\titleformat{\section}{\large\bfseries\filcenter}{\thesection}{1em}{}
\titleformat{\subsection}{\bfseries}{\thesubsection}{1em}{}
\let\oldbibliography\thebibliography
\renewcommand{\thebibliography}[1]{\oldbibliography{#1}
\setlength{\itemsep}{0\baselineskip}}
\renewcommand\@makefntext[1]{\leftskip=2em\hskip-0.5em\@makefnmark#1}
\newtheorem{theorem}{Theorem}[section]
\newtheorem{theorem-intro}{Theorem}[]
\newtheorem{corollary}[theorem]{Corollary}
\newtheorem{lemma}[theorem]{Lemma}
\newtheorem{proposition}[theorem]{Proposition}
\theoremstyle{definition}
\newtheorem{remark}[theorem]{Remark}
\newtheorem{definition}[theorem]{Definition}
\newtheorem{setup}[theorem]{Setup}
\numberwithin{equation}{section}
\renewcommand\thanks[1]{%
  \begingroup
  \renewcommand\thefootnote{}\footnote{#1}%
  \addtocounter{footnote}{-1}%
  \endgroup
}
\renewcommand{\:}{\colon} %For Functions ":"
\newcommand{\defeq}{\vcentcolon=} %Define equal "\defeq "
\newcommand{\verteq}{\rotatebox[origin=c]{90}{$\mkern1mu=$}}
\renewcommand{\i}{\mathrm{i}} %Imaginary Unit i
\newcommand{\mat}[4]{\begin{pmatrix}#1 &#2  \\ #3 &#4 \end{pmatrix}}
\newcommand{\col}[2]{\begin{pmatrix}#1 \\#2\end{pmatrix}}
\renewcommand{\epsilon}{\varepsilon}
\newcommand{\NN}{\mathbb{N}}
\newcommand{\RR}{\mathbb{R}}
\newcommand{\CC}{\mathbb{C}}
\newcommand{\A}{\mathsf{A}}
\newcommand{\B}{\mathsf{B}}
\newcommand{\C}{\mathsf{C}}
\newcommand{\E}{\mathsf{E}}
\renewcommand{\L}{\mathsf{L}}
\newcommand{\F}{\mathsf{F}}
\newcommand{\G}{\mathsf{G}}
\renewcommand{\H}{\mathsf{H}}
\renewcommand{\P}{\mathsf{P}}
\newcommand{\K}{\mathsf{K}}
\newcommand{\U}{\mathcal{U}}
\newcommand{\R}{\mathsf{R}}
\newcommand{\V}{\mathsf{V}}
\newcommand{\Id}{\mathds{1}}
\renewcommand{\d}{\mathrm{d}}
\newcommand{\Ric}{{\mathrm{Ric}}}
\newcommand{\D}{\square}
\newcommand{\DeltakS}{\Delta_{k}}
\newcommand{\DeltaZS}{\Delta_{0}}
\newcommand{\DeltaOS}{\Delta_{1}}
\def\p{\partial}
\newcommand{\PS}{\mathcal{V}_{\mathrm{P}}} %Phase Space
\newcommand{\PSS}{\mathcal{V}_{\Sigma}} %Phase Space on Sigma
\newcommand{\PSR}{\mathcal{V}_{\mathrm{R}}} %Phase Space R
\newcommand{\ran}{\textnormal{ran}}
\renewcommand{\ker}{\textnormal{ker}}
\newcommand{\WF}{\textnormal{WF}}
\newcommand{\CCR}{\mathrm{CCR}(\PS,\q_1)} %CCR Algebra of \PS 
\newcommand{\BS}{\mathrm{B}\mathcal{S}_{\mathrm{cl}}}
\renewcommand{\sc}{\mathrm{sc}}
\newcommand{\tc}{\mathrm{tc}}
\newcommand{\fc}{\mathrm{fc}}
\newcommand{\pc}{\mathrm{pc}}
\renewcommand{\c}{\mathrm{c}}
\renewcommand{\b}{\mathrm{b}}
\newcommand{\q}{\mathrm{q}}
\newcommand{\f}{\mathfrak{f}}
\newcommand{\g}{\mathfrak{g}}
\newcommand{\M}{\mathsf{M}}	%Manifolds
\newcommand{\T}{\mathsf{T}}	%Tangent Space
\newcommand{\J}{\mathcal{J}} %Causal Future/Past
\newcommand{\vol}{{\textnormal{vol}_{g}\,}} %Volume Form
\newcommand{\volS}{{\textnormal{vol}_{\Sigma}\,}} %Volume Form on Sigma
\tikzset{%
  symbol/.style={
    draw=none,
    every to/.append style={
      edge node={node [sloped, allow upside down, auto=false]{$#1$}}
    },
  },
}
\begin{document}
%
%
%
%
%Titlepage:
\begin{center}
\vspace*{-1cm}

{\Large\bf  The Quantization of Maxwell Theory\\[2mm] in the Cauchy Radiation Gauge:\\[3mm] Hodge Decomposition and Hadamard States} 

\vspace{5mm}

{\bf by}

\vspace{3mm}
\noindent
{  \bf  Simone Murro and Gabriel Schmid}\\[2mm]
\noindent   {\it Dipartimento di Matematica,  Universit\`a di Genova \& INdAM \& INFN, Italy}\\[2mm]

Emails: \ {\tt  murro@dima.unige.it, gabriel.schmid@dima.unige.it}
\\[10mm]
\end{center}
\begin{abstract}
The aim of this paper is to prove the existence of Hadamard states for the Maxwell equations on any globally hyperbolic spacetime. This will be achieved by introducing a new gauge fixing condition, the {\it Cauchy radiation gauge}, that will allow to suppress all the unphysical degrees of freedom. The key ingredient for achieving this gauge is a new Hodge decomposition for differential $k$-forms in Sobolev spaces on complete (possibly non-compact) Riemannian manifolds.
\end{abstract}
\paragraph*{Keywords:} Hodge decomposition on complete Riemannian manifolds, Hadamard states for Maxwell fields, Cauchy radiation gauge, quantum field theory on curved spacetimes.
\paragraph*{MSC 2020:} Primary: 35Q61, 47F05, 81T20; Secondary: 35J05, 58J45.
\tableofcontents
\section{Introduction}
The so-called {\it algebraic approach to
quantum field theory} is a very successful quantization scheme for describing quantum fields propagating on globally hyperbolic spacetimes~\cite{AQFT1,Gerard2019} and it stands at the forefront of scientific research. In this framework, the quantization is interpreted as a two-step procedure:
\begin{itemize}
\item[1.] The first consists of the assignment of a $*$-algebra of observables, which encodes structural properties such as causality, dynamics and canonical commutation relations, to a physical system.
\item[2.] The second step calls for the identification of a physical state, which is a positive,
linear and normalized functional on the algebra of observables, satisfying the so-called {\it Hadamard condition}. 
\end{itemize}
The Hadamard condition ensures the correct short-distance
behaviour of the n-point functions of the field and plays a key role in the perturbative approach to quantum field theory~\cite{Kasia}. Indeed, it implies the finiteness of the quantum fluctuations of the expectation value of every observable~\cite{FV} and it allows to constructing Wick polynomials and other nonlinear observables~\cite{VI}, like the stress-energy tensor.
For a complex scalar field~\cite{Christian1, GerardWrochnaOulghazi2017}, whose dynamics is ruled by a normally hyperbolic operator~$\D$, finding a quasifree Hadamard state amounts to constructing a pair of linear operators $\lambda^\pm\:\,C^\infty_\c(\M)\to C^\infty(\M)$ that satisfies the following properties:
\begin{itemize}
\item[(i)] \quad $(\lambda^{\pm})^{\ast}=\lambda^{\pm}$  with respect to a positive Hermitian form $(\cdot,\cdot)_{\M}$ on $C^{\infty}_{\c}(\M)$\,;
\item[(ii)]\quad $\D\lambda^{\pm}=\lambda^{\pm}\D=0$   and $\i(\lambda^{+}-\lambda^{-})$ is the causal propagator of $\D$\,;
\item[(iii)]\quad $\lambda^{\pm}\geq 0$ w.r.t.~$(\cdot,\cdot)_{\M}$\,;
\item[(iv)]\quad  $\WF^{\prime}(\lambda^{\pm})\subset\mathcal{N}^{\pm}\times\mathcal{N}^{\pm}\, $.\end{itemize}
Here $\WF^{\prime}(\lambda^{\pm})$ denotes the primed wavefront set of the Schwartz kernel of $\lambda^\pm$ and 
$\mathcal{N}^{\pm}$ are the two connected components of the lightcone $\mathcal{N}\defeq \{(p,\xi)\in \T^*\M \,|\, g^{-1}(\xi,\xi)=0 \}$.  \\

In order to generalize the construction of Hadamard states to Maxwell theory, three main difficulties have to be tackled:
\begin{enumerate}
\item The dynamics is ruled by a differential operator $\P$ that is not hyperbolic but is invariant under a gauge transformation;
\item The operator $\P$ is formally self-adjoint w.r.t.~a Hermitian product that is typically non-positive on the fibers of the vector bundle;
\item In most globally hyperbolic spacetimes, the theory is affected by infrared problems.
\end{enumerate}
Let us explain these problems in more details for Maxwell theory. Let $(\M,g)$ be a globally hyperbolic spacetime and consider the two differential operators 
$$ \P=\delta \d \qquad\text{and}\qquad \K = \d$$
acting on 1-forms, where $\delta$ is the codifferential and $\d$ the exterior derivative on $(\M,g)$. The operator $\P$ is formally self-adjoint with respect to the Hodge inner product
\begin{align*}
	(\cdot,\cdot)\defeq \int_{\M}\,g^{-1}(\cdot,\cdot)\,\vol 
	\end{align*}
which is not positive definite since $g^{-1}$ is Lorentzian. The equation $\P A=0$ for a 1-form $A$ are the Maxwell equations. Furthermore, $\P\K=0$, which encodes the fact that $\P$ is invariant under linear gauge transformations $A \mapsto A + \K \omega$ and it is responsible for the fact that $\P$ is not hyperbolic. The solutions of the Maxwell equations $\P A=0$ are obtained by solving $\D A =0$  subject to the Lorenz gauge condition $\delta A=0$ and the remaining gauge freedom is parametrized by $\omega\in\ker(\square)$, 
$$ \frac{\ker(\P)}{\ran(\d)} \simeq \frac{\ker(\D)\cap \ker(\delta)}{\d(\ker(\square))} \,.$$
 A first naive idea would be to construct Hadamard states for the hyperbolic theory (without constraints) and then to restrict it to Maxwell theory. However, since the  fiber metric $g^{-1}$ is not positive definite, Hadamard states for $\D$ do not exist due to the failure of the positivity condition (iii) (see e.g.~\cite[Section 6.3]{GerardWrochnaYM2015}). 
This means, that positivity can be only achieved in the quotient space. 
Finally let us comment on the last difficulty: the infrared problems. On ultrastatic spacetime, for a massive theory, like for Proca fields~\cite{MorettiMurroVolpe2022}, the construction of a Hadamard state is reduced to the construction of projection operators onto the subspace of positive frequency solutions. This is in particular achieved once that the operator $\sqrt{\Delta+m^2}$ and its inverse are well-defined. For the Maxwell theory, the Hodge-Laplacian $\Delta$ is in general not positive definite, therefore it is not possible to define an inverse. 
 \medskip

In this paper we tackle these difficulties by fully gauge fixing Maxwell theory and using pseudodifferential calculus to deal with infrared problems. This is the main result of this paper.
\begin{theorem}\label{thm:main}
Let $\P$ be the Maxwell operator on a globally hyperbolic manifold $(\M, g)$ and denote by $\CCR$ the CCR-algebra  associated to gauge-invariant and compactly supported observables for the Maxwell fields.
Then there exists a Hadamard state on $\CCR$.
\end{theorem}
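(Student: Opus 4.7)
The plan is to attack the three obstacles listed in turn, reducing Maxwell theory to a problem where the standard Hadamard construction applies. First I would choose a Cauchy temporal function $t$ on $(\M,g)$, obtaining a diffeomorphism $\M\simeq \RR\times\Sigma$ with complete Riemannian slices $(\Sigma_{t},h_{t})$, and decompose a gauge potential as $A=A_{0}\,\d t+A_{\Sigma}$. The first technical step is to use the gauge freedom $A\mapsto A+\d\omega$ together with the Lorenz-type constraint $\delta A=0$ to put $A$ into what the paper calls the Cauchy radiation gauge on a chosen initial slice $\Sigma_{0}$: namely $A_{0}|_{\Sigma_{0}}=0$ and $\delta_{\Sigma_{0}}(A_{\Sigma}|_{\Sigma_{0}})=0$. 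Since on $\Sigma_{0}$ the induced fibre metric is Riemannian, the new Hodge decomposition for Sobolev $1$-forms on the complete (possibly non-compact) manifold $\Sigma_{0}$ announced in the abstract should split $A_{\Sigma}|_{\Sigma_{0}}$ into an exact piece (killed by a residual gauge shift), a harmonic piece (which will turn out to be symplectically trivial) and a coexact piece carrying the true physical degree of freedom.

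Second, once this gauge is imposed on $\Sigma_{0}$, the remaining initial data propagate by the Hodge wave operator $\D=\d\delta+\delta\d$, and they stay in the transverse sector because the constraint $\delta A=0$ is dynamically preserved. This reduces the construction of a quasifree state on $\CCR$ to the construction of a pair of operators $\lambda^{\pm}$ acting on transverse coexact initial data, on which the fibre metric restricts to a positive-definite Hermitian form and thereby cures obstacle (iii). On this reduced space I would then build $\lambda^{\pm}$ through a pseudodifferential parametrix of positive type, in the spirit of the G\'erard--Wrochna construction for normally hyperbolic operators: mollify $\sqrt{-\Delta_{\Sigma}}$ by a smooth spectral cutoff away from zero to absorb the infrared issue caused by the possible lack of a spectral gap, and use the resulting strictly positive operator to define positive/negative frequency projections. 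Conditions (i)--(iii) will then hold by construction, while condition (iv) will follow from the standard Duistermaat--H\"ormander propagation argument applied to the principal symbol of $\sqrt{-\Delta_{\Sigma}}$ and the geometry of $\mathcal{N}^{\pm}$.

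Third, I would transport the state back to $\CCR$. Any compactly supported gauge-invariant observable admits, by the Cauchy radiation gauge reduction, a coexact representative on $\Sigma_{0}$ unique modulo a harmonic contribution; checking that the resulting two-point function depends only on the gauge class and is compatible with the symplectic form on $\ker(\P)/\ran(\K)$ amounts to verifying that the exact and harmonic parts of the Hodge decomposition lie in the radical of the symplectic form, which follows from Stokes' theorem together with the characterisation of $\ker(\P)/\ran(\K)$ recalled in the introduction.

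The hard part will be the Hodge-theoretic step on a general complete non-compact Riemannian Cauchy surface: the existence of an orthogonal Hodge decomposition in Sobolev scale with sufficient control on supports is exactly what is needed to carry out the gauge fixing for every compactly supported observable without introducing spurious non-localities, and this is where the new decomposition of the paper is indispensable. A secondary difficulty is the infrared behaviour at zero of $\Delta_{\Sigma}$: the absence of a spectral gap on non-compact complete manifolds forces the use of pseudodifferential microlocalisation rather than bare functional calculus, in order to preserve the wavefront set condition (iv) when defining $\lambda^{\pm}$.
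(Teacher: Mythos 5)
Your sketch captures the genuine core ideas of the paper---Cauchy radiation gauge via the new Sobolev Hodge decomposition, positivity on the transverse coexact sector, and a pseudodifferential construction of the projections---but it contains a structural gap that the paper must and does fill with an entirely separate step. You propose to carry out the $\Psi$DO construction of $\lambda^{\pm}$ directly on an arbitrary globally hyperbolic $(\M,g)$. The paper does not and cannot do this: the quantization map, the approximate square root $\varepsilon_{k}$ with $\varepsilon_{k}^{2}=\Delta_{k}+r_{k,-\infty}$, the microlocal factorization $\U_{\partial_t^2+\Delta_k}=S_\varepsilon\,\mathrm{diag}(e^{\pm i\varepsilon_k t})\,S_\varepsilon^{-1}+R_{-\infty}$, and the argument that $\varepsilon_{1}$ commutes with $\Pi=\Id-\d_{\Sigma}\Delta_{1}^{-1}\delta_{\Sigma}$ up to smoothing all live in Shubin's $\Psi$DO calculus on manifolds of \emph{bounded geometry}, and exploit that on an \emph{ultrastatic} spacetime the wave equation for $A_\Sigma$ decouples into $(\partial_t^2+\Delta_1)A_\Sigma=0$ with a $t$-independent elliptic part. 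Neither feature is available on a generic $(\M,g)$, and ``completeness of the leaves'' alone does not yield the Sobolev embedding, the bounded $\Psi$DO classes, or the clean spectral commutation in Lemma~\ref{lem:commuting}. Consequently the paper first proves Theorem~\ref{thm:hada} only on ultrastatic spacetimes with Cauchy surfaces of bounded geometry, and then reduces the general case via a \emph{paracausal deformation} (Møller-operator) argument: interpolating metrics $g_\chi=(1-\chi)g_0+\chi g_1$ between a chain of paracausally related metrics starting from the ultrastatic model, and transporting the covariances $c^{\pm}$ along the corresponding Cauchy evolution operators, see the three steps in the proof of Theorem~\ref{thm:main}. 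This deformation step is indispensable and entirely absent from your plan; without it, condition (iv) on $\WF'(\lambda^{\pm})$ has no route to being verified beyond the model spacetime.

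A secondary, smaller discrepancy: you propose to handle the infrared problem by a spectral cutoff of $\sqrt{-\Delta_\Sigma}$ away from $0$. The paper instead observes that no cutoff is needed on the physical sector: $\Delta_1$ is \emph{invertible} as a map from (a suitable quotient of) $\{f\,:\,\d_\Sigma f\in\Omega^1_{\infty,\d}\}$ onto $\mathrm{ran}(\delta_\Sigma|_{\Omega^1_\infty})$ by Proposition~\ref{prop:Poisson}, and the projector $\Pi=\Id-\d_\Sigma\Delta_1^{-1}\delta_\Sigma$ is defined exactly there (Lemma~\ref{Lemma:ProjectionsHodgeDecomp}); positivity and the commutation with $\varepsilon_1$ then follow without modifying the symbol near the zero section. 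A naive cutoff would instead break exact gauge invariance or the relation $c^{+}+c^{-}=\Id$ modulo $\ran(\K_\Sigma)$, and so is not a drop-in replacement.
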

\subsection{Structure of the Paper and Main Results }
The main idea of the paper is to fix completely  all the gauge degrees of freedom. This will be achieved by working in the so-called {\em Cauchy radiation gauge}, namely we shall only consider those solutions of the Maxwell equation that satisfy the conditions
$$ \D A =0 \qquad \delta A =0 \qquad A_0|_{\Sigma}=(\nabla_{\partial_t} A_0)|_\Sigma =0$$ 
where $A_0 \defeq (\partial_t \lrcorner A)$ and $\nabla_{\partial_{t}}A_0 \defeq (\partial_t \lrcorner \nabla_{\partial_{t}}A)$.
As explained in Section~\ref{Sec:Maxwell}, on globally hyperbolic manifolds $\M = \RR \times \Sigma$ that are ultrastatic, this gauge is equivalent to the {\it radiation gauge}, i.e.~$\delta A=0$ and $A_0=0$ on $\M$, and, if $\Sigma$ is not compact, it is equivalent to the {\it Coulomb gauge} $\delta_\Sigma A_\Sigma =0$ where $A_\Sigma\defeq A - A_0 \d t$. 
In order to show that the Cauchy radiation gauge can be always achieved, we will need to
solve the Poisson equation on complete Riemannian manifolds. To achieve our goal, we shall generalize the Hodge decomposition to non-compact, complete Riemannian manifolds. This will be arranged in  Section~\ref{sec:Hodge} by introducing suitable Sobolev spaces for $k$-forms. 
This is the second main result of the paper.

\begin{theorem}[Hodge decomposition on non-compact manifolds]\label{Thm:HodgeDecomSmooth}
Let $(\Sigma,h)$ be a complete Riemannian manifold, denote by $\d_{\Sigma}$, $\delta_{\Sigma}$ and $\Delta_{k}$ the exterior derivative, codifferential and Laplacian on $\Sigma$, and by $\H^{s}_{k}(\Sigma)$ with $s\in [0,\infty)$ the Sobolev space of $k$-forms as in Definition~\ref{def:sob space}. We define 
	\begin{align*}
		\Omega^{k}_{s}(\Sigma) &\defeq  \Omega^{k}(\Sigma;\CC)\cap \H^{s}_{k}(\Sigma)\,,\qquad\qquad\qquad\Omega_{\infty}^{k}(\Sigma)\defeq\bigcap_{s\in\RR}\Omega_{s}^{k}(\Sigma)\, ,\\
		\Omega^{k}_{s,\d}(\Sigma) &\defeq \Omega^{k}(\Sigma;\CC)\cap\overline{\d_{\Sigma}\Omega^{k-1}_{\infty}(\Sigma)}^{\Vert\cdot\Vert_{\H^{s}}},\qquad\Omega_{\infty,\d}^{k}(\Sigma)\defeq\bigcap_{s\in\RR}\Omega_{s,\d}^{k}(\Sigma)\, ,\\
		\Omega^{k}_{s,\delta}(\Sigma) &\defeq \Omega^{k}(\Sigma;\CC)\cap\overline{\delta_{\Sigma}\Omega^{k+1}_{\infty}(\Sigma)}^{\Vert\cdot\Vert_{\H^{s}}},\qquad\Omega_{\infty,\delta}^{k}(\Sigma)\defeq\bigcap_{s\in\RR}\Omega_{s,\delta}^{k}(\Sigma)\, .
	\end{align*}
	The space $\Omega^{k}_{s}(\Sigma)$ with $s\in [0,\infty]$ admits the following $\H^{s}$-orthogonal decomposition:
	\begin{align*}
		\Omega^{k}_{s}(\Sigma)\cong\Omega^{k}_{s,\d}(\Sigma)\oplus\Omega^{k}_{s,\delta}(\Sigma)\oplus \mathrm{ker}(\Delta_{k}\vert_{\Omega^{k}_{s}})\,.
	\end{align*}
	Moreover, for any $\omega\in\Omega^{k}_{s}(\Sigma)$, which we uniquely decompose as
$\omega=\alpha+\beta+\gamma$	
	with $\alpha\in\Omega^{k}_{s,\d}(\Sigma)$, $\beta\in\Omega^{k}_{s,\delta}(\Sigma)$ and $\gamma\in\mathrm{ker}(\Delta_{k}\vert_{\Omega^{k}_{s}})$, the following is true:
\begin{itemize}
	\item[(i)]$\alpha\in\Omega^{k}_{s,\d}(\Sigma)$ is exact, i.e.~$\alpha=\d_{\Sigma}\psi$ for some $\psi\in\Omega^{k-1}(\Sigma;\CC)$.
	\item[(ii)]$\beta\in\Omega^{k}_{s,\delta}(\Sigma)$ is coexact, i.e.~$\alpha=\delta_{\Sigma}\eta$ for some $\eta\in\Omega^{k+1}(\Sigma;\CC)$.
	\item[(iii)]$\d_{\Sigma}\omega=0$ if and only if $\beta=0$ and hence $\mathrm{ker}(\d_{\Sigma}\vert_{\Omega^{k}_{s}})=\Omega^{k}_{s,\d}(\Sigma)\oplus\mathrm{ker}(\Delta_{k}\vert_{\Omega^{k}_{s}})$.
	\item[(iv)]$\delta_{\Sigma}\omega=0$ if and only if $\alpha=0$ and hence $\mathrm{ker}(\delta_{\Sigma}\vert_{\Omega^{k}_{s}})=\Omega^{k}_{s,\delta}(\Sigma)\oplus\mathrm{ker}(\Delta_{k}\vert_{\Omega^{k}_{s}})$.
\end{itemize}	
\end{theorem}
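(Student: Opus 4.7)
The plan is to leverage the spectral theory of the Hodge--Laplacians $\Delta_k$ as unbounded self-adjoint operators on $L^2$-forms. A classical result of Gaffney and Chernoff, valid because $(\Sigma, h)$ is complete, ensures that $\Delta_k$ defined on smooth compactly supported forms is essentially self-adjoint, so that the Sobolev space $\H^{s}_{k}(\Sigma)$ coincides with the domain of $(1+\Delta_k)^{s/2}$ equipped with the graph norm. The heart of the argument is to establish the decomposition in the $L^2$-case and then propagate it to arbitrary $s$ via functional calculus, exploiting the intertwining identities $\d_\Sigma\Delta_{k-1} = \Delta_k \d_\Sigma$ and $\delta_\Sigma\Delta_{k+1} = \Delta_k \delta_\Sigma$, which are immediate from $\Delta_k = \d_\Sigma\delta_\Sigma + \delta_\Sigma \d_\Sigma$ together with $\d_\Sigma^2 = \delta_\Sigma^2 = 0$.

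First I would treat the case $s=0$. The subspaces $\overline{\d_\Sigma\Omega^{k-1}_\infty}$ and $\overline{\delta_\Sigma\Omega^{k+1}_\infty}$ are $L^2$-orthogonal because $\d_\Sigma^2 = 0$, and both are orthogonal to $\ker(\Delta_k\vert_{L^2})$ since $L^2$-harmonic forms on a complete manifold satisfy $\d_\Sigma\omega = \delta_\Sigma\omega = 0$; this is the standard cutoff integration-by-parts argument that uses completeness to construct exhausting Lipschitz functions with controlled gradient. It remains to check that an $\omega \in L^2$ orthogonal to all three summands vanishes: the first two conditions give $\delta_\Sigma\omega = \d_\Sigma\omega = 0$ distributionally, elliptic regularity of $\Delta_k$ upgrades $\omega$ to a smooth harmonic form, and orthogonality to the third summand forces $\omega = 0$.

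Next I would promote the decomposition to general $s \in [0, \infty)$. By the intertwining, $(1+\Delta_k)^{s/2} \colon \H^{s}_{k} \to L^{2}_{k}$ is a unitary isomorphism that sends the $\H^s$-closure of $\d_\Sigma\Omega^{k-1}_\infty$ onto the $L^2$-closure of $\d_\Sigma\Omega^{k-1}_\infty$, and similarly for the coexact and harmonic parts; hence the $L^2$-orthogonal decomposition transports to an $\H^s$-orthogonal decomposition, and the case $s = \infty$ follows by intersecting over $s$. Elliptic regularity ensures that each summand of a smooth form is smooth, so the decomposition restricts to $\Omega^{k}_{s}(\Sigma)$. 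Properties (iii) and (iv) are immediate from the orthogonality: if $\omega \in \Omega^{k}_{s}$ is closed then for any $\eta \in \Omega^{k+1}_\infty$ one has $\langle \omega, \delta_\Sigma\eta\rangle_{\H^s} = \langle \d_\Sigma\omega, \eta\rangle_{\H^s} = 0$ after using the intertwining to move $(1+\Delta_k)^{s/2}$ past $\d_\Sigma$, so the coexact component vanishes, and dually for the exact part.

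The main obstacle is the honest exactness in (i) (and, symmetrically, the coexactness in (ii)): one must produce a genuine smooth primitive $\psi$ with $\alpha = \d_\Sigma \psi$, not merely a sequence $\d_\Sigma \psi_n \to \alpha$ in $\H^s$. The natural candidate is $\psi \defeq \delta_\Sigma \Delta_{k-1}^{-1}\alpha$, where $\Delta_{k-1}^{-1}$ is the Green's operator defined by spectral calculus on the orthogonal complement of $\ker(\Delta_{k-1})$. Since $\alpha$ is a smooth limit of exact forms one has $\d_\Sigma\alpha = 0$, and the intertwining yields $\d_\Sigma\psi = \d_\Sigma\delta_\Sigma\Delta_{k-1}^{-1}\alpha = \alpha - \delta_\Sigma \d_\Sigma\Delta_{k-1}^{-1}\alpha = \alpha - \delta_\Sigma\Delta_{k}^{-1}\d_\Sigma\alpha = \alpha$, while elliptic regularity gives smoothness of $\psi$. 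The delicate point is that $0$ may lie in the continuous spectrum of $\Delta_{k-1}$, so $\Delta_{k-1}^{-1}$ is in general unbounded; one must therefore verify that elements of $\Omega^{k}_{s,\d}(\Sigma)$ actually lie in its domain. This is precisely where the definition of $\Omega^{k}_{s,\d}(\Sigma)$ as the $\H^s$-closure of $\d_\Sigma\Omega^{k-1}_\infty$---rather than as an abstract orthogonal complement---is indispensable, since the approximating sequences $\d_\Sigma\psi_n$ sit in the range of $\d_\Sigma$ and uniform bounds on the spectral projections bounded away from zero allow extracting $\psi$ as a smooth limit.
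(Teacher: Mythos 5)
Your reduction of the Sobolev decomposition to the $L^{2}$ case via functional calculus and intertwining is, up to technical bookkeeping about domains and closures, compatible with what the paper does (which works through the Br\"{u}ning--Lesch Hilbert-complex formalism and a mollifier family $\J_{\varepsilon}=e^{-\varepsilon\E}$ rather than the unitary $(1+\Delta_{k})^{s/2}$ directly). The genuine gap is in your argument for claim~(i), the smooth exactness of $\alpha\in\Omega^{k}_{s,\d}(\Sigma)$. You propose $\psi\defeq\delta_{\Sigma}\Delta^{-1}\alpha$ (aside: the degree index should be $\Delta_{k}^{-1}$, not $\Delta_{k-1}^{-1}$, since $\alpha$ is a $k$-form) with $\Delta^{-1}$ the Green operator on $\ker(\Delta)^{\perp}$. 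You already flag the problem yourself: on a non-compact complete manifold, $0$ typically lies in the essential spectrum of $\overline{\Delta_{k}}$, so this Green operator is unbounded, and there is no reason that $\alpha$ lies in its domain. Membership in $\mathcal{D}(\Delta_{k}^{-1})$ requires $\int_{0^{+}}\lambda^{-2}\,\d\mu_{\alpha}^{\Delta}(\lambda)<\infty$, a small-spectral-parameter decay condition; the Sobolev hypothesis $\alpha\in\H^{s}$ only controls the high end of the spectrum, and the existence of an approximating sequence $\d_{\Sigma}\psi_{n}\to\alpha$ in $\H^{s}$ does not yield the required low-frequency bound, because the primitives $\psi_{n}$ can lose all control in $\L^{2}$ as low-frequency modes accumulate. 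Your closing appeal to ``uniform bounds on the spectral projections bounded away from zero'' is precisely the missing estimate, and it is false in general (take $\Sigma=\RR^{n}$, where $\sigma(\Delta_{0})=[0,\infty)$ is purely continuous).

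The paper avoids the Green operator entirely and replaces this spectral step by a topological one. After establishing that $\alpha$ is smooth (elliptic regularity of $\d+\delta$) and closed (since $\langle\alpha,\delta_{\Sigma}\varphi\rangle_{\L^{2}}=0$ for all compactly supported $\varphi$), it invokes Poincar\'{e} duality: a closed $k$-form is exact iff it pairs trivially against every compactly supported coclosed $k$-form (equivalently, against every closed compactly supported form of complementary degree via the Hodge star). The sequence $\alpha=\L^{2}\text{-}\lim\d_{\Sigma}\alpha_{n}$ makes this pairing vanish by integration by parts, producing a smooth primitive $\psi$ from de Rham theory with no spectral hypothesis whatsoever. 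This is the essential idea your approach is missing, and it is what lets the theorem hold on general complete manifolds and not merely those with a mass gap.
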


The achievability of the Cauchy radiation gauge will be discussed in Section~\ref{Sec:PhaseSpaceQuant}, building on the results of Section~\ref{Sec:CauchyProblem} where the Cauchy problem for the wave operator on forms with smooth-Sobolev data will be studied. In Section~\ref{Sec:ConstructionHadamardFINAL} the construction of Hadamard states in the Cauchy radiation gauge in ultrastatic globally hyperbolic manifolds will be performed. The main benefit of working in the Cauchy radiation gauge is that the fiber metric becomes 
manifestly positive definite.
Using the notation introduced in Section~\ref{Sec:ConstructionHadamardFINAL}, this is our third main result of the paper.

\begin{theorem}[Hadamard projectors]\label{thm:hada}
 Let $(\M,g)$ be a globally hyperbolic, ultrastatic manifold with a Cauchy surface~$(\Sigma,h)$ of bounded geometry and set 
 \begin{align*}
	\Gamma_{\infty}(\V_{\rho_{1}})&\defeq \Omega^{0}_{\infty}(\Sigma_{t})\oplus\Omega^{0}_{\infty}(\Sigma_{t})\oplus \Omega^{1}_{\infty}(\Sigma_{t})\oplus\Omega^{1}_{\infty}(\Sigma_{t})\\
	\Gamma_{\infty,\d}(\V_{\rho_{1}})&\defeq \Omega^{0}_{\infty}(\Sigma_{t})\oplus\Omega^{0}_{\infty}(\Sigma_{t})\oplus \Omega^{1}_{\infty,\d}(\Sigma_{t})\oplus\Omega^{1}_{\infty,\d}(\Sigma_{t})
	\,.
\end{align*}
 for some $t\in\RR$. Consider the projection operator $\T_\Sigma$ defined in  Proposition~\ref{Prop:ProjOp} as 
\begin{align*}
		\T_{\Sigma}=\begin{pmatrix}
	0&0&0&0\\0&0&0&0\\0 & 0 & \mathds{1}-\d_{\Sigma}\Delta_{1}^{-1}\delta_{\Sigma} & 0\\ 0&0 &0& \mathds{1}-\d_{\Sigma}\Delta_{1}^{-1}\delta_{\Sigma}
	\end{pmatrix}\,.
	\end{align*}  
(see also Lemma~\ref{Lemma:ProjectionsHodgeDecomp} for the invertibility of $\Delta_1$).  Let $\epsilon_k$ be an approximate square root of the Hodge-Laplacian $\Delta_k$ as in Lemma~\ref{lem:commuting} and define  
 $\pi^{\pm}\:\Gamma_\infty(\V_{\rho_{1}})\to\Gamma_\infty(\V_{\rho_{1}})$  by
	\begin{align*}
		\pi^{\pm}\defeq\frac{1}{2}\begin{pmatrix}\mathds{1} & \pm\varepsilon_{0}^{-1} &0&0\\\pm\varepsilon_{0} &\mathds{1}&0&0\\ 0&0&\mathds{1} & \pm\varepsilon^{-1}_{1}\\0&0&\pm\varepsilon_{1} &\mathds{1}\end{pmatrix}\, .
	\end{align*}
 	The operators $c^{\pm}\:\Gamma_\infty(\V_{\rho_{1}})\to\Gamma_\infty(\V_{\rho_{1}})$ defined by 
$$c^\pm\defeq \T_{\Sigma}\pi^{\pm}\T_{\Sigma}$$ 
	 have the following properties: 
	\begin{itemize}
		\item[(i)]\quad $(c^{\pm})^{\dagger}=c^{\pm}$\, and \, $ c^\pm(\ran(\K_{\Sigma})\cap \Gamma_{\infty,\d})\subset\ran(\K_{\Sigma})$;
		\item[(ii)]\quad $(c^++c^-)\f=\f$ \, modulo\, $\ran(\K_{\Sigma})\cap \Gamma_{\infty,\d}$\, for any\, $\f\in\ker(\K_{\Sigma}^{\dagger}\vert_{\Gamma_\infty})$;
		\item[(iii)]\quad $\pm \q_{1,\Sigma}(\f,c^{\pm}\f)\geq 0$ \, for any \, $\f\in\ker(\K_{\Sigma}^{\dagger}\vert_{\Gamma_\infty})$;
	\item[(iv)]\quad$ \WF'(\mathcal{U}_{1} c_{1}^{\pm})\subset (\mathcal{N}^{\pm}\cup F)\times\T^{*}\Sigma$ \, for $F=\{k=0\}\subset\T^*\M$.
	\end{itemize}
Therefore,  the operators $\lambda^{\pm}\: \ker(\K^{\ast}\vert_{\Gamma_{\infty}})\to \Gamma_{\infty}(\V_1)$ defined by
$$\lambda^{\pm}\defeq \pm \, \i\, (\rho_{1}\G_{1})^{\ast}(\G_{1,\Sigma}c^{\pm})(\rho_{1}\G_{1}) $$ are the pseudo-covariances of a quasifree Hadamard state in the Cauchy radiation gauge.
\end{theorem}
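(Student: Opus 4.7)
The plan is to verify the four properties (i)--(iv) of the Cauchy-data operators $c^\pm=\T_\Sigma\pi^\pm\T_\Sigma$ by direct algebraic computation, using the Hodge decomposition of Theorem~\ref{Thm:HodgeDecomSmooth} and the functional calculus underlying $\varepsilon_k$, and then to transport them through the Cauchy-data isomorphism $\rho_{1}\G_{1}$ to obtain the four defining Hadamard conditions for $\lambda^\pm$.

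\textbf{Properties (i)--(iii).} Each $\varepsilon_k$ is a self-adjoint realisation of $\sqrt{\Delta_k}$ by Lemma~\ref{lem:commuting}, so $\pi^\pm$ are self-adjoint on $\Gamma_\infty(\V_{\rho_{1}})$; and $\T_\Sigma$ is the $L^{2}$-orthogonal projection onto the coexact-plus-harmonic subspace singled out by Theorem~\ref{Thm:HodgeDecomSmooth}(iv), so $\T_\Sigma^\dagger=\T_\Sigma=\T_\Sigma^{2}$. Hence $(c^\pm)^\dagger=c^\pm$, and the inclusion $c^\pm(\ran(\K_\Sigma)\cap \Gamma_{\infty,\d})\subset\ran(\K_\Sigma)$ follows because the 1-form blocks of $\T_\Sigma$ annihilate the exact subspace $\ran(\d_\Sigma)$, into which $\ran(\K_\Sigma)$ is mapped by virtue of $\K=\d$. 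Summing the defining matrices gives $\pi^{+}+\pi^{-}=\mathds{1}$, so $c^{+}+c^{-}=\T_\Sigma^{2}=\T_\Sigma$; and for $\f\in\ker(\K_\Sigma^\dagger\vert_{\Gamma_\infty})$ the decomposition of Theorem~\ref{Thm:HodgeDecomSmooth} writes $(\mathds{1}-\T_\Sigma)\f\in\ran(\d_\Sigma)\cap\Gamma_{\infty,\d}\subset\ran(\K_\Sigma)\cap\Gamma_{\infty,\d}$, yielding (ii). Property (iii) reduces, blockwise, to a completion of squares: on the image of $\T_\Sigma$ the fibre metric is positive definite (the very purpose of the Cauchy radiation gauge), and a $2\times 2$ computation gives $\pm\q_{1,\Sigma}(\f,c^\pm\f)=\tfrac{1}{2}\,\|\varepsilon_k^{1/2}(\T_\Sigma\f)_{1}\pm\varepsilon_k^{-1/2}(\T_\Sigma\f)_{2}\|^{2}\geq 0$.

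\textbf{The microlocal estimate (iv)} is the principal obstacle. The strategy is to identify $\pi^\pm$ with the Cauchy-data realisation of the projectors onto positive/negative-frequency solutions of $(\partial_{t}^{2}+\Delta_{k})u=0$: the evolution $\mathcal{U}_{k}\pi^\pm$ is, modulo a smoothing remainder, the Fourier integral operator $e^{\pm\i t\varepsilon_{k}}$, whose canonical relation sends $(x,\xi)\in\T^{*}\Sigma\setminus\{0\}$ to the forward/backward null bicharacteristic above it, giving $\WF'(\mathcal{U}_{k}\pi^\pm)\subset\mathcal{N}^\pm\times\T^{*}\Sigma$ off the zero section. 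The residual set $F=\{k=0\}$ absorbs the low-frequency correction intrinsic to the \emph{approximate} square root (needed because $\Delta_{k}$ need not be strictly positive, so the strict spectral square root may fail to lie in the pseudodifferential calculus). Bounded geometry of $\Sigma$ is what enables the uniform symbolic estimates and the Egorov-type calculus underlying these claims. Finally, flanking by the order-$0$ pseudodifferential projector $\T_\Sigma$ does not enlarge the wavefront set.

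\textbf{Transport to $\lambda^\pm$.} Once (i)--(iv) are in hand, the definition $\lambda^\pm=\pm\i(\rho_{1}\G_{1})^{\ast}(\G_{1,\Sigma}c^\pm)(\rho_{1}\G_{1})$ transports each property to the corresponding Hadamard condition of the Introduction: self-adjointness of $\lambda^\pm$ from (i); the commutation $\P\lambda^\pm=\lambda^\pm\P=0$ together with the causal-propagator normalisation $\i(\lambda^{+}-\lambda^{-})=\G_{1}$ (modulo gauge) from (ii) and the defining properties of $\G_{1}$; positivity on $\ker(\K^{\ast}\vert_{\Gamma_\infty})$ from (iii) via the symplectic identification $\ker(\K^{\ast})\cong\ker(\K_\Sigma^\dagger)$ induced by $\rho_{1}\G_{1}$; and the Hadamard wavefront condition from (iv) via propagation of singularities for the d'Alembertian on forms in the Cauchy radiation gauge, applied to each of the two factors $\rho_{1}\G_{1}$.
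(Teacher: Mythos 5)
Your proposal follows essentially the same route as the paper's proof: self-adjointness of $\varepsilon_{k}$ and the orthogonality of $\T_{\Sigma}$ give $(c^\pm)^\dagger=c^\pm$, $\T_\Sigma\circ\K_\Sigma=0$ gives the range inclusion, $\pi^{+}+\pi^{-}=\Id$ gives (ii), positivity is a $2\times 2$ completion of squares after transferring to the gauge-fixed Hermitian form, and the microlocal step rests on the $S_\epsilon$-diagonalisation and the commutation $\Pi\varepsilon_{1}\equiv\varepsilon_{1}\Pi$ modulo smoothing provided by Lemma~\ref{lem:commuting}~(iii). The final transport to $\lambda^\pm$ via Proposition~\ref{prop:hadafromc} is also the same.

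One sub-step is stated imprecisely and would not survive a close reading: in your argument for (ii) you assert
$(\mathds{1}-\T_\Sigma)\f\in\ran(\d_\Sigma)\cap\Gamma_{\infty,\d}$. The vector $(\mathds{1}-\T_\Sigma)\f=(\f_1,\f_2,(\mathds{1}-\Pi)\f_3,(\mathds{1}-\Pi)\f_4)$ lives in all four components of $\Gamma(\V_{\rho_1})$; its scalar entries $\f_1,\f_2$ are \emph{not} zero, so it cannot literally lie in a range of $\d_\Sigma$, and the membership in the smaller space $\ran(\K_\Sigma)\cap\Gamma_{\infty,\d}$ is not a consequence of the Hodge decomposition alone. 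It is obtained precisely because $\f$ satisfies the constraint $\K_\Sigma^\dagger\f=0$: using Lemma~\ref{LemmaKSigma} these two scalar equations supply the preimage $(a,\pi)$ under $\K_\Sigma$ such that $\K_\Sigma(a,\pi)=(\mathds{1}-\T_\Sigma)\f$. This is exactly the content of Proposition~\ref{Prop:ProjOp}~(iia), i.e. $\ker\bigl(\T_\Sigma\vert_{\ker(\K_\Sigma^\dagger\vert_{\Gamma_\infty})}\bigr)=\ran(\K_\Sigma)\cap\Gamma_{\infty,\d}(\V_{\rho_1})$, which your sketch should invoke rather than point to the $\d_\Sigma$-part of the Hodge decomposition only. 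With that correction, the rest of your proof proposal coincides with the paper's.
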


We conclude our paper with Section~\ref{Sec:Finale}, where the proof of Theorem~\ref{thm:main} is presented. The proof is based on Theorem \ref{thm:hada}, namely by suitably restricting the covariances $c^{\pm}$ to $\Gamma_{\c}(\V_{\rho_{1}})$, and the existence of Hadamard states on generic globally hyperbolic manifolds will follow from a deformation argument.
\subsection*{Bibliographical Remarks}
The existence of Hadamard states for the Maxwell equations was already investigated under additional assumption on the Cauchy hypersurface $\Sigma$ of the spacetime: Fewster-Pfenning~\cite{FP} (generalizing results of Furlani~\cite{Fur}) assumed $\Sigma$ to be compact and with vanishing first cohomology group, Finster-Strohmaier~\cite{Felix}, extending the Gupta–Bleuler formalism, considered only $\Sigma$  subjected to an ‘absence of zero resonances’ condition for the Hodge Laplacian on 1-forms, while Dappiaggi-Siemssen~\cite{DappSiem2013} worked out the construction on asymptotically flat spacetimes. 

Later, the construction of Hadamard
states was generalized to the linearized Yang-Mills equations: Hollands~\cite{Hollands} considered the theory linearized around the zero solution on globally hyperbolic spacetimes with compact Cauchy surface and
vanishing first cohomology group, while Gérard-Wrochna~\cite{GerardWrochnaYM2015} linearized around non-zero solutions on globally hyperbolic spacetimes with Cauchy surface either compact and parallelizable or $\RR^n$ with a Riemannian metric $h$ satisfying 
$$c^{-1}  \leq [h_{ij}(x)] \leq c \, \quad \text{for } c > 0  \qquad \text{ and } \qquad |\partial_x^\alpha h_{ij} (x)| \leq C_\alpha \quad \forall \alpha \in \mathbb{N}^n \quad x \in \RR^n\,.$$

The case of the linearized Einstein equations is more problematic and a full proof of the existence of Hadamard state on globally hyperbolic manifolds is still missing. Benini-Dappiaggi-Murro~\cite{BDM} considered asymptotically flat spacetimes with methods drawing from earlier works of Ashtekar-Magnon-Ashtekar \cite{Ashtekar} and Dappiaggi-Moretti-Pinamonti~\cite{DMP}. The quantization turns out, however, to be limited to a subspace of classical degrees of freedom due to divergences at null
infinity. 
Gérard-Murro-Wrochna~\cite{GerardMurroWrochna2022} investigated the construction of Hadamard states for linearized gravity on analytic spacetimes using Wick rotation. However, the gauge invariance and positivity of the two-point functions are only obtained modulo the addition of some smooth corrections. Only very recently, Gérard~\cite{Gerard} managed to construct Hadamard states on Cauchy-compact globally hyperbolic spacetimes via a complete gauge fixing.
\subsection{General Notation and Conventions}
\begin{itemize}
\item[$\bullet$] $(\M=\RR\times\Sigma, g=-\beta^{2}\d t^2+h_t)$ denotes a globally hyperbolic manifold.
\item[$\bullet$] Given a vector bundle $\E$ over $\M$, we denote
by $\Gamma(\E)$ the linear space of smooth sections of $\E$ and by $\Gamma_\c(\E)$, $\Gamma_{\sc}(\E)$ and $\Gamma_{\tc}(\E)$ the subspace of compactly, resp.~spatially compactly, resp.~temporally compactly supported smooth sections of $\E$. 
\item[$\bullet$]A complex vector bundle $\E$ over $\M$ together with a non-degenerate Hermitian bundle metric $\langle\cdot,\cdot\rangle_{\E}\in\Gamma(\E^{\ast}\otimes\E^{\ast})$ will be referred to as a Hermitian bundle. We denote by 
\begin{align*}
	(\cdot,\cdot)_{\E}\defeq \int_{\M}\langle\cdot,\cdot\rangle_{\E}\,\mathrm{vol}_{g}
\end{align*}
the induced non-degenerate Hermitian form on $\Gamma(\E)$.
\item[$\bullet$] We denote by $g^{-1}$ the inverse metric of $g$ naturally defined on the cotangent bundle of $\M$.
\item[$\bullet$] For a vector-valued distribution $u\in\Gamma'_\c(\E)$, we adopt the standard convention that the wavefront set $\operatorname{WF}(u)$ is the union of the wavefront sets of its components in an arbitrary but fixed local frame. Furthermore, given a linear and continuous operator $A\:\Gamma_\c(\E)\to\Gamma(\E)$, the wavefront set $\WF(A)$ is defined as the wavefront set of its Schwartz kernel.
\item[$\bullet$] Given a bidistribution $u\in\Gamma'(\E\boxtimes \E)$,  the primed wavefront of $u$ is defined as
$$\WF'(u)\defeq\{(x,y,\xi,-\eta)\in \T^* (\M\times\M)\setminus\{0\} \,|\, (x,y,\xi,\eta)\in\WF(u)\}\,.$$
\item[$\bullet$] Given a $k$-form $\omega\in\Omega^{k}(\M)$ and a vector field $X\in\Gamma(\T\M)$ we denote by $X\lrcorner\,\omega\in\Omega^{k-1}(\M)$ the
contraction, i.e.~the insertion of $X$ into the first slot of $\omega$.
\item[$\bullet$]For a normed vector space $(\sf{X},\Vert\cdot\Vert_{\sf{X}})$ we will write $\sf{X}\text{-}\lim_{n\to\infty}x_{n}$ to indicate the limit of a converging sequence $(x_{n})_{n\in\mathbb{N}}\in\sf{X}^{\mathbb{N}}$ w.r.t.~the topology induced by $\Vert\cdot\Vert_{\sf{X}}$, in order to avoid confusions in case there are inequivalent norms of $\sf{X}$ in use.
\end{itemize} 
\subsection*{Acknowledgements}
We  are  very grateful to Christian Gérard and Nicola Pinamonti for inspiring discussions about this paper.
We would also like to thank  Tommaso Bruno, Matteo Capoferri, Claudio Dappiaggi, Simone Di Marino, Felix Finster and Stefano Galanda for useful suggestions related to the topic of this paper, as well as the referees for many valuable suggestions. This work was produced within the activities of the INdAM-GNFM and was partly carried out during the program ``Spectral Theory and Mathematical Relativity'' at the Erwin Schrödinger Institute. We are grateful to the ESI for support and the hospitality during our stay.
\subsection*{Fundings}
S.M.~was supported by the GNFM-INdAM Progetto Giovani \textit{Feynman propagator for Dirac fields:~a microlocal analytic approach.} The research of G.S.~is funded by a PhD scholarship of the University of Genoa. The research was supported in part by
the MIUR Excellence Department Project 2023-2027 awarded to the Department of Mathematics of the
University of Genoa.
\section{Maxwell Theory and the Cauchy Radiation Gauge}\label{Sec:Maxwell}
In the following, $(\M,g)$ will denote an {\it globally hyperbolic} ($n+1$)-dimensional Lorentzian manifold of metric signature
$(-, +, \dots, +)$, namely a connected, paracompact and smooth Hausdorff manifold for which there exists a smooth Cauchy temporal function $t\:\M\to\RR$ such that
\begin{align*}
	\M \simeq \RR\times\Sigma
	\qquad
	g=-\beta^2 \d t^2+h_t\,,
\end{align*}
where $\beta\:\RR \times \Sigma \to \RR$ is a smooth and positive function, $h_t$ is a Riemannian metric on each slice
$\Sigma_t\defeq\{t\} \times \Sigma$ varying smoothly with $t$, and each $\Sigma_{t}$ is a smooth spacelike Cauchy hypersurface. We remind the reader that a Cauchy hypersurface is an achronal set intersected exactly once by every inextensible timelike curve. This class of manifolds contains many important examples of spacetimes relevant to general relativity and cosmology. Given a globally hyperbolic manifold, we call {\it Maxwell bundles} the Hermitian bundles 
\begin{align*}
	\V_k \defeq (\M\times\CC)\otimes\bigwedge\nolimits^{k}\T^{\ast}\M,\qquad \langle\cdot,\cdot\rangle_{\V_{k}}\defeq g^{-1}_{(k)}(\cdot,\cdot)\defeq \frac{1}{k!}(g^{-1})^{\otimes k}(\overline{\cdot},\cdot)\, .
\end{align*}
whose sections are smooth (complex) differential $k$-forms $\Gamma(\V_k)=\Omega^k(\M;\CC)$, and we define the  \textit{(complex) Hodge inner products} by
\begin{align}\label{Hodgeprod}
	(\alpha,\beta)_{\V_{k}}\defeq \int_{\M}\,\langle\alpha,\beta\rangle_{\V_{k}}\,\vol 
	\end{align}
for all $\alpha,\beta\in\Gamma(\V_{k})$ with compactly overlapping supports.
As usual, we denote the exterior derivative by $\d\:\Omega^{k-1}(\M)\to\Omega^{k}(\M)$ and define the codifferential operator $\delta\:\Omega^{k}(\M)\to\Omega^{k-1}(\M)$ as the formal adjoint of $\d$ with respect to the Hodge inner product~\eqref{Hodgeprod}. Recall that while $\d$ is independent of the metric, the codifferential $\delta$ does depend on the Lorentzian metric $g$.

With this notation, we shall call \emph{Maxwell operator} the linear and formally self-adjoint differential operator 
\begin{align*}
	\P\:\Gamma(\V_1)\to\Gamma(\V_1)\,,\qquad\P\defeq \delta\d\, .
\end{align*}
The identity $\P\circ\d = 0$ encodes the fact that $\P$ is invariant under linear local gauge transformations $\Gamma(\V_{1})\ni A \mapsto A + \d f$ for $f\in\Gamma(\V_{0})$ and it implies that $\P$ is not hyperbolic. 
A straight-forward computations shows that the operators
\begin{align*}
\D_{0}\defeq \delta\d\:\Gamma(\V_0)\to\Gamma(\V_0) \,, \qquad	\D_{1}\defeq \P+\d\delta\:\Gamma(\V_1)\to\Gamma(\V_1)
\end{align*}
 are normally hyperbolic. Then, solutions of Maxwell's equations $\P A = 0$ can be obtained by
solving $\D_1 A = 0$ with the {\it Lorenz gauge condition} $\delta A = 0$. The Lorenz gauge, however, does not fix completely the gauge degrees of freedom and the remaining gauge freedom is parametrized by the transformations $A\mapsto A+\d f$ with $f\in\ker(\D_{0})$. Our goal is to remove this remaining gauge freedom. 

\begin{definition}\label{def:CauchyRadGa}
Let $(\M,g)$ be a globally hyperbolic manifold and $\Sigma$ a smooth spacelike Cauchy hypersurface. Furthermore, let $t\:\M\to\RR$ be a Cauchy temporal function such that $t^{-1}(0)\cong\Sigma$. A $1$-form $A\in\Omega^{1}(\M;\CC)$ satisfies the \emph{Cauchy radiation gauge} on
 $\Sigma$ if 
$$ \text{(i)} \;\; \delta A=0\qquad \text{(ii)}\;\; A_{n}\vert_{\Sigma}=(\nabla_{n}A)_{n}\vert_{\Sigma}=0 $$ 
where $n\propto\partial_{t}$ denotes the timelike unit normal of $\Sigma$, $A_{n}\defeq n\lrcorner A$ and $(\nabla_{n}A)_{n}\defeq n\lrcorner (\nabla_{n}A)$.
\end{definition}

With the next proposition, we show that our definition is (if evaluated on solutions) equivalent to the well-known radiation gauge on ultrastatic globally hyperbolic manifolds. In particular, if $\Sigma$ is non-compact, it is also equivalent to the Coulomb gauge.
To this end, we fix a globally hyperbolic manifold $\M\cong\RR\times\Sigma$ with $g=-\beta^{2}\d t^{2}+h_{t}$ and we use the following notation for differential operators on the (time-dependent) Riemannian manifold $(\Sigma,h_{t})$:
\begin{itemize}
\item[$\bullet$] $\overrightarrow{\nabla}$ is the Levi-Civita connection of $(\Sigma,h_{t})$;
\item[$\bullet$] $\d_\Sigma$ denotes the exterior derivative on $\Omega^k(\Sigma; \CC)$ and by $\delta_{\Sigma}$ we denote its formal adjoint with respect to the {\it (Riemannian-)Hodge inner product} defined as
	\begin{align}\label{HodgeL2Sigma}(\cdot,\cdot)_{k}\defeq \int_{\Sigma}h^{-1}_{(k)}(\cdot,\cdot)\,\text{vol}_{h}\quad \text{with}\quad h_{(k)}^{-1}(\cdot,\cdot)\defeq \frac{1}{k!}(h^{-1})^{\otimes k}(\overline{\cdot},\cdot)\,; \end{align}
\item[$\bullet$] $\DeltakS\defeq\d_{\Sigma}\delta_{\Sigma}+\delta_{\Sigma}\d_{\Sigma}$ is the \textit{de Rham-Hodge Laplacian} on $\Omega^k(\Sigma;\CC)$;
\item[$\bullet$] $k_{t}$ is the  \textit{second fundamental form} of $\Sigma$ in $\M$, i.e.~the time-dependent tensor field $k_{ij}=-\nabla_{j}n_{i}=-\frac{1}{2\beta}\partial_{t}h_{ij}$ on $\Sigma$, where $n=\beta^{-1}\partial_{t}$ denotes the future-directed timelike unit normal.
\end{itemize}

\begin{proposition}\label{Prop:GaugeCond}
Let $(\M=\RR\times\Sigma,g=-\d t^2+h)$ be a globally hyperbolic ultrastatic manifold and $A\in\ker(\P)$, which we decompose as $A=A_{0}\d t+A_{\Sigma}$ with $A_{0}\defeq \partial_{t}\lrcorner A$. Then, the following statements are equivalent:
\begin{enumerate}
\item[(i)] $A$ satisfies the \emph{Cauchy radiation gauge} on $\Sigma_{0}$;
\item[(ii)]$A$ satisfies both the \emph{temporal gauge} $A_0=0$ and the \emph{Coulomb gauge}  $\delta_{\Sigma}A_{\Sigma}=0$;
\item[(iii)] $A$ satisfies the \emph{radiation gauge}, i.e.~\emph{temporal gauge} $A_0=0$ and the \emph{Lorenz gauge} $\delta A=0$.
\end{enumerate}
Furthermore, if $\Sigma$ is non-compact and $A\in\ker(\P\vert_{\Gamma_{\sc}})$, then (i)-(iii) is equivalent to
\begin{itemize}
\item[(iv)]$A$ satisfies the \emph{Coulomb gauge} $\delta_{\Sigma}A_{\Sigma}=0$.
\end{itemize}
\end{proposition}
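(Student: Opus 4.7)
The plan is to exploit the ultrastatic decomposition $A = A_0\,\d t + A_\Sigma$ and reduce every gauge condition to a statement about the scalar component $A_0$. In the ultrastatic metric $g = -\d t^2 + h$ all Christoffel symbols carrying a $t$-index vanish; consequently $n = \partial_t$ gives $A_n = A_0$ and $(\nabla_n A)_n = \partial_t A_0$, and a direct computation from $\delta A = -\nabla^\mu A_\mu$ yields the identity
\[
\delta A \;=\; \partial_t A_0 \;+\; \delta_\Sigma A_\Sigma .
\]
With this identity the equivalence (ii)$\Leftrightarrow$(iii) is a one-line check (both conditions, under $A_0\equiv 0$, collapse to $\delta_\Sigma A_\Sigma = 0$), and (iii)$\Rightarrow$(i) is immediate, since $A_0\equiv 0$ trivially produces the two Cauchy conditions.

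For (i)$\Rightarrow$(iii), the Lorenz gauge combined with $\P A = 0$ promotes Maxwell to the normally hyperbolic equation $\D_1 A = (\d\delta + \delta\d) A = 0$. I then apply the Weitzenb\"ock formula $\D_1 = \nabla^{\ast}\nabla + \Ric$ in the ultrastatic metric: the Ricci tensor has block form with $\Ric_{tt} = \Ric_{ti} = 0$, and the vanishing of $t$-Christoffels makes extraction of the $t$-slot commute with $\nabla^{\ast}\nabla$, so the $t$-component of $\D_1 A = 0$ collapses to the scalar wave equation $\D_0 A_0 = 0$ on $(\M,g)$. The hypothesis (i) supplies the vanishing Cauchy data $A_0|_\Sigma = \partial_t A_0|_\Sigma = 0$, and uniqueness of the Cauchy problem for $\D_0$ on the globally hyperbolic $\M$ then forces $A_0 \equiv 0$, yielding (iii).

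For the final equivalence (iv)$\Leftrightarrow$(i)--(iii) under $\Sigma$ non-compact and $A\in\ker(\P|_{\Gamma_{\sc}})$, the direction (ii)$\Rightarrow$(iv) is trivial. Conversely, Coulomb gives $\delta A = \partial_t A_0$, hence $\D_1 A = \d\delta A = \d(\partial_t A_0)$, whose $t$-component—via the same Weitzenb\"ock reduction—reads $\D_0 A_0 = \partial_t^2 A_0$. In the ultrastatic metric $\D_0 = \partial_t^2 + \Delta_0$ on functions (with $\Delta_0 = \delta_\Sigma \d_\Sigma$ the non-negative scalar spatial Laplacian), so $\Delta_0 A_0(t,\cdot) = 0$ on every slice $\Sigma_t$. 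Since $A_0(t,\cdot) \in C^{\infty}_{\c}(\Sigma)$, integration by parts gives
\[
\int_{\Sigma} |\d_\Sigma A_0|^2 \, \mathrm{vol}_h \;=\; \int_{\Sigma} \overline{A_0}\,\Delta_0 A_0 \, \mathrm{vol}_h \;=\; 0,
\]
so $A_0(t,\cdot)$ is constant on the connected $\Sigma$; being compactly supported on the non-compact $\Sigma$ it must vanish identically, and (ii) follows. The technical heart of the argument is the Weitzenb\"ock bookkeeping that decouples the $t$-component of $\D_1 A$ from $A_\Sigma$; non-compactness of $\Sigma$ is essential for the last step, since on compact $\Sigma$ non-zero constants are harmonic and $A_0$ need not vanish.
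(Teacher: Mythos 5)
Your proof is correct and follows essentially the same route as the paper's: in each step you reduce the gauge conditions to a statement about the time component $A_0$ via the identity $\delta A=\partial_t A_0+\delta_\Sigma A_\Sigma$, show that $A_0$ satisfies the scalar wave equation $\square_0 A_0=0$ (resp.\ the slice‑wise elliptic equation $\Delta_0 A_0=0$ in the Coulomb case), and invoke uniqueness of the Cauchy problem (resp.\ the absence of nonzero compactly supported harmonic functions on a connected non-compact $\Sigma$). The only cosmetic difference is that you make the decoupling of the $\d t$-component of $\square_1A$ explicit through the Weitzenböck formula and the vanishing of $\mathrm{Ric}_{\mu 0}$, where the paper simply asserts the decoupled system; both are the same computation.
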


\begin{proof}
In the ultrastatic case, one easily derives the equations $\nabla_{0}A_{0}=\partial_{t}A_{0}$ with $\nabla_{0}A_{0}\defeq \partial_{t}\lrcorner\nabla_{\partial_{t}}A$ and $\delta A=\delta_{\Sigma}A_{\Sigma}+\partial_{t}A_{0}$, which shows (ii) $\Leftrightarrow$ (iii). To show (i) $\Leftrightarrow$ (iii) we notice that the following systems are equivalent:
$$ \begin{cases}
\P A =0\\
\delta A=0
\end{cases} \;\; \Longleftrightarrow \quad \begin{cases}
\D_1 A =0\\
\delta A=0
\end{cases} \;\; \Longleftrightarrow \quad \begin{cases}
(\partial_{t}^{2}+\Delta_{0})A_{0} =\square_{0}A_{0}=0\\
(\partial_{t}^{2}+\Delta_{1})A_{\Sigma} =0\\
\delta A=0
\end{cases} $$
We conclude that $A_0=0$ if and only if the initial data for $A_0$ are zero. Last but not least, we show (ii) $\Leftrightarrow$ (iv), which amounts to showing that for non-compact $\Sigma$ and $A\in\ker(\P\vert_{\Gamma_{\sc}})$, the Coulomb gauge implies the temporal gauge. Since $k=0$ and $\Ric(g)_{\mu 0}=0$ in the ultrastatic case (by the Gauss-Codazzi equations), a straight-forward computation in local coordinates shows that the Maxwell operator can be decomposed as $\P A=(\P A)_{0}\d t+(\P A)_{\Sigma}$ with $(\P A)_{0}\defeq \partial_{t}\lrcorner (\P A)$ and
\begin{align*}
       (\P A)_{0}&=\DeltaZS  A_{0}-\partial_{t}\delta_{\Sigma}A_{\Sigma}\\
      (\P A)_{\Sigma}&=\partial_{t}^{2}A_{\Sigma}+\DeltaOS  A_{\Sigma}-\d_{\Sigma}(\delta_{\Sigma}A_{\Sigma}+\partial_{t}A_{0})\, ,
\end{align*}
Therefore, for $A\in\ker(\P\vert_{\Gamma_{\sc}})$ satisfying the Coulomb gauge, we conclude that
$\DeltaZS  A_0=0$. For a fixed time $t$, $A_{0}\vert_{t}$ is compactly supported. But if $\Sigma$ is non-compact, the only harmonic function with compact support is the trivial one and hence $A_{0}$ is necessarily zero.
\end{proof}

\begin{remark}
\begin{itemize}
\item[(i)]Notice that the equivalence between (i)-(iii) and (iv) in Proposition~\ref{Prop:GaugeCond} is true more generally if $\Sigma$ has infinite volume and we choose $A_\Sigma \in C^\infty(\RR, \L_{1}^{2}(\Sigma))$, where $\L_{1}^{2}(\Sigma)$ denotes the $\L^{2}$-space of $1$-forms on $\Sigma$.
\item[(ii)]On a general globally hyperbolic spacetime, $\delta A$, $\delta_{\Sigma}A_{\Sigma}$ and $A_{0}$ are related via 
\begin{align*}
	\delta A=\delta_{\Sigma}A_{\Sigma}+\beta^{-2}\nabla_{0}A_{0}-\beta^{-1}\mathrm{tr}_{h}(k)A_{0}\hspace*{0.5cm}\text{with}\hspace*{0.5cm}\nabla_{0}A_{0}=\partial_{t}A_{0}-\beta^{-1}A_{0}\partial_{t}\beta-\beta h^{-1}(A_{\Sigma},\d_{\Sigma}\beta)\, .
\end{align*}
If $\d_{\Sigma}\beta=0$, it follows that $A_{0}=0$ implies $\delta A=\delta_{\Sigma}A_{\Sigma}$. Therefore, (ii) and (iii) in Proposition \ref{Prop:GaugeCond} are equivalent for every $A\in\Gamma(\V_{1})$ and every globally hyperbolic manifold with a spatially constant lapse function $\beta$. Examples include ultrastatic spacetimes, but also de Sitter and FLRW spacetimes which are relevant for cosmology.
\end{itemize}
\end{remark}

With the next proposition, we shall now see that the achievability of the Cauchy radiation gauge is closely related to the solvability of the Poisson equations on Riemannian manifolds. In particular, in the case of Cauchy-compact globally hyperbolic manifolds, the achievability of the Cauchy radiation gauge is a direct consequence of the Hodge decomposition theorem:

\begin{proposition}\label{Prop:CauchyRadGauge}
Let $(\M=\RR\times\Sigma,g=-\beta^{2}\d t^{2}+h_{t})$ be a globally hyperbolic manifold with compact Cauchy surface $\Sigma$. 
For any $A\in\Gamma(\V_{1})$, there exists a $f\in\Gamma(\V_{0})$, which is unique up to a constant, such that $A^{\prime}\defeq A-\d f$ satisfies the Cauchy radiation gauge on $\Sigma_{0}$. 
\end{proposition}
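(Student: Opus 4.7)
The plan is to translate the Cauchy radiation gauge into a Cauchy problem for the normally hyperbolic operator $\D_{0}=\delta\d$ on $\Gamma(\V_{0})$, supplemented by an elliptic problem on $\Sigma$ fixing the initial value of $f$. The gauge transformation $A\mapsto A'=A-\d f$ acts on the three gauge-relevant quantities by
\[
	\delta A' = \delta A - \D_{0}f, \qquad A'_{n} = A_{n} - n(f), \qquad (\nabla_{n}A')_{n} = (\nabla_{n}A)_{n} - n^{\mu}n^{\nu}\nabla_{\mu}\nabla_{\nu}f,
\]
so conditions (i) and (ii) of the Cauchy radiation gauge amount to finding $f\in\Gamma(\V_{0})$ satisfying the wave equation $\D_{0}f=\delta A$ on $\M$ together with $(\nabla_{n}f)|_{\Sigma}=A_{n}|_{\Sigma}$ and $n^{\mu}n^{\nu}\nabla_{\mu}\nabla_{\nu}f|_{\Sigma}=(\nabla_{n}A)_{n}|_{\Sigma}$. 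Well-posedness of the Cauchy problem for $\D_{0}$ on a globally hyperbolic spacetime (e.g.~B\"ar--Ginoux--Pf\"affle) reduces this to prescribing admissible Cauchy data $(f_{0},f_{1})=(f|_{\Sigma},(\nabla_{n}f)|_{\Sigma})$: the first condition fixes $f_{1}=A_{n}|_{\Sigma}$, and the second becomes a constraint on $f_{0}$ via the PDE restricted to $\Sigma$.

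To derive the constraint, I would use the orthogonal splitting $g^{-1}=-n\otimes n + h^{-1}$ to write
\[
	\D_{0}f = \delta\d f = n^{\mu}n^{\nu}\nabla_{\mu}\nabla_{\nu}f - h^{\mu\nu}\nabla_{\mu}\nabla_{\nu}f.
\]
A short calculation in adapted coordinates using $\Gamma^{0}_{ij}=-\beta^{-1}k_{ij}$ (relating spacetime and spatial Christoffel symbols) and the first part of condition (ii) yields $h^{\mu\nu}\nabla_{\mu}\nabla_{\nu}f|_{\Sigma}=-\Delta_{0}f_{0}+\mathrm{tr}_{h}(k)\,A_{n}|_{\Sigma}$, and therefore
\[
	\D_{0}f|_{\Sigma} = n^{\mu}n^{\nu}\nabla_{\mu}\nabla_{\nu}f|_{\Sigma} + \Delta_{0}f_{0} - \mathrm{tr}_{h}(k)\,A_{n}|_{\Sigma}.
\]
Substituting the second part of condition (ii), together with the identity $\delta A=\delta_{\Sigma}A_{\Sigma}+\beta^{-2}\nabla_{0}A_{0}-\beta^{-1}\mathrm{tr}_{h}(k)A_{0}$ from the remark after Proposition~\ref{Prop:GaugeCond} (and using $(\nabla_{n}A)_{n}=\beta^{-2}\nabla_{0}A_{0}$, $A_{0}=\beta A_{n}$), the lapse- and curvature-dependent terms cancel, leaving the clean elliptic equation
\[
	\Delta_{0}f_{0} = \delta_{\Sigma}A_{\Sigma}|_{\Sigma}
\]
on the compact Riemannian manifold $(\Sigma,h_{0})$.

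On compact $\Sigma$, the Laplace--Beltrami operator $\Delta_{0}=\delta_{\Sigma}\d_{\Sigma}$ is self-adjoint and nonnegative on $\L^{2}(\Sigma,\volS)$, with kernel equal to the locally constant functions. By the classical Hodge decomposition, the equation above admits a smooth solution $f_{0}$, unique modulo constants, provided $\delta_{\Sigma}A_{\Sigma}|_{\Sigma}$ is $\L^{2}$-orthogonal to $\ker(\Delta_{0})$; this compatibility is automatic by Stokes' theorem, since $\int_{\Sigma}\delta_{\Sigma}\omega\,\volS=0$ for every smooth $1$-form $\omega$ on a compact manifold. Solving the Cauchy problem $\D_{0}f=\delta A$ with the data $(f_{0},A_{n}|_{\Sigma})$ then yields the desired $f\in\Gamma(\V_{0})$, and reversing the reduction confirms that all three gauge conditions are satisfied. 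Uniqueness up to a constant follows because the difference of any two such $f$ solves the homogeneous Cauchy problem with constant data and zero normal derivative, whose unique solution is that constant.

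The main technical obstacle is the exact cancellation of the $\beta$- and $k$-dependent terms in the reduction to the elliptic equation $\Delta_{0}f_{0}=\delta_{\Sigma}A_{\Sigma}|_{\Sigma}$. Without this cancellation the source would contain additional terms (such as pairings of $A_{\Sigma}$ with $\d\beta$) for which the integral compatibility $\int_{\Sigma}(\cdot)\,\volS=0$ would generically fail; the emergence of a pure codifferential on the right-hand side is precisely what makes the Cauchy radiation gauge naturally compatible with compact Cauchy surfaces.
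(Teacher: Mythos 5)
Your proposal is correct and reduces the problem to exactly the same data as the paper's proof: a Cauchy problem $\D_{0}f=\delta A$ for the scalar wave operator, with the initial velocity $\partial_{t}f|_{\Sigma}$ fixed by the temporal gauge condition and the initial value $f|_{\Sigma}$ determined by the Poisson equation $\Delta_{0}(f|_{\Sigma})=\delta_{\Sigma}A_{\Sigma}|_{\Sigma}$, which on compact $\Sigma$ is solved via Hodge decomposition (the necessary $\L^{2}$-orthogonality to constants being automatic). The only difference is presentational: the paper derives the Poisson constraint by applying the identity $\delta A'=\delta_{\Sigma}A'_{\Sigma}+\beta^{-2}\nabla_{0}A'_{0}-\beta^{-1}\mathrm{tr}_{h}(k)A'_{0}$ directly to the transformed field $A'$, which makes the cancellation of the $\beta$- and $k$-dependent terms (that you flag as the main technical point) manifest from the outset rather than emerging at the end of a Christoffel-symbol computation; you instead restrict the wave equation to $\Sigma$ using the orthogonal splitting of $g^{-1}$ and cancel by hand. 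Both are valid routes to the same reduction.
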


\begin{proof}
    Let us decompose $A=A_{0}\d t+A_{\Sigma}$ with $A_{0}=\partial_{t}\lrcorner A$. A straight-forward computation in coordinates shows that $\delta A=\delta_{\Sigma}A_{\Sigma}+\beta^{-2}\nabla_{0}A_{0}-\beta^{-1}\mathrm{tr}_{h}(k)A_{0}$. In particular, since $\beta>0$, we see that $A^{\prime}$ being in the Cauchy radiation gauge is equivalent to requiring 
$$ \text{(i)} \;\; \delta A^{\prime}=0\qquad \text{(ii)}\;\; A^{\prime}_{0}\vert_{\Sigma_{0}}=\delta_{\Sigma}A^{\prime}_{\Sigma}\vert_{\Sigma_{0}}=0 $$
i.e.~the condition $\nabla_{0}A^{\prime}_{0}\vert_{\Sigma_{0}}=0$ can be replaced with $\delta_{\Sigma}A^{\prime}_{\Sigma}\vert_{\Sigma_{0}}=0$. As a consequence, being in the Cauchy radiation gauge for $A^{\prime}$ is tantamount to solving the system 
    \begin{equation}\label{ast2}
    \begin{aligned}
        \begin{cases}
            \D_{0}f&=\delta A\\
            \pi &=A_{0}\vert_{\Sigma_{0}}\\
            \DeltaZS  a&=\delta_{\Sigma}A_{\Sigma}\vert_{\Sigma_{0}}
        \end{cases}
    \end{aligned}
    \end{equation}
    where $a\defeq f\vert_{\Sigma_{0}}$ and $\pi\defeq\nabla_{0}f\vert_{\Sigma_{0}}$ denote the initial data of $f$. Decomposing $A_{\Sigma}\vert_{\Sigma_{0}}\in\Omega^{1}(\Sigma)$ via the Hodge decomposition theorem, we find a unique $g\in \Omega^0(\Sigma)$ (up to constant) such that $\delta_\Sigma A_{\Sigma}\vert_{\Sigma_{0}}= \delta_\Sigma \d_ \Sigma g=\Delta_{0}g$. Hence, the Cauchy problem~\eqref{ast2} is equivalent to
     \begin{equation}\label{ast3}
    \begin{aligned}
        \begin{cases}
            \D_{0}f&=\delta A\\
             a&=g\\
            \pi &=A_{0}\vert_{\Sigma_{0}}            
        \end{cases}
    \end{aligned}
    \end{equation}
    where $g$ is specified by $\Delta_{0}g=\delta_{\Sigma}A_{\Sigma}\vert_{\Sigma_{0}}$ up to constant. Since $\D_{0}$ is a normally hyperbolic operator, the solution to the Cauchy problem~\eqref{ast3} exists and it is unique (up to a constant).
\end{proof}

\begin{remark}
Notice that, since $f$ is unique up to a constant, all the gauge degrees of freedom are fixed and we are left with no gauge freedom.
\end{remark}

In order to drop the assumption on the compactness of the Cauchy surface, it seems natural to generalize the Hodge decomposition to non-compact manifolds.  This will be the content of the next section. We also want to point out that the radiation gauge can be achieved on a class of manifolds with suitable isometries \cite{Tolksdorff}.
\section{The Hodge Decomposition for Sobolev Spaces}\label{sec:Hodge}
The aim of this section is to obtain a Hodge decomposition for Sobolev spaces of differential forms. To this end, we shall only consider \textit{complete} Riemannian manifolds in order to establish a suitable notion of Sobolev spaces. This will allow us to solve the Poisson equation with a divergence source and to achieve the Cauchy radiation gauge for Maxwell fields also on globally hyperbolic manifold, which are spatially non-compact.
\subsection{Sobolev Spaces on Complete Riemannian Manifolds}
Let $(\Sigma,h)$ be an oriented, connected and complete $n$-dimensional Riemannian manifold with empty boundary. For the sake of readability, in this section we shall use the following notation:
\begin{itemize}
\item  $\langle\cdot,\cdot\rangle_{\L^2}$ is the  (Riemannian) Hodge-inner product defined by~\eqref{HodgeL2Sigma} and $\L^{2}_{k}(\Sigma)\defeq\overline{\Omega^{k}_{\c}(\Sigma;\CC)}^{\Vert\cdot\Vert_{\L^{2}}}$,
\item $\d\defeq\d_\Sigma\:\Omega^{k}(\Sigma;\CC)\to\Omega^{k+1}(\Sigma;\CC)$ denotes the exterior derivative,
 \item $\delta\defeq\delta_{\Sigma}\:\Omega^{k+1}(\Sigma;\CC)\to\Omega^{k}(\Sigma;\CC)$ is the formal adjoint of $\d_\Sigma$ w.r.t.~the Hodge-inner product,
 \item $\overline{\Delta}\defeq\overline{\Delta_{k}}$ is the minimal self-adjoint extension of $\Delta_{k}=\delta\d +\d \delta\: \Omega^{k}_{\c}(\Sigma;\CC)\to \L^2_k(\Sigma)$, which exists by essential self-adjointness of $\Delta_{k}$ on complete manifolds, see e.g.~\cite{Gaffney1951,Chernoff1973,Strichartz1983}.
 \item Finally, by denoting $\E\defeq \E_{k}\defeq (\mathds{1}+\overline{\Delta}_{k})$, we define for all $s\in\mathbb{R}$ the operators
	\begin{align*}
		\E^{s}\:\mathcal{D}(\E^{s})\to\L^{2}_{k}(\Sigma)\, \,
	\end{align*}
	by means of the spectral theorem, i.e.~
	\begin{align*}
		\mathcal{D}(\E^{s})\defeq\{\omega\in\L^{2}_{k}(\Sigma)\mid (\lambda\mapsto\lambda^{s})\in \L^{2}(\sigma(\E),\d\mu^{\E}_{\omega})\},\quad \E^{s}\defeq \int_{\sigma(\E)}\,\lambda^{s}\,\mathrm{d}\mu^{\E}(\lambda)\, ,
	\end{align*}
	where $\mu^{\E}$ denotes the spectral measure of $\E$ and  $\mu^{\E}_{\omega}(\cdot)\defeq \langle \mu^{\E}(\cdot)\omega,\omega\rangle_{\L^{2}}$. Note that for $s<0$, the operators $\E^{s}$ are bounded.
\end{itemize}

Let us remark that $\sigma(\E)\subset [1,\infty)$, which implies $\sigma(\E^{s})\subset [1,\infty)$ for $s> 0$ by spectral calculus. It follows that $\E^{s}\:\mathcal{D}(\E^{s})\to\L^{2}_{k}(\Sigma)$ is invertible with bounded inverse $\E^{-s}$. This motivates the following definition:

\begin{definition}\label{def:sob space}
	We call the \textit{Sobolev space of degree $s\in [0,\infty)$} by
	\begin{align*}
		\H^{s}_{k}(\Sigma)\defeq \mathcal{D}\Big(\E_{k}^{s/2}\Big), \qquad \langle\cdot,\cdot\rangle_{\H^{s}}\defeq \Big\langle\E_{k}^{s/2}\cdot,\E_{k}^{s/2}\cdot\Big\rangle_{\L^{2}}\, .
	\end{align*}
	In addition, we define the following set equipped with its projective limit topology:
	\begin{align*}
		\H^{\infty}_{k}(\Sigma)\defeq \bigcap_{s\geq 0}\H^{s}_{k}(\Sigma)
	\end{align*}
\end{definition}

Note that $\E^{s}\:\H^{2s}_{k}(\Sigma)\to\L^{2}_{k}(\Sigma)$ is a unitary operator and hence $(\H^{s}_{k}(\Sigma),\langle\cdot,\cdot\rangle_{\H^{s}})$ a Hilbert space. Furthermore, $\H^{s}_{k}(\Sigma)$ is the completion of $\Omega^{k}_{\c}(\Sigma;\CC)$ w.r.t.~$\Vert\cdot\Vert_{\H^{s}}$, which in particular implies that $\Omega^{k}_{\c}(\Sigma;\CC)$ is dense in $\H^{s}_{k}(\Sigma)$, see e.g.~\cite{Dodziuk1981,Strichartz1983}.

\begin{remark}\label{Remark:SobolevEmbedding}
	Let $(\Sigma,h)$ be an $n$-dimensional Riemannian manifold of \textit{bounded geometry} \cite{Gromov1985}, i.e.~its injectivity radius is non-zero and the Riemann curvature tensor and all its covariant derivatives are (uniformly) bounded w.r.t.~the natural norm induced by $h$. Such a manifold is in particular complete and the following facts are well-known:
\begin{itemize}
	\item[(1)]The \textit{Sobolev embedding theorem} holds~\cite{Aubin1976,AubinBook,HebeyBook}, i.e.~$\H^{m}_{k}(\Sigma)\subset C^{l}(\Sigma,\bigwedge^{k}\T^{\ast}\Sigma)$ for $l,m\in\mathbb{N}$ with $m-l>n/2$. Furthermore, the inclusion map is continuous. In particular, note that this implies $\H^{\infty}_{\bullet}(\Sigma)\subset\Omega^{\bullet}(\Sigma;\CC)$. %Furthermore, the \textit{Rellich-Kondrachov embedding theorem} holds, i.e.~the embeddings $\H^{m}_{k}(\Sigma)\subset\H^{n}_{k}(\Sigma)$ for $m>n$ are compact, see e.g.~\cite{Aubin1976,AubinBook,HebeyBook}.
	\item[(2)] The definition of $\H^{m}_{k}(\Sigma)$ for $m\in\mathbb{N}$ is equivalent to the one of the standard Sobolev space $W^{m,2}_{k}(\Sigma)$, i.e.~any element in $\H^{m}_{k}(\Sigma)$ is $m$-times weakly differentiable with all weak (covariant) derivatives being in $\L^{2}_{k}(\Sigma)$, see e.g.~\cite{Aubin1976,Dodziuk1981}.
\end{itemize}
\end{remark}

With the next lemma, we shall show that there exists a continuous embedding between Sobolev spaces also on manifolds that are not of bounded geometry.

\begin{lemma} \label{Lemma:WeakCovergence}
	For any  $r \geq s\geq 0$, the Sobolev space $\H^{r}_{\bullet}(\Sigma)$ embeds continuously into $\H^{s}_{\bullet}(\Sigma)$. In particular, (strong) convergence in $\H^{r}_{\bullet}(\Sigma)$ implies (strong) convergence in $\H^{s}_{\bullet}(\Sigma)$ and $\L^{2}_{\bullet}(\Sigma)$.
\end{lemma}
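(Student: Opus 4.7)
The strategy is to reduce the statement to a pointwise inequality on the spectrum of $\E = \E_k = \mathds{1} + \overline{\Delta_k}$ via the spectral theorem. Since the Sobolev norm is defined through the spectral calculus of $\E$, and the key fact $\sigma(\E) \subset [1,\infty)$ has just been recorded, the whole argument should be essentially a one-line spectral computation followed by the observation that $\L^2_k(\Sigma) = \H^0_k(\Sigma)$.

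First, I would rewrite the Sobolev norm in a form that is directly amenable to spectral calculus. For $\omega \in \mathcal{D}(\E^{s/2})$, by self-adjointness of $\E^{s/2}$ and the spectral theorem,
\begin{align*}
\|\omega\|_{\H^s}^2 = \langle \E^{s/2}\omega,\E^{s/2}\omega\rangle_{\L^2} = \langle \E^s\omega,\omega\rangle_{\L^2} = \int_{\sigma(\E)} \lambda^s \, \d\mu^{\E}_\omega(\lambda),
\end{align*}
and analogously for the $\H^r$-norm with $\lambda^r$ in place of $\lambda^s$. Now, because $\sigma(\E) \subset [1,\infty)$ and $r \geq s \geq 0$, the pointwise estimate $\lambda^s \leq \lambda^r$ holds on $\sigma(\E)$. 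Integrating against the positive measure $\mu^{\E}_\omega$ then yields, for every $\omega \in \mathcal{D}(\E^{r/2})$, first that $\lambda \mapsto \lambda^{s/2}$ lies in $\L^2(\sigma(\E),\d\mu^{\E}_\omega)$ (so $\omega$ belongs to $\mathcal{D}(\E^{s/2}) = \H^s_k(\Sigma)$), and second that $\|\omega\|_{\H^s} \leq \|\omega\|_{\H^r}$.

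This inclusion $\H^r_k(\Sigma) \hookrightarrow \H^s_k(\Sigma)$ together with the norm bound $\|\cdot\|_{\H^s} \leq \|\cdot\|_{\H^r}$ is precisely continuity of the embedding, and it immediately transfers strong convergence from $\H^r_k(\Sigma)$ to $\H^s_k(\Sigma)$. To deduce the $\L^2_k(\Sigma)$ statement, I would simply specialise to $s = 0$, noting that $\E^0 = \mathds{1}$ so $\H^0_k(\Sigma) = \L^2_k(\Sigma)$ isometrically by construction. No step here should present any real obstacle: the only thing to be careful about is making the domain inclusion $\mathcal{D}(\E^{r/2}) \subset \mathcal{D}(\E^{s/2})$ explicit via the $\L^2$-integrability of $\lambda \mapsto \lambda^{s/2}$ against $\mu^{\E}_\omega$, which is where the lower bound $\sigma(\E) \subset [1,\infty)$ is used (and without which the inequality $\lambda^s \leq \lambda^r$ would fail near zero).
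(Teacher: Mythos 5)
Your proof is correct and is essentially the same spectral-theorem argument as the paper's, differing only in presentation: you work directly with the spectral measure $\d\mu^{\E}_\omega$ and the pointwise inequality $\lambda^s\le\lambda^r$ on $\sigma(\E)\subset[1,\infty)$, while the paper factorizes $\E^{s/2}=\E^{(s-r)/2}\E^{r/2}$ and invokes boundedness of $\E^{(s-r)/2}$ with operator norm $\le 1$. If anything, your version is slightly cleaner in that it explicitly derives the domain inclusion $\mathcal{D}(\E^{r/2})\subset\mathcal{D}(\E^{s/2})$ rather than asserting it, but the substance is identical.
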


\begin{proof} The spectral theorem implies $\E^{\alpha+\beta}=\overline{\E^{\alpha}\E^{\beta}}$ as well as $\mathcal{D}(\E^{\alpha}\E^{\beta})=\mathcal{D}(\E^{\beta})\cap\mathcal{D}(\E^{\alpha+\beta})$ and hence $\E^{\alpha+\beta}=\E^{\alpha}\E^{\beta}=\E^{\beta}\E^{\alpha}$ on $\mathcal{D}(\E^{\gamma})$ with $\gamma=\max\{\alpha,\beta,\alpha+\beta\}$ for all $\alpha,\beta\in\RR$. Now, let $\omega\in\H^{r}_{\bullet}(\Sigma)\subset\H^{s}_{\bullet}(\Sigma)$. It follows that
	\begin{align*}
		\Vert\omega\Vert_{\H^{s}}=\Vert\E^{\frac{s}{2}}\omega\Vert_{\L^{2}}=\Vert\E^{\frac{s-r}{2}}\E^{\frac{r}{2}}\omega\Vert_{\L^{2}}\leq\Vert\E^{\frac{r}{2}}\omega\Vert_{\L^{2}}=\Vert\omega\Vert_{\H^{r}}\, ,
	\end{align*}
	where we used that $\E^{\frac{s-r}{2}}$ is a bounded operator on $\L^{2}_{\bullet}(\Sigma)$ with operator norm $\Vert\E^{\frac{s-r}{2}}\Vert_{\L^{2}\to\L^{2}}=\mathrm{ess}\,\sup_{\lambda\in\sigma(\E)}\vert\lambda^{\frac{s-r}{2}}\vert\leq 1$. As a special case, we obtain $\Vert\omega\Vert_{\L^{2}}\leq\Vert\omega\Vert_{\H^{s}}$ for any $s\geq 0$.
\end{proof}
\subsection{Hodge Decomposition for Sobolev Spaces}
In order to extend the Hodge decomposition to Sobolev spaces, the first step is to show that the exterior derivative $\d$ and its formal $\L^{2}$-adjoint $\delta$ are well-defined as operators
\begin{align*}
	\d&\:\mathcal{D}(\d)\to\H^{s}_{k+1}(\Sigma)\\
	\delta&\:\mathcal{D}(\delta)\to\H^{s}_{k-1}(\Sigma)
\end{align*}
with dense domains $\mathcal{D}(\d)=\mathcal{D}(\delta)=\Omega^{k}(\Sigma;\CC)\cap\H^{\infty}_{k}(\Sigma)$. As a first step, we show that $\d$ and $\delta$ are formal adjoints of each other w.r.t.~$\langle\cdot,\cdot\rangle_{\H^{s}}$ in the following sense:

\begin{lemma}\label{Lemma:FormalAdjointSobolev}
	Let $s \in [0,\infty]$ and set
	$$		\Omega^{k}_{s}(\Sigma)\defeq \Omega^{k}(\Sigma;\CC)\cap\H^{s}_{k}(\Sigma)\,.$$
Then, for any $\alpha,\beta\in\Omega^{\bullet}_{s}(\Sigma)$ such that $\d\alpha\in\Omega^{\bullet}_{s}(\Sigma)$ and $\delta\beta\in\Omega^{\bullet}_{s}(\Sigma)$ it holds that
 \begin{align*}
		\langle\d\alpha,\beta\rangle_{\H^{s}}=\langle\alpha,\delta\beta\rangle_{\H^{s}}\, .
	\end{align*}
\end{lemma}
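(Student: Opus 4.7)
The strategy is to reduce the $\H^{s}$-pairing to an $\L^{2}$-pairing and then apply a Gaffney-type integration by parts valid on complete Riemannian manifolds. The cornerstone of the argument is the commutation of $\d$ and $\delta$ with the de Rham--Hodge Laplacian: since $\d^{2}=0=\delta^{2}$, one has $\d\Delta_{k} = \d\delta \d = \Delta_{k+1}\d$ and $\delta\Delta_{k+1} = \delta \d \delta = \Delta_{k}\delta$ as differential operators on $\Omega^{\bullet}_{\c}(\Sigma;\CC)$. By essential self-adjointness of the $\Delta_{k}$ on the complete manifold $\Sigma$, together with the Borel functional calculus applied to $\E_{\bullet}=\Id+\overline{\Delta_{\bullet}}$, the operators $\d$ and $\delta$ therefore commute with every $\E_{\bullet}^{s/2}$ on the appropriate domains.

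With this in hand, I would compute
\[
\langle \d\alpha,\beta\rangle_{\H^{s}} \,=\, \bigl\langle \E_{k+1}^{s/2}\d\alpha,\, \E_{k+1}^{s/2}\beta\bigr\rangle_{\L^{2}} \,=\, \bigl\langle \d\bigl(\E_{k}^{s/2}\alpha\bigr),\, \E_{k+1}^{s/2}\beta\bigr\rangle_{\L^{2}},
\]
where the second equality uses the commutation. The forms $u\defeq \E_{k}^{s/2}\alpha\in\L^{2}_{k}$ and $v\defeq \E_{k+1}^{s/2}\beta\in\L^{2}_{k+1}$ satisfy $\d u = \E_{k+1}^{s/2}\d\alpha\in\L^{2}_{k+1}$ and $\delta v = \E_{k}^{s/2}\delta\beta\in\L^{2}_{k}$, thanks to the hypotheses $\d\alpha\in\Omega^{\bullet}_{s}(\Sigma)$ and $\delta\beta\in\Omega^{\bullet}_{s}(\Sigma)$. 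The Gaffney-type identity for complete Riemannian manifolds (cf.~\cite{Gaffney1951,Strichartz1983}) then yields $\langle \d u,v\rangle_{\L^{2}}=\langle u,\delta v\rangle_{\L^{2}}$, and undoing the commutation recovers $\langle \alpha,\delta\beta\rangle_{\H^{s}}$.

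The main obstacle is a careful justification of the operator identity $\E_{\bullet}^{s/2}\circ\d = \d\circ\E_{\bullet}^{s/2}$ on the correct domain. On the core $\Omega^{\bullet}_{\c}(\Sigma;\CC)$ the commutation with integer powers of $\E_{\bullet}$ is immediate by induction, and one would extend to polynomials, then to continuous functions of $\E_{\bullet}$ via the spectral theorem, and finally to the elements at hand by approximating $\alpha$ by compactly supported smooth forms in the graph norm of $\d$ on $\H^{s}$. Completeness of $\Sigma$ enters crucially both to run the spectral argument (essential self-adjointness of $\Delta_{\bullet}$ on $\Omega^{\bullet}_{\c}(\Sigma;\CC)$) and to justify the absence of boundary terms in the $\L^{2}$-integration by parts (agreement of minimal and maximal realisations of $\d$ and $\delta$).
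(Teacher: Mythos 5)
Your proposal follows essentially the same route as the paper's proof: both reduce the $\H^{s}$-pairing to an $\L^{2}$-pairing by commuting $\E^{s/2}$ past $\d$ (resp.~$\delta$), and both then invoke the Gaffney/completeness result that the $\L^2$-closures of $\d$ and $\delta$ are mutual adjoints. The only difference is a matter of packaging: the paper justifies the commutation step by citing an abstract lemma (commutation of a Borel function of the \emph{bounded} self-adjoint operator $\E^{-1}$ with the closed operator $\overline{\d}$), whereas you sketch building it up from compact-supported forms through polynomials and continuous functional calculus — a route where one should work with $\E^{-1}$ rather than $\E$ to stay within bounded functional calculus, as the paper implicitly does.
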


\begin{proof}
	First of all, we recall some general facts about $\d$ and $\delta$ as operators in $\L^{2}_{\bullet}(\Sigma)$:
	\begin{itemize}
		\item[(i)]Consider $\d$ and $\delta$ as densely-defined operators $\d\:\Omega^{k}_{\c}(\Sigma;\CC)\to\L^{2}_{k+1}(\Sigma)$ and $\delta\:\Omega^{k+1}_{\c}(\Sigma;\CC)\to\L^{2}_{k}(\Sigma)$. They are clearly formal adjoints w.r.t.~$\langle\cdot,\cdot\rangle_{\L^{2}}$. Furthermore, their $\L^{2}$-closures $\overline{\d}$ and $\overline{\delta}$ are adjoints in $\L^{2}_{\bullet}(\Sigma)$, see e.g.~\cite{Gaffney1951,BrueningLesch1992}. In particular, $\overline{\d}=\overline{\delta}^{\ast}=\delta^{\ast}$ and hence
		\begin{align*}
			\mathcal{D}(\overline{\d})=\{\omega\in\L^{2}_{k}(\Sigma)\mid \d\omega\in\L^{2}_{k+1}(\Sigma)\text{ weakly }\}\, ,
		\end{align*}
		where $\d\omega\in\L^{2}_{k+1}(\Sigma)$ \textit{weakly} means that $\exists\eta\in\L^{2}_{k+1}(\Sigma)$ such that $\forall\varphi\in\Omega^{k+1}_{\c}(\Sigma)$ it holds that $\langle\omega,\delta\varphi\rangle_{\L^{2}}=\langle\eta,\varphi\rangle_{\L^{2}}$. This implies for example that any $\omega\in\Omega^{k}(\Sigma;\CC)\cap\L^{2}_{k}(\Sigma)$ with $\d\omega\in\L^{2}_{k+1}(\Sigma)$ is contained in $\mathcal{D}(\overline{\d})$ and $\overline{\d}\omega=\d\omega$. A similar characterisation holds for $\mathcal{D}(\overline{\delta})$.
		\item[(ii)]As shown in \cite[Lemma 5.9.]{MorettiMurroVolpe2022}, the spectral theorem\footnote{More generally, if $\mathcal{H}$ is a Hilbert space, then for $\B\in\mathcal{B}(\mathcal{H})$ self-adjoint and $\A\:\mathcal{D}(\A)\to\mathcal{H}$ closed, $\B\A\subset\A\B$ implies $f(\B)\A=\A f(\B)$ on $\mathcal{D}(f(\B)\A)\cap\mathcal{D}(f(\B))\subset\mathcal{D}(\A f(\B))$ for every Borel measurable function $f\:\sigma(\B)\to\CC$.} implies for all $s\geq 0$
		\begin{align*}
		\E^{s}\overline{\d}&=\overline{\d}\E^{s},\qquad \text{on }\quad\mathcal{D}(\E^{s})\cap\mathcal{D}(\E^{s}\overline{\d})\subset\mathcal{D}(\overline{\d}\E^{s})\\
		\E^{s}\overline{\delta}&=\overline{\delta}\E^{s},\qquad \text{on }\quad\mathcal{D}(\E^{s})\cap\mathcal{D}(\E^{s}\overline{\delta})\subset\mathcal{D}(\overline{\delta}\E^{s})\,.
	\end{align*}
	\end{itemize}
	Now, let $\alpha,\beta\in\Omega^{\bullet}_{s}(\Sigma)$ with $\d\alpha\in\Omega^{\bullet}_{s}(\Sigma)$ and $\delta\beta\in\Omega^{\bullet}_{s}(\Sigma)$. With this assumption, by (i), $\alpha\in\H^{s}_{\bullet}(\Sigma)\cap\mathcal{D}(\E^{\frac{s}{2}}\overline{\d})$ and $\beta\in\H^{s}_{\bullet}(\Sigma)\cap\mathcal{D}(\E^{\frac{s}{2}}\overline{\delta})$. Using (ii), it follows that 
	\begin{align*}
		\langle \alpha,\delta\beta\rangle_{\H^{s}}&=\langle\E^{\frac{s}{2}}\alpha,\E^{\frac{s}{2}}\delta\beta\rangle_{\L^{2}}=\langle\E^{\frac{s}{2}}\alpha,\overline{\delta}\E^{\frac{s}{2}}\beta\rangle_{\L^{2}}=\\&=\langle\overline{\d}\E^{\frac{s}{2}}\alpha,\E^{\frac{s}{2}}\beta\rangle_{\L^{2}}=\langle\E^{\frac{s}{2}}\d\alpha,\E^{\frac{s}{2}}\beta\rangle_{\L^{2}}=\langle \d\alpha,\beta\rangle_{\H^{s}}
	\end{align*}
	which concludes the proof.
\end{proof}

As a second step, we show that $\d$ and $\delta$ are well defined as operators in $\H^{s}_{\bullet}(\Sigma)$ with domains $\mathcal{D}(\d)=\mathcal{D}(\delta)=\Omega^{k}_{\infty}(\Sigma)$:

\begin{lemma}\label{Def:OmegaS}
	The exterior derivative and codifferential are well-defined as operators 
	\begin{align*}
		\d\:\Omega^{k}_{s}(\Sigma)\to\Omega^{k+1}_{s-1}(\Sigma)\\
		\delta\:\Omega^{k}_{s}(\Sigma)\to\Omega^{k-1}_{s-1}(\Sigma)
	\end{align*}
	for any $s\in [0,\infty]$ with $s\geq 1$. In particular, $\d\:\Omega_{\infty}^{k}(\Sigma)\to\Omega_{\infty}^{k+1}(\Sigma)$ and $\delta\:\Omega_{\infty}^{k}(\Sigma)\to\Omega_{\infty}^{k-1}(\Sigma)$. 
\end{lemma}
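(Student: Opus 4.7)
My plan is to first establish the key estimate
\begin{equation*}
\|\d\omega\|_{\H^{s-1}}^{2}+\|\delta\omega\|_{\H^{s-1}}^{2}\leq \|\omega\|_{\H^{s}}^{2}
\end{equation*}
for every $\omega\in\Omega^{k}_{\c}(\Sigma;\CC)$ and $s\geq 1$, and then to bootstrap to $\Omega^{k}_{s}(\Sigma)$ by density of $\Omega^{k}_{\c}(\Sigma;\CC)$ in $\H^{s}_{k}(\Sigma)$. The case $s=\infty$ then follows immediately from the finite case, since $\Omega^{k}_{\infty}=\bigcap_{r\geq 0}\Omega^{k}_{r}$ and $\d$ lowers the Sobolev exponent by at most one.

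For the key estimate, I would first recall that on smooth forms $\Delta_{k+1}\d=\d\Delta_{k}$ and $\Delta_{k-1}\delta=\delta\Delta_{k}$, so the same intertwining holds between $\E_{k\pm 1}$ and $\E_{k}$. Applying the functional-calculus commutation used in step~(ii) of the proof of Lemma~\ref{Lemma:FormalAdjointSobolev} (to $\E^{-1}$, a bounded self-adjoint operator, together with $f(\lambda)=\lambda^{-(s-1)}$), one obtains $\E^{s-1}\overline{\d}=\overline{\d}\,\E^{s-1}$ and $\E^{s-1}\overline{\delta}=\overline{\delta}\,\E^{s-1}$ on the natural domains. Now for $\omega\in\Omega^{k}_{\c}(\Sigma;\CC)$ the forms $\omega$, $\d\omega$, $\delta\omega$ and $\Delta_{k}\omega$ are all compactly supported and smooth, hence in $\mathcal{D}(\E^{r})$ for every $r\geq 0$; in particular $\E^{s-1}\d\omega\in\mathcal{D}(\overline{\delta})$ with $\overline{\delta}\E^{s-1}\d\omega=\E^{s-1}\delta\d\omega$, and symmetrically for $\delta\omega$. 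With these tools, a direct computation yields
\begin{align*}
\|\d\omega\|_{\H^{s-1}}^{2}+\|\delta\omega\|_{\H^{s-1}}^{2}
&=\langle \d\omega,\E^{s-1}\d\omega\rangle_{\L^{2}}+\langle \delta\omega,\E^{s-1}\delta\omega\rangle_{\L^{2}}\\
&=\langle \omega,\E^{s-1}(\delta\d+\d\delta)\omega\rangle_{\L^{2}}\\
&=\langle \omega,\E^{s-1}(\E-\mathds{1})\omega\rangle_{\L^{2}}
=\|\omega\|_{\H^{s}}^{2}-\|\omega\|_{\H^{s-1}}^{2},
\end{align*}
where the second line uses the adjointness of $\overline{\d}$ and $\overline{\delta}$ in $\L^{2}$ together with the above commutations, and the third uses $\Delta_{k}=\E-\mathds{1}$ and the spectral theorem. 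Combined with Lemma~\ref{Lemma:WeakCovergence}, this gives the claimed inequality for both $\d\omega$ and $\delta\omega$.

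For the extension step, given $\omega\in\Omega^{k}_{s}(\Sigma)$ I would pick an approximating sequence $(\omega_{n})\subset\Omega^{k}_{\c}(\Sigma;\CC)$ with $\omega_{n}\to\omega$ in $\H^{s}_{k}(\Sigma)$. The key estimate applied to $\omega_{n}-\omega_{m}$ shows that $(\d\omega_{n})$ is Cauchy in $\H^{s-1}_{k+1}(\Sigma)$, converging to some $\eta\in\H^{s-1}_{k+1}(\Sigma)$. Testing against $\varphi\in\Omega^{k+1}_{\c}(\Sigma;\CC)$ and using Lemma~\ref{Lemma:WeakCovergence} to pass from $\H^{s-1}$- to $\L^{2}$-convergence, one identifies $\eta=\d\omega$ as distributions, and hence almost everywhere; since $\omega$ is smooth, so is $\d\omega$, and therefore $\d\omega\in\H^{s-1}_{k+1}(\Sigma)\cap\Omega^{k+1}(\Sigma;\CC)=\Omega^{k+1}_{s-1}(\Sigma)$. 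The argument for $\delta\omega$ is identical.

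The main obstacle I anticipate is the careful bookkeeping of domains in the integration-by-parts identity, where the (in general unbounded) operators $\E^{s-1}$, $\overline{\d}$ and $\overline{\delta}$ must be composed and swapped consistently; the compact-support hypothesis on $\omega$ is used precisely to guarantee that $\d\omega$, $\delta\omega$ and $\Delta_{k}\omega$ remain in the domain of every power of $\E$, so that the functional-calculus commutations can be applied to both sides at once without further justification.
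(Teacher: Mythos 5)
Your proof is correct and follows essentially the same route as the paper's: the same inequality $\Vert\d\varphi\Vert_{\H^{s-1}}^{2}+\Vert\delta\varphi\Vert_{\H^{s-1}}^{2}\leq\Vert\varphi\Vert_{\H^{s}}^{2}$ on $\Omega^{k}_{\c}(\Sigma;\CC)$ (which you in fact sharpen to an identity), obtained by commuting a power of $\E$ past $\d$ and $\delta$ and integrating by parts, followed by the same density/Cauchy-sequence extension with the limit identified distributionally via Lemma~\ref{Lemma:WeakCovergence}. The only cosmetic difference is that the paper expresses the integration by parts through Lemma~\ref{Lemma:FormalAdjointSobolev} in $\H^{s-1}$ rather than unwinding it in $\L^{2}$ with $\E^{s-1}$ as you do, but these are the same computation.
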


\begin{proof}
	We show the claim only for $\d$, since the the proof for $\delta$ can be done analogously. First of all, we claim that the following inequality holds true for all $\varphi\in\Omega^{k}_{\c}(\Sigma;\CC)$:
	\begin{align}\label{eq:Ineq}
		\Vert\d\varphi\Vert_{\H^{s-1}}\leq\Vert\varphi\Vert_{\H^{s}}\, .
	\end{align}
	To see this, we first estimate
	\begin{align*}
		\Vert\varphi\Vert_{\H^{s}}^{2}&=\langle\E^{\frac{s}{2}}\varphi,\E^{\frac{s}{2}}\varphi\rangle_{\L^{2}}=\langle\E^{\frac{s-1}{2}}\varphi,\E^{\frac{s-1}{2}}\E\varphi\rangle_{\L^{2}}=\langle\varphi,\E\varphi\rangle_{\H^{s-1}}=\Vert\varphi\Vert_{\H^{s-1}}^{2}+\langle\varphi,\Delta\varphi\rangle_{\H^{s}}\\&=\Vert\varphi\Vert_{\H^{s-1}}^{2}+\Vert\d\varphi\Vert_{\H^{s-1}}^{2}+\Vert\delta\varphi\Vert_{\H^{s-1}}^{2}\geq\Vert\d\varphi\Vert_{\H^{s-1}}^{2}\,.
	\end{align*}
	where we used Lemma \ref{Lemma:FormalAdjointSobolev}. Now, let $\omega\in\Omega^{k}_{s}(\Sigma)$. Since $\Omega^{k}_{\c}(\Sigma;\CC)$ is dense in $\H^{s}_{k}(\Sigma)$, we can find a sequence $(\omega_{n})_{n}$ in $\Omega^{k}_{\c}(\Sigma;\CC)$ such that $\omega=\H^{s}\text{-}\lim_{n\to\infty}\omega_{n}$. By inequality~\eqref{eq:Ineq}, it follows that $(\d\omega_{n})_{n}$ converges in $\H^{s-1}_{k+1}(\Sigma)$, i.e.~there exists $\eta\in\H^{s-1}_{k+1}(\Sigma)$ such that $\eta=\H^{s-1}\text{-}\lim_{n\to\infty}\d\omega_{n}$. We claim that $\eta=\d\omega$. First of all, by Lemma \ref{Lemma:WeakCovergence}, we know that $\omega=\L^{2}\text{-}\lim_{n\to\infty}\omega_{n}$ and $\eta=\L^{2}\text{-}\lim_{n\to\infty}\d\omega_{n}$. Hence, for all $\varphi\in\Omega^{k+1}_{\c}(\Sigma;\CC)$ it holds that 
\begin{align*}
		&\langle\d\omega_{n},\varphi\rangle_{\L^{2}}\xrightarrow{n\to\infty}\langle\eta,\varphi\rangle_{\L^{2}}\\
		&\hspace*{0.8cm}\verteq\\
		&\langle\omega_{n},\delta\varphi\rangle_{\L^{2}}\xrightarrow{n\to\infty}\langle\omega,\delta\varphi\rangle_{\L^{2}}=\langle\d\omega,\varphi\rangle_{\L^{2}}
	\end{align*}
and hence $\langle\eta-\d\omega,\varphi\rangle_{\L^{2}}=0$. We conclude that $\eta=\d\omega$ by non-degeneracy.
\end{proof}

Next, we shall introduce a suitable class of mollifiers for differential form, similar to those discussed in \cite{BaerLectureNotes} in the setting of compact Riemannian manifolds:

\begin{lemma}\label{Lemma:Mollifiers}
	Consider the self-adjoint bounded operators $\J_{\varepsilon}\defeq e^{-\varepsilon\E}\:\L^{2}_{\bullet}(\Sigma)\to\L^{2}_{\bullet}(\Sigma)$ defined using spectral calculus for $\varepsilon>0$. Then:
	\begin{itemize}
		\item[(i)]$\J_{\varepsilon}$ commutes with $\E^{s}$ on $\H^{2s}_{\bullet}(\Sigma)$ for $s\in [0,\infty)$. In particular, $\J_{\varepsilon}$ is self-adjoint in $\H^{s}_{\bullet}(\Sigma)$. Furthermore, $\J_{\varepsilon}\:\H^{s}_{\bullet}(\Sigma)\to\H^{r}_{\bullet}(\Sigma)$ is well-defined and bounded for all $r,s\in [0,\infty)$.
		\item[(ii)]$\J_{\varepsilon}$ is smoothing, i.e.~$\mathrm{ran}(\J_{\varepsilon})\subset\Omega^{\bullet}(\Sigma;\CC)$.
		\item[(iii)]$\J_{\varepsilon}\:\H^{s}_{\bullet}(\Sigma)\to\H^{s}_{\bullet}(\Sigma)$ converges strongly to $\mathds{1}_{\H^{s}_{\bullet}(\Sigma)}$ as $\varepsilon\to 0^{+}$. 
		\item[(iv)]$\J_{\varepsilon}$ commutes with $\d$ on $\{\omega\in\L^{2}_{\bullet}(\Sigma)\cap\Omega^{k}(\Sigma;\CC)\mid\d\omega\in\L^{2}_{\bullet}(\Sigma)\}$ and similar for $\delta$.
	\end{itemize}
\end{lemma}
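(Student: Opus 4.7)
All four claims follow from the spectral-calculus construction of $\J_{\varepsilon}=e^{-\varepsilon\E}$ together with the crucial observation that $\sigma(\E)\subset [1,\infty)$, so that the Borel function $\lambda\mapsto \lambda^{N}e^{-\varepsilon\lambda}$ is bounded on $\sigma(\E)$ for every $N\geq 0$ and $\varepsilon>0$. I combine this with Lemma~\ref{Lemma:WeakCovergence}, local elliptic regularity for the Hodge Laplacian, and the pointwise commutation $\d\Delta_{k}=\Delta_{k+1}\d$ available on smooth compactly supported forms.

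\textbf{Proofs of (i) and (iii).} Since $\lambda\mapsto e^{-\varepsilon\lambda}$ and $\lambda\mapsto \lambda^{s/2}$ are Borel functions of $\E$, their spectral calculi commute, giving $\J_{\varepsilon}\E^{s/2}=\E^{s/2}\J_{\varepsilon}$ on $\mathcal{D}(\E^{s/2})=\H^{s}_{\bullet}(\Sigma)$; note that $\J_{\varepsilon}$ preserves this domain because $\lambda^{s}e^{-2\varepsilon\lambda}$ is bounded on $[1,\infty)$. Self-adjointness of $\J_{\varepsilon}$ on $\H^{s}_{\bullet}(\Sigma)$ then follows from
\begin{align*}
\langle\J_{\varepsilon}\omega,\eta\rangle_{\H^{s}}=\langle\E^{s/2}\J_{\varepsilon}\omega,\E^{s/2}\eta\rangle_{\L^{2}}=\langle\J_{\varepsilon}\E^{s/2}\omega,\E^{s/2}\eta\rangle_{\L^{2}}=\langle\omega,\J_{\varepsilon}\eta\rangle_{\H^{s}}\,,
\end{align*}
using self-adjointness of $\J_{\varepsilon}$ in $\L^{2}_{\bullet}(\Sigma)$. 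For the mapping property $\J_{\varepsilon}\:\H^{s}_{\bullet}(\Sigma)\to\H^{r}_{\bullet}(\Sigma)$ the spectral representation yields
\begin{align*}
\Vert\J_{\varepsilon}\omega\Vert_{\H^{r}}^{2}=\int_{\sigma(\E)}\lambda^{r}e^{-2\varepsilon\lambda}\,\d\mu^{\E}_{\omega}(\lambda)\leq C_{r,\varepsilon}\Vert\omega\Vert_{\L^{2}}^{2}\leq C_{r,\varepsilon}\Vert\omega\Vert_{\H^{s}}^{2}\,,
\end{align*}
with $C_{r,\varepsilon}\defeq \sup_{\lambda\geq 1}\lambda^{r}e^{-2\varepsilon\lambda}<\infty$, where the last inequality invokes Lemma~\ref{Lemma:WeakCovergence}. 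For (iii), the identical calculation produces
\begin{align*}
\Vert(\J_{\varepsilon}-\mathds{1})\omega\Vert_{\H^{s}}^{2}=\int_{\sigma(\E)}|e^{-\varepsilon\lambda}-1|^{2}\lambda^{s}\,\d\mu^{\E}_{\omega}(\lambda)\,,
\end{align*}
and dominated convergence (with dominant $\lambda^{s}\in \L^{1}(\d\mu^{\E}_{\omega})$, finite since $\omega\in\H^{s}_{\bullet}(\Sigma)$) gives zero as $\varepsilon\to 0^{+}$.

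\textbf{Proof of (ii).} For any $\omega\in\L^{2}_{\bullet}(\Sigma)$ and any $N\in\NN$, applying the mapping property of (i) with $r=2N$ and $s=0$ yields $\J_{\varepsilon}\omega\in\H^{2N}_{\bullet}(\Sigma)=\mathcal{D}(\E^{N})$, hence $\overline{\Delta}^{N}(\J_{\varepsilon}\omega)\in\L^{2}_{\bullet}(\Sigma)$. Because $\overline{\Delta}$ is the closure of $\Delta$ on compactly supported smooth forms, it coincides with $\Delta$ in the distributional sense, so $\Delta^{N}(\J_{\varepsilon}\omega)\in \L^{2}_{\mathrm{loc}}$ distributionally for every $N$. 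The Hodge Laplacian being elliptic, local elliptic regularity upgrades this to $\J_{\varepsilon}\omega\in W^{2N,2}_{\mathrm{loc}}$ for every $N$; local Sobolev embedding (a purely local statement, valid on any smooth manifold, and thus not requiring bounded geometry) then yields smoothness.

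\textbf{Proof of (iv) and main obstacle.} On $\Omega^{k}_{\c}(\Sigma;\CC)$ the identity $\d\Delta_{k}=\Delta_{k+1}\d$ holds pointwise, hence $\d\E_{k}=\E_{k+1}\d$ on this common core. The technically delicate step, which I expect to be the main obstacle, is extending this relation to a genuine operator identity between the closed operator $\overline{\d}$ and bounded Borel functions of the unbounded self-adjoint $\E$. My plan is to regularise $\E_{k}$ by its bounded spectral truncations $\E_{k,n}\defeq\int_{[1,n]}\lambda\,\d\mu^{\E_{k}}(\lambda)$, apply the bounded-operator commutation principle recalled in the footnote of Lemma~\ref{Lemma:FormalAdjointSobolev} to conclude $\overline{\d}\,e^{-\varepsilon\E_{k,n}}=e^{-\varepsilon\E_{k+1,n}}\overline{\d}$ on $\mathcal{D}(\overline{\d})$, and then pass to the strong limit $n\to\infty$ using closedness of $\overline{\d}$ together with the bound $\Vert e^{-\varepsilon\E_{k,n}}-\J_{\varepsilon}\Vert_{\L^{2}\to\L^{2}}\to 0$. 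An alternative route is a uniqueness-for-heat-flow argument showing that $\d\J_{t}\omega$ and $\J_{t}\d\omega$ solve the same Cauchy problem on $(k+1)$-forms. Either way, for $\omega$ in the stated domain one has $\omega\in\mathcal{D}(\overline{\d})$ with $\overline{\d}\omega=\d\omega$, while $\J_{\varepsilon}\omega$ is smooth by (ii), so $\d\J_{\varepsilon}\omega=\overline{\d}\,\J_{\varepsilon}\omega=\J_{\varepsilon}\overline{\d}\omega=\J_{\varepsilon}\d\omega$; the argument for $\delta$ is verbatim, with $\Delta_{k-1}\delta=\delta\Delta_{k}$ in place of $\d\Delta_{k}=\Delta_{k+1}\d$.
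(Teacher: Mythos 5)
Your proofs of (i) and (iii) coincide with the paper's: commuting Borel functional calculi for $\E$, the operator bound on $\L^2$ for $\lambda\mapsto\lambda^{(s-r)/2}e^{-\varepsilon\lambda}$, and dominated convergence. For (ii) you take a genuinely different route. The paper disposes of smoothness in one line by observing that $\J_\varepsilon\omega$ solves the hypoelliptic heat-type equation $(\partial_\varepsilon+\E)\J_\varepsilon\omega=0$, whereas you iterate: $\J_\varepsilon\omega\in\mathcal D(\E^N)$ for every $N$, so $\overline\Delta^N\J_\varepsilon\omega\in\L^2$, the closed operator agrees with the distributional Laplacian, and interior elliptic regularity then gives $\J_\varepsilon\omega\in W^{2N,2}_{\mathrm{loc}}$ for all $N$. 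This is correct, more elementary, and (as you note) purely local, so bounded geometry is never invoked — which matches the paper's standing hypothesis of mere completeness. The paper's argument is shorter but relies on hypoellipticity of the parabolic operator; yours relies only on elliptic regularity for $\Delta$ and is arguably more self-contained.

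For (iv) you have correctly identified the delicate point — passing from the pointwise identity $\d\Delta_k=\Delta_{k+1}\d$ on a core to a genuine intertwining of $\overline\d$ with a function of the unbounded $\E$ — but your proposed resolution has two problems. The minor one: the asserted norm convergence $\Vert e^{-\varepsilon\E_{k,n}}-\J_\varepsilon\Vert_{\L^2\to\L^2}\to 0$ is false. Since $e^{-\varepsilon\E_{k,n}}-\J_\varepsilon=\chi_{(n,\infty)}(\E_k)(\mathds 1-e^{-\varepsilon\E_k})$, its norm is $\sup_{\lambda>n}(1-e^{-\varepsilon\lambda})\to 1$; only strong convergence holds (which, combined with closedness of $\overline\d$, would in fact suffice for the limit step). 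The major one: to apply the footnote's commutation principle with $\B=\E_{k,n}$ you first need to know that the spectral truncation $\E_{k,n}$ (equivalently, the spectral projection $\chi_{[1,n]}(\E_k)$) intertwines $\overline\d$, i.e.~$\E_{k+1,n}\overline\d\subset\overline\d\,\E_{k,n}$. This is precisely the content of the intertwining relation you are trying to establish, so the truncation detour is circular: it does not get you past the need for a spectral-calculus commutation input. The paper's route is the efficient one: it records (in the proof of Lemma~\ref{Lemma:FormalAdjointSobolev}, citing \cite[Lemma~5.9]{MorettiMurroVolpe2022}) that $\E^{-1}\overline\d=\overline\d\,\E^{-1}$ holds on $\mathcal D(\overline\d)$, then writes $\J_\varepsilon=g_\varepsilon(\E^{-1})$ with $g_\varepsilon$ bounded and Borel on $\sigma(\E^{-1})\subset[0,1]$, and applies the footnote's principle once, directly, with the \emph{already bounded} self-adjoint $\B=\E^{-1}$ — no truncation or limiting argument required. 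Your alternative heat-flow uniqueness route is plausible in principle, but on a noncompact complete manifold you would have to establish the appropriate $\L^2$ uniqueness theorem for the heat equation on forms, which again amounts to a nontrivial input of comparable weight. I would advise replacing the (iv) plan by the $\E^{-1}$ argument.
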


\begin{proof}
	For (i), we recall that for two measurable functions $f,g\:\sigma(\E)\to\CC$, it holds that $f(\E)g(\E)=g(\E)f(\E)$ on $\mathcal{D}(f(\E))\cap\mathcal{D}(g(\E))\cap\mathcal{D}((f\cdot g)(\E))$, where $f\cdot g$ denotes the pointwise product. Taking $f_{\varepsilon}(\lambda)\defeq e^{-\varepsilon\lambda}$ and $g(\lambda)\defeq \lambda^{\frac{s}{2}}$, it follows that $\J_{\varepsilon}\E^{\frac{s}{2}}=\E^{\frac{s}{2}}\J_{\varepsilon}$ on $\H^{s}_{\bullet}(\Sigma)$. Secondly, we compute 
	\begin{align}
		\Vert\J_{\varepsilon}\omega\Vert_{\H^{s}}=\Vert \E^{\frac{s}{2}}\J_{\varepsilon}\omega\Vert_{\L^{2}}=\Vert \E^{\frac{s-r}{2}}\J_{\varepsilon}\E^{\frac{r}{2}}\omega\Vert_{\L^{2}}\leq C\cdot\Vert\E^{\frac{r}{2}}\omega\Vert_{\L^{2}}=C\cdot\Vert\omega\Vert_{\H^{r}}
	\end{align}
	with $C>0$ for all $\omega\in\H^{s}_{\bullet}(\Sigma)$, where we used that $\E^{\frac{s-r}{2}}\J_{\varepsilon}$ is a bounded operator on $\L^{2}_{\bullet}(\Sigma)$. For (ii) we observe that $\mathcal{J}_{\varepsilon}\omega$ satisfies the hypoelliptic heat-type equation $(\partial_{\varepsilon}+\E)\mathcal{J}_{\varepsilon}\omega=0$ for any $\omega\in\L^{2}_{\bullet}(\Sigma)$. Statement (iii) is a consequence of the dominant convergence theorem and the fact that $f_{\varepsilon}(\lambda)=e^{-\varepsilon\lambda}$ converges to $\lambda\mapsto 1$ as $\varepsilon\to 0^{+}$. It is left to show (iv). First of all, we observe that $\J_{\varepsilon}$ can equivalently be defined by spectral calculus of the bounded operator $\E^{-1}$ as
	\begin{align*}
		\J_{\varepsilon}=g_{\varepsilon}(\E^{-1}),\hspace*{1cm} g_{\varepsilon}(\lambda)\defeq \begin{cases} e^{-\frac{\varepsilon}{\lambda}}, &\lambda> 0\\
		0, &\text{else}\end{cases}\, .
	\end{align*}
	Note that $g_{\varepsilon}$ is bounded and continuous on $\sigma(\E^{-1})\subset [0,1]$. Now, we recall that $\B\A=\A\B$ on $\mathcal{D}(\A)$ for $\B$ bounded and self-adjoint and $\A$ closed implies $f(\B)\A=\A f(\B)$ on $\mathcal{D}(\A)$ for any bounded measurable function $f$. In our case, $\E^{-1}\overline{\d}=\overline{\d}\E^{-1}$ on $\mathcal{D}(\overline{\d})$, where $\overline{\d}$ denotes the $\L^{2}$-closure of $\d$, as discussed in the proof of Lemma \ref{Lemma:FormalAdjointSobolev}. The claim follows from the fact that $\{\omega\in\L^{2}_{\bullet}(\Sigma)\cap\Omega^{k}(\Sigma;\CC)\mid\d\omega\in\L^{2}_{\bullet}(\Sigma)\}\subset\mathcal{D}(\overline{\d})$ and (ii).
\end{proof}

As a direct consequence of Lemma \ref{Lemma:FormalAdjointSobolev}, we see that $\d$ and $\delta$ are closable in $\H^{s}_{\bullet}(\Sigma)$, since their $\H^{s}$-adjoints are densely defined. We denote their $\H^{s}$-closure in the following by
\begin{align*}
	\overline{\d}&\:\mathcal{D}_{s}(\overline{\d})\to\H^{s}_{k+1}(\Sigma)\\
	\overline{\delta}&\:\mathcal{D}_{s}(\overline{\delta})\to\H^{s}_{k-1}(\Sigma)
\end{align*}
By definition, $\omega\in\H^{s}_{k}(\Sigma)$ is contained in $\mathcal{D}_{s}(\overline{\d})$, if there exists a sequence $(\omega_{n})_{n}$ in $\Omega_{\infty}^{k}(\Sigma)$ with $\omega=\H^{s}\text{-}\lim_{n\to\infty}\omega_{n}$ such that $(\d\omega_{n})_{n}$ is convergent in $\H^{s}_{k+1}(\Sigma)$. We then set $\overline{\d}\omega\defeq \H^{s}\text{-}\lim_{n\to\infty}\d\omega_{n}$.

\begin{lemma}\label{Lemma:Adjoints}
	It holds that $\overline{\delta}^{\ast}=\overline{\d}$, where the adjoint is taken in $\H^{s}_{\bullet}(\Sigma)$.
\end{lemma}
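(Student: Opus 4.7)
My plan is to prove the two inclusions $\overline{\d}\subset\overline{\delta}^{\ast}$ and $\overline{\delta}^{\ast}\subset\overline{\d}$ separately, with the mollifiers of Lemma~\ref{Lemma:Mollifiers} doing the essential work for the hard inclusion.

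The first inclusion $\overline{\d}\subset\overline{\delta}^{\ast}$ is the easy part and follows directly from the formal adjointness established in Lemma~\ref{Lemma:FormalAdjointSobolev}. Indeed, that lemma shows $\d\subset\delta^{\ast}$ as operators on $\Omega^{\bullet}_{\infty}(\Sigma)$ (where the adjoint is taken in $\H^{s}_{\bullet}(\Sigma)$). Since the Hilbert-space adjoint of any densely defined operator is automatically closed and satisfies $\delta^{\ast}=\overline{\delta}^{\ast}$, taking closures gives $\overline{\d}\subset\overline{\delta}^{\ast}$.

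The reverse inclusion is the content of the proof. Let $\omega\in\mathcal{D}(\overline{\delta}^{\ast})\subset\H^{s}_{k}(\Sigma)$ and set $\eta\defeq\overline{\delta}^{\ast}\omega\in\H^{s}_{k+1}(\Sigma)$, so that $\langle\omega,\delta\varphi\rangle_{\H^{s}}=\langle\eta,\varphi\rangle_{\H^{s}}$ for every $\varphi\in\Omega^{k+1}_{\infty}(\Sigma)$. The idea is to regularize $\omega$ via $\omega_{\varepsilon}\defeq \J_{\varepsilon}\omega$ and exhibit an explicit approximating sequence realizing $\omega$ as an element of $\mathcal{D}_{s}(\overline{\d})$. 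By Lemma~\ref{Lemma:Mollifiers}(ii) and (i), $\omega_{\varepsilon}\in\Omega^{k}(\Sigma;\CC)\cap\H^{r}_{k}(\Sigma)$ for every $r\geq 0$, hence $\omega_{\varepsilon}\in\Omega^{k}_{\infty}(\Sigma)$; consequently $\d\omega_{\varepsilon}\in\Omega^{k+1}_{\infty}(\Sigma)$ by Lemma~\ref{Def:OmegaS}, and similarly $\J_{\varepsilon}\eta\in\Omega^{k+1}_{\infty}(\Sigma)$.

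The central computation is the identity $\d\omega_{\varepsilon}=\J_{\varepsilon}\eta$ in $\H^{s}_{k+1}(\Sigma)$. For any $\varphi\in\Omega^{k+1}_{\infty}(\Sigma)$, Lemma~\ref{Lemma:FormalAdjointSobolev} (applied to the two smooth forms $\omega_{\varepsilon}$ and $\varphi$, whose relevant $\d$ and $\delta$ images lie in $\Omega^{\bullet}_{\infty}$) gives
\begin{align*}
\langle\d\omega_{\varepsilon},\varphi\rangle_{\H^{s}}
=\langle \J_{\varepsilon}\omega,\delta\varphi\rangle_{\H^{s}}
=\langle\omega,\J_{\varepsilon}\delta\varphi\rangle_{\H^{s}}
=\langle\omega,\delta\J_{\varepsilon}\varphi\rangle_{\H^{s}}
=\langle\eta,\J_{\varepsilon}\varphi\rangle_{\H^{s}}
=\langle \J_{\varepsilon}\eta,\varphi\rangle_{\H^{s}},
\end{align*}
where I used self-adjointness of $\J_{\varepsilon}$ in $\H^{s}$ from Lemma~\ref{Lemma:Mollifiers}(i), the commutation $[\J_{\varepsilon},\delta]=0$ on smooth $\L^{2}$-forms with $\L^{2}$-codifferential from Lemma~\ref{Lemma:Mollifiers}(iv) (which applies to $\varphi\in\Omega^{k+1}_{\infty}$ since $\delta\varphi\in\Omega^{k}_{\infty}\subset\L^{2}$), and that $\J_{\varepsilon}\varphi\in\Omega^{k+1}_{\infty}$ so the defining property of $\omega\in\mathcal{D}(\overline{\delta}^{\ast})$ applies. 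Since $\Omega^{k+1}_{\infty}(\Sigma)\supset\Omega^{k+1}_{\c}(\Sigma;\CC)$ is dense in $\H^{s}_{k+1}(\Sigma)$ and both $\d\omega_{\varepsilon}$ and $\J_{\varepsilon}\eta$ lie in $\H^{s}_{k+1}(\Sigma)$, nondegeneracy of $\langle\cdot,\cdot\rangle_{\H^{s}}$ forces $\d\omega_{\varepsilon}=\J_{\varepsilon}\eta$.

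To conclude, I let $\varepsilon\to 0^{+}$. By Lemma~\ref{Lemma:Mollifiers}(iii), $\omega_{\varepsilon}\to\omega$ and $\J_{\varepsilon}\eta\to\eta$ in $\H^{s}$, i.e.~the sequence $(\omega_{\varepsilon})$ in $\Omega^{k}_{\infty}(\Sigma)$ converges to $\omega$ in $\H^{s}_{k}(\Sigma)$ with $(\d\omega_{\varepsilon})$ convergent to $\eta$ in $\H^{s}_{k+1}(\Sigma)$. By the very definition of $\overline{\d}$ as the $\H^{s}$-closure of $\d$ on $\Omega^{k}_{\infty}(\Sigma)$, this shows $\omega\in\mathcal{D}_{s}(\overline{\d})$ and $\overline{\d}\omega=\eta=\overline{\delta}^{\ast}\omega$, establishing $\overline{\delta}^{\ast}\subset\overline{\d}$.

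The main subtlety I expect is the chain of commutations in the central display: one has to verify at each step that the relevant object lies in the domain where the properties from Lemma~\ref{Lemma:Mollifiers} apply. The choice to regularize on both sides (so that one can always invoke Lemma~\ref{Lemma:FormalAdjointSobolev} on genuinely smooth forms in $\Omega^{\bullet}_{\infty}$, rather than on $\omega$ and $\eta$ themselves, which are only known to be in $\H^{s}$) is what makes the argument go through.
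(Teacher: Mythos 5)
Your proof is correct and follows essentially the same route as the paper: both reduce the hard inclusion $\mathcal{D}(\overline{\delta}^{\ast})\subset\mathcal{D}_{s}(\overline{\d})$ to the mollifier identity $\d\J_{\varepsilon}\omega=\J_{\varepsilon}\overline{\delta}^{\ast}\omega$, established via the same five-step chain using self-adjointness of $\J_{\varepsilon}$, the commutation $[\J_{\varepsilon},\delta]=0$, Lemma~\ref{Lemma:FormalAdjointSobolev}, and the defining property of the adjoint. The only cosmetic difference is that you dispatch the easy inclusion $\overline{\d}\subset\overline{\delta}^{\ast}$ by invoking closedness of the Hilbert-space adjoint rather than passing through an explicit approximating sequence as the paper does.
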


\begin{proof}
	First, note that $\overline{\delta}^{\ast}=\overline{\d}$ is equivalent to $\delta^{\ast}=\overline{\d}$, since $(\overline{\A})^{\ast}=\overline{\A^{\ast}}=\A^{\ast}$ for any closable operator $\A$ on a Hilbert space. To show $\overline{\d}\subset\delta^{\ast}$, let $\omega\in\mathcal{D}_{s}(\overline{\d})$. By assumption, there exists a sequence $\{\omega_{n}\}_{n}\in\Omega^{\bullet}_{\infty}(\Sigma)$ such that 
	\begin{align*}
		\omega=\H^{s}\text{-}\lim_{n\to\infty}\omega_{n}\hspace*{1cm}\text{and}\hspace*{1cm}\overline{\d}\omega=\H^{s}\text{-}\lim_{n\to\infty}\d\omega_{n}\, .
	\end{align*}
	As a consequence, for all $\varphi\in\Omega^{\bullet}_{\infty}(\Sigma)$ it holds that 
	\begin{align*}
		&\langle\d\omega_{n},\varphi\rangle_{\H^{s}}\xrightarrow{n\to\infty}\langle\overline{\d}\omega,\varphi\rangle_{\H^{s}}\\
		&\hspace*{0.8cm}\verteq\\
		&\langle\omega_{n},\delta\varphi\rangle_{\H^{s}}\xrightarrow{n\to\infty}\langle\omega,\delta\varphi\rangle_{\H^{s}}
	\end{align*}
	where we used Lemma \ref{Lemma:FormalAdjointSobolev}. We conclude that $\langle\omega,\delta\varphi\rangle_{\H^{s}}=\langle\overline{\d}\omega,\varphi\rangle_{\H^{s}}$ and hence $\omega\in\mathcal{D}(\delta^{\ast})$ as well as $\delta^{\ast}\omega=\overline{\d}\omega$. It is left to show $\mathcal{D}(\delta^{\ast})\subset\mathcal{D}_{s}(\overline{\d})$. Let $\omega\in\mathcal{D}(\delta^{\ast})$. We have to construct a sequence $\{\omega_{n}\}_{n}\in\Omega^{\bullet}_{\infty}(\Sigma)$ such that 
	\begin{align*}
		\omega=\H^{s}\text{-}\lim_{n\to\infty}\omega_{n}\hspace*{1cm}\text{and}\hspace*{1cm}(\d\omega_{n})_{n}\text{ is convergent in }\H^{s}_{\bullet}(\Sigma)\, ,
	\end{align*}
	because then it follows that $\omega\in\mathcal{D}_{s}(\overline{\d})$ as well as $\H^{s}\text{-}\lim_{n\to\infty}\d\omega_{n}=\overline{\d}\omega=\delta^{\ast}\omega$. We follows an analogues strategy as in \cite{Gaffney1951} adapted for the setting of Sobolev spaces: We consider the class of mollifiers introduces in Lemma \ref{Lemma:Mollifiers}. Now, we argue that for $\omega\in\mathcal{D}(\delta^{\ast})$, it holds that $\d\J_{\varepsilon}\omega=\J_{\varepsilon}\delta^{\ast}\omega$. First note that clearly $\J_{\varepsilon}\omega\in\Omega^{\bullet}_{\infty}(\Sigma)$. Let $\varphi\in\Omega^{\bullet}_{\infty}(\Sigma)$ be arbitrary. Then
	  \begin{align*}
	  	\langle\d\J_{\varepsilon}\omega,\varphi\rangle_{\H^{s}}\stackrel{\ref{Lemma:FormalAdjointSobolev}}{=}\langle\J_{\varepsilon}\omega,\delta\varphi\rangle_{\H^{s}}=\langle\omega,\J_{\varepsilon}\delta\varphi\rangle_{\H^{s}}=\langle\omega,\delta\J_{\varepsilon}\varphi\rangle_{\H^{s}}=\langle\delta^{\ast}\omega,\J_{\varepsilon}\varphi\rangle_{\H^{s}}=\langle\J_{\varepsilon}\delta^{\ast}\omega,\varphi\rangle_{\H^{s}}\, ,
	  \end{align*}
	  By non-degeneracy of $\langle\cdot,\cdot\rangle_{\H^{s}}$ on $\H^{s}_{\bullet}(\Sigma)\times\Omega^{s}_{\infty}(\Sigma)$, it follows that $\d\J_{\varepsilon}\omega=\J_{\varepsilon}\delta^{\ast}\omega$ as claimed and hence $\d\J_{\varepsilon}\omega\to\delta^{\ast}\omega$ as $\varepsilon\to 0^{+}$.
\end{proof}

The last ingredient needed in order to generalize the Hodge decomposition to Sobolev spaces is the following proposition.
\begin{proposition}
	The sequence
	\begin{align*}
		0\xrightarrow{}\mathcal{D}_{s}(\overline{\d_{0}})\xrightarrow{\overline{\d_{0}}}\mathcal{D}_{s}(\overline{\d_{1}})\xrightarrow{\overline{\d_{1}}}\dots\xrightarrow{\overline{\d_{d-1}}}\mathcal{D}_{s}(\overline{\d_{d}})\xrightarrow{\overline{\d_{d}}}0
	\end{align*}
	is a well-defined co-chain complex, i.e.~$\mathrm{ran}(\overline{\d_{k}})\subset\mathcal{D}_{s}(\overline{\d_{k+1}})$ as well as $\overline{\d_{k+1}}\circ\overline{\d_{k}}=0$.
\end{proposition}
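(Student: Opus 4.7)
The plan is to deduce everything from the nilpotency $\d\circ\d=0$ on smooth forms, together with the mapping property established in Lemma \ref{Def:OmegaS}. The definition of the $\H^{s}$-closure is purpose-built for this kind of argument: to show that something lies in $\mathcal{D}_{s}(\overline{\d_{k+1}})$, I only need to exhibit an approximating sequence in $\Omega^{k+1}_{\infty}(\Sigma)$ whose exterior derivatives converge in $\H^{s}$. Since $\d\d=0$ on smooth forms, the candidate sequence will automatically have zero exterior derivatives, trivializing the convergence condition.

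Concretely, I would proceed as follows. Fix $\omega \in \mathcal{D}_{s}(\overline{\d_{k}})$. By the definition of the $\H^{s}$-closure recalled just before Lemma \ref{Lemma:Adjoints}, there exists a sequence $(\omega_{n})_{n}$ in $\Omega^{k}_{\infty}(\Sigma)$ with
\begin{align*}
\omega = \H^{s}\text{-}\lim_{n\to\infty}\omega_{n}\qquad\text{and}\qquad \overline{\d_{k}}\omega = \H^{s}\text{-}\lim_{n\to\infty}\d_{k}\omega_{n}.
\end{align*}
Now set $\eta_{n}\defeq \d_{k}\omega_{n}$. By Lemma \ref{Def:OmegaS}, $\d_{k}$ maps $\Omega^{k}_{\infty}(\Sigma)$ into $\Omega^{k+1}_{\infty}(\Sigma)$, so $(\eta_{n})_{n}$ is a legitimate approximating sequence in $\Omega^{k+1}_{\infty}(\Sigma)$. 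By construction, $\eta_{n}\to \overline{\d_{k}}\omega$ in $\H^{s}_{k+1}(\Sigma)$.

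The key observation is that $\d_{k+1}\eta_{n}=\d_{k+1}\d_{k}\omega_{n}=0$ for every $n$, because each $\omega_{n}$ is smooth and $\d\circ\d=0$ holds classically on smooth forms. Hence $(\d_{k+1}\eta_{n})_{n}$ converges (trivially) in $\H^{s}_{k+2}(\Sigma)$ to the zero form. Applying the definition of the $\H^{s}$-closure once more, this shows both that $\overline{\d_{k}}\omega \in \mathcal{D}_{s}(\overline{\d_{k+1}})$, i.e.~$\mathrm{ran}(\overline{\d_{k}})\subset \mathcal{D}_{s}(\overline{\d_{k+1}})$, and that
\begin{align*}
\overline{\d_{k+1}}(\overline{\d_{k}}\omega) = \H^{s}\text{-}\lim_{n\to\infty}\d_{k+1}\eta_{n} = 0,
\end{align*}
which is $\overline{\d_{k+1}}\circ\overline{\d_{k}}=0$. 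There is no real obstacle in this argument; the only thing one must verify carefully is that Lemma \ref{Def:OmegaS} guarantees $\d_{k}\omega_{n}\in\Omega^{k+1}_{\infty}(\Sigma)$ (so the chosen $\eta_{n}$ are admissible smooth-Sobolev approximants), after which the nilpotency $\d\circ\d=0$ does the rest. An analogous argument would produce the dual cochain complex for $\overline{\delta}$ if needed.
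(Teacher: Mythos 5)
Your proof is correct and follows essentially the same route as the paper's: take an approximating sequence $(\omega_n)_n$ for $\omega$, observe that $(\d_k\omega_n)_n$ then serves as an approximating sequence for $\overline{\d_k}\omega$ whose exterior derivatives vanish identically by $\d\circ\d=0$, and conclude. You are in fact slightly more explicit than the paper in invoking Lemma~\ref{Def:OmegaS} to justify that $\d_k\omega_n\in\Omega^{k+1}_\infty(\Sigma)$, a point the paper leaves tacit.
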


\begin{proof}
	Consider $\omega\in\mathcal{D}_{s}(\overline{\d_{k}})$. By definition, this means that there is a sequence $(\omega_{n})_{n}$ in $\Omega^{k}_{\infty}(\Sigma)$ such that $\H^{s}\text{-}\lim_{n\to\infty}\omega_{n}=\omega$ and such that $(\d_{k}\omega_{n})_{n}$ is convergent in $\H^{s}_{k+1}(\Sigma)$. We then set $\overline{\d_{k}}\omega\defeq \H^{s}\text{-}\lim_{n\to\infty}\d_{k}\omega_{n}$. Now, consider the sequence $(\d_{k}\omega_{n})_{n}$. Clearly, also the sequence $(\d_{k+1}\d_{k}\omega_{n}=0)_{n}$ is converging. Hence, $\overline{\d_{k}}\omega\in\mathcal{D}_{s}(\overline{\d_{k+1}})$. Furthermore
	\begin{align*}
		\overline{\d_{k+1}}\overline{\d_{k}}\omega=\H^{s}\text{-}\lim_{n\to\infty}\d_{k+1}\d_{k}\omega_{n}=0\, ,
	\end{align*}
	which concludes the proof.
\end{proof}

In the terminology introduced by Brüning-Lesch \cite{BrueningLesch1992}, the complex $(\mathcal{D}_{s}(\overline{\d_{\bullet}}),\overline{\d_{\bullet}})$ is a \textit{Hilbert complex}, i.e.~a (finite) cochain complex whose cochain maps are closed operators between Hilbert spaces. As shown in \cite[Lemma 2.1.]{BrueningLesch1992}, in every such complex there exists a weak Hodge-type orthogonal decomposition. In the specific case of Sobolev spaces, this yields the first main result of this section, which for $s=0$ reduces to the well-known Hodge-Kodaira decomposition~\cite{Kodaira1949,deRhamBook}:

\begin{theorem}\label{Thm:HodgeDecom}
	The Sobolev space $\H^{s}_{k}(\Sigma)$ admits the following $\H^{s}$-orthogonal decompositions:
	\begin{align*}
		\H^{s}_{k}(\Sigma)\cong \mathrm{Har}^{s}_{k}(\Sigma)\oplus\overline{\d\Omega^{k-1}_{\infty}(\Sigma)}\oplus\overline{\delta\Omega^{k+1}_{\infty}(\Sigma)}\, ,
	\end{align*}
	where $\mathrm{Har}^{s}_{k}(\M)\defeq \mathrm{ker}(\overline{\d_{k}})\cap\mathrm{ker}(\overline{\delta_{k}})$ and the closures are taken w.r.t.~$\Vert\cdot\Vert_{\H^{s}}$. 
\end{theorem}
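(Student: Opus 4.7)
My plan is to realize the $\H^{s}$-Hodge decomposition as a direct application of the abstract weak Hodge decomposition for Hilbert complexes in the sense of Brüning--Lesch \cite{BrueningLesch1992}, and then identify the closed ranges appearing there with the subspaces $\overline{\d\Omega^{k-1}_{\infty}(\Sigma)}$ and $\overline{\delta\Omega^{k+1}_{\infty}(\Sigma)}$.

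\textbf{Step 1 (the Hilbert complex).} The preceding proposition shows that $(\mathcal{D}_{s}(\overline{\d_{\bullet}}),\overline{\d_{\bullet}})$ is a cochain complex of closed densely defined operators acting on the Hilbert spaces $\H^{s}_{\bullet}(\Sigma)$; in other words, it is a Hilbert complex in the sense of \cite[Section~2]{BrueningLesch1992}. By Lemma~\ref{Lemma:Adjoints}, the Hilbert-space adjoint of $\overline{\d_{k-1}}$ in $\H^{s}$ is exactly $\overline{\delta_{k}}$, so the dual complex of this Hilbert complex is $(\mathcal{D}_{s}(\overline{\delta_{\bullet}}),\overline{\delta_{\bullet}})$.

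\textbf{Step 2 (abstract weak Hodge decomposition).} For any Hilbert complex $(D_{\bullet},d_{\bullet})$ on Hilbert spaces $H_{\bullet}$, \cite[Lemma~2.1]{BrueningLesch1992} provides the orthogonal decomposition
\begin{align*}
H_{k}=\bigl(\ker(d_{k})\cap\ker(d_{k-1}^{\ast})\bigr)\oplus\overline{\ran(d_{k-1})}\oplus\overline{\ran(d_{k}^{\ast})}\, .
\end{align*}
Applied to our complex, this yields the $\H^{s}$-orthogonal decomposition
\begin{align*}
\H^{s}_{k}(\Sigma)=\mathrm{Har}^{s}_{k}(\Sigma)\oplus\overline{\ran(\overline{\d_{k-1}})}^{\Vert\cdot\Vert_{\H^{s}}}\oplus\overline{\ran(\overline{\delta_{k+1}})}^{\Vert\cdot\Vert_{\H^{s}}}\, ,
\end{align*}
with $\mathrm{Har}^{s}_{k}(\Sigma)=\ker(\overline{\d_{k}})\cap\ker(\overline{\delta_{k}})$ by definition.

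\textbf{Step 3 (identification of the ranges).} It remains to show
\begin{align*}
\overline{\ran(\overline{\d_{k-1}})}^{\Vert\cdot\Vert_{\H^{s}}}=\overline{\d\Omega^{k-1}_{\infty}(\Sigma)}^{\Vert\cdot\Vert_{\H^{s}}}\, ,
\end{align*}
and the analogous statement for $\delta$. Since $\Omega^{k-1}_{\infty}(\Sigma)\subset\mathcal{D}_{s}(\overline{\d_{k-1}})$ with $\overline{\d_{k-1}}\vert_{\Omega^{k-1}_{\infty}}=\d$ by construction, the inclusion $\d\Omega^{k-1}_{\infty}(\Sigma)\subset\ran(\overline{\d_{k-1}})$ is immediate, giving one containment after taking closures. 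Conversely, any $\eta\in\ran(\overline{\d_{k-1}})$ is of the form $\eta=\overline{\d_{k-1}}\omega$ for some $\omega\in\mathcal{D}_{s}(\overline{\d_{k-1}})$, and by the very definition of the $\H^{s}$-closure there exists a sequence $(\omega_{n})_{n}\subset\Omega^{k-1}_{\infty}(\Sigma)$ with $\H^{s}\text{-}\lim_{n\to\infty}\d\omega_{n}=\eta$; hence $\eta\in\overline{\d\Omega^{k-1}_{\infty}(\Sigma)}^{\Vert\cdot\Vert_{\H^{s}}}$. This proves the reverse inclusion. The same argument with $\d$ replaced by $\delta$ and using Lemma~\ref{Lemma:FormalAdjointSobolev}/\ref{Lemma:Adjoints} to ensure closability and $\Omega^{k+1}_{\infty}(\Sigma)\subset\mathcal{D}_{s}(\overline{\delta_{k+1}})$ yields the analogous identification for the coexact piece. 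Combining Steps~1--3 gives the claimed decomposition.

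The only potentially delicate point is the verification that the abstract hypotheses of \cite[Lemma~2.1]{BrueningLesch1992} hold in the $\H^{s}$-setting; but all the prerequisites, namely that $\overline{\d_{\bullet}}$ is a closed, densely defined differential on $\H^{s}_{\bullet}(\Sigma)$ with adjoint $\overline{\delta_{\bullet}}$, have been secured in Lemmas~\ref{Def:OmegaS}, \ref{Lemma:FormalAdjointSobolev} and \ref{Lemma:Adjoints}, so the argument reduces to the two bookkeeping steps above.
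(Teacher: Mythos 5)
Your proposal is correct and follows essentially the same route as the paper: the paper also sets up the Hilbert complex $(\mathcal{D}_{s}(\overline{\d_{\bullet}}),\overline{\d_{\bullet}})$, appeals to the Brüning--Lesch framework (though it unwinds \cite[Lemma~2.1]{BrueningLesch1992} explicitly as a two-line orthogonal-complement argument in $\H^{s}_{k}(\Sigma)$ using $\ker(\A^{\ast})^{\perp}=\overline{\ran(\A)}$), and then identifies the closed ranges via $\overline{\ran(\overline{\A})}=\overline{\ran(\A)}$, which is exactly your Step~3.
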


\begin{proof}
	First, we show that there is the $\H^{s}$-orthogonal decomposition
	\begin{align}\label{eq:Decomposition}
		\H^{s}_{k}(\Sigma)\cong \mathrm{Har}^{s}_{k}(\Sigma)\oplus\overline{\mathrm{ran}(\overline{\d})}\oplus\overline{\mathrm{ran}(\overline{\delta})}\,.
	\end{align}
	Since $\overline{\d}$ is a closed operator, its kernel is closed in $\H^{s}_{k}(\Sigma)$ and hence
	\begin{align*}
		\H^{s}_{k}(\Sigma)\cong\mathrm{ker}(\overline{\d})^{\perp}\oplus \mathrm{ker}(\overline{\d})=\mathrm{ker}(\overline{\d})^{\perp}\oplus \overline{\mathrm{ran}(\overline{\d})}\oplus (\mathrm{ker}(\overline{\d})\cap\mathrm{ran}(\overline{\d})^{\perp})\, .
	\end{align*}
	Decomposition~\eqref{eq:Decomposition} follows now from Lemma \ref{Lemma:Adjoints} and the general fact that for any densely defined closed operator $\A$ on a Hilbert space it holds that $\mathrm{ker}(\A^{\ast})^\perp=\overline{\mathrm{ran}(\A)}$. To prove the proposition, we use the simple observation that for any closable operator $\A$  it holds that $\overline{\mathrm{ran}(\overline{\A})}=\overline{\mathrm{ran}(\A)}$, which shows that $\overline{\mathrm{ran}(\overline{\d})}=\overline{\mathrm{ran}(\d)}=\overline{\d\Omega^{k-1}_{\infty}(\Sigma)}$ and similar for $\overline{\mathrm{ran}(\overline{\delta})}$.
\end{proof}

The following proposition gives an equivalent characterization of the space $\mathrm{Har}_{k}^{s}(\Sigma)$ in terms of harmonic forms:

\begin{proposition}\label{Prop:Harmonic}
	It holds that  $\mathrm{Har}_{k}^{s}(\Sigma)=\mathrm{ker}(\Delta_{k}\vert_{\Omega^{k}_{s}})$ for all $k\in\mathbb{N}$ and $s\in [0,\infty]$.
\end{proposition}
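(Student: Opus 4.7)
The plan is to establish the two inclusions separately. Both rely on the essential self-adjointness of $\Delta_{k}$ on complete Riemannian manifolds (Gaffney, Chernoff, Strichartz), which already underlies the definition of $\overline{\Delta_{k}}$ and $\E_{k}$, and on the mollifiers $\J_{\varepsilon}=e^{-\varepsilon\E_{k}}$ of Lemma~\ref{Lemma:Mollifiers}.

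For the inclusion $\ker(\Delta_{k}\vert_{\Omega^{k}_{s}})\subset\mathrm{Har}_{k}^{s}(\Sigma)$, I would start with $\omega\in\Omega^{k}_{s}(\Sigma)$ satisfying $\Delta_{k}\omega=0$. Since $\omega,\Delta_{k}\omega\in\L^{2}_{k}(\Sigma)$, essential self-adjointness forces $\omega\in\mathcal{D}(\overline{\Delta_{k}})$; the standard $\L^{2}$-Hodge identity on complete manifolds (again a consequence of essential self-adjointness, in the form going back to Gaffney) then gives $0=\langle\overline{\Delta_{k}}\omega,\omega\rangle_{\L^{2}}=\Vert\overline{\d}\omega\Vert_{\L^{2}}^{2}+\Vert\overline{\delta}\omega\Vert_{\L^{2}}^{2}$, so that $\d\omega=\delta\omega=0$ as smooth forms. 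Setting $\omega_{n}\defeq\J_{1/n}\omega$ then produces a sequence in $\Omega_{\infty}^{k}(\Sigma)$ which converges to $\omega$ in $\H^{s}$ by Lemma~\ref{Lemma:Mollifiers}(i)--(iii), while part (iv) of the same lemma yields $\d\omega_{n}=\J_{1/n}\d\omega=0$ and similarly $\delta\omega_{n}=0$. Hence $\omega\in\ker(\overline{\d_{k}})\cap\ker(\overline{\delta_{k}})=\mathrm{Har}_{k}^{s}(\Sigma)$.

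For the reverse inclusion, I would take $\omega\in\mathrm{Har}_{k}^{s}(\Sigma)\subset\H^{s}_{k}(\Sigma)$ and use the continuous embedding $\H^{s}_{k}(\Sigma)\hookrightarrow\L^{2}_{k}(\Sigma)$ of Lemma~\ref{Lemma:WeakCovergence} to transfer the defining $\H^{s}$-approximating sequences into $\L^{2}$-approximations, concluding that $\omega$ lies in the $\L^{2}$-domains of the closures of $\d$ and $\delta$ with vanishing image. Testing against $\varphi\in\Omega^{k}_{\c}(\Sigma;\CC)$ and using the $\L^{2}$-adjointness of $\d$ and $\delta$ gives $\langle\omega,\Delta_{k}\varphi\rangle_{\L^{2}}=0$, whence by essential self-adjointness $\omega\in\mathcal{D}(\overline{\Delta_{k}})$ with $\overline{\Delta_{k}}\omega=0$. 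In particular $\E_{k}\omega=\omega$, so spectral calculus yields $\J_{\varepsilon}\omega=e^{-\varepsilon\E_{k}}\omega=e^{-\varepsilon}\omega$. Since $\J_{\varepsilon}\omega\in\Omega^{k}(\Sigma;\CC)$ by the smoothing property (Lemma~\ref{Lemma:Mollifiers}(ii)), I obtain $\omega=e^{\varepsilon}\J_{\varepsilon}\omega\in\Omega^{k}_{s}(\Sigma)$ with $\Delta_{k}\omega=0$ pointwise.

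The main technical obstacle is the passage between the $\H^{s}$-closed operators $\overline{\d},\overline{\delta}$ (natural for the Hodge decomposition) and the $\L^{2}$-theory where essential self-adjointness and the Hodge identity are available; the continuous embedding of Lemma~\ref{Lemma:WeakCovergence} and the fact that $\J_{\varepsilon}$ maps each $\H^{s}$ into $\Omega^{k}_{\infty}(\Sigma)$ are precisely what bridge the two settings. The case $s=\infty$ follows from the cases $s<\infty$ by taking intersections in $s$ on both sides of the equality.
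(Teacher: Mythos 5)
Your proof is correct and yields the stated result; the overall structure — transferring between the $\H^{s}$ and $\L^{2}$ settings via Lemma~\ref{Lemma:WeakCovergence}, and using the mollifiers $\J_{\varepsilon}$ of Lemma~\ref{Lemma:Mollifiers} to exhibit the required approximating sequences — matches the paper's. Two steps take a genuinely different route. First, in the inclusion $\ker(\Delta_{k}\vert_{\Omega^{k}_{s}})\subset\mathrm{Har}_{k}^{s}(\Sigma)$ you rederive the implication $\overline{\Delta_{k}}\omega=0\Rightarrow\d\omega=\delta\omega=0$ via the quadratic-form identity $\langle\overline{\Delta_{k}}\omega,\omega\rangle_{\L^{2}}=\Vert\overline{\d}\omega\Vert^{2}_{\L^{2}}+\Vert\overline{\delta}\omega\Vert^{2}_{\L^{2}}$, whereas the paper simply cites Gaffney and Yau for this fact; your version makes the dependence on essential self-adjointness explicit (note that this step actually requires the identification of the form domain of $\overline{\Delta_{k}}$ with $\mathcal{D}(\overline{\d})\cap\mathcal{D}(\overline{\delta})$ in the $\L^{2}$-sense, which is Gaffney's theorem and not merely essential self-adjointness; your parenthetical attribution is therefore the right one). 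Second, in the converse inclusion you deduce smoothness of $\omega\in\mathrm{Har}^{s}_{k}(\Sigma)$ from the eigenvector relation $\E_{k}\omega=\omega$, which by spectral calculus gives $\omega=e^{\varepsilon}\J_{\varepsilon}\omega$, hence smooth by Lemma~\ref{Lemma:Mollifiers}(ii); the paper instead applies elliptic regularity of the first-order operator $\d+\delta$ to the distributional equation $(\d+\delta)\omega=0$. Your variant is elegant but not cheaper, since the smoothing property of $\J_{\varepsilon}$ was itself established via hypoellipticity of the heat operator. Both routes close the argument; your handling of $s=\infty$ by taking intersections is also how the paper (implicitly) treats it.
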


\begin{proof}
	For ``$\subset$'', let $\omega\in\mathrm{Har}_{k}^{s}(\Sigma)$. In particular, this means that $\omega\in\mathcal{D}_{s}(\overline{\d})\cap\mathcal{D}_{s}(\overline{\delta})$ and $\overline{\d}\omega=\overline{\delta}\omega=0$. We first show that $\omega$ is smooth: Since $\omega\in\mathcal{D}_{s}(\overline{\d})$, there exists a sequence $(\omega_{n})_{n}$ in $\Omega^{k}_{\infty}(\Sigma)$ such that $\omega=\H^{s}\text{-}\lim_{n\to\infty}\omega_{n}$ and $\overline{\d}\omega=\H^{s}\text{-}\lim_{n\to\infty}\d\omega_{n}=0$. By Lemma \ref{Lemma:WeakCovergence} the same convergence holds in the $\L^{2}$-topology. Therefore 
\begin{align*}
		&\langle\omega_{n},\delta\varphi\rangle_{\L^{2}}\xrightarrow{n\to\infty}\langle\omega,\delta\varphi\rangle_{\L^{2}}\\
		&\hspace*{0.8cm}\verteq\\
		&\langle\d\omega_{n},\varphi\rangle_{\L^{2}}\xrightarrow{n\to\infty}\langle\overline{\d}\omega,\varphi\rangle_{\L^{2}}=0
	\end{align*}
and hence $\langle\omega,\delta\varphi\rangle_{\L^{2}}=0$ for all $\varphi\in\Omega^{k+1}_{\c}(\Sigma;\CC)$. Similarly, we show that $\langle\omega,\d\varphi\rangle_{\L^{2}}=0$ for all $\varphi\in\Omega^{k-1}_{\c}(\Sigma;\CC)$. We conclude that $(\d+\delta)\omega=0$ in the distributional sense and hence $\omega\in\Omega^{k}(\Sigma;\CC)$ be elliptic regularity of the operator $\d+\delta$. It follows that $\omega\in\mathrm{ker}(\Delta_{k})\cap\Omega^{k}_{s}(\Sigma)$. For ``$\supset$'', let $\omega\in\Omega^{k}_{s}(\Sigma)$ be such that $\Delta\omega=0$. We recall that $\Delta\eta=0$ for $\eta\in\L^{2}_{k}(\Sigma)$ if and only if $\d\eta=\delta\eta=0$ on complete manifolds (see e.g.~\cite{Gaffney1954,Yau1976}), which implies $\d\omega=0$ and $\delta\omega=0$. We have shown that $\ker(\Delta\vert_{\Omega^{k}_{s}})\subset\ker(\d\vert_{\Omega^{k}_{s}})\cap\ker(\delta\vert_{\Omega^{k}_{s}})$. It is left to show that $\ker(\d\vert_{\Omega^{k}_{s}})\cap\ker(\delta\vert_{\Omega^{k}_{s}})\subset\ker(\overline{\d})\cap\ker(\overline{\delta})$. To show $\mathrm{ker}(\d)\cap\Omega^{k}_{s}(\Sigma)\subset\ker(\overline{\d})$, we take $\omega\in\Omega^{k}_{s}(\Sigma)$ such that $\d\omega=0$ and we have to construct a sequence $(\omega_{n})_{n}$ in $\Omega_{\infty}^{k}(\Sigma)$ such that $\omega=\H^{s}\text{-}\lim_{n\to\infty}\omega_{n}$ and $\H^{s}\text{-}\lim_{n\to\infty}\d\omega_{n}=0$. We consider the mollifiers $\J_{\varepsilon}$ introduced in Lemma \ref{Lemma:Mollifiers} and notice that $\d\J_{\varepsilon}\omega=\J_{\varepsilon}\d\omega=0$ and $\J_{\varepsilon}\omega\in\Omega^{k}_{\infty}(\Sigma)$. Hence $\J_{\varepsilon}\omega$ gives the  required sequence. Similarly, $\mathrm{ker}(\delta)\cap\Omega^{k}_{s}(\Sigma)\subset\ker(\overline{\delta})$.
\end{proof}

\begin{remark}
Let us give an overview of Hodge type decomposition theorems obtained for non-compact manifolds: A classical result due to Kodaira \cite{Kodaira1949} (see also \cite{BrueningLesch1992,deRhamBook}) gives a decomposition of $\L^{2}_{k}(\Sigma)$. Gromov proved in~\cite{Gromov1991} a strong Hodge decomposition for $\L^{2}_{k}(\Sigma)$ under the assumption of a mass gap of the Laplacian. Some results for (weighted) Sobolev spaces on asymptomatically Euclidean manifold can be found in \cite{Cantor} and for manifolds with compact boundary in \cite{Schwarz}. A decomposition of $\H^{1}$ for hyperbolic manifolds was obtained in \cite{ChanCzubakSuarez2018}.
\end{remark}

Next, let us discuss the case of the space $\H^{\infty}(\Sigma)$. This space is in general not a Hilbert space, but nevertheless one can decompose it in a similar fashion:

\begin{corollary}\label{InfintieHodge}
	The vector space $\H^{\infty}_{\bullet}(\Sigma)$ admits the following direct sum decomposition:
	\begin{align*}
		\H^{\infty}_{\bullet}(\Sigma)\cong \overline{\d\Omega^{k-1}_{\infty}(\Sigma)}^{\infty}\oplus\overline{\delta\Omega^{k+1}_{\infty}(\Sigma)}^{\infty}\oplus\mathrm{Har}^{\infty}_{\bullet}(\Sigma)\,,
	\end{align*}
	where we used the notation
	\begin{align*}
		\overline{\d\Omega^{k}_{\infty}(\Sigma)}^{\infty}\defeq \bigcap_{s\geq 0}\overline{\d\Omega^{k}_{\infty}(\Sigma)}^{\H^{s}},\quad \overline{\delta\Omega^{k}_{\infty}(\Sigma)}^{\infty}\defeq \bigcap_{s\geq 0}\overline{\delta\Omega^{k}_{\infty}(\Sigma)}^{\H^{s}},\quad \mathrm{Har}^{\infty}_{\bullet}(\Sigma)\defeq \bigcap_{s\geq 0}\mathrm{Har}^{s}_{\bullet}(\Sigma)\, .
	\end{align*}
	This decomposition is $\H^{s}$-orthogonal $\forall s\geq 0$ and hence in particular $\L^{2}$-orthogonal.
\end{corollary}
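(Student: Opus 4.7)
The plan is to obtain the $\H^{\infty}$-decomposition by invoking Theorem~\ref{Thm:HodgeDecom} at each Sobolev level $s\geq 0$ and then showing that the three components of the resulting decomposition are independent of $s$. Given $\omega\in\H^{\infty}_{k}(\Sigma)$, for each $s\geq 0$ one has $\omega\in\H^{s}_{k}(\Sigma)$ and Theorem~\ref{Thm:HodgeDecom} produces a unique $\H^{s}$-orthogonal decomposition $\omega=\alpha_{s}+\beta_{s}+\gamma_{s}$ with $\alpha_{s}\in\overline{\d\Omega^{k-1}_{\infty}(\Sigma)}^{\H^{s}}$, $\beta_{s}\in\overline{\delta\Omega^{k+1}_{\infty}(\Sigma)}^{\H^{s}}$, and $\gamma_{s}\in\mathrm{Har}^{s}_{k}(\Sigma)$.

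The key step is to prove compatibility across levels: whenever $r\geq s\geq 0$, I claim that $\alpha_{r}=\alpha_{s}$, $\beta_{r}=\beta_{s}$ and $\gamma_{r}=\gamma_{s}$. By Lemma~\ref{Lemma:WeakCovergence}, any sequence $(\d\eta_{n})_{n}\subset\d\Omega^{k-1}_{\infty}(\Sigma)$ that converges in $\H^{r}$ converges also in $\H^{s}$ to the same limit, hence $\overline{\d\Omega^{k-1}_{\infty}(\Sigma)}^{\H^{r}}\subset\overline{\d\Omega^{k-1}_{\infty}(\Sigma)}^{\H^{s}}$, and analogously for the closure of $\delta\Omega^{k+1}_{\infty}(\Sigma)$. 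For the harmonic part, Proposition~\ref{Prop:Harmonic} identifies $\mathrm{Har}^{r}_{k}(\Sigma)=\ker(\Delta_{k}\vert_{\Omega^{k}_{r}})$, and since $\Omega^{k}_{r}(\Sigma)\subset\Omega^{k}_{s}(\Sigma)$ by Lemma~\ref{Lemma:WeakCovergence}, this yields $\mathrm{Har}^{r}_{k}(\Sigma)\subset\mathrm{Har}^{s}_{k}(\Sigma)$. Therefore $\omega=\alpha_{r}+\beta_{r}+\gamma_{r}$ is also a legitimate $\H^{s}$-Hodge decomposition of $\omega$, and uniqueness in Theorem~\ref{Thm:HodgeDecom} forces $\alpha_{r}=\alpha_{s}$, $\beta_{r}=\beta_{s}$ and $\gamma_{r}=\gamma_{s}$.

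Denoting the common components by $\alpha,\beta,\gamma$, it follows that $\alpha\in\bigcap_{s\geq 0}\overline{\d\Omega^{k-1}_{\infty}(\Sigma)}^{\H^{s}}=\overline{\d\Omega^{k-1}_{\infty}(\Sigma)}^{\infty}$, and analogously $\beta\in\overline{\delta\Omega^{k+1}_{\infty}(\Sigma)}^{\infty}$ and $\gamma\in\mathrm{Har}^{\infty}_{k}(\Sigma)$; in particular each of them lies in $\H^{\infty}_{k}(\Sigma)$. Uniqueness of the decomposition in $\H^{\infty}_{k}(\Sigma)$ follows immediately from uniqueness at any fixed $\H^{s}$-level, and the $\H^{s}$-orthogonality for every $s\geq 0$, which in particular implies $\L^{2}$-orthogonality for $s=0$, is inherited verbatim from Theorem~\ref{Thm:HodgeDecom}.

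The only genuinely delicate point is the compatibility across different Sobolev levels, namely verifying that the three pieces of the decomposition at level $r$ land in the corresponding pieces at level $s<r$. This is precisely where the continuous embedding $\H^{r}\hookrightarrow\H^{s}$ from Lemma~\ref{Lemma:WeakCovergence} and the elliptic identification of harmonic forms from Proposition~\ref{Prop:Harmonic} do the necessary work. All remaining content of the corollary is a routine bookkeeping about projective limits, so no further analytical input is required.
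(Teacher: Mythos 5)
Your proposal is correct and follows essentially the same route the paper takes: decompose $\omega$ at each level $s$ via Theorem~\ref{Thm:HodgeDecom}, use the inclusions between the three subspaces across levels (inherited from Lemma~\ref{Lemma:WeakCovergence}) together with uniqueness to force the components to coincide, and read off the $\H^{s}$-orthogonality for all $s$. The only cosmetic addition is your explicit invocation of Proposition~\ref{Prop:Harmonic} for the harmonic-part inclusion, which the paper leaves implicit; this is a fair clarification but not a different approach.
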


\begin{proof}
	Let $\omega\in\H^{\infty}_{\bullet}(\Sigma)$. By assumption, it belongs to $\H^{s}_{\bullet}(\Sigma)$ for any $s\geq 0$ and hence, using the Hodge decomposition (Theorem \ref{Thm:HodgeDecom}), there exists for any $s\geq 0$ a decomposition of the form
\begin{align*}
		\omega=\alpha_{s}+\beta_{s}+\gamma_{s},\quad\alpha_{s}\in\overline{\d\Omega^{k-1}_{\infty}(\Sigma)}^{\H^{s}},\,\beta_{s}\in\overline{\delta\Omega^{k+1}_{\infty}(\Sigma)}^{\H^{s}},\,\gamma\in\mathrm{Har}^{s}_{\bullet}(\Sigma)\, .
\end{align*}
	Now, by Lemma \ref{Lemma:WeakCovergence}, it clearly holds that 
\begin{align*}
	\overline{\d\Omega^{k-1}_{\infty}(\Sigma)}^{\H^{r}}\subset\overline{\d\Omega^{k-1}_{\infty}(\Sigma)}^{\H^{s}},\quad \overline{\delta\Omega^{k+1}_{\infty}(\Sigma)}^{\H^{r}}\subset\overline{\delta\Omega^{k+1}_{\infty}(\Sigma)}^{\H^{s}},\quad \mathrm{Har}^{r}_{\bullet}(\Sigma)\subset\mathrm{Har}^{s}_{\bullet}(\Sigma)
\end{align*}	
	 for all $r,s\geq 0$ with $r\geq s$. By uniqueness of the Hodge decomposition, we hence must have $\alpha_{s}=\alpha_{r}$, $\beta_{s}=\beta_{r}$ as well as $\gamma_{s}=\gamma_{r}$ for any pair $r,s\geq 0$, which proves the claimed decomposition.
\end{proof}

If $(\Sigma,h)$ is a Riemannian manifold of bounded geometry, then $\H^{\infty}_{k}(\Sigma)\subset\Omega^{k}(\Sigma;\CC)$ by the Sobolev embedding theorem, cf.~Remark \ref{Remark:SobolevEmbedding}, and hence in particular $\H^{\infty}_{k}(\Sigma)=\Omega^{k}_{\infty}(\Sigma)$. In the next section, we discuss a more general Hodge decomposition for smooth differential forms on non-compact manifolds.
\subsection{Smooth Hodge Decomposition on Non-Compact Manifolds}
We are finally in the position to generalize the (smooth) Hodge decomposition on non-compact manifolds. To this end, let us introduce the following notation.

\begin{definition}\label{Def:SubspacesHodge}
	Let $(\Sigma,h)$ be a complete Riemannian manifold. We define the following subspaces of $\Omega^{k}_{s}(\Sigma)$ for $s\in\RR$ and $\Omega^{k}_{\infty}(\Sigma)=\bigcap_{s\geq 0}\Omega^{k}_{s}(\Sigma)$: 
	\begin{align*}
		\Omega^{k}_{s,\d}(\Sigma) &\defeq \Omega^{k}(\Sigma;\CC)\cap\overline{\d_{\Sigma}\Omega^{k-1}_{\infty}(\Sigma)}^{\Vert\cdot\Vert_{\H^{s}}},\qquad\Omega_{\infty,\d}^{k}(\Sigma)\defeq\bigcap_{s\geq 0}\Omega_{s,\d}^{k}(\Sigma)\, ,\\
		\Omega^{k}_{s,\delta}(\Sigma) &\defeq \Omega^{k}(\Sigma;\CC)\cap\overline{\delta_{\Sigma}\Omega^{k+1}_{\infty}(\Sigma)}^{\Vert\cdot\Vert_{\H^{s}}},\qquad\Omega_{\infty,\delta}^{k}(\Sigma)\defeq\bigcap_{s\geq 0}\Omega_{s,\delta}^{k}(\Sigma)\, .
	\end{align*}
\end{definition}

We can finally state and prove the first main result of this paper, Theorem~\ref{Thm:HodgeDecomSmooth}.

\begin{proof}[Proof of Theorem~\ref{Thm:HodgeDecomSmooth}]
	Let $s\geq 0$ be fixed. First of all, we show that any form $\alpha\in\overline{\d\Omega^{k-1}_{\infty}(\Sigma)}$ satisfies $\langle\alpha,\delta\varphi\rangle_{\L^{2}}=0$ for all $\varphi\in\Omega^{k+1}_{\c}(\Sigma;\CC)$. Let us write $\alpha=\H^{s}\text{-}\lim_{n\to\infty}\d\alpha_{n}$ for a sequence $(\alpha_{n})_{n}$ in $\Omega^{k-1}_{\infty}(\Sigma)$. By Lemma \ref{Lemma:WeakCovergence}, it holds that $\alpha=\L^{2}\text{-}\lim_{n\to\infty}\d\alpha_{n}$. In particular, we have that
	\begin{align*}
		0=\langle\d\alpha_{n},\delta\varphi\rangle_{\L^{2}}\xrightarrow{n\to\infty}\langle\alpha,\delta\varphi\rangle_{\L^{2}}
	\end{align*}
	for any $\varphi\in\Omega^{k+1}_{\c}(\Sigma;\CC)$ and hence $\langle\alpha,\delta\varphi\rangle_{\L^{2}}=0$. Similarly, a form $\beta\in\overline{\delta\Omega^{k+1}_{\infty}(\Sigma)}$ satisfies $\langle\alpha,\d\varphi\rangle_{\L^{2}}=0$ for all $\varphi\in\Omega^{k-1}_{\c}(\Sigma;\CC)$. Now, let $\omega\in\Omega^{k}_{s}(\Sigma)$ and let us uniquely decompose it in accordance with Theorem \ref{Thm:HodgeDecom} as
	\begin{align*}
		\omega=\alpha+\beta+\gamma,\quad\alpha\in\overline{\d\Omega^{k-1}_{\infty}(\Sigma)},\,\beta\in\overline{\delta\Omega^{k+1}_{\infty}(\Sigma)},\,\gamma\in\mathrm{Har}^{s}_{k}(\Sigma)=\ker(\Delta\vert_{\Omega^{k}_{s}})\, .
	\end{align*}
	Then, for every $\varphi\in\Omega^{k+1}_{\c}(\Sigma;\CC)$, it holds that
	\begin{align*}
		\langle\delta\omega,\varphi\rangle_{\L^{2}}=\langle\omega,\d\varphi\rangle_{\L^{2}}=\langle\alpha,\d\varphi\rangle_{\L^{2}}=\langle\alpha,(\d+\delta)\varphi\rangle_{\L^{2}}\, .
	\end{align*}
	In other words, $\delta\omega=(\d+\delta)\alpha$, where $(\d+\delta)\alpha$ has to be understood in the distributional sense. By elliptic regularity of the operator $\d+\delta$ and smoothness of $\delta\omega$ we conclude that $\alpha\in\Omega^{k}(\Sigma;\CC)$. Similarly we argue for $\beta$. Smoothness of $\gamma$ has been shown in Proposition \ref{Prop:Harmonic}. For the exactness claims (i) and (ii), we use Poincaré duality: Let $\alpha\in\Omega^{k}_{s,\d}(\Sigma)$. Above we showed that $\langle\alpha,\delta\varphi\rangle_{\L^{2}}=0$ for all $\varphi\in\Omega^{k+1}_{\c}(\Sigma;\CC)$. Since $\alpha$ now is in addition assumed to be smooth, we conclude that $\d\alpha=0$ by non-degeneracy. Now, by Poincaré duality, a closed form $\alpha$ is exact if and only if $\langle\alpha,\psi\rangle_{\L^{2}}=0$ for all $\psi\in\Omega^{\bullet}_{\c}(\Sigma;\CC)\cap\ker(\delta)$. By assumption, there is a sequence $(\alpha_{n})_{n}$ in $\Omega^{k-1}_{\infty}(\Sigma)$ such that $\alpha=\H^{s}\text{-}\lim_{n\to\infty}\d\alpha_{n}$ and hence $\alpha=\L^{2}\text{-}\lim_{n\to\infty}\d\alpha_{n}$ by Lemma \ref{Lemma:WeakCovergence}. It follows that
	\begin{align*}
		0=\langle\alpha_{n},\delta\psi\rangle_{\L^{2}}=\langle\d\alpha_{n},\psi\rangle_{\L^{2}}\xrightarrow{n\to\infty}\langle\alpha,\psi\rangle_{\L^{2}}
	\end{align*}
	and hence $\langle\alpha,\psi\rangle_{\L^{2}}=0$. We conclude that $\alpha$ is exact. The claim for $\beta\in\Omega^{k}_{s,\delta}(\Sigma)$ follows by duality. For (iii), lets assume that $\d\omega=0$. It follows that $\d\beta=0$. Now, since also $\delta\beta=0$, we conclude that $\beta\in\Omega^{k}_{s,\delta}(\Sigma)\cap\mathrm{Har}_{k}^{s}(\Sigma)$ and hence $\beta=0$, since the Hodge decomposition is a direct sum decomposition. Similarly we show (iv). The decomposition of $\Omega_{\infty}^{k}(\Sigma)$ and the analogues properties (i)-(iv) follow immediately using similar arguments as in the proof of Corollary \ref{InfintieHodge}.
\end{proof}
\subsection{Poisson Equation on Non-Compact Manifolds}
As an immediate consequence of Theorem~\ref{Thm:HodgeDecomSmooth}, we get existence and uniqueness of solutions of the Poisson equation, which is closely related to the achievability of the Cauchy radiation gauge.

\begin{proposition}\label{prop:Poisson}
	Let $s\in [0,\infty]$ and $\omega\in\Omega_{s}^{1}(\Sigma)$. Then the Poisson equation
	\begin{align*}
		\Delta_{0}f=\delta\omega\, ,
	\end{align*}
	has a unique solution (up to constant) on the space $\{f\in C^{\infty}(\Sigma;\CC)\mid \d f\in\Omega_{s,\d}^{1}(\Sigma)\}$.
\end{proposition}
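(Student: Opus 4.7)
The plan is to reduce the Poisson equation directly to the Hodge decomposition already established in Theorem~\ref{Thm:HodgeDecomSmooth}. The key observation is that the right-hand side $\delta\omega$ lives entirely in the exact component of $\omega$, so a primitive of that component will do the job.

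For existence, I would decompose $\omega = \alpha+\beta+\gamma$ via Theorem~\ref{Thm:HodgeDecomSmooth} with $\alpha\in\Omega^{1}_{s,\d}(\Sigma)$, $\beta\in\Omega^{1}_{s,\delta}(\Sigma)$ and $\gamma\in\ker(\Delta_{1}|_{\Omega^{1}_{s}})$. By part (ii) of that theorem $\beta=\delta_{\Sigma}\eta$ for some smooth $\eta\in\Omega^{2}(\Sigma;\CC)$, hence $\delta_{\Sigma}\beta=\delta_{\Sigma}^{2}\eta=0$. Furthermore, on a complete manifold $\Delta_{1}\gamma=0$ implies $\d_{\Sigma}\gamma=\delta_{\Sigma}\gamma=0$ (a fact already used in the proof of Proposition~\ref{Prop:Harmonic}). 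Therefore $\delta_{\Sigma}\omega=\delta_{\Sigma}\alpha$. By part (i) of Theorem~\ref{Thm:HodgeDecomSmooth}, $\alpha=\d_{\Sigma}\psi$ for some $\psi\in\Omega^{0}(\Sigma;\CC)$, so setting $f\defeq\psi$ yields
\begin{align*}
\Delta_{0}f=\delta_{\Sigma}\d_{\Sigma}\psi=\delta_{\Sigma}\alpha=\delta_{\Sigma}\omega\, ,
\end{align*}
and $\d_{\Sigma}f=\alpha\in\Omega^{1}_{s,\d}(\Sigma)$ by construction, so $f$ lies in the prescribed space.

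For uniqueness up to a constant, suppose $f_{1},f_{2}$ are two smooth solutions with $\d_{\Sigma}f_{i}\in\Omega^{1}_{s,\d}(\Sigma)$, and set $g\defeq f_{1}-f_{2}$. Then $\Delta_{0}g=\delta_{\Sigma}\d_{\Sigma}g=0$, hence $\d_{\Sigma}g\in\ker(\delta_{\Sigma}|_{\Omega^{1}_{s}})$. Using $\d_{\Sigma}^{2}=0$ we also have $\d_{\Sigma}g\in\ker(\d_{\Sigma}|_{\Omega^{1}_{s}})$, so by parts (iii) and (iv) of Theorem~\ref{Thm:HodgeDecomSmooth}
\begin{align*}
\d_{\Sigma}g\in\bigl(\Omega^{1}_{s,\d}(\Sigma)\oplus\ker(\Delta_{1}|_{\Omega^{1}_{s}})\bigr)\cap\bigl(\Omega^{1}_{s,\delta}(\Sigma)\oplus\ker(\Delta_{1}|_{\Omega^{1}_{s}})\bigr)=\ker(\Delta_{1}|_{\Omega^{1}_{s}})\, .
\end{align*}
Combined with the hypothesis $\d_{\Sigma}g\in\Omega^{1}_{s,\d}(\Sigma)$ and the direct sum nature of the Hodge decomposition, this forces $\d_{\Sigma}g=0$, and hence $g$ is constant by connectedness of $\Sigma$.

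I do not anticipate any real obstacles: everything reduces to bookkeeping with the three orthogonal summands. The only subtle point to articulate cleanly is why the smoothness of the primitive $\psi$ follows (which is already guaranteed by part (i) of Theorem~\ref{Thm:HodgeDecomSmooth}) and why completeness of $(\Sigma,h)$ is implicitly used in identifying $\ker(\Delta_{1}|_{\Omega^{1}_{s}})$ with $\ker(\d_{\Sigma})\cap\ker(\delta_{\Sigma})$ inside $\Omega^{1}_{s}(\Sigma)$.
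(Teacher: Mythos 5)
Your proof is correct and follows essentially the same route as the paper: both reduce the existence claim to the Hodge decomposition of $\omega$ from Theorem~\ref{Thm:HodgeDecomSmooth} (you split off all three summands explicitly, while the paper groups $\beta+\gamma$ into $\ker(\delta_{\Sigma}\vert_{\Omega^{1}_{s}})$ via part (iv), a purely cosmetic difference), take $f$ to be a primitive of the exact part $\alpha$, and argue uniqueness by showing $\d_{\Sigma}g$ is simultaneously in $\Omega^{1}_{s,\d}(\Sigma)$ and in $\ker(\Delta_{1}\vert_{\Omega^{1}_{s}})$, hence zero since the decomposition is direct.
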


\begin{proof}
	By Theorem~\ref{Thm:HodgeDecomSmooth}, $\omega$ can be written as $\omega=\alpha+\beta$, where $\alpha\in\Omega_{s,\d}^{1}(\Sigma)$ and $\beta\in\ker(\delta\vert_{\Omega^{1}_{s}})$. Furthermore, $\alpha=\d f$ for some $f\in C^{\infty}(\Sigma;\CC)$. It follows that $\delta\omega=\delta\alpha=\Delta f$. For uniqueness, let $f\in C^{\infty}(\Sigma;\CC)$ be such that $\d f\in\Omega^{1}_{s,\d}(\Sigma)$ and $\Delta_{0}f=0$. Then, the $1$-form $\eta\defeq \d f$ satisfies $\d \eta=\delta\eta=0$ and hence in particular $\eta\in\ker(\Delta\vert_{\Omega^{1}_{s}})$. Since also $\eta\in\Omega^{1}_{s,\d}(\Sigma)$ by assumption, we conclude that $\eta=0$, by the fact that the decomposition in Theorem \ref{Thm:HodgeDecomSmooth} is a direct sum decomposition. It follows that $f=\mathrm{const}$ as claimed.
\end{proof}
\section{On the Cauchy Problem with Smooth Sobolev Data}\label{Sec:CauchyProblem}
In order to discuss the phase space of Maxwell theory in the Cauchy radiation gauge, as a preliminary result, we shall show that the Cauchy problem for the wave operator $\D_k$ is continuous in the Sobolev topology.  To this end, we assume the following setup.
\begin{setup}\label{setup}
$(\M=\RR\times\Sigma,g=-\beta^{2}\d t^{2}+h_{t})$ is a globally hyperbolic manifold such that
\begin{itemize}
\item  $(\Sigma,h_t)$ are complete Cauchy hypersurfaces of bounded geometry;
\item the lapse function $\beta$, $\beta^{-1}$ and their time derivatives are bounded in the sense that for any $m\in\RR$ and any $p\in\NN$ there exists a positive constant $C_{mp}\in \RR$ such that for any $u\in\H^s(\Sigma_t)$ it holds
$$\Vert \partial_t^p \beta^m u\Vert_{\H^{s}}\leq C_{mp} \Vert u \Vert_{\H^{s}}\,;$$
\item the second fundamental form $k$ is bounded in a similar sense as above.
\end{itemize}  
\end{setup}
Examples of globally hyperbolic manifolds satisfying Setup~\ref{setup} are  ultrastatic manifolds, de Sitter space and FLRW spacetimes.
\medskip

As in the previous section, we introduce a family of Sobolev spaces $\H^{s}_{\bullet}(\Sigma_{t})$ (see Definition~\ref{def:sob space}) and as before we denote its intersection with the smooth forms by
	$$	\Omega^{k}_{s}(\Sigma_{t})=\Omega^{k}(\Sigma;\CC)\cap\H^{s}_{k}(\Sigma_{t})\,.$$

Recall that on a globally hyperbolic manifold, any $k$-form $\omega\in\Omega^{k}(\M;\CC)$ can be decomposed as $\omega=\d t\wedge\omega_{0}+\omega_{1}$ with $\omega_{0}\defeq \partial_{t}\lrcorner\,\omega$, which yields a decomposition  $$\Gamma(\V_k)=\Omega^{k}(\M;\CC) \simeq C^\infty\big(\RR,\Omega^{k-1}(\Sigma;\CC)\big)\oplus C^\infty\big(\RR,\Omega^{k}(\Sigma;\CC)\big) \, .$$
We further introduce the subspace $$\Gamma_s(\V_k)\defeq C^\infty\big(\RR,\Omega_s^{k-1}(\Sigma_{\bullet})\big)\oplus C^\infty\big(\RR,\Omega_s^{k}(\Sigma_{\bullet})\big) $$ and set $\Gamma_{\tc,s}(\V_{k})\defeq \Gamma_{\tc}(\V_{k})\cap\Gamma_{s}(\V_{k})$ for the subspace of timelike compactly-supported fields.

\medskip

Let us now consider the normally-hyperbolic d'Alembertian $\D_{k}\defeq \delta\d+\d\delta\:\Gamma(\V_{k})\to\Gamma(\V_{k})$ and denote by $\rho_{k,t}\:\Gamma(\V_{k})\to\Gamma(\V_{\rho_{k,t}})$ the initial data map defined by
\begin{align*}
	\rho_{k,t}(\omega)\defeq (\omega\vert_{\Sigma_{t}},\i^{-1}\partial_{t}\omega\vert_{\Sigma_{t}})\, .
\end{align*}
where $\V_{\rho_{k,t}}\defeq \V_{k}\vert_{\Sigma_{t}}\oplus\V_{k}\vert_{\Sigma_{t}}\,$ and $\Gamma(\V_{k}\vert_{\Sigma_{t}})\cong\Omega^{k-1}(\Sigma;\CC)\oplus\Omega^{k}(\Sigma;\CC)$.
In the case $t=0$, we will simply write $\rho_{k}\defeq \rho_{k,0}$ and $\V_{\rho_{k}}\defeq \V_{\rho_{k,0}}$.
The space of smooth Sobolev initial data will be denoted by
\begin{align*}
	\Gamma_{s}(\V_{\rho_{k,t}})\defeq \mathcal{H}^{s}_{k}(\Sigma_{t})\oplus\mathcal{H}^{s-1}_{k}(\Sigma_{t})\qquad \text{with}\quad \mathcal{H}^{s}_{k}(\Sigma_{t})\defeq \Omega^{k-1}_{s}(\Sigma_{t})\oplus\Omega_{s}^{k}(\Sigma_{t}) \,,
\end{align*}
where the space $\mathcal{H}_{k}^{s}(\Sigma_{t})$ is naturally topologized by the norms
	\begin{align*}
		\Vert (\alpha_{0},\alpha_{1})\Vert_{\mathcal{H}^{s}}\defeq (\Vert\alpha_{0}\Vert_{\H^{s}}^{2}+\Vert\alpha_{1}\Vert_{\H^{s}}^{2})^{\frac{1}{2}}\, .
	\end{align*}
for all $(\alpha_{0},\alpha_{1})\in\mathcal{H}_{k}^{s}(\Sigma_{t})$. With the notation introduced above we can finally formulate the main result of this section.

\begin{theorem}\label{thm:sob Cauchy}
Let $(\M,g)$ be as in the Setup~\ref{setup}.  
Then the Cauchy problem for  $\D_k$  is well-posed, i.e.~for any $f\in\Gamma_{s-2}(\V_k)$ and $(h_1,h_2)\in\Gamma_s(\V_{\rho_{k,t_{0}}})$ with $t_{0}\in\RR$ there exists a unique solution $\omega\in\Gamma_{s}(\V_k)$ to the initial value problem
	\begin{equation*} 
	\begin{cases}{}
	{\D_k} \omega=f   \\
	\omega|_{\Sigma_{t_0}} = h_1   \\
	(\i^{-1}\partial_t \omega)|_{\Sigma_{t_0}}=h_2 \\
	\end{cases} 
	\end{equation*}
  which depends continuously on the data $(f,h_1,h_2)$ with respect to the natural Fréchet topologies.
\end{theorem}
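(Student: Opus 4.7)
The plan is to reduce the second-order equation $\D_k\omega=f$ to a first-order evolution system in the Sobolev scale on the Cauchy slices, to establish a Grönwall-type energy estimate uniform on compact time intervals, and to deduce the full statement by density starting from the classical well-posedness theory for normally hyperbolic operators acting on compactly supported smooth sections.

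\textbf{Reduction and energy estimate.} Since $\D_k$ is normally hyperbolic with principal symbol $g^{-1}\otimes\mathrm{id}$, the temporal splitting $\M\cong\RR\times\Sigma$ allows one to write it on pairs $(\omega_0,\omega_1)\in\Omega^{k-1}(\Sigma_t;\CC)\oplus\Omega^k(\Sigma_t;\CC)$ in the form
\begin{align*}
\D_k\omega=-\beta^{-2}\partial_t^2\omega+L_1(t)\partial_t\omega+L_0(t)\omega,
\end{align*}
where $L_0(t)$ has the spatial Hodge-Laplacian $\Delta_{k,t}$ as leading part, and $L_0,L_1$ contain lower-order pieces built from $\beta$, its derivatives, the second fundamental form $k_t$, and the curvature of $(\Sigma_t,h_t)$ (entering via a Weitzenböck identity). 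Setup~\ref{setup} guarantees that each such coefficient defines a bounded operator between the appropriate $\H^r$-spaces uniformly in $t$ on compact intervals. Setting $U(t)\defeq(\omega(t),\i^{-1}\partial_t\omega(t))$ turns the equation into a first-order system $\partial_t U=A(t)U+F(t)$ with $F(t)=(0,\i\beta^{2}f(t))$. I would then consider the $\mathcal{H}^s$-energy
\begin{align*}
E^s(t)\defeq\|\omega(t)\|_{\mathcal{H}^s}^2+\|\partial_t\omega(t)\|_{\mathcal{H}^{s-1}}^2.
\end{align*}
Differentiating $E^s$ in $t$, commuting the spectral weights $\E_{k,t}^{s/2}$ through $\partial_t$ and through $L_0,L_1$, and controlling every commutator using the Setup bounds, one arrives at an estimate
\begin{align*}
\tfrac{\d}{\d t}E^s(t)\leq C(T)\,E^s(t)+\|f(t)\|_{\mathcal{H}^{s-2}}^2\qquad\text{on}\;[t_0,T],
\end{align*}
and Grönwall yields $E^s(t)\leq e^{C(T)(t-t_0)}\bigl(E^s(t_0)+\int_{t_0}^t\|f(\tau)\|_{\mathcal{H}^{s-2}}^2\,\d\tau\bigr)$. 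This directly gives uniqueness, and, after differentiating the equation in $t$ and iterating the estimate in order to control all Fréchet seminorms of $\Gamma_s(\V_k)$, the continuous dependence part of the theorem.

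\textbf{Existence by approximation.} Given data $(h_1,h_2)\in\Gamma_s(\V_{\rho_{k,t_0}})$ and source $f\in\Gamma_{s-2}(\V_k)$, approximate them by compactly supported smooth data $(h_1^n,h_2^n)$ and by smooth sources $f^n\in\Gamma_{\c}(\V_k)$, converging in the respective Fréchet topologies; the required density follows from the density of $\Omega^k_{\c}$ in $\H^s_k$ recalled in Section~\ref{sec:Hodge}. For such data the classical Leray-type well-posedness for normally hyperbolic operators on globally hyperbolic manifolds provides unique smooth solutions $\omega^n\in\Gamma_{\sc}(\V_k)$. Applying the energy estimate (and its iterates on time-derivatives) to the differences $\omega^n-\omega^m$ shows that $(\omega^n)_n$ is Cauchy in the Fréchet topology of $\Gamma_s(\V_k)$, and the limit solves the Cauchy problem with data $(f,h_1,h_2)$.

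\textbf{Main obstacle.} The principal technical difficulty I anticipate is the genuine $t$-dependence of the Sobolev structure: the spaces $\H^s_k(\Sigma_t)$ are defined via the spectral calculus of $\E_{k,t}=\mathds{1}+\overline{\Delta_{k,t}}$, and the spectral resolution of $\E_{k,t}$ varies with $t$. Consequently, to close the Grönwall argument one needs to control the commutators $[\partial_t,\E_{k,t}^{s/2}]$ and $[\E_{k,t}^{s/2},L_j(t)]$ in a suitable operator norm. This is precisely what motivates Setup~\ref{setup}: the bounded geometry of the Cauchy slices combined with the uniform bounds on $\beta$, $\beta^{-1}$ and on the second fundamental form $k_t$ places us in a pseudodifferential calculus of bounded geometry in which these commutators are controlled uniformly in $t$ on compact intervals.
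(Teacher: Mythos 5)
Your proposal follows essentially the same strategy as the paper: a Grönwall-type Sobolev energy estimate on the Cauchy slices (using the Weitzenböck decomposition and the Setup bounds to control the commutators arising from the $t$-dependent spectral weights $\E_{k,t}^{s/2}$), followed by an approximation argument in which compactly supported smooth data — dense in the relevant topologies — are solved classically and the differences are shown to be Cauchy in $\Gamma_s(\V_k)$ via the energy estimate. The reduction to a first-order system is a cosmetic repackaging of the same algebra, and your identification of the main obstacle (commutator control for the time-varying Sobolev scale) matches the role Setup~\ref{setup} plays in the paper's argument.
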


\begin{remark}
	The Cauchy data map $\rho_{k}$ restricts to a well-defined map $\rho_{k}\:\Gamma_{s}(\V_{k})\to\Gamma_{s}(\V_{\rho_{k}})$. Theorem~\ref{thm:sob Cauchy} implies that its restriction to $\ker(\D_{k}\vert_{\Gamma_{s}})$ is bijective with inverse given by the \textit{Cauchy evolution operator} $\U_{k}\:\Gamma_{s}(\V_{\rho_{k}})\to\ker(\D_{k}\vert_{\Gamma_{s}})$. 
\end{remark}

Since compactly supported Cauchy data are dense\footnote{As mentioned earlier, using standard arguments it can be shown that $\Omega^{k}_\c(\Sigma,\CC)$ is dense in $\H_{k}^s(\Sigma)$ for any $s\geq 0$.} in $\Gamma_{s-2}(\V_k)\oplus \Gamma_s(\V_{\rho_k})$ , and the solution to Cauchy problem is spacelike compact, to prove our claim, it remains to show that given a sequence of compactly supported Cauchy data converging to smooth element in $\Gamma_{{s-2}}(\V_k)\oplus \Gamma_s(\V_{\rho_k})$, the solution will be an element in $\Gamma_{s}(\V_k)$ . To achieve our goal we need to introduce suitable energy estimates. We begin with a preliminary result.

\begin{lemma}\label{Lemma:L2Bound}
	Let $\omega\in C^{\infty}(\RR,\L^{2}_{k}(\Sigma_{\bullet}))$ such that $\partial_{t}\omega\in C^{\infty}(\RR,\L^{2}_{k}(\Sigma_{\bullet}))$. Then 
	\begin{align*}
		\frac{\d}{\d t}\Vert\omega_{t}\Vert_{\L^{2}}^{2}\leq 2\mathrm{Re}\{\langle\partial_{t}\omega_{t},\omega_{t}\rangle_{\L^{2}}\}+c(t)\cdot\Vert\omega_{t}\Vert_{\L^{2}}^{2}
	\end{align*}
	for some positive constant $c$ that is locally uniform in time.
\end{lemma}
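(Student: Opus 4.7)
The plan is to differentiate $\Vert\omega_{t}\Vert_{\L^{2}}^{2}$ directly by writing it as an integral and applying a Leibniz-type rule. The key subtlety is that the $\L^{2}$-inner product on $\Sigma_{t}$ carries an explicit $t$-dependence, since both the fiber metric $h^{-1}_{(k),t}$ and the volume form $\vol_{h_{t}}$ vary with $t$. Concretely, I start from
\begin{align*}
	\Vert\omega_{t}\Vert_{\L^{2}}^{2}=\int_{\Sigma}h^{-1}_{(k),t}(\overline{\omega_{t}},\omega_{t})\,\vol_{h_{t}}\, .
\end{align*}

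First I would justify that the map $t\mapsto\Vert\omega_{t}\Vert_{\L^{2}}^{2}$ is differentiable with the expected formula. Since $\omega,\partial_{t}\omega\in C^{\infty}(\RR,\L^{2}_{k}(\Sigma_{\bullet}))$ and, by Setup~\ref{setup}, the tensors $\partial_{t}h_{t}=-2\beta k_{t}$ and $\partial_{t}\vol_{h_{t}}=-\beta\,\tr_{h_{t}}(k_{t})\,\vol_{h_{t}}$ are uniformly bounded in $t$ on compact intervals (with constants controlled by $\beta$, $k$ and their derivatives), the dominated convergence theorem permits differentiation under the integral sign, yielding
\begin{align*}
	\frac{\d}{\d t}\Vert\omega_{t}\Vert_{\L^{2}}^{2}=2\,\mathrm{Re}\langle\partial_{t}\omega_{t},\omega_{t}\rangle_{\L^{2}}+\int_{\Sigma}\bigl[\partial_{t}h^{-1}_{(k),t}\bigr](\overline{\omega_{t}},\omega_{t})\,\vol_{h_{t}}+\int_{\Sigma}h^{-1}_{(k),t}(\overline{\omega_{t}},\omega_{t})\,\partial_{t}\vol_{h_{t}}\, .
\end{align*}

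The two terms involving derivatives of the metric are the ones I need to control. The tensor $\partial_{t}h^{-1}_{(k),t}$ is an $h_{t}$-endomorphism of $\bigwedge^{k}\T^{\ast}\Sigma$ built polynomially from $h_{t}^{-1}$ and $\partial_{t}h_{t}=-2\beta k_{t}$, so the bounded-geometry hypotheses together with the boundedness assumptions on $\beta$ and $k$ yield a pointwise bound
\begin{align*}
	\bigl|[\partial_{t}h^{-1}_{(k),t}](\overline{\xi},\xi)\bigr|\leq c_{1}(t)\,h^{-1}_{(k),t}(\overline{\xi},\xi)\,,\qquad \bigl|\partial_{t}\vol_{h_{t}}/\vol_{h_{t}}\bigr|\leq c_{2}(t)\, ,
\end{align*}
for all $\xi\in\bigwedge^{k}\T^{\ast}\Sigma$, with $c_{1},c_{2}$ locally uniformly bounded in $t$. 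Integrating these against $|\omega_{t}|^{2}_{h_{(k),t}}\,\vol_{h_{t}}$ bounds the two extra terms by $c(t)\Vert\omega_{t}\Vert_{\L^{2}}^{2}$ with $c(t)\defeq c_{1}(t)+c_{2}(t)$, and combining with the $\mathrm{Re}$-term gives the claimed inequality.

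The main technical point is verifying the pointwise estimate on $\partial_{t}h^{-1}_{(k),t}$; this is where the bounded geometry of the slices and the uniform control over $\beta$, $k$ (and their derivatives) from Setup~\ref{setup} enter in an essential way. Everything else is standard manipulation of the Leibniz rule under the integral sign.
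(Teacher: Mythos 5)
Your proof is correct and rests on the same idea as the paper's: differentiate $\Vert\omega_t\Vert_{\L^2}^2$ and absorb the geometric correction terms into a multiple of $\Vert\omega_t\Vert_{\L^2}^2$. The paper implements this by first conjugating with a $t$-dependent unitary $\mathsf{U}_t=\rho_t u_t\colon\L^2_k(\Sigma_t)\to\L^2_k(\Sigma_0)$ so that the pairing on the reference slice is time-independent and the derivative falls only on $\mathsf{U}_t\omega_t$; you instead differentiate the time-dependent pairing directly via a Leibniz rule and isolate the $\partial_t h^{-1}_{(k),t}$ and $\partial_t\vol_{h_t}$ contributions. The two bookkeeping devices are interchangeable and yield the same bound; your version is somewhat more explicit about where the boundedness of $\beta$, $k_t$ and $\tr_{h_t}(k_t)$ from Setup~\ref{setup} enters. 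One small caveat worth making precise in a final write-up: since $\omega$ is only assumed to be $C^\infty$ as a map into $\L^2$, the justification of the Leibniz differentiation is better phrased via the standard Banach-space product rule for the continuous bilinear family $(u,v)\mapsto\langle u,v\rangle_{\L^2(\Sigma_t)}$ (smooth in $t$ with locally uniformly bounded derivative), rather than pointwise dominated convergence applied to $\partial_t\omega_t$, which is not defined pointwise on $\Sigma$; this is the same technicality the paper encounters with $\partial_t(u_t\omega_t)$.
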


\begin{proof}
	First, we note that there is a unitary operator $\mathsf{U}_{t}\:\L^{2}_{k}(\Sigma_{t})\to\L^{2}_{k}(\Sigma_{0})$ defined by $\mathsf{U}_{t}=\rho_{t}\cdot u_{t}$, where $u_{t}\:\Omega^{k}(\Sigma_{t};\CC)\to\Omega^{k}(\Sigma_{0};\CC)$ is constructed using the flow of $\partial_{t}$ and $\rho\in C^{\infty}(\M,(0,\infty))$ is the map defined by $\mathrm{vol}_{h_{t}}=\rho_{t}^{2}\mathrm{vol}_{h_{0}}$. By the dominate convergence, it follows that 
	\begin{align*}
		\frac{\d}{\d t}\Vert\omega_{t}\Vert_{\L^{2}}^{2}=&\frac{\d}{\d t}\Vert\mathsf{U}_{t}\omega_{t}\Vert_{\L^{2}}^{2}=\frac{\d}{\d t}\int_{\Sigma}\,(h_{0})^{-1}_{(k)}(u_{t}\omega_{t},u_{t}\omega_{t})\,\rho_{t}^{2}\mathrm{vol}_{h_{0}}\leq \\\leq& 2\mathrm{Re}\bigg\{\int_{\Sigma}(h_{0})^{-1}_{(k)}(\partial_{t}u_{t}\omega_{t},u_{t}\omega_{t})\,\rho^{2}_{t}\mathrm{vol}_{h_{0}}\bigg\}+c(t)\cdot\Vert\omega_{t}\Vert_{\L^{2}}^{2}=\\=&2\mathrm{Re}\{\langle\partial_{t}\omega_{t},\omega_{t}\rangle_{\L^{2}}\}+c(t)\cdot\Vert\omega_{t}\Vert_{\L^{2}}^{2}
	\end{align*}
	where we used the fact that $u_{t}$ commutes with $\partial_{t}$ up to bounded terms depending on the second fundamental form $k$ and lapse function $\beta$. Since the term involving $\partial_{t}\rho_{t}^{2}$, which yields a logarithmic change of the volume $\frac{\partial_{t}\rho_{t}^{2}}{\rho_{t}^{2}}\mathrm{vol}_{h_{t}}$, can be bounded by a time-dependent constant $c(t)$, we can conclude our proof.
\end{proof}	

\begin{proposition}[Energy Estimates]\label{Prop:EnergyEstimate}
	Assume the Setup~\ref{setup}. Then, for any  $\omega\in\Gamma_{s}(\V_{k})$ such that $\partial_t\omega\in \Gamma_{s}(\V_{k})$ and $\square_{k}\omega\in\Gamma_{s-2}(\V_{k})$ it holds
	\begin{align*}
		\mathcal{E}_{s}(\omega,t_{1})\leq \mathcal{E}_{s}(\omega,t_{0})\cdot e^{C (t_{1}-t_{0})}+\int_{t_{0}}^{t_{1}}e^{C (t_{1}-\tau)}\Vert\square_{k}\omega\vert_{\Sigma_{\tau}}\Vert_{\mathcal{H}^{s-2}}^{2}\,\d\tau
	\end{align*}
	for some constant $C>0$, where $\mathcal{E}_{s}(\omega,\cdot)\:\RR\to\RR$ is for all $t\in\RR$ defined by
	\begin{align*}
		\mathcal{E}_{s}(\omega,t):&=\Vert\omega\vert_{\Sigma_{t}}\Vert_{\mathcal{H}^{s}}^{2}+\Vert\partial_{t}\omega\vert_{\Sigma_{t}}\Vert_{\mathcal{H}^{s-1}}^{2}\,.
	\end{align*}
\end{proposition}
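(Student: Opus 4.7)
The plan is to establish a differential inequality of the form
\[
\frac{d}{dt}\mathcal{E}_{s}(\omega,t) \le C\,\mathcal{E}_{s}(\omega,t) + \Vert\square_{k}\omega\vert_{\Sigma_{t}}\Vert^{2}_{\mathcal{H}^{s-2}}
\]
with $C>0$ depending only on the Setup~\ref{setup} data, and then integrate by Gr\"onwall's inequality to obtain the claimed estimate. To compute the left-hand side I invoke spectral calculus on each slice $(\Sigma_{t},h_{t})$: setting $\varepsilon_{t}:=(\mathds{1}+\overline{\Delta_{k}^{t}})^{1/2}$, one has
\[
\mathcal{E}_{s}(\omega,t) = \Vert\varepsilon_{t}^{s}\omega_{t}\Vert^{2}_{\L^{2}(\Sigma_{t})} + \Vert\varepsilon_{t}^{s-1}\partial_{t}\omega_{t}\Vert^{2}_{\L^{2}(\Sigma_{t})}.
\]
Differentiating each summand and applying Lemma~\ref{Lemma:L2Bound} produces two principal cross-terms, namely $2\,\textrm{Re}\langle\varepsilon_{t}^{s}\partial_{t}\omega,\varepsilon_{t}^{s}\omega\rangle_{\L^{2}}$ and $2\,\textrm{Re}\langle\varepsilon_{t}^{s-1}\partial_{t}^{2}\omega,\varepsilon_{t}^{s-1}\partial_{t}\omega\rangle_{\L^{2}}$, together with a remainder that absorbs the $c(t)$-correction of the lemma and the commutator $[\partial_{t},\varepsilon_{t}^{s}]$, the latter being bounded on $\H^{s}_{k}(\Sigma_{t})$ thanks to the uniform bounds on $h_{t}$ and $\partial_{t}h_{t}$ imposed in Setup~\ref{setup}.

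To handle the two principal cross-terms I use the wave equation. Writing $\square_{k}=\beta^{-2}\partial_{t}^{2}+\Delta_{k}^{t}+R$ with $R$ a first-order-in-$\partial_{t}$ operator whose coefficients are controlled by the Setup~\ref{setup} bounds on $\beta^{\pm 1}$ and $k$, I substitute $\partial_{t}^{2}\omega=\beta^{2}\square_{k}\omega-\beta^{2}\Delta_{k}^{t}\omega-\beta^{2}R\omega$ into the second cross-term. Simultaneously, using the spectral identity $\varepsilon_{t}^{s+1}=\varepsilon_{t}^{s-1}(\mathds{1}+\Delta_{k}^{t})$ and self-adjointness of $\varepsilon_t$, I expand the first cross-term so that its formally top-order piece pairs directly with the $-\beta^{2}\Delta_{k}^{t}\omega$ contribution of the second. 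The combined top-order piece reads
\[
2\,\textrm{Re}\,\bigl\langle\varepsilon_{t}^{s-1}\partial_{t}\omega,\,\varepsilon_{t}^{s-1}(\mathds{1}-\beta^{2})\Delta_{k}^{t}\omega\bigr\rangle_{\L^{2}}\,,
\]
which vanishes identically in the ultrastatic case $\beta\equiv 1$. In general, commuting $\beta^{2}$ through $\varepsilon_{t}^{s-1}$ produces a pseudodifferential operator of order $s-2$ applied to $\Delta_{k}^{t}\omega$, whence the resulting contribution is controlled by $\Vert\omega\Vert_{\H^{s}}^{2}\le\mathcal{E}_{s}$.

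What remains are the $R$-contributions, which are bounded directly by $\mathcal{E}_{s}$ via Setup~\ref{setup}, and the source term $2\,\textrm{Re}\langle\varepsilon_{t}^{s-1}\beta^{2}\square_{k}\omega,\varepsilon_{t}^{s-1}\partial_{t}\omega\rangle_{\L^{2}}$. For the source, a redistribution of the spectral powers together with Cauchy--Schwarz and Young's inequality yields a bound $\lesssim\Vert\square_{k}\omega\Vert^{2}_{\mathcal{H}^{s-2}}+\mathcal{E}_{s}$. Summing all contributions gives the announced differential inequality, and Gr\"onwall's lemma then produces the integrated energy estimate. The principal technical obstacle is precisely the cancellation of the top-order pieces described above: without it, $\frac{d}{dt}\mathcal{E}_{s}$ would contain uncontrolled contributions of formal order $s+1$ in $\omega$. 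The bounded-geometry hypotheses of Setup~\ref{setup}, namely uniform bounds on $\beta^{\pm 1}$, $k$, $h_{t}$ and their time derivatives, are tailored precisely to ensure that the residual commutator corrections arising from the deviation from the ultrastatic case remain \emph{lower-order perturbations}, and can therefore be absorbed into the Gr\"onwall constant $C$.
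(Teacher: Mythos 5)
Your proposal follows the same overall strategy as the paper: differentiate $\mathcal{E}_{s}$ slice-wise, invoke Lemma~\ref{Lemma:L2Bound} for the base $\L^{2}$-level differentiation, control the commutator $[\E^{s/2},\partial_{t}]$ as a pseudodifferential operator of order $\leq s$, substitute the wave equation for $\partial_{t}^{2}\omega$, cancel the matched top-order cross-terms, and finish by Gr\"onwall. One cosmetic difference is that the paper works with the component decomposition $\omega=\omega_{0}\,\d t+\omega_{1}$ and the structure~\eqref{eq:WaveOperatorDecomposition}, in which the mixing between $\omega_{0}$ and $\omega_{1}$ sits in the lower-order operators $\A_{i},\B_{i},\C_{i},\F_{i}$; you absorb this into a single remainder $R$, which is fine since the top-order piece is block-diagonal.

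There is, however, a genuine gap in your treatment of the $\beta^{2}$-deviation from the ultrastatic case. You correctly isolate the residual top-order term
\begin{equation*}
2\,\mathrm{Re}\,\bigl\langle\varepsilon_{t}^{s-1}\partial_{t}\omega,\ \varepsilon_{t}^{s-1}(\mathds{1}-\beta^{2})\Delta_{k}^{t}\omega\bigr\rangle_{\L^{2}} ,
\end{equation*}
but your claim that it is controlled by $\Vert\omega\Vert_{\H^{s}}^{2}$ because commuting $\beta^{2}$ past $\varepsilon_{t}^{s-1}$ produces an operator of order $s-2$ is incomplete. The commutator piece is indeed of order $s-2$ applied to $\Delta_{k}^{t}\omega$ and hence controlled. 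But the non-commutator piece $\bigl\langle\varepsilon_{t}^{s-1}\partial_{t}\omega,\ (\mathds{1}-\beta^{2})\varepsilon_{t}^{s-1}\Delta_{k}^{t}\omega\bigr\rangle_{\L^{2}}$ survives and $\varepsilon_{t}^{s-1}\Delta_{k}^{t}$ has order $s+1$; after Cauchy--Schwarz this is bounded only by $\Vert\partial_{t}\omega\Vert_{\H^{s-1}}\Vert\omega\Vert_{\H^{s+1}}$, or equivalently by $\Vert\partial_{t}\omega\Vert_{\H^{s}}\Vert\omega\Vert_{\H^{s}}$ after re-pairing the half-powers, neither of which is dominated by $\mathcal{E}_{s}$. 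Commutators alone cannot remove a coefficient sitting in the principal symbol.

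The standard repair (as in B\"ar--Wafo, to whom the paper's proof defers) is to carry out the energy identity with the weighted functional $\widetilde{\mathcal{E}}_{s}(\omega,t)\defeq\Vert\omega\vert_{\Sigma_{t}}\Vert_{\mathcal{H}^{s}}^{2}+\Vert\beta^{-1}\partial_{t}\omega\vert_{\Sigma_{t}}\Vert_{\mathcal{H}^{s-1}}^{2}$, which is equivalent to $\mathcal{E}_{s}$ by the Setup~\ref{setup} bounds on $\beta^{\pm1}$. After substituting $\beta^{-1}\partial_{t}^{2}\omega=\beta\bigl(\square_{k}\omega-\Delta_{k}^{t}\omega-\dots\bigr)$ into the derivative of the weighted second summand, the offending cross-term becomes $-2\,\mathrm{Re}\,\langle\beta\Delta_{k}^{t}\omega,\beta^{-1}\partial_{t}\omega\rangle_{\H^{s-1}}$; here the explicit factors $\beta$ and $\beta^{-1}$ pair up and cancel exactly (modulo commutators of order $\leq s-2$, which are genuinely lower order), and the remaining $-2\,\mathrm{Re}\,\langle\Delta_{k}^{t}\omega,\partial_{t}\omega\rangle_{\H^{s-1}}$ cancels identically against $2\,\mathrm{Re}\,\langle\partial_{t}\omega,\Delta_{k}^{t}\omega\rangle_{\H^{s-1}}$ arising from the first summand. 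The top-order cancellation therefore requires the weight, not merely a commutator estimate.
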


\begin{proof}
 We follow a similar strategy as in \cite{BaerWafo2015} (see also the discussion in \cite[Sec.~3.7]{BaerLectureNotes}). As a first step, we try to find suitable bounds for $\partial_{t}\Vert\omega\vert_{\Sigma_{t}}\Vert_{\mathcal{H}^{s}}^{2}$ and $\partial_{t}\Vert\partial_{t}\omega\vert_{\Sigma_{t}}\Vert_{\mathcal{H}^{s-1}}^{2}$. As usual, we decompose $\omega=\d t\wedge\omega_{0}+\omega_{1}$ and recall $\Vert\omega\vert_{\Sigma_{t}}\Vert_{\mathcal{H}^{s}}^{2}=\Vert\omega_{0}\vert_{\Sigma_{t}}\Vert_{\H^{s}}^{2}+\Vert\omega_{1}\vert_{\Sigma_{t}}\Vert_{\H^{s}}^{2}$. Using Lemma \ref{Lemma:L2Bound}, we find
	\begin{equation}\label{eq:FirstPiece}
	\begin{aligned}
		\frac{\d}{\d t}\Vert\omega_{i}\vert_{\Sigma_{t}}\Vert_{\H^{s}}^{2}&\leq  2\mathrm{Re}\{\langle\partial_{t}\E^{\frac{s}{2}}\omega_{i}\vert_{\Sigma_{t}},\E^{\frac{s}{2}}\omega_{i}\vert_{\Sigma_{t}}\rangle_{\L^{2}}\}+c_{1}(t)\cdot\Vert\E^{\frac{s}{2}}\omega_{i}\vert_{\Sigma_{t}}\Vert_{\L^{2}}^{2}\\
		&\leq 2\mathrm{Re}\{\langle\partial_{t}\omega_{i}\vert_{\Sigma_{t}},\omega_{i}\vert_{\Sigma_{t}}\rangle_{\H^{s}}\}+c_{2}(t)\cdot\Vert\omega_{i}\vert_{\Sigma_{t}}\Vert_{\H^{s}}^{2}\, ,
	\end{aligned}
	\end{equation}
	where, in order to obtain the second estimate, we used that the commutator $[\E^{\frac{s}{2}},\partial_{t}]$ is effectively a pseudodifferential operator of order $\leq s$, as one can easily see be calculating its principal symbol, and hence can be bounded in the $\Vert\cdot\Vert_{\H^{s}}$-norm. Following similar steps, we obtain the bound
	\begin{align}\label{eq:TimeDerivativeTerm}
		\frac{\d}{\d t}\Vert\partial_{t}\omega_{i}\vert_{\Sigma_{t}}\Vert_{\H^{s-1}}^{2}\leq 2\mathrm{Re}\{\langle\partial_{t}^{2}\omega_{i}\vert_{\Sigma_{t}},\partial_{t}\omega_{i}\vert_{\Sigma_{t}}\rangle_{\H^{s-1}}\}+c_{3}(t)\cdot\Vert\partial_{t}\omega_{i}\vert_{\Sigma_{t}}\Vert_{\H^{s-1}}^{2}\, .
	\end{align}
	Next, we want to estimate the inner product containing $\partial_{t}^{2}\omega_{i}$. For this, we recall the Weitzenböck formula of Riemannian geometry, which states $\D_{k}=-g^{\mu\nu}\nabla_{\mu}\nabla_{\nu}+\R$, where $\R\:\Gamma(\V_{k})\to\Gamma(\V_{k})$ is a linear differential operator of order zero constructed from the curvature tensor of $(\M,g)$. Now, similar to $\omega$, we decomposing the $k$-form $\D_{k}\omega$ as $\D_{k}\omega=\d t\wedge (\D_{k}\omega)_{0}+(\D_{k}\omega)_{1}$ with $(\D_{k}\omega)_{0}\defeq \partial_{t}\lrcorner (\D_{k}\omega)$. In general, both $(\D_{k}\omega)_{0}$ and $(\D_{k}\omega)_{1}$ will contain both the components $\omega_{0}\in C^{\infty}(\RR,\Omega^{k-1}_{s}(\Sigma_{\bullet}))$ and $\omega_{1}\in C^{\infty}(\RR,\Omega^{k}_{s}(\Sigma_{\bullet}))$ as well as their (first) derivatives, but it is not too hard to see that they have the following general structure:
	\begin{align}\label{eq:WaveOperatorDecomposition}
		(\D_{k}\omega)_{i}=\beta^{-2}\partial_{t}^{2}\omega_{i}+\E_{k-1+i}\omega_{i}+\A_{i}\omega_{0}+\B_{i}\omega_{1}+\C_{i}(\partial_{t}\omega_{0})+\F_{i}(\partial_{t}\omega_{1})
	\end{align}
	where $\E_{k}=\mathds{1}+\Delta_{k}$ with $\Delta_{k}=\delta_{\Sigma}\d_{\Sigma}+\d_{\Sigma}\delta_{\Sigma}$, as usual, and where  
	\begin{align*}
		&\A_{i}\:\Omega_{s}^{k-1}(\Sigma;\CC)\to\Omega_{s-1}^{k-1+i}(\Sigma;\CC)\\
		&\B_{i}\:\Omega_{s}^{k}(\Sigma;\CC)\to\Omega_{s-1}^{k-1+i}(\Sigma;\CC)\\
		&\C_{i}\:\Omega_{s}^{k-1}(\Sigma;\CC)\to\Omega_{s}^{k-1+i}(\Sigma;\CC)\\
		&\F_{i}\:\Omega_{s}^{k}(\Sigma;\CC)\to\Omega_{s}^{k-1+i}(\Sigma;\CC)
	\end{align*}
	are bounded linear differential operators, containing factors of $\beta,\beta^{-1}$ as well as contractions of the second fundamental form $k$ with $\omega_{i}$, $\A_{i},\B_{i}$ of order one and $\C_{i},\F_{i}$ of order zero, differentiating only in $\Sigma_{t}$-direction\footnote{If $(\M=\RR\times\Sigma,g=-\d t^{2}+h)$ is ultrastatic, then clearly $\A_{0}=\B_{1}=-\mathds{1}$ and $\A_{1}=\B_{0}=\C_{i}=\F_{i}=0$.}. Using the decomposition~\eqref{eq:WaveOperatorDecomposition} as well as the Cauchy-Schwarz inequality, we estimate the term involving $\partial_{t}^{2}\omega_{i}$ in~\eqref{eq:TimeDerivativeTerm} further as
	\begin{equation}\label{eq:SecondPiece}
	\begin{aligned}
		2\mathrm{Re}&\{\langle\partial_{t}^{2}\omega_{i}\vert_{\Sigma_{t}},\partial_{t}\omega_{i}\vert_{\Sigma_{t}}\rangle_{\H^{s-1}}\}\\
		=&2\mathrm{Re}\{\langle \beta^{2}((\D_{k}\omega)_{i}-\E\omega_{i}-\A_{i}\omega_{0}-\B_{i}\omega_{1}-\C_{i}(\partial_{t}\omega_{0})-\F_{i}(\partial_{t}\omega_{1}))\vert_{\Sigma_{t}},\partial_{t}\omega_{i}\vert_{\Sigma_{t}}\rangle_{\H^{s-1}}\}\\
		\leq& -2\mathrm{Re}\{\langle\E\omega_{i}\vert_{\Sigma_{t}},\partial_{t}\omega_{i}\vert_{\Sigma_{t}}\rangle_{\H^{s-1}}\}+c_{4}(t)\cdot\bigg( \Vert (\square_{k}\omega)_{i}\vert_{\Sigma_{t}}\Vert_{\H^{s}}+ \Vert \omega_{0}\vert_{\Sigma_{t}}\Vert_{\H^{s}}+\\
		&+\Vert \omega_{1}\vert_{\Sigma_{t}}\Vert_{\H^{s}}+\Vert \partial_{t}\omega_{0}\vert_{\Sigma_{t}}\Vert_{\H^{s-1}} +\Vert \partial_{t}\omega_{1}\vert_{\Sigma_{t}}\Vert_{\H^{s-1}}\bigg)\cdot\Vert \partial_{t}\omega_{i}\vert_{\Sigma_{t}}\Vert_{\sf{H}^{s-1}}
	\end{aligned}
	\end{equation}
	where also used that $\A_{i},\B_{i},\C_{i},\F_{i}$ are bounded in the relevant Sobolev norms. Now, note that 
	\begin{align*}
		\langle\E\omega_{i}\vert_{\Sigma_{t}},\partial_{t}\omega_{i}\vert_{\Sigma_{t}}\rangle_{\H^{s-1}}=\langle\E^\frac{1}{2}\omega_{i}\vert_{\Sigma_{t}},\E^\frac{1}{2}\partial_{t}\omega_{i}\vert_{\Sigma_{t}}\rangle_{\H^{s-1}}=\langle\omega_{i}\vert_{\Sigma_{t}},\partial_{t}\omega_{i}\vert_{\Sigma_{t}}\rangle_{\H^{s}}\, .
	\end{align*}
	Summing~\eqref{eq:FirstPiece} and~\eqref{eq:SecondPiece}, we see that the term $-2\mathrm{Re}\{\omega_{i}\vert_{\Sigma_{t}},\partial_{t}\omega_{i}\vert_{\Sigma_{t}}\rangle_{\H^{s}}\}$ cancels and we are left with the inequality
	\begin{align*}
	\frac{\d}{\d t}\mathcal{E}_{s}(\omega,t)\leq c_{4}(t)\cdot \mathcal{E}_{s}(\omega,t)+\Vert (\square_{k}\omega)\vert_{\Sigma_{t}}\Vert^{2}_{\mathcal{H}^{s-1}}\, .
\end{align*}
The claimed result follows by applying Grönwall's lemma \cite{Gronwall} (see also \cite[Lemma~1.5.1]{BaerLectureNotes}) to a compact time interval.
\end{proof}

Let us equip the space $\Gamma_{s}(\V_{k})$ with the topology induced by the family of seminorms
\begin{align*}	\Vert\omega\Vert_{\mathrm{I},s}\defeq \bigg(\int_{\mathrm{I}}\,\Vert\omega\vert_{\Sigma_{t}}\Vert_{\mathcal{H}^{s}}^{2}\,\mathrm{d}t\bigg)^{\frac{1}{2}}
\end{align*}
labelled by compact time intervals $\mathrm{I}\subset\RR$. Taking a compact exhaustion of $\mathbb{R}$, we conclude that the topology of $\Gamma_{s}(\V_{k})$ is induced by a countable family of seminorms and hence in particular (pseudo)metrizable. However, note that $\Gamma_{s}(\V_{k})$ is in general not a Fréchet space, since not every limit of a convergent sequence in $\Gamma_{s}(\V_{k})$ w.r.t.~$\Vert\cdot\Vert_{\mathrm{I},s}$ is necessarily smooth.

As an immediate consequence of the energy estimate, we obtain the following result.

\begin{corollary}\label{Corollary:Energy}
Assume the Setup~\ref{setup} and let $\omega\in\Gamma_{s}(\V_{k})$ be such that $\partial_{t}\omega\in\Gamma_{s}(\V_{k})$ and $\D_{k}\omega\in\Gamma_{s-2}(\V_{k})$. Then, for every compact time interval $\mathrm{I}=[t_{0},t_{1}]\subset\mathbb{R}$ it holds that
	\begin{align*}
		\Vert\omega\Vert_{\mathrm{I},s}^{2}\leq C\cdot\bigg(\Vert\omega\vert_{\Sigma_{t_{0}}}\Vert_{\mathcal{H}^{s}(\Sigma_{t_{0}})}^{2}+\Vert\partial_t\omega\vert_{\Sigma_{t_{0}}}\Vert_{\mathcal{H}^{s-1}(\Sigma_{t_{0}})}^{2}+\Vert\D_{k}\omega\Vert_{\mathrm{I},s}^{2}\bigg)
	\end{align*}
\end{corollary}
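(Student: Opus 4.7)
The plan is to obtain the bound by simply integrating the pointwise energy inequality from Proposition~\ref{Prop:EnergyEstimate} over the compact interval $\mathrm{I}=[t_0,t_1]$ and applying Fubini's theorem. This is essentially bookkeeping, so I do not expect any genuine obstacle.

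First, I would note that by the very definition of $\mathcal{E}_s(\omega,t)$, the pointwise bound
\begin{align*}
\Vert\omega\vert_{\Sigma_t}\Vert_{\mathcal{H}^s}^2 \le \mathcal{E}_s(\omega,t)
\end{align*}
holds, so integrating in $t$ gives $\Vert\omega\Vert_{\mathrm{I},s}^2\le\int_{t_0}^{t_1}\mathcal{E}_s(\omega,t)\,\d t$. Then I would insert the estimate provided by Proposition~\ref{Prop:EnergyEstimate}, yielding
\begin{align*}
\Vert\omega\Vert_{\mathrm{I},s}^2 \le \mathcal{E}_s(\omega,t_0)\int_{t_0}^{t_1} e^{C(t-t_0)}\,\d t + \int_{t_0}^{t_1}\!\!\int_{t_0}^{t} e^{C(t-\tau)}\Vert\D_k\omega\vert_{\Sigma_\tau}\Vert^2_{\mathcal{H}^{s-2}}\,\d\tau\,\d t.
\end{align*}
The first summand equals $\mathcal{E}_s(\omega,t_0)\cdot(e^{C(t_1-t_0)}-1)/C$, which controls the two initial-data terms on the right-hand side of the claimed inequality, with a constant depending on the (bounded) length of $\mathrm{I}$.

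For the double integral, I would swap the order of integration: since $\{(t,\tau)\mid t_0\le\tau\le t\le t_1\}=\{(t,\tau)\mid t_0\le\tau\le t_1,\ \tau\le t\le t_1\}$, Fubini gives
\begin{align*}
\int_{t_0}^{t_1}\Vert\D_k\omega\vert_{\Sigma_\tau}\Vert^2_{\mathcal{H}^{s-2}}\!\!\int_\tau^{t_1} e^{C(t-\tau)}\,\d t\,\d\tau \le \frac{e^{C(t_1-t_0)}-1}{C}\,\Vert\D_k\omega\Vert_{\mathrm{I},s-2}^2.
\end{align*}
Collecting the two pieces and using the continuous embedding $\mathcal{H}^s\hookrightarrow\mathcal{H}^{s-2}$ from Lemma~\ref{Lemma:WeakCovergence} to absorb the norm on $\D_k\omega$ into the form stated, we obtain the claimed inequality with constant $C'=(e^{C(t_1-t_0)}-1)/C$, which depends only on $C$ and $|\mathrm{I}|$. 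The only mildly subtle point is matching the Sobolev index on the source term $\D_k\omega$: the energy inequality naturally produces the $\mathcal{H}^{s-2}$-norm, which is controlled by the $\mathcal{H}^s$-norm along $\Sigma_\tau$ via Lemma~\ref{Lemma:WeakCovergence}, so no additional regularity beyond the hypothesis $\D_k\omega\in\Gamma_{s-2}(\V_k)$ is required.
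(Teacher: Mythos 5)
Your proof is correct and is exactly the argument the paper has in mind; the paper itself simply writes ``We omit the proof and refer to [Bär--Wafo, Cor.~12],'' and what you wrote out (drop the $\partial_t$-term from $\mathcal{E}_s$, integrate the Grönwall estimate of Proposition~\ref{Prop:EnergyEstimate} over $\mathrm{I}$, Fubini on the forcing term, and monotonicity of the Sobolev norms from Lemma~\ref{Lemma:WeakCovergence}) is precisely that standard integration-in-time argument. You are also right that the natural output of the energy inequality is $\Vert\D_k\omega\Vert_{\mathrm{I},s-2}^2$ rather than the $\Vert\D_k\omega\Vert_{\mathrm{I},s}^2$ appearing in the statement; your upgrade via $\Vert\cdot\Vert_{\H^{s-2}}\le\Vert\cdot\Vert_{\H^{s}}$ is the correct way to reconcile the two, and in fact shows the corollary holds in the sharper form with index $s-2$.
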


We omit the proof and refer to \cite[Corollary 12]{BaerWafo2015}. We can finally prove the main result of this section.

\begin{proof}[Proof of Theorem~\ref{thm:sob Cauchy}]
The existence and the uniqueness of the solution of the Cauchy problem for smooth Cauchy data is well-know, (see e.g.~\cite{BaerBook}), so we only have to show that we our choice of Cauchy data the solution belong to $\Gamma_s(\V_k)$. To this end, we recast that the solution to normally hyperbolic PDEs enjoy finite propagation of speed, namely if $(f,{h_{1}},{h_{2}})\in\Gamma_{\sc}(\V_{k})\times\Gamma_{\c}(\V_{\rho_{k}})$, then $u\in\Gamma_{\sc}(\V_k)$.
Therefore, consider a sequence of Cauchy data $(f_n,{h_{1}}_n,{h_{2}}_n)\in\Gamma_{\sc}(\V_{k})\times\Gamma_{\c}(\V_{\rho_{k}})$ which converge  to an element $(f^*,{h_1^*},{h_2^*})\in\Gamma_{s-2}(\V_{k})\times\Gamma_{s}(\V_{\rho_{k}})$ (in the intersection topology)  and denote the corresponding solution of the Cauchy problem with $u_n\in\Gamma_{\sc}(\V_k)$. Then, the limit $\lim u_n$ will also converge to a smooth solution $u$ to the Cauchy problem and, by Corollary~\ref{Corollary:Energy}, we can conclude that $u\in\Gamma_s(\V_k)$. 
\end{proof}

Moreover, we recall that the normally hyperbolic operators $\D_{k}$ are in particular \textit{Green hyperbolic}, which means that there exist (unique) Green operators $\G_{k}^{+}$ (resp.~$\G_{k}^{-}$), which are the inverses of $\D_{k}$ when restricted to past (resp.~future) compactly supported sections. In the setting of Sobolev spaces, Theorem~\ref{thm:sob Cauchy} gives rise to the following characterization:

\begin{proposition}\label{Prop:Green}
Let $(\M,g)$  as in the Setup~\ref{setup}. Then, there exist (unique) linear operators $\G_{k}^{\pm}\colon\Gamma_{\tc,s-2}(\V_{k})\to\Gamma_{s}(\V_{k})$,
    called \textit{advanced/retarded Green operators}, satisfying the following conditions:
    \begin{align*}
        \text{(i)}&\hspace*{1cm}\G_{k}^{\pm}\circ \D_{k}\vert_{\Gamma_{\tc,s}}=\D_{k}\circ\G_{k}^{\pm}=\Id_{\Gamma_{\tc,s}}\\
        \text{(ii)}&\hspace*{1cm}\mathrm{supp}(\G_{k}^{\pm}s)\subset \J^{\pm}(\mathrm{supp}(s))\hspace*{1cm}\forall s\in\Gamma_{\mathrm{tc}}(\V)
\end{align*}
Furthermore, we define the \textit{causal propagator} by $\G_{k}\defeq\G_{k}^{+}-\G_{k}^{-}:\Gamma_{\tc,s-2}(\V_{k})\to\Gamma_{s}(\V_{k})$. Then, the following sequence is exact and forms a complex:
\begin{align*}
	0\xrightarrow{\hspace*{1cm}}\Gamma_{\tc,s}(\V_{k})\xrightarrow{\hspace*{0.5cm}\D_{k}\hspace*{0.5cm}}\Gamma_{\tc,s-2}(\V_{k})\xrightarrow{\hspace*{0.5cm}\G_{k}\hspace*{0.5cm}}\Gamma_{s}(\V_{k})\xrightarrow{\hspace*{0.5cm}\D_{k}\hspace*{0.5cm}}\Gamma_{s-2}(\V_{k})\xrightarrow{\hspace*{1cm}}0\, .
\end{align*}
\end{proposition}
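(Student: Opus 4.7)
The plan is to build the Green operators $\G_k^\pm$ by inverting the Cauchy problem of Theorem~\ref{thm:sob Cauchy}, and then to deduce the exact sequence by combining Cauchy uniqueness with a standard cutoff/partition-of-unity argument applied to the well-posedness of the Cauchy problem.

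For the construction, given $f\in\Gamma_{\tc,s-2}(\V_k)$, I would pick $t_{0}\in\RR$ with $\mathrm{supp}(f)\subset\{t>t_0\}$ and apply Theorem~\ref{thm:sob Cauchy} with vanishing Cauchy data on $\Sigma_{t_0}$ and source $f$ to obtain a unique $u\in\Gamma_s(\V_k)$ solving $\D_k u=f$; setting $\G_k^{+}f\defeq u$, independence from $t_0$ follows from Cauchy uniqueness. The operator $\G_k^{-}$ is defined symmetrically by solving from the future. Property~(i) reduces to Cauchy uniqueness: for $u\in\Gamma_{\tc,s}(\V_k)$, both $u$ and $\G_k^{\pm}\D_k u$ solve the same Cauchy problem with trivial data on a hypersurface outside $\mathrm{supp}(u)$. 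The support property~(ii) follows from finite propagation speed applied to the classical smooth solution $u$, and uniqueness of $\G_k^{\pm}$ is yet another application of Cauchy uniqueness.

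For the exactness of the complex, injectivity of $\D_k\vert_{\Gamma_{\tc,s}}$ is immediate from Cauchy uniqueness. The identity $\ker(\G_k\vert_{\Gamma_{\tc,s-2}})=\ran(\D_k\vert_{\Gamma_{\tc,s}})$ follows from~(i) and the support property: if $\G_k f=0$, then $\G_k^{+}f=\G_k^{-}f$ has simultaneously past- and future-compact support, hence lies in $\Gamma_{\tc,s}(\V_k)$, and is sent to $f$ by $\D_k$. For $\ran(\G_k)=\ker(\D_k\vert_{\Gamma_s})$, given $u\in\ker(\D_k\vert_{\Gamma_s})$, I would choose a smooth temporal cutoff $\chi$ with $\chi=0$ for $t\ll 0$ and $\chi=1$ for $t\gg 0$, and set $f\defeq\D_k(\chi u)=[\D_k,\chi]u$. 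Since $[\D_k,\chi]$ is a first-order differential operator with coefficients supported on the timelike compact strip $\{\d\chi\neq 0\}$, one has $f\in\Gamma_{\tc,s-1}(\V_k)\subset\Gamma_{\tc,s-2}(\V_k)$ by Lemma~\ref{Lemma:WeakCovergence}, and a direct check using uniqueness of $\G_k^{\pm}$ yields $\G_k^{+}f=\chi u$ and $\G_k^{-}f=(\chi-1)u$, so $\G_k f=u$.

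The remaining and technically most delicate step is the surjectivity of $\D_k\vert_{\Gamma_s}$. Given $f\in\Gamma_{s-2}(\V_k)$, I would split $f=f_{+}+f_{-}$ via a temporal cutoff, so that $f_{+}$ (resp.~$f_{-}$) has past-compact (resp.~future-compact) support, and then further decompose $f_{\pm}=\sum_n\rho_n f_{\pm}$ via a locally finite temporal partition of unity $\{\rho_n\}$ with each $\rho_n f_{\pm}\in\Gamma_{\tc,s-2}(\V_k)$. Setting $u_n^{\pm}\defeq\G_k^{\pm}(\rho_n f_{\pm})\in\Gamma_s(\V_k)$, finite propagation speed ensures that the family $\{\mathrm{supp}(u_n^{\pm})\}$ is locally finite, so $u\defeq\sum_n(u_n^{+}+u_n^{-})$ is a well-defined smooth section with $\D_k u=f$. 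The main obstacle lies precisely in confirming that this locally finite reassembly indeed produces an element of $\Gamma_s(\V_k)$: on each compact time interval $\mathrm{I}\subset\RR$ only finitely many $u_n^{\pm}$ contribute, and the $\Vert\cdot\Vert_{\mathrm{I},s}$-bound on $u\vert_{\mathrm{I}}$ must be obtained from the energy estimate of Proposition~\ref{Prop:EnergyEstimate} applied uniformly to this finite collection of pieces. Carrying out this last step carefully is the technical heart of the proof.
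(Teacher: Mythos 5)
Your construction of $\G_k^\pm$, your verification of (i) and (ii), and your handling of the first three exactness claims (injectivity of $\D_k\vert_{\Gamma_{\tc,s}}$, the identity $\ker(\G_k)=\ran(\D_k\vert_{\Gamma_{\tc,s}})$, and the identity $\ran(\G_k)=\ker(\D_k\vert_{\Gamma_s})$ via the temporal-cutoff commutator $[\D_k,\chi]u$) all coincide with the paper's argument, which itself adapts the Bär--Ginoux proof of the Green-hyperbolic exact sequence to the Sobolev setting.

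Where you depart from the paper is in the final step, the surjectivity of $\D_k\colon\Gamma_s(\V_k)\to\Gamma_{s-2}(\V_k)$, which you label the ``technically most delicate step'' and attack with a temporal splitting $f=f_++f_-$, a locally finite temporal partition of unity, and a reassembly of the $u_n^\pm=\G_k^\pm(\rho_n f_\pm)$. This is not wrong --- on every compact time interval only finitely many $u_n^\pm$ are nonzero, so the sum is finite slice by slice and lands in $\Gamma_s(\V_k)$ with no uniformity issue --- but it is a substantial detour. Theorem~\ref{thm:sob Cauchy} already allows an \emph{arbitrary} source $f\in\Gamma_{s-2}(\V_k)$, with no timelike-compactness restriction: given such an $f$, solve the inhomogeneous Cauchy problem $\D_k u=f$ with (say) vanishing data on one fixed $\Sigma_{t_0}$, and the theorem hands you $u\in\Gamma_s(\V_k)$ directly. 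That is the paper's entire argument for exactness at $\Gamma_{s-2}(\V_k)$. You seem to have implicitly assumed you must first funnel $f$ through the $\G_k^\pm$ and hence through $\Gamma_{\tc,s-2}$, but this is unnecessary. In short: the genuine difficulty in this proposition is the well-posedness result of Theorem~\ref{thm:sob Cauchy}, not its deployment in the exactness claim, and once you already rely on that theorem to define $\G_k^\pm$, the surjectivity statement is the one-line corollary, not the hard core.
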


\begin{proof}
	Let $f\in\Gamma_{\tc,s-2}(\V_{k})$. By assumption, there exists a time $t_{0}\in\RR$ such that $\mathrm{supp}(f)\subset \J^{+}(\Sigma_{t_{0}})$. We set $\G_{k}^{+}f\defeq u$ where $u$ is the unique solution of the Cauchy problem
	\begin{align*}
		\begin{cases}
			\D_{k}u &=f\\
			u\vert_{\Sigma_{0}}&=0\\
			\partial_{t}u\vert_{\Sigma_{0}}&=0
		\end{cases}
	\end{align*}
	By Theorem~\ref{thm:sob Cauchy}, it follows that $u\in\Gamma_{s}(\V_{k})$ and hence $\G_{k}^{+}\:\Gamma_{\tc,s-2}(\V_{k})\to\Gamma_{s}(\V_{k})$. Similarly, we construct $\G_{k}^{-}\:\Gamma_{\tc,s-2}(\V_{k})\to\Gamma_{s}(\V_{k})$ by choosing $t_{1}$ such that $\mathrm{supp}(f)\subset \J^{-}(\Sigma_{t_{1}})$. Clearly, (i) and (ii) are satisfied. For uniqueness, we recall that the Cauchy problem of $\D_{k}$ is well-posed for arbitrary smooth Cauchy data. In particular, following the same construction as above, we obtain more general Green operators $\G^{+}\:\Gamma_{\pc}(\V_{k})\to\Gamma_{\pc}(\V_{k})$ and $\G^{-}\:\Gamma_{\fc}(\V_{k})\to\Gamma_{\fc}(\V_{k})$ with similar properties. By construction, these operators are the inverses of $\D_{k}\vert_{\Gamma_{\pc}}$ resp.~$\D_{k}\vert_{\Gamma_{\fc}}$ and hence in particular unique. Uniqueness of $\G_{k}^{\pm}$ then follows from the fact that they are the restrictions to $\Gamma_{\tc,s-2}(\V_{k})$. For more details, we refer to \cite{Baer2015}. 
	
	Let us now turn to the claimed exact complex: The sequence displayed above is clearly a complex, i.e.~$\G_{k}\circ\D_{k}\vert_{\Gamma_{\tc,s}}=0$ and $\D_{k}\circ\G_{k}=0$. For exactness, we follow the proof in \cite[Theorem 3.5.]{BaerGinoux2011} adapted to the setting of Sobolev spaces: For exactness at $\Gamma_{\tc,s}(\V_{k})$, we observe that any $\omega\in\ker(\square_{k})\cap\Gamma_{\tc,s}(\V_{k})$ satisfies $\omega=\G^{\pm}\D_{k}\omega=0$. For exactness at $\Gamma_{\tc,s-2}(\V_{k})$, we have to show $\ker(\G_{k}\vert_{\Gamma_{\tc,s-2}})=\ran(\D_{k}\vert_{\Gamma_{\tc,s}})$. The direction ``$\supset$'' is clear. For ``$\subset$'', let $\omega\in\Gamma_{\tc,s-2}(\V_{k})$ such that $\G_{k}\omega=0$. We set $\eta\defeq \G_{k}^{+}\omega=\G_{k}^{-}\omega$. It follows that $\eta\in\Gamma_{\tc,s}$ and $\square\eta=\omega$. For exactness at $\Gamma_{s}(\V_{k})$, we need to show $\ker(\D_{k}\vert_{\Gamma_{s}})=\ran(\G_{k}\vert_{\Gamma_{\tc,s-2}})$. The direction ``$\supset$'' is clear. For ``$\subset$'', let $\omega\in\Gamma_{s}(\V_{k})$ be such that $\D_{k}\omega=0$. We take cutoff functions $\chi^{\pm}$ with $\mathds{1}=\chi^{+}-\chi^{-}$ and $\mathrm{supp}(\chi^{\pm}\omega)\subset\J^{\pm}(\Sigma_{\pm})$ for suitable Cauchy hypersurfaces $\Sigma_{\pm}$ with the property that $\J^{+}(\Sigma_{+})\cap\J^{-}(\Sigma_{-})\neq\emptyset$. Set $\omega^{\pm}\defeq \chi^{\pm}\omega$ and $\eta\defeq \D_{k}\omega^{+}=\D_{k}\omega^{-}$. Then, clearly $\mathrm{supp}(\eta)\subset\J^{+}(\Sigma_{+})\cap\J^{-}(\Sigma_{-})$, hence $\eta\in\Gamma_{\tc}(\V_{k})$ and $\G^{\pm}\eta=\omega^{\pm}$. We conclude that $\G\eta=\omega$. Last but not least, exactness at $\Gamma_{s-2}(\V_{k})$ follows from Theorem~\ref{thm:sob Cauchy}.
\end{proof}

A straightforward computation shows that the Green operators $\G^{\pm}$ are formal adjoints of each other w.r.t.~$(\cdot,\cdot)_{\V_{k}}$. In particular, the causal propagator is formally anti self-adjoint. Furthermore, the exact sequence above establishes the isomorphism
\begin{align*}
		[\G_{k}]\:\frac{\Gamma_{\tc,s-2}(\V_{k})}{\mathrm{ran}(\D_{k}\vert_{\Gamma_{\tc,s}})}\xrightarrow{\cong}\mathrm{ker}(\D_{k}\vert_{\Gamma_{s}})\, .
\end{align*}

\begin{remark}\label{Remark:ExactSequenceExtended}
	We remark that the extended causal propagator $\G\:\Gamma_{\tc}(\V_{k})\to\Gamma(\V_{k})$ induces a similar exact sequence of the form 
\begin{align*}
	0\xrightarrow{\hspace*{1cm}}\Gamma_{\tc}(\V_{k})\xrightarrow{\hspace*{0.5cm}\D_{k}\hspace*{0.5cm}}\Gamma_{\tc}(\V_{k})\xrightarrow{\hspace*{0.5cm}\G\hspace*{0.5cm}}\Gamma(\V_{k})\xrightarrow{\hspace*{0.5cm}\D_{k}\hspace*{0.5cm}}\Gamma(\V_{k})\xrightarrow{\hspace*{1cm}}0\, .
\end{align*}
\end{remark}
\section{The Quantization of Maxwell Theory in the Cauchy Radiation Gauge}\label{Sec:PhaseSpaceQuant}
The aim of this section is to fix completely the gauge degrees of freedom of Maxwell theory and to discuss different representations of the phase space for Sobolev initial data. For this purpose, we will adapt the abstract formalism for linear gauge theories introduced by Hack-Schenkel in~\cite{HackSchenkel2013} to the setting of Sobolev spaces and discuss the notion of Hadamard states.
For the reader interested in the quantization of a scalar free field theory, we recommend~\cite{AQFT1,Gerard2019}.
\subsection{Phase Space and Complete Gauge Fixing}\label{Sec:PhaseSpace}
As before, we consider a Maxwell bundle $(\V_{k},(\cdot,\cdot)_{\V_{k}})$ introduced in Section~\ref{Sec:Maxwell}, which is defined over an $(n+1)$-dimensional globally hyperbolic manifold $(\M,g)$ as in the Setup~\ref{setup}. Since $(\Sigma,h_t)$ is complete for all $t\in\RR$ we denote the space of smooth spacelike Sobolev fields of degree $s=\infty$ and the corresponding space of initial data by
\begin{align*}
	\Gamma_{\infty}(\V_{k})&=C^\infty\big(\RR,\Omega_\infty^{k-1}(\Sigma_{\bullet})\big)\oplus C^\infty\big(\RR,\Omega_{\infty}^{k}(\Sigma_{\bullet})\big)\\
	\Gamma_{\infty}(\V_{\rho_{k}})&=\mathcal{H}_{k}^{\infty}(\Sigma_{0})\oplus\mathcal{H}_{k}^{\infty}(\Sigma_{0})
\end{align*}
with $\mathcal{H}_{k}^{\infty}(\Sigma)=\Omega_{\infty}^{k-1}(\Sigma)\oplus\Omega_{\infty}^{k}(\Sigma)$, as in Section \ref{Sec:CauchyProblem}. The corresponding Cauchy data map $\rho_{k}:\omega\mapsto (\omega\vert_{\Sigma_{0}},\partial_{t}\omega\vert_{\Sigma_{0}})$ is hence well-defined as a map $\rho_{k}\:\Gamma_{\infty}(\V_{k})\to\Gamma_{\infty}(\V_{\rho_{k}})$. 

\begin{definition}
	We consider the following subspaces of $\Gamma_{\infty}(\V_{1})$ and $\Gamma_{\infty}(\V_{\rho_{1}})$:
	\begin{align*}
		\Gamma_{\infty,\d}(\V_{1})&=\{\omega=\omega_{0}\d t+\omega_{1}\in\Gamma_{\infty}(\V_{1})\mid \omega_{1}\vert_{\Sigma_{0}},\partial_{t}\omega_{1}\vert_{\Sigma_{0}}\in\Omega^{1}_{\infty,\d}(\Sigma_{0})\}\\
	\Gamma_{\infty,\d}(\V_{\rho_{1}})&= \mathcal{H}^{\infty}_{1,\d}(\Sigma_{0})\oplus \mathcal{H}^{\infty}_{1,\d}(\Sigma_{0})
	\end{align*}
where $\mathcal{H}^{\infty}_{1,\d}(\Sigma_{0})\defeq \Omega^{0}_{\infty}(\Sigma_{0})\oplus\Omega^{1}_{\infty,\d}(\Sigma_{0})\subset\mathcal{H}^{\infty}_{1}(\Sigma_{0})$. Clearly, $\rho_{1}\:\Gamma_{\infty,\d}(\V_{1})\to\Gamma_{\infty,\d}(\V_{\rho_{1}})$.
\end{definition}

\begin{remark}
	Note that the definition $\Gamma_{\infty,\d}(\V_{1})$ depends on the choice of initial hypersurface $\Sigma_{0}$, since the condition $\omega_{1}\vert_{\Sigma_{0}}\in\Omega^{1}_{\infty,\d}(\Sigma_{0})$ does in general not propagate.
\end{remark}

Using the discussion of Section~\ref{sec:Hodge}, we obtain the following generalization of Proposition~\ref{Prop:CauchyRadGauge} for non-compact manifolds. We start by proving the following preliminary Lemma:

\begin{corollary}\label{Corollary:Gaugefixing}
Let $(\M,g)$ be a globally hyperbolic manifold as in the Setup~\ref{setup}.
 For any $A\in\Gamma_{\infty}(\V_{1})$, there exists a unique (up to a constant) $f\in\Gamma(\V_{0})$ with the property $\d f\in\Gamma_{\infty,\d}(\V_{1})$ s.t.~$A^{\prime}\defeq A-\d f$ satisfies the Cauchy radiation gauge condition. 
\end{corollary}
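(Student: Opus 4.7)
The plan is to mirror the proof of Proposition~\ref{Prop:CauchyRadGauge} while replacing its two ingredients that relied on compactness of $\Sigma$: the classical Hodge decomposition on $\Sigma$ (used to solve the Poisson equation $\Delta_0 g = \delta_\Sigma A_\Sigma|_{\Sigma_0}$) and the smooth Cauchy theory for normally hyperbolic operators. Under Setup~\ref{setup}, the first is furnished by Proposition~\ref{prop:Poisson} (itself a consequence of Theorem~\ref{Thm:HodgeDecomSmooth}), and the second is available in the upgraded Sobolev form of Theorem~\ref{thm:sob Cauchy}.

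First I would decompose $A = A_0\,\d t + A_\Sigma$ and, repeating the reduction in the proof of Proposition~\ref{Prop:CauchyRadGauge} — using the identity $\delta A' = \delta_\Sigma A'_\Sigma + \beta^{-2}\nabla_0 A'_0 - \beta^{-1}\mathrm{tr}_h(k)A'_0$ to trade $(\nabla_0 A')_0|_{\Sigma_0}=0$ for $\delta_\Sigma A'_\Sigma|_{\Sigma_0}=0$ once $\delta A'=0$ and $A'_0|_{\Sigma_0}=0$ are imposed — translate the Cauchy radiation gauge condition on $A' = A - \d f$ into the mixed initial value problem
\begin{equation*}
\square_0 f = \delta A,\qquad f|_{\Sigma_0} = g,\qquad \nabla_0 f|_{\Sigma_0} = A_0|_{\Sigma_0},
\end{equation*}
where $g \in C^\infty(\Sigma_0;\CC)$ must satisfy $\Delta_0 g = \delta_\Sigma A_\Sigma|_{\Sigma_0}$. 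Since $A_\Sigma|_{\Sigma_0} \in \Omega^1_\infty(\Sigma_0)$, Proposition~\ref{prop:Poisson} at $s = \infty$ produces such a $g$, unique up to an additive constant and with the crucial property that $\d_\Sigma g \in \Omega^1_{\infty,\d}(\Sigma_0)$. The classical smooth Cauchy theory for the normally hyperbolic operator $\square_0$ then yields a smooth $f \in \Gamma(\V_0)$. Uniqueness is immediate: two valid $f, f'$ differ by some $\phi$ solving $\square_0 \phi = 0$ with $\phi|_{\Sigma_0}$ equal to a constant $c$ (by uniqueness of the Poisson solution) and $\nabla_0 \phi|_{\Sigma_0} = 0$; since the constant $c$ trivially satisfies $\square_0 c = 0$ with that data, smooth Cauchy uniqueness gives $\phi \equiv c$.

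The key remaining step is to verify $\d f \in \Gamma_{\infty,\d}(\V_1)$, and this is where I expect the main obstacle: one \emph{cannot} feed $(g, A_0|_{\Sigma_0})$ directly into Theorem~\ref{thm:sob Cauchy} for $\square_0$, because Proposition~\ref{prop:Poisson} controls only $\d_\Sigma g$ in Sobolev norm, and on a non-compact $\Sigma_0$ the scalar $g$ itself generically fails to lie in any $\H^s$. My workaround is to apply Theorem~\ref{thm:sob Cauchy} instead to the $1$-form $\omega \defeq \d f$, which by commutation of $\d$ with the Hodge-Laplacian solves $\square_1 \omega = \d \delta A \in \Gamma_\infty(\V_1)$. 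Its Cauchy data are
\begin{align*}
\omega|_{\Sigma_0} &= A_0|_{\Sigma_0}\,\d t + \d_\Sigma g,\\
\nabla_0 \omega|_{\Sigma_0} &= \bigl(\partial_t^2 f|_{\Sigma_0}\bigr)\,\d t + \d_\Sigma A_0|_{\Sigma_0},
\end{align*}
and using the wave equation to recover $\partial_t^2 f|_{\Sigma_0}$ in terms of $\Delta_0 g = \delta_\Sigma A_\Sigma|_{\Sigma_0}$, $\delta A|_{\Sigma_0}$ and bounded $\beta,k$-coefficients from Setup~\ref{setup} puts the $\d t$-components in $\Omega^0_\infty(\Sigma_0)$. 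The $\Sigma$-components $\d_\Sigma g$ and $\d_\Sigma A_0|_{\Sigma_0}$ belong to $\Omega^1_{\infty,\d}(\Sigma_0)$ — the former by construction of $g$, the latter because it is the exact differential of the smooth Sobolev scalar $A_0|_{\Sigma_0} \in \Omega^0_\infty(\Sigma_0)$. Theorem~\ref{thm:sob Cauchy} thus provides a solution in $\Gamma_\infty(\V_1)$, which by uniqueness of the smooth Cauchy problem must coincide with $\d f$; the initial-time conditions then upgrade the conclusion from $\Gamma_\infty(\V_1)$ to $\Gamma_{\infty,\d}(\V_1)$, completing the proof.
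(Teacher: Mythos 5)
Your proof is correct and follows essentially the same route as the paper: translate the gauge condition into a Cauchy problem for $\square_{0}f$ with the spatial initial datum fixed by the Poisson equation from Proposition~\ref{prop:Poisson}, and then verify $\d f\in\Gamma_{\infty,\d}(\V_1)$ by applying Theorem~\ref{thm:sob Cauchy} to $\square_1(\d f)=\d\delta A$ rather than to $f$ itself. You correctly identified the same obstruction the paper implicitly works around — on non-compact $\Sigma$ the scalar $g$ need not lie in any $\H^s$, so only its differential can be fed into the Sobolev Cauchy theorem — and your treatment of the initial data matches the paper's (your $\partial_t^2 f|_{\Sigma_0}$ via the wave equation versus the paper's via $\nabla_0^2 f|_{\Sigma_0}=\nabla_0 A_0|_{\Sigma_0}$ are equivalent given $\square_0 f=\delta A$).
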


\begin{proof}
	The proof is analogues to the proof of Proposition~\ref{Prop:CauchyRadGauge}: Let us decompose $A=A_{0}\d t+A_{\Sigma}$ with $A_{0}=\partial_{t}\lrcorner A$. The condition for $A^{\prime}$ to satisfy the Cauchy radiation gauge amounts to solving the system 
    \begin{align*}
        \begin{cases}
            \D_{0}f&=\delta A\\
            \pi &=A_{0}\vert_{\Sigma_{0}}\\
            \DeltaZS  a&=\delta_{\Sigma}A_{\Sigma_{0}}\vert_{\Sigma}
        \end{cases}
    \end{align*}
    where $a\defeq f\vert_{\Sigma_{0}}$ and $\pi\defeq\partial_{t}f\vert_{\Sigma_{0}}$ denote the initial data of $f$. By assumption, $A_{\Sigma}\vert_{\Sigma_{0}}\in\Omega^{1}_{\infty,\d}(\Sigma_{0})$ and hence, by Proposition~\ref{prop:Poisson}, there exists a unique $a\in C^{\infty}(\Sigma;\CC)$ with the property $\d_{\Sigma}a\in\Omega^{1}_{\infty,\d}(\Sigma_{0})$ s.t.~$\DeltaZS a=\delta_{\Sigma}A_{\Sigma}\vert_{\Sigma_{0}}$. Therefore, the system above has a unique solution $f\in\Gamma(\V_{0})$ up to constant \cite[Chapter 3]{BaerBook}, which satisfies $\d_{\Sigma}f\vert_{\Sigma_{0}}\in\Omega^{1}_{\infty,\d}(\Sigma_{0})$. It remains to show that $\d f\in\Gamma_{\infty,\d}(\V_{1})$. For this, we observe that $\d f$ is the unique solution of the Cauchy problem 
    \begin{align*}
    		\begin{cases}
    			\square_{1}\d f&=\d\delta A\in\Gamma_{\infty}(\V_{1})\\
    			\d f\vert_{\Sigma_{0}}&=(A_{0}\d t + \d_{\Sigma}a)\vert_{\Sigma_{0}}\in \mathcal{H}_{\infty,\d}^{1}(\Sigma_{0})=\Omega^{0}_{\infty}(\Sigma_{0})\oplus\Omega^{1}_{\infty,\d}(\Sigma_{0})\\
    		\partial_{t}\d f\vert_{\Sigma_{0}}&=((\partial_{t}A_{0}+\beta h^{-1}(\d_{\Sigma}a-A_{\Sigma},\d_{\Sigma}\beta))\d t+\d_{\Sigma}A_{0})\vert_{\Sigma_{0}}\in \mathcal{H}_{\infty,\d}^{1}(\Sigma_{0})=\Omega^{0}_{\infty}(\Sigma_{0})\oplus\Omega^{1}_{\infty,\d}(\Sigma_{0})
    		\end{cases}
    \end{align*}
	where we used that $\d_{\Sigma}A_{0}\vert_{\Sigma_{0}}\in\d_{\Sigma}\Omega^{0}_{\infty}(\Sigma_{0})\subset\Omega^{1}_{\infty,\d}(\Sigma_{0})$ and where we rewrote the initial data as
	\begin{align*}
		\partial_{t}f\vert_{\Sigma_{0}}&=A_{0}\vert_{\Sigma_{0}}\\
		\d_{\Sigma}f\vert_{\Sigma_{0}}&=\d_{\Sigma}a\\
		\partial_{t}^{2}f\vert_{\Sigma_{0}}&=\partial_{t}A_{0}\vert_{\Sigma_{0}}+\beta h^{-1}(\d_{\Sigma}a-A_{\Sigma},\d_{\Sigma}\beta)\vert_{\Sigma_{0}}\\
		\partial_{t}\d_{\Sigma}f\vert_{\Sigma_{0}}&=\d_{\Sigma}\partial_{t}f\vert_{\Sigma_{0}}=\d_{\Sigma}A_{0}\vert_{\Sigma_{0}}
\end{align*}	    
	In the third line, we used $\nabla_{0}^{2}f\vert_{\Sigma_{0}}=\nabla_{0}A_{0}\vert_{\Sigma_{0}}$ and $\nabla_{0}A_{0}=\partial_{t}A_{0}-\beta^{-1}\mathrm{tr}_{h}(k)A_{0}\partial_{t}\beta-\beta h^{-1}(A_{\Sigma},\d_{\Sigma}\beta)$. We conclude that $\d f\in\Gamma_{\infty,\d}(\V_{1})$ by Theorem~\ref{thm:sob Cauchy}.
\end{proof}

After this preliminary discussion, we now move on to the phase space of Maxwell theory and its quantization. We will follow the general strategy for quantizing linear gauge theories developed by Hack-Schenkel \cite{HackSchenkel2013} and refined by Gérard-Wrochna~\cite{GerardWrochnaYM2015} (see also the presentation in \cite{GerardMurroWrochna2022})\footnote{For the relation of the Hack-Schenkel formalism to the BRST formalism, we refer the reader to \cite{WrochnaZahn2017}.}, which covers many important examples including linearized Yang-Mills theory as well as linearized gravity. The Hack-Schenkel formalism is usually discussed in the setting of fields with spacelike compact support. In the following, we shall adopt it, in the special case of Maxwell theory, to smooth fields, which are spatially in a Sobolev space, and to complete gauge fixing.\bigskip

Consider the Maxwell operator $\P\:\Gamma(\V_{1})\to\Gamma(\V_{1})$ given by $\P=\delta\d$. In order to incorporate the Cauchy radiation gauge, let us introduce some notation for the Cauchy data: We consider the decompositions
\begin{align}\label{NotationInitialData}
	\Gamma(\V_{\rho_{0}})\cong\Omega^{0}(\Sigma;\CC)^{2},\hspace*{2cm}\Gamma(\V_{\rho_{1}})\cong\Omega^{0}(\Sigma;\CC)^{2}\oplus\Omega^{1}(\Sigma;\CC)^{2}
\end{align}
and parametrize the Cauchy data maps $\rho_i\:\Gamma(\V_{i})\to\Gamma(\V_{\rho_{i}})$ w.r.t.~this decomposition as 
\begin{align*}
    \rho_{0}\: f \mapsto \begin{pmatrix}f\vert_{\Sigma_{0}}\\\i^{-1}\nabla_{0}f\vert_{\Sigma_{0}}\end{pmatrix}\hspace*{0.5cm}\text{and}\hspace*{0.5cm}
    \rho_{1}\: A \mapsto \begin{pmatrix}A_{0}\vert_{\Sigma_{0}}\\\i^{-1}\partial_{t}A_{0}\vert_{\Sigma_{0}}\\A_{\Sigma}\vert_{\Sigma_{0}}\\\i^{-1}\partial_{t}A_{\Sigma}\vert_{\Sigma_{0}}\end{pmatrix}\, .
\end{align*}
where we decompose $\Gamma(\V_{1})\ni A=A_{0}\d t+A_{\Sigma}$ as usual. Furthermore, we consider the \textit{Cauchy evolution operators} $\U_{i}\:\Gamma(\V_{\rho_{i}})\to\ker(\D_{i})$, which are the inverses of $\rho_{i}$ restricted to the right domain. With this notation, we introduce the operator
\begin{align*}
	\K\:\Gamma(\V_{0})\to\Gamma(\V_{1}),\qquad \K\defeq \d\, .
\end{align*}
Its formal adjoint w.r.t.~to $(\cdot,\cdot)_{\V_{i}}$ is the operator $\K^{\ast}\:\Gamma(\V_{1})\to\Gamma(\V_{0})$ given by $\K^{\ast}=\delta$, which parametrizes the Lorenz gauge condition. On the level of Cauchy data, we introduce the operators
\begin{align*}
	\K_{\Sigma}\:\Gamma(\V_{\rho_{0}})&\to\Gamma(\V_{\rho_{1}}),\qquad \K_{\Sigma}\defeq \rho_{1}\K\U_{0}\\
	\K_{\Sigma}^{\dagger}\:\Gamma(\V_{\rho_{1}})&\to\Gamma(\V_{\rho_{0}}),\qquad \K_{\Sigma}^{\dagger}\defeq \rho_{0}\K^{\ast}\U_{1}\, .
\end{align*} 
The notation $\K^{\dagger}_{\Sigma}$ will become clear as soon as a suitable Hermitian form will be introduced (see the end of Subsection~\ref{Sec:PhaseSpace}). Furthermore, note that $\K_{\Sigma}^{\dagger}\K_{\Sigma}=0$. Similarly, we encode the Cauchy temporal gauge both globally and on the level of initial data by introducing the operators
\begin{align*}
	\R_{\Sigma}\:\Gamma(\V_{\rho_{1}})&\to\Gamma(\V_{\rho_{1}}),\qquad \R_{\Sigma}\defeq\begin{pmatrix}\mathds{1} & 0&0&0\\ -\beta^{-1}\partial_{t}\beta\vert_{\Sigma_{0}} & \mathds{1} & -\beta h^{-1}(\d_{\Sigma}\beta,\cdot )\vert_{\Sigma_{0}} &0\\0&0&0&0\\0&0&0&0\end{pmatrix}\\
	\R\:\Gamma(\V_{1})&\to\Gamma(\V_{1}),\qquad\hspace*{0.2cm} \R\defeq \U_{1}\R_{\Sigma}\rho_{1}\, .
\end{align*}
In other words, $A\in\ker(\R)$ if and only if $A_{0}\vert_{\Sigma_{0}}=\nabla_{0}A_{0}\vert_{\Sigma_{0}}=0$, where we used the general formula
\begin{align*}
	\nabla_{0}A_{0}=\partial_{t}A_{0}-A_{0}\beta^{-1}\partial_{t}\beta-\beta h^{-1}(\d_{\Sigma}\beta,A_{\Sigma})\end{align*}
	 to relate $\nabla_{0}A_{0}\vert_{\Sigma_{0}}$ to the initial data $(A_{0}\vert_{\Sigma_{0}},\partial_{t}A_{0}\vert_{\Sigma_{0}},A_{\Sigma}\vert_{\Sigma_{0}},\partial_{t}A_{\Sigma}\vert_{\Sigma_{0}})$ of $A\in\Gamma(\V_{1})$.
	
\begin{proposition}\label{Prop:EquPhaseSpaces} Let $(\M,g)$ be a globally hyperbolic manifold as in the Setup~\ref{setup}. The following diagram is commutative and every map is an isomorphism:
\begin{equation*}
\begin{tikzcd}
   	 \PS\defeq\cfrac{\ker(\K^{\ast}\vert_{\Gamma_{\tc,\infty}})}{\ran(\P\vert_{\Gamma_{\tc}})\cap\Gamma_{\G}(\V_{1})} \arrow[r,"{[\G_{1}]}"]\arrow[dr,"{[\G_{1}]}"]\arrow[d,"{[\rho_1\G_{1}]}"] &\cfrac{\ker(\P\vert_{\Gamma_{\infty}})}{\ran(\K)\cap\Gamma_{\infty,\d}(\V_{1})} \\
    		\PSS\defeq\cfrac{\ker(\K_\Sigma^{\dagger}\vert_{\Gamma_{\infty}})}{\ran(\K_\Sigma)\cap\Gamma_{\infty,\d}(\V_{\rho_{1}})}\arrow[r,"{[\U_{1}]}"]\arrow[d,dashed,"\T_{\Sigma}"] & \cfrac{\ker(\D_{1}\vert_{\Gamma_{\infty}})\cap\ker(\K^{\ast}\vert_{\Gamma_{\infty}})}{\K(\ker(\D_{0}))\cap\Gamma_{\infty,\d}(\V_{1})} \arrow[u,hookrightarrow]\\
\PSR \defeq\ker(\K_{\Sigma}^{\dagger}\vert_{\Gamma_{\infty}})\cap\ker(\R_{\Sigma}\vert_{\Gamma_{\infty}})     \arrow[r,"\U_1"] & \ker(\D_{1}\vert_{\Gamma_{\infty}})\cap\ker(\K^{\ast}\vert_{\Gamma_{\infty}})\cap\ker(\R\vert_{\Gamma_{\infty}})\arrow[u,hookrightarrow]
    \end{tikzcd}
\end{equation*}
where the maps denoted by ``\,$\hookrightarrow$'' are induced by the inclusions and where 
\begin{align*}
\Gamma_{\G}(\V_{1}):= \{\omega\in\Gamma_{\tc,\infty}(\V_{1})\mid\G_{1}\omega\in\Gamma_{\infty,\d}(\V_{1})\}\, .
\end{align*}
\end{proposition}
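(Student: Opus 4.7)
The plan is to establish each arrow of the diagram as a well-defined isomorphism using four ingredients: the exact sequence of Proposition~\ref{Prop:Green}, the well-posed Sobolev Cauchy problem of Theorem~\ref{thm:sob Cauchy}, the intertwining identities $\G_1\K=\K\G_0$ and $\G_0\K^{\ast}=\K^{\ast}\G_1$ (which follow from $\D_1\K=\K\D_0$ and $\D_0\K^{\ast}=\K^{\ast}\D_1$), and the complete-gauge-fixing Corollary~\ref{Corollary:Gaugefixing}.

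First I would verify the top-row map $[\G_{1}]$. Well-definedness: for $\omega\in\ker(\K^{\ast}\vert_{\Gamma_{\tc,\infty}})$ one has $\K^{\ast}\G_1\omega=\G_0\K^{\ast}\omega=0$, and since $\D_1\G_1=0$ on $\Gamma_{\tc,\infty}$ by Proposition~\ref{Prop:Green}, the decomposition $\P=\D_1-\K\K^{\ast}$ yields $\P\G_1\omega=0$. The passage to the quotient is respected because $\Gamma_{\G}(\V_{1})$ is defined precisely so that $\ran(\P\vert_{\Gamma_{\tc}})\cap\Gamma_{\G}(\V_{1})$ is mapped into $\ran(\K)\cap\Gamma_{\infty,\d}(\V_{1})$ via the extended exact sequence of Remark~\ref{Remark:ExactSequenceExtended} applied to $\D_0$. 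Injectivity follows from exactness in Proposition~\ref{Prop:Green}: if $\G_{1}\omega=\K f$ modulo $\Gamma_{\infty,\d}$, then $\omega-\K g\in\ker(\G_1)=\ran(\D_1\vert_{\Gamma_{\tc,\infty}})$ for a suitable $g$, and the constraint $\K^{\ast}\omega=0$ forces $\omega\in\ran(\P\vert_{\Gamma_{\tc}})$. Surjectivity is the classical Lorenz-gauge argument: given $A\in\ker(\P\vert_{\Gamma_\infty})$, solve $\D_0 f=\K^{\ast}A$ with Cauchy data chosen, via Corollary~\ref{Corollary:Gaugefixing}, so that $\d f\in\Gamma_{\infty,\d}(\V_{1})$; then $A-\K f\in\ker(\D_1)\cap\ker(\K^{\ast})$ equals $\G_{1}\omega$ for some $\omega\in\ker(\K^{\ast}\vert_{\Gamma_{\tc,\infty}})$ by Proposition~\ref{Prop:Green}. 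The right-hand vertical inclusion is seen to be an isomorphism by the same Lorenz-gauge argument.

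For the middle row, $[\rho_1\G_1]\:\PS\to\PSS$ is well-defined because $\rho_1$ intertwines $\K^{\ast}$ with $\K_{\Sigma}^{\dagger}$ and $\K$ with $\K_{\Sigma}$ by the very definition of these Cauchy-data operators, and it is an isomorphism since by Theorem~\ref{thm:sob Cauchy} the Cauchy data map $\rho_1$ restricts to a bijection from $\ker(\D_1\vert_{\Gamma_\infty})$ onto $\Gamma_\infty(\V_{\rho_1})$, with inverse $\U_1$; commutativity of the middle square reduces to the identity $\G_1=\U_1\rho_1\G_1$ on the Lorenz-gauge subspace. For the bottom row, the dashed $\T_\Sigma$ is obtained by writing $\f\in\ker(\K_{\Sigma}^{\dagger}\vert_{\Gamma_\infty})$ as in~\eqref{NotationInitialData} and applying Proposition~\ref{prop:Poisson} to solve $\Delta_0 a=\delta_\Sigma A_\Sigma\vert_{\Sigma_0}$ within the class $\{a\in C^\infty(\Sigma;\CC)\mid\d_\Sigma a\in\Omega^{1}_{\infty,\d}(\Sigma_0)\}$; the unique-up-to-constant $a$, together with its analogue for $\partial_t A_\Sigma$, produces the required element of $\ran(\K_\Sigma)\cap\Gamma_{\infty,\d}(\V_{\rho_1})$ that moves $\f$ into $\ker(\R_\Sigma)\cap\ker(\K_{\Sigma}^{\dagger})=\PSR$. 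Injectivity of $\T_\Sigma$ is the uniqueness part of Proposition~\ref{prop:Poisson}, and the bottom horizontal $\U_1$ is a bijection by Theorem~\ref{thm:sob Cauchy} combined with the tautology $\R=\U_1\R_\Sigma\rho_1$, which ensures $\R\U_1\f=0$ iff $\R_\Sigma\f=0$.

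The main obstacle is ensuring that the residual gauge transformations used to reach both the Lorenz and the Cauchy radiation conditions are compatible with the smooth-Sobolev denominators, i.e.~that the gauge function $f\in\ker(\D_0)$ always satisfies $\d f\in\Gamma_{\infty,\d}(\V_1)$. This is precisely what Corollary~\ref{Corollary:Gaugefixing} — and hence the smooth Hodge decomposition of Theorem~\ref{Thm:HodgeDecomSmooth} combined with the Sobolev continuity of the Cauchy problem of Theorem~\ref{thm:sob Cauchy} — provides; without it the quotients could not be matched up across the diagram on non-compact Cauchy surfaces.
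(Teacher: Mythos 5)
Your overall architecture matches the paper's: the inclusions and the bottom $\U_1$ are straightforward, the heavy lifting is in showing $[\G_1]$ is bijective, and compatibility of the gauge corrections with the smooth-Sobolev classes is precisely what Corollary~\ref{Corollary:Gaugefixing} (hence the Hodge decomposition and Theorem~\ref{thm:sob Cauchy}) buys you.

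However, two steps in your $[\G_1]$ argument are compressed to the point of being gaps. For injectivity you write that ``$\omega-\K g\in\ker(\G_1)=\ran(\D_1\vert_{\Gamma_{\tc,\infty}})$ for a suitable $g$, and the constraint $\K^{\ast}\omega=0$ forces $\omega\in\ran(\P\vert_{\Gamma_{\tc}})$.'' The passage from $\D_1\eta$ to $\P\eta$ is the whole point: once $\omega=\K g-\D_1\eta$, you must apply $\K^{\ast}$ and use $\K^{\ast}\K=\D_0$, $\K^{\ast}\D_1=\D_0\K^{\ast}$ to get $\D_0(g-\K^{\ast}\eta)=0$, then invoke the temporally-compact exact sequence of Remark~\ref{Remark:ExactSequenceExtended} (applied to $\D_0$) to conclude $g=\K^{\ast}\eta$, and only then $\omega=\K\K^{\ast}\eta-\D_1\eta=-\P\eta$. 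Without the explicit identity $g=\K^{\ast}\eta$ — which uses uniqueness from the exact sequence, not just algebra — the claim is not proved. For surjectivity you assert that $A-\K f=\G_1\omega$ ``for some $\omega\in\ker(\K^{\ast}\vert_{\Gamma_{\tc,\infty}})$ by Proposition~\ref{Prop:Green}.'' That proposition only produces \emph{some} preimage $\eta\in\Gamma_{\tc,\infty}(\V_1)$ with $\G_1\eta=A-\K f$; it does not produce one in $\ker(\K^{\ast})$. A \emph{second} gauge correction is required: use $\G_0\K^{\ast}\eta=0$ to write $\K^{\ast}\eta=\D_0 g$ for $g\in\Gamma_{\tc,\infty}(\V_0)$, set $f'\defeq\G_0 g\in\Gamma_\infty(\V_0)$ (so that $\K f'\in\Gamma_{\infty,\d}(\V_1)$), and replace $\eta$ by $\eta-\K g\in\ker(\K^{\ast}\vert_{\Gamma_{\tc,\infty}})$, noting $(A-\K f)-\K f'=\G_1(\eta-\K g)$. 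Your proof omits this step, and the Sobolev-class control on $\K f'$ that makes the quotient well-defined again depends on $\G_0$ preserving $\Gamma_\infty$, which is Theorem~\ref{thm:sob Cauchy}. Filling in these two steps would bring your proposal into agreement with the paper's proof.
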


\begin{proof}
	The two isomorphisms ``$\hookrightarrow$'' induced by the inclusion are clear and a direct consequence of Corollary \ref{Corollary:Gaugefixing}, i.e.~the fact that for any $A\in\ker(\P\vert_{\Gamma_{\infty}})$, we can find a unique $f\in\Gamma(\V_{1})$ (up to constant) such that $\K f\in\Gamma_{\infty,\d}(\V_{1})$ and such that $A+\K f$ satisfies the Cauchy radiation gauge. Furthermore, the isomorphism $\U_{1}$ on the bottom of the diagram is clear by definition, since $\U_{1}$ is bijective as a map $\U_{1}\:\Gamma_{\infty}(\V_{\rho_{1}})\to\ker(\D_{1}\vert_{\Gamma_{\infty}})$. 
	
In the following, we show that the map $[\G_{1}]$ on top is an isomorphism. The remaining isomorphisms are proven analogously and commutativity of the diagram is clear.  Bijectivity amounts to showing that
	\begin{align*}
		\text{injectivity:}\hspace*{0.5cm}& \G^{-1}_{1}(\ran(\K)\cap\Gamma_{\infty,\d}(\V_{1}))\cap\ker(\K^{\ast}\vert_{\Gamma_{\tc,\infty}})=\ran(\P\vert_{\Gamma_{\tc}})\cap\Gamma_{\G}(\V_{1})\\
		\text{surjectivity:}\hspace*{0.5cm}& \ker(\P\vert_{\Gamma_{\infty}})=\G_{1}(\ker(\K^{\ast}\vert_{\Gamma_{\tc,\infty}}))+(\ran(\K)\cap\Gamma_{\infty,\d}(\V_{1}))
	\end{align*}
	For ``$\supset$'' in injectivity, let $\omega=\P\eta$ with $\eta\in\Gamma_{\tc}(\V_{1})$ such that $\omega\in\Gamma_{\G}(\V_{1})$, i.e.~$\omega\in\Gamma_{\tc,\infty}(\V_{1})$ and $\G_{1}\omega\in\Gamma_{\infty,\d}(\V_{1})$.  Clearly $\K^{\ast}\omega=\K^{\ast}\P\eta=0$ and 
	\begin{align*}
		\G_{1}\omega=\G_{1}\P\eta=-\G_{1}\K\K^{\ast}\eta=\K(-\G_{0}\K^{\ast}\eta)\in\ran(\K)\cap\Gamma_{\infty,\d}(\V_{1})
	\end{align*}
	For ``$\subset$'', let $\omega\in\ker(\K^{\ast}\vert_{\Gamma_{\tc,\infty}})$ be such that $\G_{1}\omega\in\Gamma_{\infty,\d}(\V_{1})$ and $\G_{1}\omega=\K f$ for some $f\in\Gamma(\V_{0})$. Then $0=\K^{\ast}\omega=\D_{0}f$ implies that $f=\G_{0}g$ for some $g\in\Gamma_{\tc}(\V_{0})$. Now, $\G_{1}\omega=\K f=\G_{1}\K g$ implies that there exists $\eta\in\Gamma_{\tc}(\V_{1})$ such that $\K g-\omega=\D_{1}\eta$. Note that $\square_{0}\K^{\ast}\eta=\square_{0} g$ and hence $\D_{0}(g-\K^{\ast}\eta)=0$. Since $g-\K^{\ast}\eta\in\Gamma_{\tc}(\V_{0})$, we conclude by Remark \ref{Remark:ExactSequenceExtended}  that $g=\K^{\ast}\eta$. It follows that 
	\begin{align*}
		\omega=\K g-\D_{1}\eta=\K(g-\K^{\ast}\eta)-\P\eta=\P(-\eta)
	\end{align*}
	and hence $\omega\in\mathrm{ran}(\P\vert_{\Gamma_{\tc}})$. For surjectivity, ``$\supset$'' is clear. For ``$\subset$'', we pick $\omega\in\ker(\P\vert_{\Gamma_{\infty}})$. First, by Corollary~\ref{Corollary:Gaugefixing}, we can always find a gauge transformation $\omega=\omega+\K f$ with $f\in\Gamma(\V_{0})$ and $\K f\in\Gamma_{\infty,\d}(\V_{1})$ such that $\omega\in\ker(\K^{\ast}\vert_{\Gamma_{\infty}})$. Therefore, w.l.o.g.~we assume $\omega\in\ker(\P\vert_{\Gamma_{\infty}})\cap\ker(\K^{\ast}\vert_{\Gamma_{\infty}})$ and hence in particular $\D_{1}\omega=0$. It follows that $\omega=\G_{1}\eta$ for some $\eta\in\Gamma_{\tc,\infty}(\V_{1})$. Furthermore, $\K^{\ast}\omega=0$ implies $\G_{0}\K^{\ast}\eta=0$ and hence $\K^{\ast}\eta=\D_{0}g$ for some $g\in\Gamma_{\tc,\infty}(\V_{0})$ (since $\K^{\ast}\eta\in\Gamma_{\tc,\infty}(\V_{0}))$. Then, a straight-forward calculation shows that $f\defeq \G_{0}g$ is the required gauge transform, i.e.~$\omega-\K f=\G_{1}(\eta-\K g)\in\G_{1}(\ker(\K^{\ast}\vert_{\Gamma_{\tc,\infty}}))$. Observe that $f=\G_{0}g\in\Gamma_{\infty}(\V_{0})$ and hence $\K f\in\Gamma_{\infty,\d}(\V_{1})$ since clearly $\K (\Gamma_{\infty}(\V_{0}))\subset\Gamma_{\infty,\d}(\V_{1})$.
\end{proof}

We equip the bundles of initial data $\V_{\rho_{i}}$ with Hermitian bundle metrics $\langle\cdot,\cdot\rangle_{\V_{\rho_{i}}}$ and denote the corresponding Hermitian forms on sections by $(\cdot,\cdot)_{\V_{\rho_{i}}}\:\Gamma_\infty(\V_{\rho_{i}})\times\Gamma_\infty(\V_{\rho_{i}})\to\CC$. Next, we define linear operators\footnote{Using Green's formula it can be shown that $\G_{i,\Sigma}$ are in fact linear differential operators.} $\G_{i,\Sigma}\:\Gamma_\infty(\V_{\rho_{i}})\to\Gamma_\infty(\V_{\rho_{i}})$, which  are uniquely determined by the relation
\begin{align}\label{GSigma}
	\G_{i}=(\rho_{i}\G_{i})^{\ast}\G_{i,\Sigma}(\rho_{i}\G_{i})\, ,
\end{align}
where $(\rho_{i}\G_{i})^{\ast}$ denotes the adjoint of $\rho_{i}\G_{i}$ w.r.t.~$(\cdot,\cdot)_{\V_{i}}$ and $(\cdot,\cdot)_{\V_{\rho_i}}$. With this notation, we define Hermitian forms by
\begin{align*}
	\q_{i,\Sigma}\:\Gamma_{\infty}(\V_{\rho_{i}})\times\Gamma_{\infty}(\V_{\rho_{i}})\to\mathbb{C},\quad \q_{i,\Sigma}(\cdot,\cdot)\defeq \i (\cdot,\G_{i,\Sigma}\cdot)_{\V_{\rho_{i}}} \, .
\end{align*}
Note that the choice of fibre metrics $\langle\cdot,\cdot\rangle_{\V_{\rho_{i}}}$ on $\V_{\rho_{i}}$ was arbitrary, however, the Hermitian forms $q_{i,\Sigma}$ are uniquely fixed by $\G_{i}$ and $\langle\cdot,\cdot\rangle_{\V_{i}}$. Furthermore, note that the operator $\K_{\Sigma}^{\dagger}$ is the formal adjoint of $\K_{\Sigma}$ w.r.t.~the Hermitian forms $\q_{i,\Sigma}$.

\begin{definition}[Phase Space]
	We equip the vector spaces $\PS$ and $\PSS$ with the Hermitian forms
	\begin{align*}
		\q_{1}\:\PS\times\PS&\to\CC,\qquad \q_{1}([\cdot],[\cdot])\defeq \i(\cdot,\G_{1}\cdot)_{\V_{1}}\\
		\q_{1,\Sigma}\:\PSS\times\PSS&\to\CC,\qquad \q_{1,\Sigma}([\cdot],[\cdot])\defeq \i(\cdot,\G_{1,\Sigma}\cdot)_{\V_{\rho_{1}}}\, .
	\end{align*}
\end{definition}

By Proposition~\ref{Prop:EquPhaseSpaces}, the map $[\rho_{1}\G_{1}]$ provides a unitary isomorphism
\begin{align*}
	[\rho_{1}\G_{1}]\:(\PS,\q_{1})\xrightarrow{\cong}(\PSS,\q_{1,\Sigma})\, .
\end{align*}
\subsection{Cauchy Surface Covariance and Hadamard States}\label{HadamardDefinition}
The quantization of a linear, free field gauge
theory is realizes by a two-step procedure: First, one assigns to the classical phase space $(\PS,\q_1)$  a unital $*$-algebra $\CCR$ abstractly generated by symbols $\Id,\Phi(v),\Phi^{\ast}(w)$ for all $v,w\in\PS$, such that 
\begin{itemize}
\item[$\bullet$]the assignment $v\mapsto\Phi(v)$  and $w\mapsto\Phi^{\ast}(w)$ are respectively $\mathbb{C}$-anti-linear and $\mathbb{C}$-linear;
\item[$\bullet$]$\Phi(v)^{\ast}=\Phi^{\ast}(v)$;
\item[$\bullet$]the \textit{canonical commutation relations} are fulfilled:
\begin{align*}
	&[\Phi(v),\Phi(w)]=[\Phi^{\ast}(v),\Phi^{\ast}(w)]=0\, ,\\
	&[\Phi(v),\Phi^{\ast}(w)]=\q_1(v,w)\Id \,.
\end{align*}
\end{itemize}
Then, one
determines the admissible \textit{physical states} of the system by identifying a suitable subclass of
the linear, positive, i.e.~$\omega(a^{\ast}a)\geq 0$, and normalized, i.e.~$\omega(\Id)=1$, functionals $\omega\: \CCR \to \CC$.  Once that a state is specified, the Gelfand-Naimark-Segal (GNS) construction
guarantees the existence of a representation of the quantum field algebra as operators define on a common dense subspace of some Hilbert space.  In this paper, we will not  worry about the explicit construction of such representation and we will restrict our attention to the subclass of so-called {\it quasifree states}, which are fully
determined by a pair of {\it (spacetime) covariances}, i.e.~Hermitian forms $\Lambda^{\pm}\:\PS\times\PS\to\CC$ defined by 
\begin{align*}
	\Lambda^{+}(v,w)\defeq\omega(\Phi(v)\Phi^{\ast}(w))\hspace*{1cm}\text{and}\hspace*{1cm}\Lambda^{-}(v,w)\defeq\omega(\Phi^{\ast}(w)\Phi(v))\,.
\end{align*}
for all $v,w\in\PS$. Indeed, any quasifree state $\omega\:\CCR\to\CC$ can be written as
\begin{align*}
	\omega\bigg(\prod_{i=1}^{n}\Phi^{\ast}(v_{i})\prod_{j=1}^{m}\Phi(w_{j})\bigg)=\delta_{mn}\sum_{\sigma\in\mathfrak{S}^{n}}\prod_{i=1}^{n}\omega(\Phi^{\ast}(v_{\sigma(i)})\Phi(w_{\sigma(i)}))
\end{align*}
for $m,n\in\NN$. In fact, every pair of Hermitian forms $\Lambda^{\pm}\:\PS\times\PS\to\CC$ on $\PS$, which are non-negative in the sense that $\Lambda^{\pm}(v,v)\geq 0$ for all $v\in\PS$, and with the property $\Lambda^{+}-\Lambda^{-}=\q_{1}$ define a unique quasi-free state in this way. For further details we refer to~\cite{Gerard2019}. Adapting \cite{Gerard2019,GerardWrochnaYM2015} to the setting of Sobolev spaces, there is the following characterization:

\begin{proposition} 
Consider  a pair of \textup{pseudo-covariances} $\lambda^\pm$, namely continuous linear maps $\lambda^{\pm}\:\Gamma_{\tc,\infty}(\V_{1})\to\Gamma(\V_{1})$ satisfying 
\begin{itemize}
\item[(i)] \quad $(\lambda^{\pm})^{\ast}=\lambda^{\pm}$ \, w.r.t.\, $(\cdot,\cdot)_{\V_{1}}$\, and \, $\lambda^{\pm}\:\ran(\P\vert_{\Gamma_{\tc}})\cap\Gamma_{\G}(\V_{1})\to\ran(\P\vert_{\Gamma_{\tc}})\cap\Gamma_{\G}(\V_{1})$;
\item[(ii)]\quad $\D_{1}\lambda^{\pm}=\lambda^{\pm}\D_{1}=0$   and  $(\lambda^{+}-\lambda^{-})s=\i\G_{1}s$ mod $\sf{K}(\mathrm{ker}(\D_{0}))\cap\Gamma_{\infty,\d}(\sf{V}_{1})$ \, $\forall s\in \ker(\K^{\ast}\vert_{\Gamma_{\tc,\infty}})$;
\item[(iii)]\quad $( s,\lambda^{\pm}s)_{\V_{1}}\geq 0$\,  for any  $ s\in\ker(\K^{\ast}\vert_{\Gamma_{\tc,\infty}})$.
\end{itemize}
Then, the Hermitian forms $\Lambda^{\pm}\:\PS\times\PS\to\CC$ defined by
\begin{align*}
	\Lambda^{\pm}([s],[t])\defeq( s,\lambda^{\pm}t)_{\V_{1}} \qquad \forall s,t\in\ker(\K^{\ast}\vert_{\Gamma_{\tc,\infty}})
\end{align*}
are the spacetime covariances of a quasifree state on $\CCR$.
 \end{proposition}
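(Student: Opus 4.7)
My plan is to verify directly that $\Lambda^{\pm}$ are (a) well-defined Hermitian forms on $\PS\times\PS$, (b) non-negative, and (c) satisfy the commutation relation $\Lambda^{+}-\Lambda^{-}=\q_{1}$. Once these three properties are established, the standard reconstruction theorem for quasifree states on CCR $*$-algebras (see, e.g., \cite{Gerard2019}) produces a unique quasifree state $\omega$ on $\CCR$ whose two-point functions are exactly $\Lambda^{\pm}$.

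\textbf{Well-definedness.} By sesquilinearity and the self-adjointness from (i), the independence of $(s,\lambda^{\pm}t)_{\V_{1}}$ from the chosen representatives reduces to the identity
\[
(s,\lambda^{\pm}r)_{\V_{1}}=0 \quad \forall\, s\in\ker(\K^{*}\vert_{\Gamma_{\tc,\infty}}),\ r=\P\eta\in\ran(\P\vert_{\Gamma_{\tc}})\cap\Gamma_{\G}(\V_{1}).
\]
Moving $\lambda^{\pm}$ to the first slot via self-adjointness and then invoking the factorisation $\P=\D_{1}-\K\K^{*}$ together with $\D_{1}\lambda^{\pm}=0$ from (ii) and the adjoint relation $(\alpha,\K\beta)_{\V_{1}}=(\K^{*}\alpha,\beta)_{\V_{0}}$ yields
\[
(s,\lambda^{\pm}r)_{\V_{1}} = (\D_{1}\lambda^{\pm}s,\eta)_{\V_{1}} - (\K^{*}\lambda^{\pm}s,\K^{*}\eta)_{\V_{0}} = -(\K^{*}\lambda^{\pm}s,\K^{*}\eta)_{\V_{0}}.
\]
The task is thus reduced to showing that $\lambda^{\pm}$ preserves the Lorenz-type constraint on $\ker(\K^{*}\vert_{\Gamma_{\tc,\infty}})$. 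Extracting this gauge-compatibility from (i) is the crux of the argument: the invariance of $\ran(\P\vert_{\Gamma_{\tc}})\cap\Gamma_{\G}$ is the dual statement via the quotient isomorphisms of Proposition~\ref{Prop:EquPhaseSpaces}, and together with self-adjointness it should pin down $\K^{*}\lambda^{\pm}s=0$ on the relevant subspace.

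\textbf{Remaining properties.} Hermiticity is immediate from self-adjointness,
\[
\overline{\Lambda^{\pm}([s],[t])} = (\lambda^{\pm}t,s)_{\V_{1}} = (t,\lambda^{\pm}s)_{\V_{1}} = \Lambda^{\pm}([t],[s]).
\]
The commutation relation follows from (ii): for $s,t\in\ker(\K^{*}\vert_{\Gamma_{\tc,\infty}})$ there exists $v\in\ker(\D_{0})$ with $(\lambda^{+}-\lambda^{-})t=\i\G_{1}t+\K v$, and since $\K^{*}s=0$ the $\K v$ contribution is killed by the pairing, giving
\[
(s,(\lambda^{+}-\lambda^{-})t)_{\V_{1}} = \i(s,\G_{1}t)_{\V_{1}}+(\K^{*}s,v)_{\V_{0}} = \i(s,\G_{1}t)_{\V_{1}} = \q_{1}([s],[t]).
\]
Non-negativity is exactly condition (iii) read verbatim. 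The hard part throughout is the gauge-compatibility step within well-definedness; once $\lambda^{\pm}$ is known to preserve the Lorenz constraint, all remaining properties follow by transparent algebraic manipulations from (ii) and (iii).
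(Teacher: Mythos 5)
Your overall skeleton — well-definedness, Hermiticity, $\Lambda^{+}-\Lambda^{-}=\q_{1}$, and positivity — is the right decomposition, and the paper itself does not give a proof (it points to \cite{Gerard2019,GerardWrochnaYM2015}), so there is no paper proof to match against; the comparison is against what a complete argument should look like. Your treatment of Hermiticity, the commutation relation, and positivity is correct: moving $\K$ across via $(s,\K v)_{\V_{1}}=(\K^{\ast}s,v)_{\V_{0}}$ and using $\K^{\ast}s=0$ kills the error term in (ii), and (iii) gives positivity verbatim.

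The genuine gap is in well-definedness, and you flag it yourself by calling it the ``crux'' and then hand-waving the closing step. Two problems with the step as sketched. First, the reduction you propose — showing $\K^{\ast}\lambda^{\pm}s=0$ for $s\in\ker(\K^{\ast}\vert_{\Gamma_{\tc,\infty}})$ — is \emph{not} what hypothesis (i) asserts: (i) says $\lambda^{\pm}$ preserves the gauge-trivial subspace $\ran(\P\vert_{\Gamma_{\tc}})\cap\Gamma_{\G}(\V_{1})$, not that it preserves $\ker(\K^{\ast})$; these are genuinely different conditions and you cannot deduce the latter from the former by ``duality'' as you suggest. Second, the route you take (move $\lambda^{\pm}$ to the first slot, factor $\P=\D_{1}-\K\K^{\ast}$) leads to $-(\K^{\ast}\lambda^{\pm}s,\K^{\ast}\eta)_{\V_{0}}$, which does not manifestly vanish without the very claim you are trying to prove.

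The correct and much shorter argument keeps $\lambda^{\pm}$ acting on the gauge-trivial element. Let $r\in\ran(\P\vert_{\Gamma_{\tc}})\cap\Gamma_{\G}(\V_{1})$. By hypothesis (i), $\lambda^{\pm}r\in\ran(\P\vert_{\Gamma_{\tc}})\cap\Gamma_{\G}(\V_{1})\subset\Gamma_{\tc,\infty}(\V_{1})$, so $\lambda^{\pm}r$ has temporally compact support. By hypothesis (ii), $\D_{1}\lambda^{\pm}r=0$. A $\D_{1}$-solution with temporally compact support has vanishing initial data on some Cauchy slice and therefore vanishes identically by uniqueness of the Cauchy problem for the normally hyperbolic operator $\D_{1}$. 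Hence $\lambda^{\pm}r=0$. This makes both cross terms in the well-definedness check vanish at once: $(s,\lambda^{\pm}r)_{\V_{1}}=0$ trivially, and $(r,\lambda^{\pm}t)_{\V_{1}}=(\lambda^{\pm}r,t)_{\V_{1}}=0$ by self-adjointness. In short, hypotheses (i) and the first part of (ii) jointly force $\lambda^{\pm}$ to annihilate the gauge degrees of freedom, which is strictly stronger than the ``preserve the Lorenz constraint'' condition you reached for and which closes the well-definedness argument cleanly.
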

 The name ``pseudo-''covariance comes from the fact that
$\lambda^\pm$ are not required to be positive for $(\cdot\,,\cdot)_{\V_1}$ on $\Gamma_\infty (\V_1)$, but only on the subspace
$\ker(\K^{\ast}\vert_{\Gamma_{\tc,\infty}})$. \bigskip
 
It is widely accepted that among all possible (quasifree) states, the physical ones are required to satisfy the so-called Hadamard condition. The reasons for this choice are manifold: For example, it implies the finiteness of the quantum fluctuations of the expectation value of every observable, and it allows us to construct Wick polynomials following a covariant scheme. This requirement is conveniently translated to the language of microlocal analysis:

\begin{definition}\label{Def:Hadamard}
	 A quasifree state $\omega$ on $\CCR$ induced by a pair of pseudo-covariances $\lambda^\pm$ is called \textit{Hadamard state} if
	\begin{align*}
		\WF^{\prime}(\lambda^{\pm})\subset\mathcal{N}^{\pm}\times\mathcal{N}^{\pm}\, ,
	\end{align*}
	where $\WF^{\prime}(\lambda^{\pm})$ is defined as the primed wavefront set of the Schwartz kernel of $\lambda^\pm$ and 
$$\mathcal{N}^{\pm}\defeq \{(p,\xi)\in\T^{\ast}\M\mid  g^{-1}(\xi,\xi)=0 \text{ and } \pm \xi(v)>0\,\,\forall v\in\T_{p}\M\text{ future-directed timelike}\}\, .$$
\end{definition}
This definition of Hadamard states is the same as in \cite{GerardWrochnaYM2015,GerardMurroWrochna2022}. See also \cite{WrochnaPhD,Gerard2019} for reviews of other (equivalent) definitions. The construction of Hadamard states is in general a hard task and to be accomplished, it is often preferable  to work on the level of initial data. 

\begin{definition}\label{Def:PseudoCovariances}
Let $\omega$ be a quasifree state on $\CCR$ induced by pseudo-covariances $\lambda^{\pm}$. The continuous linear maps $\lambda^\pm_\Sigma\:\Gamma_{\infty}(\V_{\rho_{1}})\to\Gamma(\V_{\rho_{1}})$ defined by
\begin{align*}
	\lambda^{\pm}_{\Sigma}\defeq(\rho_{1}^{\ast}\G_{1,\Sigma})^{\ast}\lambda^{\pm}(\rho^{\ast}_{1}G_{1,\Sigma})\,,
\end{align*}
where the adjoints are taken w.r.t.~$(\cdot,\cdot)_{\V_{i}}$ and $(\cdot,\cdot)_{\V_{\rho_{i}}}$, are called \textit{Cauchy pseudo-covariances}.
\end{definition}
Since the bundle metrics $\langle\cdot,\cdot\rangle_{\V_{\rho_{i}}}$ can be chosen arbitrarily, it is more natural to work with the physical Hermitian form $\q_{1,\Sigma}$. Following \cite{Gerard2019,GerardWrochnaYM2015} we get the following characterization:
 
\begin{proposition}\label{prop:hadafromc}
Suppose $c^\pm:\Gamma_{\infty}(\V_{1})\to\Gamma(\V_{1})$ are continuous linear operators satisfying
\begin{itemize}
\item[(i)] \quad $( c^\pm)^{\dagger}=c^{\pm}$\, w.r.t. \,$\q_{1,\Sigma}$\,\,  and \,  $ c^\pm(\ran(\K_{\Sigma})\cap \Gamma_{\infty,\d}(\V_{\rho_{1}}))\subset\ran(\K_{\Sigma})\cap \Gamma_{\infty,\d}(\V_{\rho_{1}})$;
\item[(ii)] \quad $(c^{+}+ c^{-})\f=\f$ \, modulo \, $\ran(\K_\Sigma)\cap \Gamma_{\infty,\d}(\V_{\rho_{1}})$ \, for any $ \f\in\ker(\K_{\Sigma}^{\dagger}\vert_{\Gamma_{\infty}})$;
\item[(iii)]\quad $\pm\q_{1,\Sigma}(\f,c^{\pm}\f)\geq 0$ \, for any $\f\in\ker(\K_{\Sigma}^{\dagger}\vert_{\Gamma_{\infty}})$.
\end{itemize}
Then, $\lambda^{\pm}_{\Sigma}\defeq\pm \i\G_{1,\Sigma}c^{\pm}$ are the Cauchy pseudo-covariances of a quasifree state on $\CCR$, or in other words, 
\begin{align*}
	\lambda^{\pm}\defeq (\rho_{1}\G_{1})^{\ast}\lambda^{\pm}_{\Sigma}(\rho_{1}\G_{1})=\pm i\U_{1} c^{\pm}(\rho_{1}\G_{1}),
\end{align*}
are pseudo-covariances of a quasifree state on $\CCR$. Furthermore, suppose that $\D_1$ is normally hyperbolic. If for some neighbourhood $U$ of $\Sigma$ in $\M$ we have
 $$ \text{(iv)} \hspace*{0.5cm} \WF'(\mathcal{U}_{1} c_{1}^{\pm})\subset (\mathcal{N}^{\pm}\cup F)\times\T^{*}\Sigma \; \text{ over $U\times \Sigma$
where $F\subset \T^{*}\M$ is a conic set s.t.~$F\cap \mathcal{N}= \emptyset$}$$ 
then the associated state  is Hadamard. 
 \end{proposition}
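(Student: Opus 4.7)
The statement has two independent parts: that $\lambda^{\pm}$ are pseudo-covariances of a quasifree state, and, under~(iv), that the resulting state is Hadamard. The plan is to verify them in order, most of the work being algebraic in the first and microlocal in the second.

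For the pseudo-covariance claim the plan is to check the three properties stated immediately before Proposition~\ref{prop:hadafromc}. The key algebraic preliminary is that $\G_{1,\Sigma}^{\ast}=-\G_{1,\Sigma}$ w.r.t.~$(\cdot,\cdot)_{\V_{\rho_{1}}}$, which is forced by $\G_{1}^{\ast}=-\G_{1}$ and the defining identity~\eqref{GSigma}. Consequently the $\dagger$-self-adjointness of $c^{\pm}$ w.r.t.~$\q_{1,\Sigma}$ is equivalent to $\G_{1,\Sigma}c^{\pm}=(c^{\pm})^{\ast}\G_{1,\Sigma}$, whence $(\lambda_{\Sigma}^{\pm})^{\ast}=\lambda_{\Sigma}^{\pm}$ and then $(\lambda^{\pm})^{\ast}=\lambda^{\pm}$ by the sandwich formula $\lambda^{\pm}=(\rho_{1}\G_{1})^{\ast}\lambda_{\Sigma}^{\pm}(\rho_{1}\G_{1})$. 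The range clause in~(i) is automatic because $\rho_{1}\G_{1}\P=0$, and $\D_{1}\lambda^{\pm}=\lambda^{\pm}\D_{1}=0$ follows from $\D_{1}\U_{1}=0$ and $\G_{1}\D_{1}=0$. For the commutator identity, the intertwining $\square_{0}\K^{\ast}=\K^{\ast}\D_{1}$ (a direct computation using $\K^{\ast}=\delta$) gives $\K^{\ast}\G_{1}=\G_{0}\K^{\ast}$, so $\rho_{1}\G_{1}s\in\ker(\K_{\Sigma}^{\dagger}|_{\Gamma_{\infty}})$ whenever $s\in\ker(\K^{\ast}|_{\Gamma_{\tc,\infty}})$; property~(ii) of $c^{\pm}$ then yields $(c^{+}+c^{-})\rho_{1}\G_{1}s=\rho_{1}\G_{1}s+\K_{\Sigma}\phi$ with $\K_{\Sigma}\phi\in\Gamma_{\infty,\d}(\V_{\rho_{1}})$, and applying $\mathrm{i}\U_{1}$ together with $\U_{1}\K_{\Sigma}=\K\U_{0}$ converts this to $(\lambda^{+}-\lambda^{-})s=\mathrm{i}\G_{1}s$ modulo $\K(\ker(\D_{0}))\cap\Gamma_{\infty,\d}(\V_{1})$. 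Positivity is then immediate:
\[
(s,\lambda^{\pm}s)_{\V_{1}}=(\rho_{1}\G_{1}s,\lambda_{\Sigma}^{\pm}\rho_{1}\G_{1}s)_{\V_{\rho_{1}}}=\pm\q_{1,\Sigma}(\rho_{1}\G_{1}s,c^{\pm}\rho_{1}\G_{1}s)\geq 0.
\]

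For the Hadamard part the plan is a standard propagation-of-singularities argument. Writing $\lambda^{\pm}=\pm\mathrm{i}(\U_{1}c^{\pm})\circ(\rho_{1}\G_{1})$ and using $\D_{1}\lambda^{\pm}=\lambda^{\pm}\D_{1}=0$ together with self-adjointness, the Duistermaat--H\"ormander theorem forces $\WF'(\lambda^{\pm})\subset\mathcal{N}\times\mathcal{N}$ and invariance under the bicharacteristic flow in each factor separately. Assumption~(iv) bounds $\WF'(\U_{1}c^{\pm})\subset(\mathcal{N}^{\pm}\cup F)\times\T^{\ast}\Sigma$ over $U\times\Sigma$; combining with the standard bound $\WF'(\rho_{1}\G_{1})\subset\T^{\ast}\Sigma\times\mathcal{N}$ via the H\"ormander composition rules gives, on a neighbourhood of $\Sigma\times\Sigma$, a bound $\WF'(\lambda^{\pm})\subset(\mathcal{N}^{\pm}\cup F)\times\T^{\ast}\M$. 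Self-adjointness of $\lambda^{\pm}$ symmetrizes this to the second factor, and intersecting with the already-established inclusion $\WF'(\lambda^{\pm})\subset\mathcal{N}\times\mathcal{N}$ kills the $F$-contributions since $F\cap\mathcal{N}=\emptyset$ by hypothesis. Propagating along the bicharacteristic flow extends the resulting bound $\WF'(\lambda^{\pm})\subset\mathcal{N}^{\pm}\times\mathcal{N}^{\pm}$ from the neighbourhood of $\Sigma\times\Sigma$ to all of $\M\times\M$, using that $\mathcal{N}^{\pm}$ are flow-invariant.

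The algebraic checks in the first part are routine once the adjoint relations among $\G_{1},\G_{1,\Sigma},\K,\K_{\Sigma}$ are assembled. The delicate point I expect is the wavefront-set bookkeeping in the Hadamard part, specifically the argument that the auxiliary set $F$ may be discarded: this requires careful tracking of which factor of $\WF'$ inherits the bound from~(iv), which from the d'Alembertian, and how self-adjointness transfers information from the first factor to the second before one intersects with $\mathcal{N}\times\mathcal{N}$.
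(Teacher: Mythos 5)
The paper does not give its own proof of Proposition~\ref{prop:hadafromc}; it defers to \cite{Gerard2019,GerardWrochnaYM2015}, so your write-up fills a gap the authors left to the literature. Your treatment of $\dagger$-self-adjointness (via $\G_{1,\Sigma}^{\ast}=-\G_{1,\Sigma}$), of the commutator identity, of positivity, and the microlocal argument in the Hadamard part are all sound in outline and follow the standard route.

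There is, however, one concrete error. The claim that the range clause in~(i) is ``automatic because $\rho_{1}\G_{1}\P=0$'' is false. For $\eta\in\Gamma_{\tc}(\V_{1})$ one has
\begin{align*}
\G_{1}\P\eta=\G_{1}(\D_{1}-\K\K^{\ast})\eta=-\G_{1}\K\K^{\ast}\eta=-\K\,\G_{0}\K^{\ast}\eta,
\end{align*}
using $\G_{1}\D_{1}\vert_{\Gamma_{\tc}}=0$ and the intertwining $\G_{1}\K=\K\G_{0}$, and this is generically nonzero. What is true is that $\rho_{1}\G_{1}\P\eta=-\K_{\Sigma}\bigl(\rho_{0}\G_{0}\K^{\ast}\eta\bigr)\in\ran(\K_{\Sigma})$, and that $\P\eta\in\Gamma_{\G}(\V_{1})$ forces this element to lie additionally in $\Gamma_{\infty,\d}(\V_{\rho_{1}})$. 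This is exactly where the second half of hypothesis~(i), namely that $c^{\pm}$ preserves $\ran(\K_{\Sigma})\cap\Gamma_{\infty,\d}(\V_{\rho_{1}})$, must be invoked: it yields $c^{\pm}\rho_{1}\G_{1}\P\eta\in\ran(\K_{\Sigma})\cap\Gamma_{\infty,\d}(\V_{\rho_{1}})$, hence $\lambda^{\pm}\P\eta=\pm\,\i\,\U_{1}c^{\pm}\rho_{1}\G_{1}\P\eta\in\K(\ker\D_{0})\cap\Gamma_{\infty,\d}(\V_{1})$, which pairs to zero against every $s\in\ker(\K^{\ast}\vert_{\Gamma_{\tc,\infty}})$ and so ensures that $\Lambda^{\pm}$ descends to the quotient $\PS$. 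Your argument never uses that part of hypothesis~(i); the fact that one of the stated assumptions appears superfluous should have been a warning sign. Once $\rho_{1}\G_{1}\P\neq 0$ is recognised and the assumption is put to work, the rest of the proof goes through as you describe.
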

 
\begin{remark}
	Note that $(\rho_{1}\G_{1})$ can continuously be extended to a surjective map $(\rho_{1}\G_{1})\:\Gamma^{\prime}(\V_{1})\to\Gamma^{\prime}(\V_{\rho_{1}})$, which is needed in the definition of $\lambda^{\pm}$ above (see \cite[Lemma 3.7.]{GerardWrochnaYM2015} for details).
\end{remark}
\subsection{Construction of Hadamard states in Ultrastatic Spacetimes}\label{Sec:ConstructionHadamardFINAL}
The aim of this section is to construct Hadamard states for Maxwell fields in the Cauchy radiation gauge on ultrastatic globally hyperbolic manifolds with a Cauchy hypersurface of \textit{bounded geometry}. The general case can be achieved by performing the usual deformation argument (see also Section~\ref{Sec:Finale}).
\subsubsection{Phase Space and Gauge Fixing}\label{Subsec:PhaseSpaceUltrastatic}
As a first step, we provide an explicit realization of the abstract isomorphism $\T_{\Sigma}$ between $\PSS$ and $\PSR$ introduced in Proposition~\ref{Prop:EquPhaseSpaces} on ultrastatic spacetimes. This will be fundamental for discussing the Hadamard properties of the pseudodifferential operators $c^\pm$ we are going to construct. Throughout this section, let $(\M,g)$ be globally hyperbolic and ultrastatic manifold 
\begin{align*}
	\M\cong\mathbb{R}\times\Sigma\,,
	\qquad
	 g=-\d t^2+ h\,,
\end{align*}
where $\Sigma$ denotes a smooth spacelike Cauchy hypersurface and $h$ a Riemannian metric on $\Sigma$. Recall that this implies in particular that $(\Sigma,h)$ is complete, see e.g.~\cite{Kay1978,Sanchez2004}. Therefore, the assumptions of the Setup~\ref{setup} are satisfied. \bigskip

In order to construct the operator $\T_\Sigma$, we need first an explicit expression of $\K_\Sigma$ and $\K_\Sigma^\dagger$. 

\begin{lemma}\label{LemmaKSigma}
Let $(\M,g=-\d t^{2}+h)$ be an ultrastatic spacetime. Using the notation of Section~\ref{Sec:PhaseSpace} and the decompositions of $\Gamma(\V_{\rho_{k}})$ as in Equation \eqref{NotationInitialData}, the operators $\K_{\Sigma}$ and $\K_{\Sigma}^{\dagger}$ take the form
	\begin{align*}
		\K_{\Sigma}=\begin{pmatrix}
            0 & \i\Id\\\i\DeltaZS   &0 \\ \d_{\Sigma} & 0 \\ 0 &\d_{\Sigma}
        \end{pmatrix}\hspace*{1cm}&\text{and}\hspace*{1cm}\K_{\Sigma}^{\dagger}=\begin{pmatrix}
            0 & \i\Id & \delta_{\Sigma} & 0\\ \i\DeltaZS  &0&0&\delta_{\Sigma}
        \end{pmatrix}\, .
	\end{align*}
\end{lemma}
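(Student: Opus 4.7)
The strategy is a direct computation: for each operator, unwrap the definition, evaluate the bulk operator ($\d$ or $\delta$), and then read off the Cauchy data, using the wave equations $\D_0 f=0$ and $\D_1 A=0$ to replace second time derivatives appearing in the data by spatial operators acting on the first-order data.

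For $\K_{\Sigma}$, I would take arbitrary initial data $(a,\pi)\in \Gamma(\V_{\rho_{0}})\cong\Omega^{0}(\Sigma;\CC)^{2}$ and set $f\defeq \U_{0}(a,\pi)$, so that $f\vert_{\Sigma_{0}}=a$ and $\i^{-1}\partial_{t}f\vert_{\Sigma_{0}}=\pi$ (using $\nabla_{0}=\partial_{t}$ in the ultrastatic case). Then
\[
\K f=\d f=\partial_{t}f\,\d t+\d_{\Sigma}f,
\]
so the four components of $\rho_{1}\K f$ are obtained as
$(\partial_{t}f)\vert_{\Sigma_{0}}=\i\pi$, $\i^{-1}(\partial_{t}^{2}f)\vert_{\Sigma_{0}}$, $(\d_{\Sigma}f)\vert_{\Sigma_{0}}=\d_{\Sigma}a$ and $\i^{-1}(\partial_{t}\d_{\Sigma}f)\vert_{\Sigma_{0}}=\d_{\Sigma}\pi$. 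The only non-trivial entry is $\partial_{t}^{2}f\vert_{\Sigma_{0}}$: since $\square_{0}f=(\partial_{t}^{2}+\Delta_{0})f=0$ in the ultrastatic case (as recorded in the proof of Proposition~\ref{Prop:GaugeCond}), one gets $\partial_{t}^{2}f\vert_{\Sigma_{0}}=-\Delta_{0}a$, and hence $\i^{-1}\partial_{t}^{2}f\vert_{\Sigma_{0}}=\i\,\DeltaZS a$. Arranging these four components as a column vector gives exactly the claimed matrix for $\K_{\Sigma}$.

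For $\K_{\Sigma}^{\dagger}$, I would take $(u_{0},u_{1},v_{0},v_{1})\in\Gamma(\V_{\rho_{1}})$ with $u_{0}=A_{0}\vert_{\Sigma_{0}}$, $u_{1}=\i^{-1}\partial_{t}A_{0}\vert_{\Sigma_{0}}$, $v_{0}=A_{\Sigma}\vert_{\Sigma_{0}}$, $v_{1}=\i^{-1}\partial_{t}A_{\Sigma}\vert_{\Sigma_{0}}$, where $A=\U_{1}(u_{0},u_{1},v_{0},v_{1})$ solves $\D_{1}A=0$. The key identity, valid in the ultrastatic case and already used in the proof of Proposition~\ref{Prop:GaugeCond}, is
\[
\K^{\ast}A=\delta A=\delta_{\Sigma}A_{\Sigma}+\partial_{t}A_{0}.
\]
Evaluating on $\Sigma_{0}$ immediately yields the first entry $\delta A\vert_{\Sigma_{0}}=\delta_{\Sigma}v_{0}+\i u_{1}$. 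For the second entry one computes
\[
\i^{-1}\partial_{t}\delta A\vert_{\Sigma_{0}}=\i^{-1}\bigl(\delta_{\Sigma}\partial_{t}A_{\Sigma}\vert_{\Sigma_{0}}+\partial_{t}^{2}A_{0}\vert_{\Sigma_{0}}\bigr)=\delta_{\Sigma}v_{1}+\i^{-1}\partial_{t}^{2}A_{0}\vert_{\Sigma_{0}},
\]
and the wave equation $\square_{0}A_{0}=(\partial_{t}^{2}+\DeltaZS)A_{0}=0$ (again from the proof of Proposition~\ref{Prop:GaugeCond}) converts the last term into $-\i^{-1}\DeltaZS u_{0}=\i\,\DeltaZS u_{0}$. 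Arranging this as a column gives the claimed form of $\K_{\Sigma}^{\dagger}$.

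There is no real obstacle: the whole lemma is a bookkeeping exercise. The only points demanding care are the factors of $\i$ introduced by the convention $\rho_{k}(\omega)=(\omega\vert_{\Sigma_{0}},\i^{-1}\partial_{t}\omega\vert_{\Sigma_{0}})$, and the use of the ultrastatic-specific identities $\nabla_{0}=\partial_{t}$ and $\delta A=\delta_{\Sigma}A_{\Sigma}+\partial_{t}A_{0}$ to reduce $\square_{0}$, $\square_{1}$ and $\delta$ to expressions in the spatial operators $\DeltaZS$, $\DeltaOS$, $\d_{\Sigma}$ and $\delta_{\Sigma}$. As a sanity check, once both matrices are computed one can verify directly that $\K_{\Sigma}^{\dagger}\K_{\Sigma}=0$, matching the abstract identity $\K^{\ast}\K=\D_{0}$ restricted to $\ker(\D_{0})$ (indeed, the product is $\mathrm{diag}(\i\DeltaZS-\DeltaZS\i,\i\DeltaZS-\DeltaZS\i)=0$).
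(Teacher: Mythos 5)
Your proof is correct and follows essentially the same route as the paper: unwrap $\K_\Sigma=\rho_1\K\U_0$ and $\K_\Sigma^\dagger=\rho_0\K^\ast\U_1$, compute the bulk operators, and use the wave equations $\square_0 f=(\partial_t^2+\Delta_0)f=0$ and $(\square_1 A)_0=(\partial_t^2+\Delta_0)A_0=0$ together with $\delta A=\delta_\Sigma A_\Sigma+\partial_t A_0$ to eliminate second time derivatives. One tiny slip in your sanity check: the diagonal entries of $\K_\Sigma^\dagger\K_\Sigma$ are $\i\cdot\i\Delta_0+\delta_\Sigma\d_\Sigma=-\Delta_0+\delta_\Sigma\d_\Sigma$ (which vanishes on $0$-forms since $\Delta_0=\delta_\Sigma\d_\Sigma$ there), not the commutator $\i\Delta_0-\Delta_0\i$; the conclusion $\K_\Sigma^\dagger\K_\Sigma=0$ is still correct.
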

    
\begin{proof}
Let $(a,\pi)\in\Gamma(\V_{0})$. Then, $\K_{\Sigma}=\rho_{1}\K\U_{0}$ acts on $(a,\pi)$ as 
\begin{align}\label{eq:KSigma}
	\K_{\Sigma}\begin{pmatrix}a\\\pi\end{pmatrix}=\begin{pmatrix}\partial_{t}f\vert_{\Sigma}\\ \i^{-1}\partial_{t}^{2}f\vert_{\Sigma}\\(\d f)_{\Sigma}\vert_{\Sigma}\\\i^{-1}\partial_{t}(\d f)_{\Sigma}\vert_{\Sigma}\end{pmatrix}=\begin{pmatrix}\i\pi\\ \i\DeltaZS a\\\d_{\Sigma}a\\\d_{\Sigma}\pi\end{pmatrix}\hspace*{0.5cm}\text{with}\hspace*{0.5cm}f\defeq \U_{0}\begin{pmatrix}a\\\pi\end{pmatrix}\in\ker(\D_{0})\, ,
\end{align}
where we used that $\D_{0}f=\partial_{t}^{2}f+\DeltaZS f=0$. Similarly, for $(a_{0},\pi_{0},a_{\Sigma},\pi_{\Sigma})\in\Gamma(\V_{1})$, we calculate
    \begin{align*}
    		\K_{\Sigma}^{\dagger}\begin{pmatrix}a_{0}\\\pi_{0}\\ a_{\Sigma}\\\pi_{\Sigma}\end{pmatrix}=\begin{pmatrix}\delta A\vert_{\Sigma}\\\i^{-1}\partial_{t}\delta A\vert_{\Sigma}\end{pmatrix}=\begin{pmatrix}\delta_{\Sigma}a_{\Sigma}+\i \pi_{0}\\i\Delta_{0}a_{0}+\delta_{\Sigma}\pi_{\Sigma}\end{pmatrix}\hspace*{0.5cm}\text{with}\hspace*{0.5cm}A\defeq \U_{1}\begin{pmatrix}a_{0}\\\pi_{0}\\ a_{\Sigma}\\\pi_{\Sigma}\end{pmatrix}\in\ker(\D_{1})\, ,
    \end{align*}
    where we used $\delta A=\delta_{\Sigma}A_{\Sigma}+\partial_{t}A_{0}$ as well as $(\square_{1}A)_{0}=\partial_{t}^{2}A_{0}+\Delta_{0}A_{0}=0$.
\end{proof} 

\begin{corollary}\label{Cor:VSR}
The space of initial data in the Cauchy radiation gauge $\PSR$ (see Proposition~\ref{Prop:EquPhaseSpaces}) on an ultrastatic globally hyperbolic manifold $(\M,g=-\d t^2+h)$ is given by
\begin{align*}
		\PSR=\{(0,0,a_{\Sigma},\pi_{\Sigma})\in\Gamma_{\infty}(\V_{\rho_{1}})\mid \delta_{\Sigma}a_{\Sigma}=\delta_{\Sigma}\pi_{\Sigma}=0\}.
\end{align*}
\end{corollary}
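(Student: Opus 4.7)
The plan is to simply unfold the two defining conditions $\f\in\ker(\R_{\Sigma}|_{\Gamma_{\infty}})$ and $\f\in\ker(\K_{\Sigma}^{\dagger}|_{\Gamma_{\infty}})$ in the ultrastatic case and intersect them. Write $\f=(a_{0},\pi_{0},a_{\Sigma},\pi_{\Sigma})\in\Gamma_{\infty}(\V_{\rho_{1}})$ according to the decomposition~\eqref{NotationInitialData}.

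First I would evaluate $\R_{\Sigma}$ on $(\M,g=-\d t^{2}+h)$. Since the lapse is $\beta\equiv 1$, we have $\partial_{t}\beta=0$ and $\d_{\Sigma}\beta=0$, so the matrix defining $\R_{\Sigma}$ collapses to
\begin{align*}
\R_{\Sigma}=\begin{pmatrix}\mathds{1} & 0 & 0 & 0\\ 0 & \mathds{1} & 0 & 0\\ 0 & 0 & 0 & 0\\ 0 & 0 & 0 & 0\end{pmatrix}\,,
\end{align*}
and hence $\f\in\ker(\R_{\Sigma}|_{\Gamma_{\infty}})$ if and only if $a_{0}=0$ and $\pi_{0}=0$. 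Geometrically this just records that, in the ultrastatic case, $\R=\U_{1}\R_{\Sigma}\rho_{1}$ vanishes precisely on those $A=A_{0}\d t+A_{\Sigma}\in\ker(\D_{1}|_{\Gamma_{\infty}})$ whose initial data satisfy $A_{0}|_{\Sigma_{0}}=\partial_{t}A_{0}|_{\Sigma_{0}}=0$.

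Next I would apply Lemma~\ref{LemmaKSigma}, which yields
\begin{align*}
\K_{\Sigma}^{\dagger}\f=\begin{pmatrix}\delta_{\Sigma}a_{\Sigma}+\i\pi_{0}\\ \i\Delta_{0}a_{0}+\delta_{\Sigma}\pi_{\Sigma}\end{pmatrix}\,.
\end{align*}
Substituting $a_{0}=\pi_{0}=0$ from the previous step, the condition $\K_{\Sigma}^{\dagger}\f=0$ reduces to $\delta_{\Sigma}a_{\Sigma}=0$ and $\delta_{\Sigma}\pi_{\Sigma}=0$. Intersecting the two kernels therefore gives exactly the set on the right-hand side, proving the claim. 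No step presents a real obstacle; the only point deserving a sentence of comment is that $\ker(\R_{\Sigma})\cap\ker(\K_{\Sigma}^{\dagger})$ indeed corresponds, under $\U_{1}$, to the Cauchy radiation gauge of Definition~\ref{def:CauchyRadGa}, which is the whole point of Proposition~\ref{Prop:EquPhaseSpaces}.
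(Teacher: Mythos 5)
Your proof is correct and is precisely the derivation the paper leaves implicit: specialize $\R_{\Sigma}$ to the ultrastatic case (where $\partial_{t}\beta=\d_{\Sigma}\beta=0$ reduces it to the projection onto the first two slots, giving $a_{0}=\pi_{0}=0$), then feed this into the explicit form of $\K_{\Sigma}^{\dagger}$ from Lemma~\ref{LemmaKSigma} to extract $\delta_{\Sigma}a_{\Sigma}=\delta_{\Sigma}\pi_{\Sigma}=0$. Since the paper presents the statement as an unproved corollary of Lemma~\ref{LemmaKSigma}, this is the intended argument.
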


\begin{lemma}\label{Lemma:ProjectionsHodgeDecomp}
	Let $(\Sigma,h)$ be a complete Riemannian manifold. The orthogonal projector $$\Pi\:\Omega^{1}_{\infty}(\Sigma)\to\ker(\delta_{\Sigma}\vert_{\Omega^{1}_{\infty}}),$$ where $\Omega^{1}_{\infty}(\Sigma)\cong\Omega^{1}_{\infty,\d}(\Sigma)\oplus\ker(\delta_{\Sigma}\vert_{\Omega^{1}_{\infty}})$ by Theorem~\ref{Thm:HodgeDecomSmooth}, has the following properties:
\begin{itemize}
	\item[(i)]$\Pi^{2}=\Pi$, $\mathrm{ran}(\Pi)=\ker(\delta_{\Sigma}\vert_{\Omega^{1}_{\infty}})$ and $\ker(\Pi)=\Omega^{1}_{\infty,\d}(\Sigma)$.
	\item[(ii)]$\Pi$ is orthogonal w.r.t.~$\langle\cdot,\cdot\rangle_{\L^{2}}$.
	\item[(iii)]$\Pi\Delta_{1}=\Delta_{1}\Pi$ on $\Omega^{1}_{\infty}(\Sigma)$.
	\item[(iv)]$\Pi=\mathds{1}-\d_{\Sigma}\Delta_{1}^{-1}\delta_{\Sigma}$ where $\Delta_{1}$ denotes the Laplacian as an operator
	\begin{align*}
		\Delta_{1}\:\{f\in C^{\infty}(\Sigma;\CC)\mid\d_{\Sigma}f\in\Omega^{1}_{\infty,\d}(\Sigma)\}/_{\sim_{\c}}\to\mathrm{ran}(\delta_{\Sigma}\vert_{\Omega^{1}_{\infty}})\, ,
	\end{align*}
	where $\sim_{\c}$ denotes the equivalence relation on $C^{\infty}(\Sigma;\CC)$ identifying functions that differ by a constant. Recall that $\Delta_{1}$ with this domain is bijective by Proposition~\ref{prop:Poisson}.
\end{itemize}	 
\end{lemma}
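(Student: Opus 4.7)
The plan is to reduce everything to Theorem~\ref{Thm:HodgeDecomSmooth} and Proposition~\ref{prop:Poisson}. For (i), I would start by combining Theorem~\ref{Thm:HodgeDecomSmooth} with its item (iv) to obtain the two-term decomposition $\Omega^{1}_{\infty}(\Sigma) = \Omega^{1}_{\infty,\d}(\Sigma) \oplus \ker(\delta_{\Sigma}\vert_{\Omega^{1}_{\infty}})$; then $\Pi$ is by construction the projection along the first factor onto the second, and the identities $\Pi^{2}=\Pi$, $\ran(\Pi)=\ker(\delta_{\Sigma}\vert_{\Omega^{1}_{\infty}})$, $\ker(\Pi)=\Omega^{1}_{\infty,\d}(\Sigma)$ become automatic. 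For (ii), I would invoke the $\L^{2}$-orthogonality of the three-term decomposition of Theorem~\ref{Thm:HodgeDecomSmooth} restricted to $\H^{\infty}$, a fact proved analogously to Corollary~\ref{InfintieHodge}. Alternatively, one can argue directly: for $\alpha = \L^{2}\text{-}\lim_{n}\d_{\Sigma}\alpha_{n}$ with $\alpha_{n}\in\Omega^{0}_{\infty}(\Sigma)$ and $\beta\in\ker(\delta_{\Sigma}\vert_{\Omega^{1}_{\infty}})$, Lemma~\ref{Lemma:FormalAdjointSobolev} gives $\langle\d_{\Sigma}\alpha_{n},\beta\rangle_{\L^{2}} = \langle\alpha_{n},\delta_{\Sigma}\beta\rangle_{\L^{2}} = 0$, and passing to the limit yields $\langle\alpha,\beta\rangle_{\L^{2}} = 0$.

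For (iii), the key observation is that $\Delta_{1}$ preserves every summand of the smooth Hodge decomposition. Given $\omega = \alpha + \beta + \gamma$ with $\alpha\in\Omega^{1}_{\infty,\d}(\Sigma)$, $\beta\in\Omega^{1}_{\infty,\delta}(\Sigma)$, $\gamma\in\ker(\Delta_{1}\vert_{\Omega^{1}_{\infty}})$, the form $\alpha$ is closed by Theorem~\ref{Thm:HodgeDecomSmooth}(iii), and Lemma~\ref{Def:OmegaS} ensures $\delta_{\Sigma}\alpha \in \Omega^{0}_{\infty}(\Sigma)$, so $\Delta_{1}\alpha = \d_{\Sigma}\delta_{\Sigma}\alpha \in \d_{\Sigma}\Omega^{0}_{\infty}(\Sigma) \subset \Omega^{1}_{\infty,\d}(\Sigma)$. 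Dually, $\Delta_{1}\beta = \delta_{\Sigma}\d_{\Sigma}\beta \in \Omega^{1}_{\infty,\delta}(\Sigma) \subset \ker(\delta_{\Sigma}\vert_{\Omega^{1}_{\infty}})$, and $\Delta_{1}\gamma = 0$. Consequently both $\Pi\Delta_{1}\omega$ and $\Delta_{1}\Pi\omega$ collapse to $\Delta_{1}\beta$, proving the claim.

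For (iv), the plan is to verify that $\d_{\Sigma}\Delta_{1}^{-1}\delta_{\Sigma}\omega$ recovers exactly the exact component $\alpha$ of $\omega$. Since $\delta_{\Sigma}\beta = 0$ and $\delta_{\Sigma}\gamma = 0$, one has $\delta_{\Sigma}\omega = \delta_{\Sigma}\alpha$, and writing $\alpha = \d_{\Sigma}\psi$ (by Theorem~\ref{Thm:HodgeDecomSmooth}(i)) this equals $\Delta_{0}\psi$. The condition $\d_{\Sigma}\psi = \alpha \in \Omega^{1}_{\infty,\d}(\Sigma)$ is automatic, so $\psi$ lies in the domain of the operator $\Delta_{1}$ from the statement; the uniqueness-up-to-constants clause of Proposition~\ref{prop:Poisson} then identifies $[\psi] = \Delta_{1}^{-1}\delta_{\Sigma}\omega$, whence $\d_{\Sigma}\Delta_{1}^{-1}\delta_{\Sigma}\omega = \d_{\Sigma}\psi = \alpha = \omega - \Pi\omega$. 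The only subtlety I expect is the careful bookkeeping of function spaces in parts (iii) and (iv): one must ensure that $\d_{\Sigma}$, $\delta_{\Sigma}$ and their compositions genuinely preserve the smooth Sobolev subspaces $\Omega^{\bullet}_{\infty}(\Sigma)$ and $\Omega^{1}_{\infty,\d}(\Sigma)$ that appear, which is guaranteed by Lemma~\ref{Def:OmegaS} together with the characterization of the $\d$-exact and $\delta$-coexact summands in Theorem~\ref{Thm:HodgeDecomSmooth}.
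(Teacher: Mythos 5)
Your proof is correct and follows essentially the same route as the paper's, reducing each part to Theorem~\ref{Thm:HodgeDecomSmooth}, Lemma~\ref{Lemma:WeakCovergence}, Lemma~\ref{Def:OmegaS} and Proposition~\ref{prop:Poisson}. The only variation is in (iii): you show $\Delta_{1}\alpha\in\Omega^{1}_{\infty,\d}(\Sigma)$ by writing $\Delta_{1}\alpha=\d_{\Sigma}\delta_{\Sigma}\alpha$ (since $\alpha$ is closed) and observing $\delta_{\Sigma}\alpha\in\Omega^{0}_{\infty}(\Sigma)$, whereas the paper establishes the same inclusion by an approximation argument on a defining sequence $\alpha=\H^{\infty}\text{-}\lim_{n\to\infty}\d_{\Sigma}\varphi_{n}$, using $\Delta_{1}\d_{\Sigma}\varphi_{n}=\d_{\Sigma}\Delta_{0}\varphi_{n}$ together with the boundedness of $\Delta_{1}\colon\Omega^{1}_{s+2}(\Sigma)\to\Omega^{1}_{s}(\Sigma)$. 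Both arguments are sound; yours is slightly more direct.
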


\begin{proof}
	(i) is clear and follow from the fact that $\Pi$ is an orthogonal projector. For (ii) we note that $\Omega^{1}_{\infty,\d}(\Sigma)$ is orthogonal to $\ker(\delta_{\Sigma}\vert_{\Omega^{1}_{\infty}})$ also w.r.t.~$\langle\cdot,\cdot\rangle_{\L^{2}}$: Let $\alpha\in\Omega^{1}_{\infty,\d}(\Sigma)$ and $\beta\in\ker(\delta_{\Sigma}\vert_{\Omega^{1}_{\infty}})$. By assumption, $\alpha=\H^{\infty}\text{-}\lim_{n\to\infty}\d_{\Sigma} f_{n}$ for some sequence $(f_{n})_{n}$ in $\Omega^{0}_{\infty}(\Sigma)$. By Lemma~\ref{Lemma:WeakCovergence}, the same limit holds in the $\L^{2}$-sense and hence
	\begin{align*}
		0=\langle f_{n},\delta_{\Sigma}\beta\rangle_{\L^{2}}=\langle\d_{\Sigma} f_{n},\beta\rangle_{\L^{2}}\xrightarrow{n\to\infty}\langle\alpha,\beta\rangle_{\L^{2}}
	\end{align*}
	which shows that $\langle\alpha,\beta\rangle_{\L^{2}}=0$.
	
	For (iii), we take $\omega\in\Omega^{1}_{\infty}(\Sigma)$ and decompose it uniquely as $\omega=\alpha+\beta$ with $\alpha\in\Omega^{1}_{\infty,\d}(\Sigma)$ and $\beta\in\mathrm{ker}(\delta_{\Sigma}\vert_{\Omega^{1}_{\infty}})$ so that $\Pi\omega=\beta$. It follows that $\Delta_{1}\Pi\omega=\Delta_{1}\beta$. On the other hand, $\Delta_{1}\omega=\Delta_{1}\alpha+\Delta_{1}\beta$. Clearly, $\Delta_{1}\beta\in\ker(\delta_{\Sigma}\vert_{\Omega^{1}_{\infty}})$. It remains to show that $\Delta_{1}\alpha\in\Omega^{1}_{\infty,\d}(\Sigma)$, since this implies $\Pi\Delta_{1}\omega=\Delta_{1}\beta$ and hence $\Delta_{1}\Pi\omega=\Pi\Delta_{1}\omega$. By assumption, $\alpha=\H^{\infty}\text{-}\lim_{n\to\infty}\d\varphi_{n}$ for a sequence $(\varphi_{n})_{n}$ in $\Omega^{0}_{\infty}(\Sigma)$. Clearly $\Delta_{1}\d\varphi_{n}=\d\Delta_{0}\varphi_{n}$ and $\Delta_{0}\varphi_{n}\in\Omega^{0}_{\infty}(\Sigma)$. It follows that $\Delta_{1}\alpha=\H^{\infty}\text{-}\lim_{n\to\infty}\d\Delta_{0}\varphi_{n}$, since $\Delta_{1}$ is bounded as an operator $\Delta_{1}\:\Omega^{1}_{s+2}(\Sigma)\to\Omega^{1}_{s}(\Sigma)$ for any $s\geq 0$, cf.~Lemma~\ref{Def:OmegaS}. We conclude that $\Delta_{1}\alpha\in\Omega^{1}_{\infty,\d}(\Sigma)$, as claimed.
	
	For (iv), we take $\omega\in\Omega^{1}_{\infty}(\Sigma)$ and write $\omega=\alpha+\beta$ for $\alpha\in\Omega^{1}_{\infty,\d}(\Sigma)$ and $\beta\in\ker(\delta_{\Sigma}\vert_{\Omega^{1}_{\infty}})$. Furthermore, we write $\alpha=\d_{\Sigma}f$ for some $f\in C^{\infty}(\Sigma;\CC)$, which is possible by Theorem~\ref{Thm:HodgeDecomSmooth}(i). Then $\Pi\omega=\beta$ and 
	\begin{align*}
		(\mathds{1}-\d_{\Sigma}\Delta_{1}^{-1}\delta_{\Sigma})\omega=\omega-\d_{\Sigma}\Delta_{1}^{-1}\delta_{\Sigma}\omega=\omega-\d_{\Sigma}f=\omega-\alpha=\beta.
	\end{align*}
	where we used that $f$ is the unique smooth solution $\Delta_{0}f=\delta_{\Sigma}\omega$ satisfying $\d_{\Sigma}f\in\Omega^{1}_{\infty,\d}(\Sigma)$, see Proposition~\ref{prop:Poisson}.
\end{proof}

\begin{proposition}\label{Prop:ProjOp}
Let $(\M,g)$ be a ultrastatic globally hyperbolic manifold such that $(\Sigma,h)$ is of bounded geometry. Then , the operator $\T_{\Sigma}\:\Gamma_{\infty}(\V_{\rho_{1}})\to \Gamma_{\infty}(\V_{\rho_{1}})$ given by
\begin{align*}
		\T_{\Sigma}=\begin{pmatrix}
	0&0&0&0\\0&0&0&0\\0 & 0 & \Pi& 0\\ 0&0 &0&\Pi
	\end{pmatrix}\,.
	\end{align*}
	has the following properties:
	\begin{itemize}
	\item[(i)] $\T_{\Sigma}^{2}=\T_{\Sigma}$ \, and\,  $\T_{\Sigma}\vert_{\PSR}=\Id$
	\item[(ii)] $\T_{\Sigma}=\Id-\K_{\Sigma}(\R_{\Sigma}\K_{\Sigma})^{-1}\R_{\Sigma}$ \, on\, $\ker(\K_{\Sigma}^{\dagger}\vert_{\Gamma_{\infty}})$ and it has the following properties:
	\begin{itemize}
	\item[(iia)] $\ker(\T_{\Sigma}\vert_{\ker(\K_{\Sigma}^{\dagger}\vert_{\Gamma_{\infty}})})=\ran(\K_{\Sigma})\cap\Gamma_{\infty,\d}(\V_{\rho_{1}})$ 
	\item[(iib)] $\ran(\T_{\Sigma}\vert_{\ker(\K_{\Sigma}^{\dagger}\vert_{\Gamma_{\infty}})})=\PSR$
	\end{itemize}
	\end{itemize}
	In particular, (ii) implies that $\T_{\Sigma}$ is well-defined and bijective as a map $\T_{\Sigma}:\PSS\to\PSR$.
\end{proposition}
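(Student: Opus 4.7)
Part (i) is essentially algebraic: $\T_{\Sigma}^{2}=\T_{\Sigma}$ reduces to $\Pi^{2}=\Pi$, which is Lemma~\ref{Lemma:ProjectionsHodgeDecomp}(i). For $\T_{\Sigma}\vert_{\PSR}=\Id$, use Corollary~\ref{Cor:VSR} to write any element of $\PSR$ as $(0,0,a_{\Sigma},\pi_{\Sigma})$ with $\delta_{\Sigma}a_{\Sigma}=\delta_{\Sigma}\pi_{\Sigma}=0$; since $\Pi$ is the identity on $\ker(\delta_{\Sigma}\vert_{\Omega^{1}_{\infty}})$ (again Lemma~\ref{Lemma:ProjectionsHodgeDecomp}(i)), the conclusion is immediate.

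For (ii), I would exploit the fact that in the ultrastatic case ($\beta\equiv 1$, $\d_{\Sigma}\beta=0$) the operator $\R_{\Sigma}$ collapses to the projection $(a_{0},\pi_{0},a_{\Sigma},\pi_{\Sigma})\mapsto(a_{0},\pi_{0},0,0)$, while $\K_{\Sigma}$ and $\K_{\Sigma}^{\dagger}$ have the explicit form of Lemma~\ref{LemmaKSigma}. Given $v=(a_{0},\pi_{0},a_{\Sigma},\pi_{\Sigma})\in\ker(\K_{\Sigma}^{\dagger}\vert_{\Gamma_{\infty}})$, the kernel condition reads $\i\pi_{0}=-\delta_{\Sigma}a_{\Sigma}$ and $\i\Delta_{0}a_{0}=-\delta_{\Sigma}\pi_{\Sigma}$. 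The equation $\R_{\Sigma}\K_{\Sigma}(a,\pi)=\R_{\Sigma}v$ amounts to $\pi=-\i a_{0}$ and $\Delta_{0}a=\delta_{\Sigma}a_{\Sigma}$; by Proposition~\ref{prop:Poisson} (applied to $\omega=a_{\Sigma}\in\Omega^{1}_{\infty}(\Sigma)$) this determines $a$ uniquely up to constant inside $\{f\in C^{\infty}(\Sigma;\CC)\mid\d_{\Sigma}f\in\Omega^{1}_{\infty,\d}(\Sigma)\}$, and the Hodge decomposition gives $\d_{\Sigma}a=a_{\Sigma}-\Pi a_{\Sigma}$. Plugging back yields
\[
\K_{\Sigma}(a,\pi)=(a_{0},\,\i\Delta_{0}a,\,\d_{\Sigma}a,\,-\i\d_{\Sigma}a_{0})=(a_{0},\pi_{0},\,a_{\Sigma}-\Pi a_{\Sigma},\,\pi_{\Sigma}-\Pi\pi_{\Sigma}),
\]
where the last component is identified by observing that $a_{0}\in\Omega^{0}_{\infty}(\Sigma)$ (using bounded geometry and Sobolev embedding, Remark~\ref{Remark:SobolevEmbedding}) and that $-\i a_{0}$ solves $\Delta_{0}u=\delta_{\Sigma}\pi_{\Sigma}$ with $\d_{\Sigma}u\in\Omega^{1}_{\infty,\d}(\Sigma)$, so uniqueness in Proposition~\ref{prop:Poisson} forces $-\i\d_{\Sigma}a_{0}=\pi_{\Sigma}-\Pi\pi_{\Sigma}$. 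Subtracting from $v$ recovers $(0,0,\Pi a_{\Sigma},\Pi\pi_{\Sigma})=\T_{\Sigma}v$, proving the formula.

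With the formula of (ii) at hand, (iia) and (iib) are almost formal. For (iia): $\T_{\Sigma}v=0$ forces $v=\K_{\Sigma}(a,\pi)$ from the identity in (ii), and the vanishing $\Pi a_{\Sigma}=\Pi\pi_{\Sigma}=0$ puts $a_{\Sigma},\pi_{\Sigma}\in\Omega^{1}_{\infty,\d}(\Sigma)$, so $v\in\Gamma_{\infty,\d}(\V_{\rho_{1}})$. Conversely, if $v=\K_{\Sigma}(a,\pi)\in\Gamma_{\infty,\d}(\V_{\rho_{1}})$, then the lower components $\d_{\Sigma}a,\d_{\Sigma}\pi$ belong to $\Omega^{1}_{\infty,\d}(\Sigma)$ by assumption, hence lie in $\ker\Pi$ and $\T_{\Sigma}v=0$. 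For (iib): $\T_{\Sigma}v$ manifestly has vanishing first two components, and $\delta_{\Sigma}(\Pi\alpha)=0$ for any $\alpha$, so $\T_{\Sigma}v\in\PSR$; surjectivity onto $\PSR$ is immediate from $\T_{\Sigma}\vert_{\PSR}=\Id$ in (i). The final claim that $\T_{\Sigma}$ descends to a bijection $\PSS\to\PSR$ is then a direct consequence of (iia) and (iib).

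The main technical obstacle is the appeal to Proposition~\ref{prop:Poisson}: I need to ensure that both the Poisson problems for $a$ (sourced by $\delta_{\Sigma}a_{\Sigma}$) and for $a_{0}$ (as a solution of $\Delta_{0}a_{0}=\i\delta_{\Sigma}\pi_{\Sigma}$) live in the correct class $\{f\mid \d_{\Sigma}f\in\Omega^{1}_{\infty,\d}\}$ where uniqueness holds, and that the identification $\d_{\Sigma}a=a_{\Sigma}-\Pi a_{\Sigma}$ follows from Lemma~\ref{Lemma:ProjectionsHodgeDecomp}(iv). The bounded geometry assumption enters precisely here, via the Sobolev embedding, to guarantee that the $\Omega_{\infty}$ components of an element of $\ker(\K_{\Sigma}^{\dagger}\vert_{\Gamma_{\infty}})$ automatically produce gradients in $\Omega^{1}_{\infty,\d}$, closing the loop.
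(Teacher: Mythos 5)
Your proof is correct and follows essentially the same route as the paper: you compute $\R_{\Sigma}$, $\K_{\Sigma}$, $\K_{\Sigma}^{\dagger}$ explicitly in the ultrastatic case via Lemma~\ref{LemmaKSigma}, invoke Proposition~\ref{prop:Poisson} to invert $\R_{\Sigma}\K_{\Sigma}$ modulo constants, identify $\d_{\Sigma}a$ with the exact Hodge component through Lemma~\ref{Lemma:ProjectionsHodgeDecomp}, and then read off (iia)–(iib) from the kernel and range of $\Pi$. This is the paper's argument, just organized around solving $\R_{\Sigma}\K_{\Sigma}(a,\pi)=\R_{\Sigma}v$ rather than writing out the matrix $\K_{\Sigma}(\R_{\Sigma}\K_{\Sigma})^{-1}\R_{\Sigma}$ directly.

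One small correction to your closing remark: bounded geometry is not what makes $\d_{\Sigma}a_{0}\in\Omega^{1}_{\infty,\d}(\Sigma)$ for $a_{0}$ a component of an element of $\ker(\K_{\Sigma}^{\dagger}\vert_{\Gamma_{\infty}})$. That follows already from the definition of $\Gamma_{\infty}(\V_{\rho_{1}})$ (its scalar components lie in $\Omega^{0}_{\infty}(\Sigma)$ by construction, not by Sobolev embedding) together with Lemma~\ref{Def:OmegaS}, which gives $\d_{\Sigma}\Omega^{0}_{\infty}(\Sigma)\subset\Omega^{1}_{\infty}(\Sigma)$, and the trivial inclusion $\d_{\Sigma}\Omega^{0}_{\infty}(\Sigma)\subset\overline{\d_{\Sigma}\Omega^{0}_{\infty}(\Sigma)}^{\H^{s}}$ for every $s$. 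The bounded-geometry hypothesis in the proposition's statement is inherited from the surrounding microlocal construction (Lemma~\ref{lem:commuting} and the pseudodifferential calculus) rather than being used in this particular proof.
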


\begin{proof}
Claim (i) follows directly from Lemma~\ref{Lemma:ProjectionsHodgeDecomp} and the characterization of $\PSR$ in Corollary~\ref{Cor:VSR}. Let us now turn to (ii): By Lemma~\ref{LemmaKSigma}, the operator $\R_{\Sigma}\K_{\Sigma}\:\Gamma(\V_{\rho_{0}})\to\Gamma(\V_{\rho_{1}})$ is given by 
\begin{align}\label{eq:ProofRK}
        \R_{\Sigma}\K_{\Sigma}=\begin{pmatrix}0& \i\Id\\ \i\DeltaZS & 0\\ 0&0\\0&0\end{pmatrix}
\end{align}
Now, following Proposition~\ref{prop:Poisson}, this operator is well-defined and bijective as an operator
\begin{align*}
	\R_{\Sigma}\K_{\Sigma}\:\{(a,\pi)\in\Gamma(\V_{\rho_{0}})\mid \d_{\Sigma}a\in\Omega^{1}_{\infty,\d}(\Sigma)\}/_{\sim_{\c}}\to\{(a,\pi,0,0)\in\Gamma(\V_{\rho_{1}})\mid \pi\in\ran(\delta_{\Sigma}\vert_{\Omega^{1}_{\infty}})\}\,,
\end{align*}
where $\sim_{\c}$ denotes the equivalence relation on $\Gamma(\V_{0})$ identifying $(a_{0},\pi_{0})$ and $(a_{1},\pi_{1})$ if and only if $a_{0}$ and $a_{1}$ differ by a constant. It follows that $(\R_{\Sigma}\K_{\Sigma})^{-1}\R_{\Sigma}$ is well-defined on $\ker(\K_{\Sigma}^{\dagger}\vert_{\Gamma_{\infty}})$. Furthermore, it is straight-forward to verify that 
\begin{align*}
		\K_{\Sigma}(\R_{\Sigma}\K_{\Sigma})^{-1}\R_{\Sigma}\begin{pmatrix}a_{0}\\\pi_{0}\\a_{\Sigma}\\\pi_{\Sigma}\end{pmatrix}=\begin{pmatrix}
		a_{0}\\\pi_{0}\\-\d_{\Sigma}\DeltaZS ^{-1}(\i\pi_{0})\\-\i\d_{\Sigma}a_{0}
		\end{pmatrix}=\begin{pmatrix}
		a_{0}\\\pi_{0}\\\d_{\Sigma}\DeltaZS ^{-1}(\delta_{\Sigma}a_{\Sigma})\\\d_{\Sigma}\DeltaZS ^{-1}(\delta_{\Sigma}\pi_{\Sigma})
		\end{pmatrix}\, ,
	\end{align*}
for all $(a_{0},\pi_{0},a_{\Sigma},\pi_{\Sigma})\in\ker(\K_{\Sigma}\vert_{\Gamma_{\infty}})$, where $\Delta_{0}^{-1}$ is as defined in Lemma~\ref{Lemma:ProjectionsHodgeDecomp}(iii) and where we used Lemma~\ref{LemmaKSigma} to conclude that $\i\pi_{0}=-\delta_{\Sigma}a_{\Sigma}$ as well as $\i\DeltaZS a_{0}=-\delta_{\Sigma}\pi_{\Sigma}$, which implies $\Delta_{0}^{-1}\delta_{\Sigma}\pi_{\Sigma}=-ia_{0}$ where $a_{0}$ is such that $\d_{\Sigma}a_{0}\in\Omega^{1}_{\infty,\d}(\Sigma)$. We conclude that $\T_{\Sigma}=\Id-\K_{\Sigma}(\R_{\Sigma}\K_{\Sigma})^{-1}\R_{\Sigma}$ on $\ker(\K_{\Sigma}^{\dagger}\vert_{\Gamma_{\infty}})$ by Lemma~\ref{LemmaKSigma}. It remains to check (iia) and (iib). For $\mathrm{ran}(\T_{\Sigma}\vert_{\ker(\K_{\Sigma}^{\dagger}\vert_{\Gamma_{\infty}})})=\PSR$, the direction ``$\subset$'' is clear since $\T_{\Sigma}\:\Gamma_{\infty}(\V_{\rho_{1}})\to\PSR$. The other direction is clear since clearly $\T_{\Sigma}=\mathds{1}$ on $\PSR$, which follows from $\Pi=\mathds{1}$ on $\ker(\delta_{\Sigma}\vert_{\Omega^{1}_{\infty}})$. Similarly, $\ker(\T_{\Sigma}\vert_{\ker(\K_{\Sigma}^{\dagger}\vert_{\Gamma_{\infty}})})=\ran(\K_{\Sigma})\cap\Gamma_{\infty,\d}(\V_{\rho_{1}})$ follows from $\ker(\Pi)=\Omega^{1}_{\infty,\d}(\Sigma)$.
\end{proof}

We conclude this section, by showing that the projectors $\T_{\Sigma}$ induces a Hermitian form $\q_{\Sigma,\R}$ on the space $\PSR$ such that
\begin{align*}
	\T_{\Sigma}\:(\PSS,\q_{1,\Sigma})\to(\PSR,\q_{\Sigma,\R})
\end{align*}
is unitary, i.e.~$\q_{\Sigma,\R}(\T_{\Sigma}[\cdot],\T_{\Sigma}[\cdot])=\q_{1,\Sigma}([\cdot],[\cdot])=\i(\cdot,\G_{1,\Sigma}\cdot)_{\V_{\rho_{1}}}$.
Before entering into the details, let us give an explicit characterization of $\G_{i,\Sigma}$. To this end, let us endow $\Gamma(\V_{\rho_i})$ with the Hermitian forms defined by
    \begin{align*}
        \bigg(\begin{pmatrix}
            a\\\pi
        \end{pmatrix},\begin{pmatrix}
            b\\\eta
        \end{pmatrix}\bigg)_{\V_{\rho_{0}}}&\defeq\int_{\Sigma}\big(\overline{a}b+\overline{\pi}\eta\big)\,\volS,\\\bigg(\begin{pmatrix}
            a_{0}\\\pi_{0}\\a_{\Sigma}\\\pi_{\Sigma}
        \end{pmatrix},\begin{pmatrix}
            b_{0}\\\eta_{0}\\b_{\Sigma}\\\eta_{\Sigma}
        \end{pmatrix}\bigg)_{\V_{\rho_{1}}}&\defeq\int_{\Sigma}\big(\overline{a_{0}}b_{0}+\overline{\pi_{0}}\eta_{0}+h^{\sharp}(\overline{a_{\Sigma}},b_{\Sigma})+h^{\sharp}(\overline{\pi_{\Sigma}},\eta_{\Sigma})\big)\,\volS\, .
    \end{align*}
Note that $\T_{\Sigma}\:\Gamma_{\infty}(\V_{\rho_{1}})\to\Gamma_{\infty}(\V_{\rho_{1}})$ is formally self-adjoint w.r.t.~$(\cdot,\cdot)_{\V_{\rho_{1}}}$ as can be seen from its definition and from Lemma~\ref{Lemma:ProjectionsHodgeDecomp}(ii). Using the definition of $\G_{i,\Sigma}$ in Equation~\eqref{GSigma} as well as Green's identity (see e.g.~\cite{Gerard2019}), a straight-forward computation shows that they can be written in the following matrix form:\footnote{While the operators $\G_{i,\Sigma}$ are completely specified by the data of a linear gauge theory, recall that the explicit matrix representation depends on the choice of $\langle\cdot,\cdot\rangle_{\V_{\rho_{i}}}$, whose choice is arbitrary. }

\begin{align*}
		\G_{0,\Sigma}=\frac{1}{\i}\begin{pmatrix}
			0 & \Id\\ \Id & 0
		\end{pmatrix}\hspace*{1cm}&\text{and}\hspace*{1cm}\G_{1,\Sigma}=\frac{1}{\i}\begin{pmatrix}
			0 & -\Id & 0 &0\\-\Id &0&0&0\\ 0&0&0&\Id\\0&0&\Id &0
		\end{pmatrix}
	\end{align*}
	
\begin{proposition}
	The Hermitian form $\q_{\Sigma,\R}$ on $\PSR$ is given by
	\begin{align*}
		\q_{\Sigma,\R}(\f,\g)=\i(\f,\G_{\Sigma,\R}\g)_{\V_{\rho_{1}}}
	\end{align*}
	for all $\f,\g\in\PSR$, where the linear operator $\G_{\Sigma,\R}\:\Gamma_\infty(\V_{\rho_{1}})\to\Gamma_\infty(\V_{\rho_{1}})$ is given by
	\begin{align*}
		\G_{\Sigma,\R}\defeq\G_{1,\Sigma}\vert_{0\,\oplus\,(\Omega^{1}(\Sigma;\CC))^{2}}=\frac{1}{\i}\begin{pmatrix}0&0&0&0\\0&0&0&0\\0&0&0&\Id\\0&0&\Id&0\end{pmatrix}\, .
	\end{align*}
\end{proposition}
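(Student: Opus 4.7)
The plan is to exploit the fact that $\T_{\Sigma}$ restricts to the identity on $\PSR$, as established in Proposition~\ref{Prop:ProjOp}(i), which turns the defining pullback relation for $\q_{\Sigma,\R}$ into a direct equality on $\PSR$. More precisely, given $\f,\g\in\PSR$, since $\T_{\Sigma}^{2}=\T_{\Sigma}$ and $\T_{\Sigma}\vert_{\PSR}=\Id$, the class $[\f]\in\PSS$ satisfies $\T_{\Sigma}[\f]=\f$, and analogously for $\g$. The defining relation
\begin{align*}
	\q_{\Sigma,\R}(\T_{\Sigma}[\cdot],\T_{\Sigma}[\cdot])=\q_{1,\Sigma}([\cdot],[\cdot])=\i(\cdot,\G_{1,\Sigma}\cdot)_{\V_{\rho_{1}}}
\end{align*}
then immediately yields $\q_{\Sigma,\R}(\f,\g)=\i(\f,\G_{1,\Sigma}\g)_{\V_{\rho_{1}}}$, reducing the problem to checking that $\G_{1,\Sigma}$ and $\G_{\Sigma,\R}$ induce the same pairing on $\PSR\times\PSR$.

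To establish this equality, I would use the characterization of $\PSR$ in Corollary~\ref{Cor:VSR}: every element has the form $(0,0,a_{\Sigma},\pi_{\Sigma})$ with $\delta_{\Sigma}a_{\Sigma}=\delta_{\Sigma}\pi_{\Sigma}=0$. Applying the explicit matrix representation of $\G_{1,\Sigma}$ to such an input returns $\i^{-1}(0,0,\eta_{\Sigma},b_{\Sigma})$, which coincides with $\G_{\Sigma,\R}(0,0,b_{\Sigma},\eta_{\Sigma})$ by inspection of the matrix of $\G_{\Sigma,\R}$. Pairing with $\f=(0,0,a_{\Sigma},\pi_{\Sigma})$ in $(\cdot,\cdot)_{\V_{\rho_{1}}}$ and invoking its explicit definition then gives
\begin{align*}
	\i(\f,\G_{1,\Sigma}\g)_{\V_{\rho_{1}}}=\int_{\Sigma}\bigl(h^{\sharp}(\overline{a_{\Sigma}},\eta_{\Sigma})+h^{\sharp}(\overline{\pi_{\Sigma}},b_{\Sigma})\bigr)\volS=\i(\f,\G_{\Sigma,\R}\g)_{\V_{\rho_{1}}}\,,
\end{align*}
from which the claimed explicit form of $\q_{\Sigma,\R}$ follows.

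There is no serious analytic obstacle here: the difficulty is conceptual rather than technical and amounts to checking well-definedness of $\q_{\Sigma,\R}$ on $\PSR$, which is automatic from the bijectivity of $\T_{\Sigma}\:\PSS\to\PSR$ proven in Proposition~\ref{Prop:ProjOp}. As a complementary sanity check that I would include, $\G_{\Sigma,\R}$ preserves $\PSR$: applied to $(0,0,b_{\Sigma},\eta_{\Sigma})$ with $\delta_{\Sigma}b_{\Sigma}=\delta_{\Sigma}\eta_{\Sigma}=0$, the output $\i^{-1}(0,0,\eta_{\Sigma},b_{\Sigma})$ still satisfies the coclosed conditions, confirming that the restricted Hermitian form is intrinsic to $\PSR$ as required for the unitarity statement.
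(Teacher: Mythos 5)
Your argument is correct and matches the paper's proof essentially step for step: both use $\T_{\Sigma}\vert_{\PSR}=\Id$ to turn the defining pullback relation $\q_{\Sigma,\R}(\T_{\Sigma}[\cdot],\T_{\Sigma}[\cdot])=\q_{1,\Sigma}([\cdot],[\cdot])$ into a direct equality $\q_{\Sigma,\R}(\f,\g)=\i(\f,\G_{1,\Sigma}\g)_{\V_{\rho_{1}}}$ on $\PSR$, and then observe that $\G_{1,\Sigma}$ coincides with $\G_{\Sigma,\R}$ on initial data whose first two components vanish, which contains $\PSR$. Your explicit matrix verification and the closing sanity check about $\G_{\Sigma,\R}$ preserving $\PSR$ merely unwind what the paper treats as immediate from the definition of $\G_{\Sigma,\R}$ as the restriction $\G_{1,\Sigma}\vert_{0\,\oplus\,(\Omega^{1}(\Sigma;\CC))^{2}}$, so there is no substantive difference.
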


\begin{proof}
Let $\f,\g\in\PSR$. By definition, in particular $[\f],[\g]\in\PSS$ as well as $T_{\Sigma}[\f]=\f$ and $T_{\Sigma}[\g]=\g$. The claim then follows from the definition of $\q_{\Sigma,\R}$, i.e.~$\q_{\Sigma,\R}(\f,\g)=\q_{\Sigma,\R}(T_{\Sigma}[\f],T_{\Sigma}[\g])=\q_{1,\Sigma}([\f],[\g])=\i\langle\f,\G_{1,\Sigma}\g\rangle_{\V_{\rho_{1}}}=\i\langle\f,\G_{\Sigma,\R}\g\rangle_{\V_{\rho_{1}}}$.
\end{proof}
\subsubsection{Pseudodifferential Calculus on Manifolds of Bounded Geometry}
In this section, we briefly recall Shubin's calculus of pseudodifferential operators on manifolds of bounded geometry \cite{Kordyukov1991,Shubin1992}, to fix the notation and terminology. We follow the presentations in \cite{GerardStoskopf2021,GerardMurroWrochna2022}. For more details, see also \cite{GerardWrochnaOulghazi2017,Gerard2019}.\bigskip

Let $(\Sigma,h)$ be a Riemannian manifold of dimension $d\in\mathbb{N}$. A rank $(q,p)$-tensor field $T\in\Gamma(\T^{\ast}\Sigma^{\otimes q}\otimes \T\Sigma^{\otimes p})$ is called \emph{bounded}, if $\Vert T\Vert\defeq\sup_{x\in\Sigma}\Vert T_{x}\Vert_{x}<\infty$, where $\Vert\cdot\Vert_{x}$ denotes the canonical norm on $\T^{\ast}_{x}\Sigma^{\otimes q}\otimes \T_{x}\Sigma^{\otimes p}$ defined by the metric $h_{x}$ and its inverse. The Riemannian manifold $(\Sigma,h)$ is said to be of \emph{bounded geometry} \cite{Gromov1985}, if its injectivity radius is non-zero and if the Riemann curvature tensor and all its covariant derivatives are bounded. Note that bounded geometry is a purely geometrical concept, since on every manifold there exists a Riemannian metric of bounded geometry, see e.g.~\cite{Greene1978}. Let $U\subset\mathbb{R}^{d}$ be open and $\delta$ denote the Euclidean metric. Then, $\mathrm{BT}_{q}^{p}(U,\delta)$ is the space of all rank $(q,p)$-tensor fields on $U$, which are bounded with all their derivatives, equipped with its natural Fréchet space topology. For the case $p=q=0$ we also write $C^{\infty}_{\b}(U)$. A Riemannian manifold $(\Sigma,h)$ is of bounded geometry if and only if there exists a \emph{family of bounded charts}, i.e.~around every $x\in\Sigma$ there exists an open chart $(U_{x},\varphi_{x}\:U_{x}\to B_{1}(0))$, where $B_{1}(0)\subset\mathbb{R}^{d}$ denotes the ball around $0$ with radius $1$, such that the following holds:
\begin{align*}
	\text{(i)}&\hspace*{1cm}\text{The family }(h_{x}\defeq(\varphi_{x})_{\ast}h)_{x\in\Sigma}\text{ is bounded in }\mathrm{BT}_{2}^{0}(B_{1}(0),\delta)\\
	\text{(ii)}&\hspace*{1cm}\exists C>0\:\hspace*{0.5cm}C^{-1}\delta\leq h_{x}\leq C\delta\hspace*{0.5cm}\forall x\in\Sigma
\end{align*}
If $(U_{n},\varphi_{n})_{n\in\mathbb{N}}$ is a countable subcover of a bounded family of charts $(U_{x},\varphi_{x})_{x\in\Sigma}$, we call it \emph{bounded atlas}, if it is in addition \textit{uniformly finite}, i.e.~there exists $N\in\mathbb{N}$ such that $\bigcap_{j\in J}U_{j}=\emptyset$ for any index set $J\subset\mathbb{N}$ with $\vert J\vert>N$. A partition of unity $(\chi_{n})_{n\in\mathbb{N}}$ subordinate to a bounded atlas $(U_{n},\varphi_{n})_{n\in\mathbb{N}}$ is called \emph{bounded partition of unity}, if in addition $((\varphi_{n})_{\ast}\chi_{n})_{n\in\mathbb{N}}$ is a bounded family in $C^{\infty}_{\b}(B_{1}(0))$.\bigskip

Let now $(\Sigma,h)$ be an $d$-dimensional Riemannian manifold of bounded geometry and $\E\xrightarrow{\pi}\Sigma$ be a \emph{vector bundle of bounded geometry} of rank $k\in\mathbb{N}$, i.e.~there exists a bundle atlas of $\E$, which is also a bounded atlas of $(\Sigma,h)$, such that the transition functions form a bounded family of matrix-valued functions. Let us fix a bounded atlas $(U_{n},\varphi_{n})_{n\in\mathbb{N}}$ with corresponding bounded local trivializations $\psi_{n}\:\pi^{-1}(U_{n})\to U_{n}\times\mathbb{C}^{k}$ and a corresponding bounded partition of unity $(\chi_{n})_{n\in\mathbb{N}}$ subordinate to it. For any $U\subset\mathbb{R}^{d}$, let $\mathcal{S}_{\mathrm{cl}}^{m}(\T^{\ast}U,\mathbb{C}^{k\times k})\subset C^{\infty}(U\times\mathbb{R}^{d},\mathbb{C}^{k\times k})$ denote the set of matrix-valued classical (uniform) symbols of order $m$, see e.g.~\cite{Shubin2001,HormanderBookIII}.

\begin{definition}
	Let $\BS^{m}(\T^{\ast}\Sigma,L(\E))$ denote the set of $a\in C^{\infty}(\T^{\ast}\Sigma,\mathrm{End}(\E))$, such that for every $p\in\Sigma$, it holds that $(\varphi_{p})_{\ast}a\in\mathcal{S}_{\mathrm{cl}}^{m}(\T^{\ast}B_{1}(0),\mathbb{C}^{k\times k})$, and such that the family $((\varphi_{p})_{\ast}a)_{p\in\Sigma}$ is bounded in $\mathcal{S}_{\mathrm{cl}}^{m}(\T^{\ast}B_{1}(0),\mathbb{C}^{k\times k})$.
\end{definition}

An element $a\in\BS^{m}(\T^{\ast}\Sigma,L(\E))$ is called \emph{(classical) bounded symbol of order $m$}. Let us denote by $E\:\mathcal{S}_{\mathrm{cl}}^{m}(\T^{\ast}B_{1}(0),\mathbb{C}^{k\times k})\to\mathcal{S}_{\mathrm{cl}}^{m}(\T^{\ast}\mathbb{R}^{n},\mathbb{C}^{k\times k})$ a continuous extension. Furthermore, let us denote by $T_{i}$ and $\widetilde{T}_{i}$ the push-forwards of $C^{\infty}(U_{n},\E)$ and $C^{\infty}(\T^{\ast}U_{n},L(\E))$, respectively, under $\varphi_{n}$ and the corresponding local trivialization $\psi_{n}$ of $\E$. If $a\in \BS^{m}(\T^{\ast}\Sigma,L(\E))$, then we define its \emph{quantization} by
\begin{align*}
	\mathrm{Op}(a)\defeq\sum_{n\in\mathbb{N}}(\chi_{n}T_{n}^{-1})\circ\mathrm{Op}(E\widetilde{T}_{n}a)\circ (\chi_{n}T_{n})\:\Gamma_{\c}(\Sigma,\E)\to\Gamma(\Sigma,\E)\, ,
\end{align*}
where $\mathrm{Op}(E\widetilde{T}_{n}a)$ denotes the usual Kohn-Nirenberg quantization of $E\widetilde{T}_{n}a\in\mathcal{S}_{\mathrm{cl}}^{m}(\T^{\ast}\mathbb{R}^{d},\mathbb{C}^{k\times k})$, see e.g.~\cite{Shubin2001,HormanderBookIII}. In general, the quantization map depends on the various choices of $E,\chi_{n},\varphi_{n},\psi_{n}$, however, one can show that for any other quantization map $\mathrm{Op}^{\prime}$ one has that
\begin{align*}
	\mathrm{Op}(a)-\mathrm{Op}^{\prime}(a)\in \mathcal{W}^{-\infty}(\Sigma,\E)\, ,
\end{align*}
where $\mathcal{W}^{-\infty}(\Sigma,\E)$ denotes an ideal of smoothing operators defined by
\begin{align*}
	\mathcal{W}^{-\infty}(\Sigma,E)\defeq\bigcap_{n\in\mathbb{N}}B(H^{-m}(\Sigma,\E),H^{m}(\Sigma,\E))\subset\Psi^{-\infty}(\Sigma,\E)\,,
\end{align*}
where $H^{m}(\Sigma,\E)$ denotes the Sobolev spaces of order $m\in\mathbb{N}$ and $B(\cdot,\cdot)$ the space of bounded linear operators between them. This gives rise to the following definition:

\begin{definition}
	Let $(\Sigma,h)$ be a Riemmanian manifold of bounded geometry and $\mathrm{Op}$ a quantization map as above. The space of \emph{bounded pseudodifferential operators} of order $m$ is defined by
	\begin{align*}
		\Psi^{m}_{\b}(\Sigma,\E)\defeq\mathrm{Op}(\BS^{m}(\T^{\ast}\Sigma,L(\E)))+\mathcal{W}^{-\infty}(\Sigma,\E).
	\end{align*}
	Furthermore, we set $\Psi^{\infty}_{\b}(\Sigma,\E)\defeq\bigcup_{m\in\mathbb{N}}\Psi^{m}_{\b}(\Sigma,\E)$.
\end{definition}
Note that every properly-supported classical pseudodifferential operator on $\Sigma$ is also bounded. Furthermore, the space of bounded pseudodifferential operators is closed under compositions and taking adjoints.
\subsubsection{Microlocal Factorization and Hadamard States on Ultrastatic Manifolds}
Throughout this section, let $(\M,g)$ be a globally hyperbolic and ultrastatic manifold with a Cauchy surface of bounded geometry, namely
\begin{align*}
	\M=\mathbb{R}\times\Sigma\,,
	\qquad
	 g=-\d t^2+ h\,,
\end{align*}
where $\Sigma$ denotes a smooth spacelike Cauchy hypersurface and $h$ a Riemannian metric on $\Sigma$, such that $(\Sigma,h)$ is of bounded geometry. On this class of manifolds, the normally hyperbolic operators $\D_i$ admits a microlocal factorization. 
To this end, let us introduce the following notation
$$\E_i\defeq\begin{cases}
 \underline{\CC}\otimes\T^*\Sigma & i=1 \,, \\
\underline{\CC} & i=0\,,
\end{cases} $$
where $\underline{\CC}\defeq\Sigma\times\CC$ denotes the trivial line bundle.

\begin{lemma}\label{lem:commuting}
	Let $(\Sigma,h)$ be a Riemannian manifold of bounded geometry, $\Pi$ be the projector defined in Lemma~\ref{Lemma:ProjectionsHodgeDecomp} and let $\DeltakS$ be the Hodge-Laplacian acting on $k$-forms, for $k=0,1$.  There exists $\varepsilon_{k}\in C_{\b}^{\infty}(\Sigma,\Psi^{1}_{\b}(\Sigma, \E_i))$ and $r_{k,-\infty}\in\Psi^{-\infty}(\Sigma,\E_i))$ such that the following is fulfilled:
	\begin{align*}
		\text{(i)}&\hspace*{1cm}\varepsilon_{k}^{\ast}=\varepsilon_{k}\hspace*{1cm}\text{w.r.t. Hodge inner product}\hspace*{1cm}\text{and}\hspace*{1cm}\varepsilon_{k}^{2}=\Delta_{k}+r_{k,-\infty}\\
		\text{(ii)}&\hspace*{1cm}\sigma_{\varepsilon_{k}}(\xi)=\sqrt{h^{-1}(\xi,\xi)}\Id\\
		\text{(iii)}&\hspace*{1cm}\varepsilon_{1} \Pi=\Pi\varepsilon_{1}  \hspace*{0.5cm}\text{ up to }\mathcal W^{-\infty}(\E_i) \,.
	\end{align*}
\end{lemma}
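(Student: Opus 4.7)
The plan is to construct $\varepsilon_k$ by the standard parametrix construction in Shubin's bounded pseudodifferential calculus, symmetrize to obtain self-adjointness, and deduce the approximate commutation with $\Pi$ from the \emph{exact} identity $[\Delta_1,\Pi]=0$ (Lemma~\ref{Lemma:ProjectionsHodgeDecomp}(iii)) together with the fact that the principal symbol of $\varepsilon_1$ is a positive scalar in the fiber.

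First I would build an asymptotic square root. The principal symbol $\sigma_{\Delta_k}(x,\xi)=h^{-1}(\xi,\xi)\Id$ is uniformly elliptic on $(\Sigma,h)$, so one can seek $b_k\in\BS^1(\T^*\Sigma,L(\E_i))$ with asymptotic expansion $b_k\sim\sum_{j\geq 0}b_{k,j}$, where $b_{k,j}$ is a classical symbol of order $1-j$ and $b_{k,0}(x,\xi)=\sqrt{h^{-1}(\xi,\xi)}\,\Id$. Plugging into the Moyal star product $b_k\# b_k = \sigma_{\Delta_k}$ and matching homogeneity orders yields, at each step, an algebraic equation of the form $2\,b_{k,0}\,b_{k,j}=\Phi_j(b_{k,0},\dots,b_{k,j-1})$, which is solvable thanks to the ellipticity of $b_{k,0}$. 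Borel summation inside $\BS^1$, available on manifolds of bounded geometry, produces $b_k$; then $\widetilde\varepsilon_k\defeq\mathrm{Op}(b_k)\in\Psi^1_\b(\Sigma,\E_i)$ satisfies $\widetilde\varepsilon_k^{\,2}-\Delta_k\in\Psi^{-\infty}(\Sigma,\E_i)$. Setting $\varepsilon_k\defeq \tfrac{1}{2}(\widetilde\varepsilon_k+\widetilde\varepsilon_k^{\,*})$ yields a formally self-adjoint operator in $\Psi^1_\b$ with unchanged principal symbol (the difference $\widetilde\varepsilon_k-\widetilde\varepsilon_k^{\,*}$ has vanishing principal symbol since $b_{k,0}$ is real and fiberwise symmetric), and one checks via $(\widetilde\varepsilon_k^{\,*})^2=(\widetilde\varepsilon_k^{\,2})^*=\Delta_k+\text{smoothing}$ that $\varepsilon_k^{\,2}=\Delta_k+r_{k,-\infty}$ with $r_{k,-\infty}\in\Psi^{-\infty}$. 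This gives (i) and (ii).

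For (iii) I would first note that $\Pi=\mathds{1}-\d_\Sigma\Delta_1^{-1}\delta_\Sigma$ belongs to $\Psi^0_\b(\Sigma,\E_1)$ modulo smoothing: interpreting $\Delta_1^{-1}$ as a parametrix for $\Delta_1$ on the closed range $\mathrm{ran}(\delta_\Sigma)$ (which differs from the exact inverse only by a smoothing operator), one computes the principal symbol of $\Pi$ to be the fiberwise orthogonal projection $\eta\mapsto\eta-h^{-1}(\xi,\eta)\,\xi/h^{-1}(\xi,\xi)$ onto $\xi^\perp\subset \T_x^*\Sigma$. Since $\sigma(\varepsilon_1)=|\xi|_h\,\Id$ is a scalar multiple of the fiber identity, the principal symbols of $\varepsilon_1$ and $\Pi$ commute; consequently $C\defeq[\varepsilon_1,\Pi]\in\Psi^0_\b$.

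The main obstacle, and the technical heart of the argument, is upgrading this principal-order commutation to commutation modulo $\mathcal W^{-\infty}$. The exact identity $[\Delta_1,\Pi]=0$ combined with $\varepsilon_1^{\,2}=\Delta_1+r_{1,-\infty}$ yields
\begin{equation*}
\varepsilon_1\,C+C\,\varepsilon_1 \;=\; [\varepsilon_1^{\,2},\Pi] \;=\; [r_{1,-\infty},\Pi]\;\in\;\Psi^{-\infty}(\Sigma,\E_1).
\end{equation*}
Reading this Sylvester-type identity order by order and using that $\sigma(\varepsilon_1)=|\xi|_h$ is strictly positive away from the zero section, the equation $2|\xi|_h\,\sigma_{-j}(C)=0$ at each homogeneity $-j$ forces $\sigma_{-j}(C)\equiv 0$, so iteratively $C\in\mathcal W^{-\infty}(\Sigma,\E_1)$. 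An alternative route, once the construction is reinterpreted via functional calculus, is to note that $\varepsilon_1$ coincides modulo smoothing with $(\Delta_1+r_{1,-\infty})^{1/2}$ obtained by Borel functional calculus; then any bounded operator (here $\Pi$) commuting with $\Delta_1$ automatically commutes with $\varepsilon_1$ up to smoothing, since the smoothing perturbation $r_{1,-\infty}$ propagates only smoothing errors.
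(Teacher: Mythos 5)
Your treatment of (i) and (ii) via a Borel-summed asymptotic square root in Shubin's bounded calculus, followed by symmetrization, is standard and correct; the paper simply cites \cite[Lemma 5.1]{GerardMurroWrochna2022} for essentially the same construction, so this part is fine.

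For (iii), however, your primary route (the Sylvester identity read order by order in the symbol expansion) has a genuine gap, and it is precisely the gap that the paper's own argument is designed to sidestep. Your argument needs $C=[\varepsilon_1,\Pi]$ to lie in $\Psi^0_{\b}(\Sigma,\E_1)$, and hence needs $\Pi=\mathds{1}-\d_{\Sigma}\Delta^{-1}\delta_{\Sigma}$ to be a bounded pseudodifferential operator modulo $\mathcal W^{-\infty}$. You assert this by ``interpreting $\Delta^{-1}$ as a parametrix for $\Delta$ on the closed range $\mathrm{ran}(\delta_\Sigma)$, which differs from the exact inverse only by a smoothing operator'', but on a non-compact manifold of bounded geometry this is not established and is not automatic: from $\Delta Q=\mathds{1}+R$ with $R\in\mathcal W^{-\infty}$ one only gets $\Delta^{-1}-Q=-\Delta^{-1}R$, and since $\Delta^{-1}$ is in general an \emph{unbounded} operator (there need be no spectral gap), this difference is not visibly in $\mathcal W^{-\infty}$ nor even bounded. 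This uncontrolled infrared behaviour of $\Delta^{-1}$ is exactly what the paper has to work around throughout, and the paper explicitly remarks that the only facts about $\Pi$ it uses are $[\Delta_1,\Pi]=0$ and boundedness of $\Pi$ on each $\H^s$, so that $r_{1,-\infty}\Pi$ and $\Pi\,r_{1,-\infty}$ are smoothing. The paper then commutes $\Pi$ past polynomials of $\varepsilon_1^2$, then past $\varepsilon_1^{-2}$, then past continuous bounded functions of $\varepsilon_1^{-2}$ via Stone--Weierstrass, and finally reaches $\varepsilon_1=f(\varepsilon_1^{-2})$ for the unbounded $f(\lambda)=\lambda^{-1/2}$ by pointwise approximation with bounded measurable functions.

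Your ``alternative route'' via functional calculus is the same strategy as the paper's, but it is stated too summarily to close the argument: ``any bounded operator commuting with $\Delta_1$ automatically commutes with $\varepsilon_1$ up to smoothing, since the smoothing perturbation $r_{1,-\infty}$ propagates only smoothing errors'' skips the crux, namely that $\Pi$ commutes with $\varepsilon_1^2$ only \emph{modulo} smoothing, and transferring a commutation-mod-$\mathcal W^{-\infty}$ relation from a positive operator to its square root is not automatic. That step is precisely where the paper invests its polynomial, Stone--Weierstrass and monotone-approximation chain. If you want to keep the Sylvester route, you would first have to prove $\Pi\in\Psi^0_{\b}(\Sigma,\E_1)+\mathcal W^{-\infty}(\Sigma,\E_1)$, which is a non-trivial claim about $\d_{\Sigma}\Delta^{-1}\delta_{\Sigma}$ on a general manifold of bounded geometry and would need its own justification.
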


\begin{proof}
	By~\cite[Lemma 5.1]{GerardMurroWrochna2022}, we know that there exists $\epsilon_k$ with domain $\H^1(\Sigma)$ that are $m$-accretive and satisfy (i) and (ii).
For	 point (iii) we notice that
$$ ( \varepsilon^2_{1} - r_{1,-\infty})\Pi = \Delta_1\Pi =\Pi\Delta_1  =\Pi(\epsilon^2_1 - 	 r_{1,-\infty})\,, $$
by Lemma~\ref{Lemma:ProjectionsHodgeDecomp}(iii). Note that $r_{1,-\infty}:\H^{s}(\Sigma)\to\H^{\infty}(\Sigma)$ for any $s\geq 0$, which shows that the composition of $\Pi$ with $r_{1,-\infty}$ are well-defined. This implies that
$$\varepsilon^2_{1} \Pi  =\Pi\epsilon^2_1  +  \bar r_{1,-\infty} $$  
for $\bar r_{1,-\infty}\defeq   r_{1,-\infty}\Pi -\Pi \tilde{r}_{1,-\infty}$. Furthermore, $\bar r_{1,-\infty}$ is clearly a smoothing operator. Then, $\varepsilon_1^2\Pi= \Pi \varepsilon_1^2$ up to a smooth kernel, and, iterating this argument, the same holds true for polynomials of $\varepsilon_1^2$, and in particular, $\varepsilon_1^{-2}\Pi= \Pi \varepsilon_1^{-2}$ again up to smoothing. Using Stone-Weierstrass theorem, we can generalize to continuous function $f(\varepsilon_1^{-2})$ and the generalization to unbounded function can be obtained choosing a sequence of bounded measurable
functions $f_n$ that converge pointwise to $f$.  For more details we refer to~\cite{Valter}.
\end{proof}

  We next perform a microlocal factorization of the Cauchy evolution operator $\U_{1}$ of $\square_{1}$. On ultrastatic spacetimes, the equation $\square_{1}A=0$ for $A=A_{0}\d t+A_{\Sigma}\in\Omega^{1}(\M;\CC)$ decouples into the hyperbolic equations 
  \begin{align*}
  \square_{0}A_{0}=(\partial_{t}^{2}+\Delta_{0})A_{0}=0\quad \text{and} \quad (\partial_{t}^{2}+\Delta_{1})A_{\Sigma}=0\, .
  \end{align*}
   Hence, we consider the Cauchy evolution operator $\U_{\partial_{t}^{2}+\Delta_{k}}$ of the hyperbolic operator $\partial_{t}^{2}+\Delta_{k}$ acting on $k$-forms for $k\in\{0,1\}$. Let now $A\in C^{\infty}(\RR,\Omega^{k}_{\infty}(\Sigma_{\bullet}))$  be a solution of the Cauchy problem for $\partial_{t}^{2}+\Delta_{k}$. By setting
  $$\Psi(t)= \col{A(t)}{\i^{-1}\p_{t}A(t)}$$ 
  the Cauchy problem for $\partial_{t}^{2}+\Delta_{k}$ can be rewritten as the first order system
  \begin{equation}\label{cauchy}
  \p_{t}\Psi(t)= \i A(t)\Psi(t), \quad A(t)= \mat{0}{\Id}{\Delta_k}{0}, 
  \end{equation}
and any solution at time $t$ can we written as the action of the Cauchy evolution operator, i.e.~$\Psi(t)= \U(t,0)\Psi(0)$.  Since the operator $\partial_{t}^{2}+\Delta_{k}$ admits a microlocal factorization (cf.~Lemma~\ref{lem:commuting}), we can diagonalize (up to a smoothing operator) also the the Cauchy evolution operator $\U_{\partial_{t}^{2}+\Delta_{k}}$. To this end, define the operator $S$ and $S^{-1}$  respectively by  
  \[
  S= \i^{-1}\mat{\Id}{-\Id}{\epsilon_k}{\epsilon_k}(2 \epsilon_k )^{-1}, \quad S^{-1}= \i \mat{\epsilon_k}{\Id}{-\epsilon_k}{\Id}\,.
  \]
  The Cauchy problem~\eqref{cauchy} for $\Psi$ can be rewritten as the Cauchy problem for $\tilde\Psi(t)\defeq S^{-1}\Psi(t)$
\begin{equation}\label{cauchyapp}
\p_{t}\tilde\Psi(t)= \i B(t) \tilde\Psi(t), 
\qquad \text{ for } \,\,   B= \mat{\epsilon_k}{0}{0}{- \epsilon_k}+B_{-\infty},
\end{equation}
where $B_{-\infty}$ is a smoothing operator.
This implies in particular that the Cauchy evolution operator $\U_{\partial_{t}^{2}+\Delta_{k}}$ for the Cauchy problem~\eqref{cauchy} can be factorized as $$\U_{\partial_{t}^{2}+\Delta_{k}}(t,s)=S \U_{\mathrm{ap}}(t,s) S^{-1}\,,$$ where $\U_{\mathrm{ap}}$ is the Cauchy evolution operator of the Cauchy problem~\eqref{cauchyapp}.
Since the Hermitian form $q_{k,\Sigma}$ is not preserved, i.e.~
\begin{align*}
	S^{\ast}\begin{pmatrix}
	0& \mathds{1} \\ \mathds{1} & 0
\end{pmatrix}	 S=(2\epsilon_k)^{-1}\begin{pmatrix} \mathds{1} &0\\0 &-\mathds{1}\end{pmatrix}
\end{align*}
it is convenient to consider the operator $
	S_\epsilon=S(2\epsilon_k)^{1/2}$ instead of $S$, which again provides a microlocal factorization of $\U_{\partial_{t}^{2}+\Delta_{k}}$. 
 
Since $\epsilon_k$ are self-adjoint, we can use the spectral calculus to conclude the following.
 \begin{proposition}\label{prop:cauchy ev diag}
 Let $(\M,g)$ be a globally hyperbolic, ultrastatic manifold with a Cauchy surface $(\Sigma,h)$ of bounded geometry. Denote by $\U_{\partial_{t}^{2}+\Delta_{k}}$ the Cauchy evolution operator for $\partial_{t}^{2}+\Delta_{k}$ and let $S_\epsilon$ be the operator defined by
  $$ S_\epsilon= \i^{-1}\mat{\Id}{-\Id}{\epsilon_k}{\epsilon_k}(2 \epsilon_k )^{-\frac{1}{2}}\,.$$
 Then we have
 \[
\begin{aligned}
\U_{\partial_{t}^{2}+\Delta_{k}}&= S_\epsilon\mat{\exp(\i\epsilon_k t)}{0}{0}{\exp(-\i \epsilon_k t)}S_\epsilon^{-1}+ R_{-\infty}\,,
\end{aligned}
  \]
  where $R_{-\infty}\in\mathcal{W}^{-\infty} (\E_k \oplus \E_k)$ is a smoothing operator.
  \end{proposition}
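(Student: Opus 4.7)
The plan is to start from the first-order reformulation already set up in the excerpt. After the substitution $\Psi = (A,\i^{-1}\p_t A)^{\top}$, the Cauchy problem~\eqref{cauchy} is conjugated by $S$ into the system~\eqref{cauchyapp} for $\tilde\Psi = S^{-1}\Psi$, whose generator is $B = \mathrm{diag}(\epsilon_k,-\epsilon_k) + B_{-\infty}$ with $B_{-\infty}\in\mathcal W^{-\infty}$. Replacing $S$ by $S_\epsilon = S(2\epsilon_k)^{1/2}$ changes nothing at the level of the diagonalization, because the scalar multiplier $(2\epsilon_k)^{1/2}$ commutes with $\mathrm{diag}(\epsilon_k,-\epsilon_k)$ and hence also with its exponential, so $S_\epsilon \mathrm{diag}(e^{\i\epsilon_k t},e^{-\i\epsilon_k t}) S_\epsilon^{-1} = S \mathrm{diag}(e^{\i\epsilon_k t},e^{-\i\epsilon_k t}) S^{-1}$; this is just a cosmetic change to preserve the Hermitian form $q_{k,\Sigma}$.

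Next I would solve the genuinely diagonal piece by spectral calculus. Since in the ultrastatic setting $\epsilon_k$ is self-adjoint on $\H^1_k(\Sigma)$ and time-independent (Lemma~\ref{lem:commuting}), the propagator of the unperturbed system $\p_t \tilde\Psi_0 = \i\,\mathrm{diag}(\epsilon_k,-\epsilon_k)\tilde\Psi_0$ is
\begin{equation*}
\U_0(t,s) = \begin{pmatrix} e^{\i\epsilon_k(t-s)} & 0 \\ 0 & e^{-\i\epsilon_k(t-s)}\end{pmatrix},
\end{equation*}
defined by the functional calculus; in particular $\U_0(t,s)$ is unitary on $\L^2_k(\Sigma)\oplus\L^2_k(\Sigma)$ and bounded on every Sobolev space $\H^{s'}_k(\Sigma)\oplus\H^{s'}_k(\Sigma)$.

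The core step is to absorb the smoothing perturbation $B_{-\infty}$ by Duhamel's formula:
\begin{equation*}
\U_{\mathrm{ap}}(t,0) = \U_0(t,0) + \i\int_0^t \U_0(t,\tau)\, B_{-\infty}\, \U_{\mathrm{ap}}(\tau,0)\,\d\tau.
\end{equation*}
The integrand is smoothing: $B_{-\infty}\in\mathcal W^{-\infty}$ maps every $\H^{s'}$ into $\H^{\infty}$, while $\U_{\mathrm{ap}}(\tau,0)$ is bounded on each $\H^{s'}$ (by a Gr\"onwall-type estimate applied to the energy used in Proposition~\ref{Prop:EnergyEstimate}) and $\U_0(t,\tau)$ is bounded on $\H^{\infty}$. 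Hence $\U_0(t,\tau) B_{-\infty}\U_{\mathrm{ap}}(\tau,0)\in B(\H^{-s'},\H^{s'})$ for every $s'\in\RR$, uniformly on compact time intervals, and integration against $\d\tau$ preserves this property; therefore the remainder lies in $\mathcal W^{-\infty}$.

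Finally I would conjugate back by $S_\epsilon$. Since $S_\epsilon$ and $S_\epsilon^{-1}$ are matrices with entries in $\Psi^{\bullet}_{\b}(\Sigma,\E_k)$, they map Sobolev spaces of arbitrary order into Sobolev spaces, so conjugating a smoothing operator by them preserves the class $\mathcal W^{-\infty}$; the matrix structure of $\V_{\rho_k}$ contributes only bounded pseudodifferential blocks. Writing
\begin{equation*}
\U_{\p_t^2+\Delta_k}(t,0) = S_\epsilon\,\U_{\mathrm{ap}}(t,0)\,S_\epsilon^{-1} = S_\epsilon\,\U_0(t,0)\,S_\epsilon^{-1} + R_{-\infty},
\end{equation*}
with $R_{-\infty}\in\mathcal W^{-\infty}(\E_k\oplus\E_k)$ yields the claim. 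The only real technical obstacle is the bookkeeping that ensures $\U_{\mathrm{ap}}(\tau,0)$ is bounded on \emph{every} Sobolev scale (so that the Duhamel remainder is smoothing rather than merely of low order), which follows from iterated commutator estimates between $\epsilon_k$-powers and the closed system $\p_t\tilde\Psi = \i B\tilde\Psi$, combined with the energy estimates of Section~\ref{Sec:CauchyProblem}.
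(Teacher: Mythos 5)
Your proposal is correct and follows essentially the same route the paper takes: pass to the first-order system, conjugate by $S_\epsilon$, solve the decoupled diagonal generator by self-adjoint functional calculus, and absorb the $\mathcal W^{-\infty}$ perturbation $B_{-\infty}$ via a Duhamel iteration whose remainder is again smoothing. The paper compresses those last steps into the single remark that one concludes ``by spectral calculus,'' and your explicit Duhamel bookkeeping is precisely the argument being invoked.
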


We are finally in the position to
prove Theorem~\ref{thm:hada}, i.e.~to construct suitable pseudodifferential operator $c^{\pm}$ on $\PSS$ on a  globally hyperbolic, ultrastatic manifold $(\M,g)$ with a Cauchy surface $(\Sigma,h)$ of bounded geometry.
Following Section~\ref{Sec:PhaseSpaceQuant} (cf.~Proposition~\ref{prop:hadafromc}) the operators $c^\pm$ will give rise to a unique quasifree Hadamard state $\omega$ on $\CCR$. 

\begin{proof}[Proof of Theorem~\ref{thm:hada}]
\begin{itemize}
\item[(i)] Since $\varepsilon_{k}$ are formally self-adjoint with respect to the Hodge $\L^{2}$-inner product on $\Sigma$, it follows that
	\begin{align*}
		(\pi^{\pm})^{\dagger}=\G_{1,\Sigma}^{-1}(\pi^{\pm})^{\ast}\G_{1,\Sigma}=\pi^{\pm}\hspace*{1cm}\text{with}\hspace*{1cm}(\pi^{\pm})^{\ast}=\frac{1}{2}\begin{pmatrix}\mathds{1} & \pm\varepsilon_{0} &0&0\\\pm\varepsilon_{0}^{-1} &\mathds{1}&0&0\\ 0&0&\mathds{1} & \pm\varepsilon_{1}\\0&0&\pm\varepsilon_{1}^{-1} &\mathds{1}\end{pmatrix}\, ,
	\end{align*}
	where $(\pi^{\pm})^{\ast}$ is the adjoint with respect to $\langle\cdot,\cdot\rangle_{\V_{\rho_{0}}}$. In other words, $\pi^{\pm}$ are formally self-adjoint w.r.t.~$\sigma_{1,\Sigma}$. A direct computations shows that $\T_{\Sigma}$ is formally self-adjoint w.r.t.~$\sigma_{1,\Sigma}$, which implies that $(c^{\pm})^{\dagger}=c^{\pm}$. Next, observe that $c^{\pm}$ clearly preserves $\ran(\K_{\Sigma})\cap\Gamma_{\infty,\d}(\V_{\rho_{1}})$, since $\T_{\Sigma}\circ\K_{\Sigma}=0$. 
\item[(ii)] We note that $\pi^{+}+\pi^{-}=\Id$ on the whole space of initial data $\Gamma_\infty(\V_{\rho_{1}})$ and hence
	\begin{align*}
		(c^{+}+c^{-})\f=T_{\Sigma}^{2}\f=T_{\Sigma}\f=\f\quad\text{mod}\quad\ran(\K_{\Sigma})\cap \Gamma_{\infty,\d}(\V_{\rho_{1}})
	\end{align*}
	for all $\f\in\ker(\K_{\Sigma}^{\dagger}\vert_{\Gamma_\infty})$, where in the last step we used that for every $\f\in\ker(\K_{\Sigma}^{\dagger}\vert_{\Gamma_\infty})$ there exists a (unique) $\g\in\ran(\K_{\Sigma})\cap \Gamma_{\infty,\d}(\V_{\rho_{1}})$, such that $\T_{\Sigma}(\f+\g)=\f+\g$ and hence $\T_{\Sigma}\f=\f+\g$ using that $\T_{\Sigma}\circ\K_{\Sigma}=0$. 
	\item[(iii)] A direct computations shows that
	\begin{align*}
		\pm\q_{1,\Sigma}(\f,c^{\pm}f)=\pm\q_{1,\Sigma}(\f,\T_{\Sigma}\pi^{\pm}\T_{\Sigma}f)=\pm\q_{\Sigma,\R}(\T_{\Sigma}\f,\pi^{\pm}\T_{\Sigma}f)\geq 0
\end{align*}	 
	where we used that $\q_{1,\Sigma}(\f,\T_{\Sigma}\g)=\q_{\Sigma,\R}(\T_{\Sigma}\f,\g)$ for $\f\in\Gamma_{\infty}(\V_{\rho_{1}})$ and $\g\in\L^{2}(\V_{\rho_{1}})$. 

\item[(iv)] By Lemma~\ref{lem:commuting} (iii), $\pi^\pm$ commutes with $\T_\Sigma$ modulo a smooth kernel, so we only need to check that  $\pi^\pm$ satisfies 
$$\WF'(\mathcal{U}_{1} \pi^{\pm})\subset (\mathcal{N}^{\pm}\cup F)\times\T^{*}\Sigma \; \text{ for $F=\{k=0\}\subset \T^*\M$}\,,$$
where the integral kernel of $\mathcal{U}_{1} \pi^{\pm}$ is understood by extending $\pi^{\pm}$ to any compactly supported initial data via the Hahn-Banach continuous extension theorem for locally convex Fr\'echet spaces.
To this end, we begin by observing that $\pi^\pm$ can be actually written as
$\pi^\pm= S_\epsilon \Pi^\pm S^{-1}_\epsilon$ with
$$\Pi^+ = \begin{pmatrix}
1 & 0 & 0 & 0 \\
0 & 0 & 0 & 0\\
0 & 0 & 1 & 0\\
0 & 0 & 0 & 0
\end{pmatrix} \qquad \Pi^- = \begin{pmatrix}
0 & 0 & 0 & 0 \\
0 & 1 & 0 & 0\\
0 & 0 & 0 & 0\\
0 & 0 & 0 & 1
\end{pmatrix} \,.
$$
So let define 
$Q_{\pm}= (\p_{t} \pm \i \epsilon_0)\oplus (\p_{t} \pm \i \epsilon_1)$, considered as an operator acting on $\M\times \Sigma$ on the first group of variables and let $\U(t, x, x')$ the distributional kernel of $\U_{1} \pi^{\pm}$. 
From Proposition~\ref{prop:cauchy ev diag} it follows that $Q_{\pm} \U_1\in \Gamma(\M\times \Sigma, L(\V_{1}\otimes\CC^{2}, \V_{1}))$. If $Q_{\pm}$ were classical $\Psi$DOs on $\M\times \Sigma$, this would imply that $\WF'(\U_1 c^\pm)\subset \mathcal N^{\pm}\times \T^{*}\Sigma$ by elliptic regularity. We reduce ourselves to this situation by an argument from \cite[Lemma~6.5.5]{DH}, see for example \cite[Proposition~11.3.2]{Gerard2019} for details.
\end{itemize}

\noindent Verified points (i)-(iv), our claim follows by Proposition~\ref{prop:hadafromc}.
\end{proof}
\section{Existence of Hadamard States in Globally Hyperbolic Spacetimes }\label{Sec:Finale}
We are finally in the position to prove the existence of Hadamard states on globally hyperbolic manifolds for the CCR-algebra associated to the (gauge-invariant) {\em compactly supported observables}. We begin by recasting the quantization scheme for this class of observables. To this end, we will benefit from~\cite{HackSchenkel2013,GerardWrochnaYM2015}.
\subsection{The Classical Theory and its Phase Space}\label{sec:classical phase space}
Adopting the notation of the Section~\ref{Sec:PhaseSpaceQuant}, the phase space can be characterized as follow.

\begin{proposition}\label{pro:PhaseSpace}
	Let $(\M,g)$ be a globally hyperbolic manifold. Then, the following diagram is commutative and every map is an isomorphism:
    \begin{equation*}
        \begin{tikzcd}
            \PS\defeq\cfrac{\ker(\K^{\ast}\vert_{\Gamma_{\c}})}{\ran(\P\vert_{\Gamma_{\c}})} \arrow[r,"{[\G_{1}]}"]\arrow[dr,"{[\G_{1}]}"]\arrow[d,swap,"{[\rho_1\G_{1}]}"] &\cfrac{\ker(\P\vert_{\Gamma_{\sc}})}{\ran(\K\vert_{\Gamma_{\sc}})}\\             
  \PSS\defeq\cfrac{\ker(\K_\Sigma^{\dagger}\vert_{\Gamma_{\c}})}{\ran(\K_\Sigma\vert_{\Gamma_{\c}})} \arrow[r,"{[\U_{1}]}"]           & \cfrac{\ker(\D_{1}\vert_{\Gamma_{\sc}})\cap\ker(\K^{\ast}\vert_{\Gamma_{\sc}})}{\K(\ker(\D_{0}\vert_{\Gamma_{\sc}}))}\arrow[u,hookrightarrow,swap]
        \end{tikzcd}
    \end{equation*}
	 Furthermore, $(\PSS,\q_{1,\Sigma})$ is a well-defined Hermitian vector space and the following map is unitary:
	\begin{align*}
		[\rho_{1}\G_{1}]\:\Big(\PS,\q_1\Big) \to \Big(\PSS,\q_{1,\Sigma}\Big)  \, ,
	\end{align*}
	where $\q_{1}([\cdot],[\cdot])=\langle\cdot,\i\G_{1}\cdot\rangle_{\V_{1}}$ and $\q_{1,\Sigma}([\cdot],[\cdot])=\langle\cdot,\i\G_{1,\Sigma}\cdot\rangle_{\V_{\rho_{1}}}$.
\end{proposition}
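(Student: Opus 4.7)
The plan is to mirror the argument of Proposition~\ref{Prop:EquPhaseSpaces}, exploiting the fact that in the compactly supported setting no gauge fixing beyond Lorenz is needed. All arguments rest on three ingredients: the exact sequence of Proposition~\ref{Prop:Green} restricted to $(\Gamma_\c,\Gamma_{\sc})$ for both $\D_0$ and $\D_1$ (the classical B\"ar--Ginoux result for Green hyperbolic operators); the intertwining identities $\G_1\K=\K\G_0$ and $\K^\ast\G_1=\G_0\K^\ast$ that follow from $[\D,\d]=[\D,\delta]=0$; and finite propagation for the Cauchy problem of $\D_k$, which ensures that auxiliary solutions sourced by spacelike-compact terms remain in $\Gamma_{\sc}$.

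First I would establish the top horizontal isomorphism $[\G_1]\:\PS\to\ker(\P\vert_{\Gamma_{\sc}})/\ran(\K\vert_{\Gamma_{\sc}})$. Well-definedness is immediate: for $f\in\ker(\K^\ast\vert_{\Gamma_\c})$ the computation $\P\G_1 f=(\D_1-\d\delta)\G_1 f=-\d\G_0\K^\ast f=0$ shows $\G_1 f\in\ker(\P\vert_{\Gamma_{\sc}})$, and $\G_1\P\eta=-\K\G_0\K^\ast\eta\in\ran(\K\vert_{\Gamma_{\sc}})$ for every $\eta\in\Gamma_\c(\V_1)$. Injectivity follows the template of the proof of Proposition~\ref{Prop:EquPhaseSpaces}: if $\G_1 f=\K g$ with $g\in\Gamma_{\sc}(\V_0)$, then $\D_0 g=\K^\ast\K g=\K^\ast\G_1 f=\G_0\K^\ast f=0$, so $g=\G_0 h$ for some $h\in\Gamma_\c(\V_0)$ by the exact sequence; hence $\G_1(f-\K h)=0$ yields $f=\K h+\D_1\eta$ for some $\eta\in\Gamma_\c(\V_1)$, and applying $\K^\ast$ together with injectivity of $\D_0$ on $\Gamma_\c$ forces $h=-\K^\ast\eta$, whence $f=\P\eta$.

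Surjectivity is the step requiring the most care. Given $A\in\ker(\P\vert_{\Gamma_{\sc}})$, I would first put $A$ into Lorenz gauge by solving the Cauchy problem $\D_0\phi=-\delta A$ with vanishing initial data on some Cauchy surface. Since $-\delta A\in\Gamma_{\sc}$ and the causal future of any spacelike compact set is again spacelike compact, finite propagation gives $\phi\in\Gamma_{\sc}$ and $A+\d\phi\in\ker(\D_1\vert_{\Gamma_{\sc}})\cap\ker(\K^\ast\vert_{\Gamma_{\sc}})$. The exact sequence then provides $f'\in\Gamma_\c(\V_1)$ with $\G_1 f'=A+\d\phi$, and a second gauge shift $f\defeq f'-\K\psi$, with $\psi\in\Gamma_\c(\V_0)$ solving $\D_0\psi=\K^\ast f'$ --- solvable because $\G_0\K^\ast f'=\K^\ast\G_1 f'=\delta(A+\d\phi)=0$ places $\K^\ast f'$ in $\ran(\D_0\vert_{\Gamma_\c})$ by exactness --- produces the desired preimage $f\in\ker(\K^\ast\vert_{\Gamma_\c})$ with $[\G_1 f]=[A]$.

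The remaining pieces are mechanical. The vertical inclusion on the right is an isomorphism on quotients precisely by the surjectivity argument just given; the bottom horizontal map $[\U_1]$ is the bijection $\U_1$ passed to the quotient, with both quotients matching verbatim via $\K_\Sigma=\rho_1\K\U_0$ and $\K^\dagger_\Sigma=\rho_0\K^\ast\U_1$; and commutativity of the diagram follows from $\rho_1\G_1\K=\K_\Sigma\rho_0\G_0$, itself a direct consequence of the intertwining relations. The Hermitian form $\q_{1,\Sigma}$ is well-defined on $\PSS$ because $\K^\dagger_\Sigma\K_\Sigma=0$ and $\K^\dagger_\Sigma$ is the formal adjoint of $\K_\Sigma$ with respect to $\q_{1,\Sigma}$. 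Unitarity of $[\rho_1\G_1]$ is then an immediate consequence of the defining relation~\eqref{GSigma}, which yields
\[
\q_1([f],[f'])=\i(f,\G_1 f')_{\V_1}=\i(\rho_1\G_1 f,\G_{1,\Sigma}\rho_1\G_1 f')_{\V_{\rho_1}}=\q_{1,\Sigma}([\rho_1\G_1 f],[\rho_1\G_1 f'])
\]
for all $f,f'\in\ker(\K^\ast\vert_{\Gamma_\c})$.
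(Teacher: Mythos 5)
Your proof is correct and follows essentially the same skeleton the paper uses for the Sobolev-space analogue, Proposition~\ref{Prop:EquPhaseSpaces} (the paper itself does not spell out a proof of the compactly supported Proposition~\ref{pro:PhaseSpace}, deferring to \cite{HackSchenkel2013,GerardWrochnaYM2015}). The only difference is that in the $\Gamma_\c/\Gamma_{\sc}$ setting no Sobolev bookkeeping or restriction to $\Gamma_\G(\V_1)$ is needed, which is exactly the simplification you exploit; the injectivity/surjectivity manipulations via the intertwining relations $\K^\ast\G_1=\G_0\K^\ast$, $\G_1\K=\K\G_0$, the exact B\"ar--Ginoux sequences, and the unitarity computation from Equation~\eqref{GSigma} all match.
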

\subsection{Quantization and Hadamard States}\label{sec:quantiz}
The quantization of a free field gauge
theory is realized as a two-step procedure. First, one assigns to the classical phase space $(\PS,\q_1)$  a unital $*$-algebra $\CCR$ abstractly generated by symbols $\Id,\Phi(v),\Phi^{\ast}(w)$ for all $v,w\in\PS$, such that the assignment $v\mapsto\Phi(v)$  and $w\mapsto\Phi^{\ast}(v)$ are respectively $\mathbb{C}$-anti-linear and $\mathbb{C}$-linear , such that $\Phi(v)^{\ast}=\Phi^{\ast}(v)$ and such that the \textit{canonical commutation relations} are fulfilled:
\begin{align*}
	&[\Phi(v),\Phi(w)]=[\Phi^{\ast}(v),\Phi^{\ast}(w)]=0\, ,\\
	&[\Phi(v),\Phi^{\ast}(w)]=\q_1(v,w)\Id \,.
\end{align*}
To conclude the quantization, one has to constructs quasifree Hadamard states. 
As before, any quasifree state is fully
determined by a pair of {\it (spacetime) covariances}, i.e.~Hermitian forms on $\PS$ defined by 
\begin{align*}
	\Lambda^{+}(v,w)\defeq\omega(\Phi(v)\Phi^{\ast}(w))\hspace*{1cm}\text{and}\hspace*{1cm}\Lambda^{-}(v,w)\defeq\omega(\Phi^{\ast}(w)\Phi(v))\,.
\end{align*}
for all $v,w\in\PS$. Following \cite{Gerard2019,GerardWrochnaYM2015} we get the following characterization:
 
\begin{proposition}
Consider  a pair of \textup{pseudo-covariances} $\lambda^\pm$, namely continuous linear maps $\lambda^{\pm}\:\Gamma_{\c}(\V_{1})\to\Gamma(\V_{1})$ satisfying
\begin{itemize}
\item[(i)] \quad $(\lambda^{\pm})^{\ast}=\lambda^{\pm}$ \, w.r.t.\, $(\cdot,\cdot)_{\V_{1}}$\, and \, $\lambda^{\pm}\:\ran(\K\vert_{\Gamma_{\c}})\to \ran(\K)$;
\item[(ii)]\quad $\D_{1}\lambda^{\pm}=\lambda^{\pm}\D_{1}=0$   and  $(\lambda^{+}-\lambda^{-})s=\i\G_{1}s$\, modulo \, $\ran(\K)$ \, for any $ s\in \ker(\K^{\ast}\vert_{\Gamma_{\c}})$;
\item[(iii)]\quad $( s,\lambda^{\pm}s)_{\V_{1}}\geq 0$\,  for any  $ s\in\ker(\K^{\ast}\vert_{\Gamma_{\c}})$;
\item[(iv)] \quad $\WF^{\prime}(\lambda^{\pm})\subset\mathcal{N}^{\pm}\times\mathcal{N}^{\pm}\, $.
\end{itemize}
Then, the Hermitian forms $\Lambda^{\pm}\:\PS\times\PS\to\CC$ defined by
\begin{align*}
	\Lambda^{\pm}([s],[t])\defeq( s,\lambda^{\pm}t)_{\V_{1}} \qquad \forall s,t\in\ker(\K^{\ast}\vert_{\Gamma_{\c}})
\end{align*}
are the spacetime covariances of a quasifree Hadamard state on $\CCR$.
 \end{proposition}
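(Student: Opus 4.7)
The plan is to verify the five properties needed for $\Lambda^\pm$ to be the spacetime covariances of a quasifree Hadamard state on $\CCR$: (a) well-definedness on the quotient $\PS$; (b) Hermiticity (which together with sesquilinearity makes $\Lambda^\pm$ Hermitian forms); (c) non-negativity; (d) the canonical commutation relation $\Lambda^+-\Lambda^-=\q_1$; and finally (e) the Hadamard microlocal condition. Once (a)--(d) are in place, the standard construction (see \cite{Gerard2019,GerardWrochnaYM2015}) produces a unique quasifree state $\omega$ on $\CCR$ with $\Lambda^\pm$ as spacetime covariances, and (e) is by definition the Hadamard property.

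For (a), the key algebraic identity is $\P = \D_{1}-\K\K^{\ast}$. Given $s\in\ker(\K^{\ast}\vert_{\Gamma_{\c}})$ and $\eta\in\Gamma_{\c}(\V_{1})$, I would compute
\[
(s,\lambda^{\pm}\P\eta)_{\V_{1}}=(s,\lambda^{\pm}\D_{1}\eta)_{\V_{1}}-(s,\lambda^{\pm}\K\K^{\ast}\eta)_{\V_{1}}=0-(\K^{\ast}s,\mu)_{\V_{0}}=0,
\]
where the first term vanishes by (ii) and the second uses (i) to write $\lambda^{\pm}\K\K^{\ast}\eta=\K\mu$ and then $s\in\ker(\K^{\ast})$. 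Independence from the representative of $[t]$ follows from the self-adjointness $(\lambda^{\pm})^{\ast}=\lambda^{\pm}$ and an analogous computation. For (b), sesquilinearity is built into $(\cdot,\cdot)_{\V_{1}}$ and self-adjointness gives
\[
\overline{\Lambda^{\pm}([t],[s])}=\overline{(t,\lambda^{\pm}s)_{\V_{1}}}=(s,(\lambda^{\pm})^{\ast}t)_{\V_{1}}=(s,\lambda^{\pm}t)_{\V_{1}}=\Lambda^{\pm}([s],[t]).
\]
Property (c) is immediate from (iii). For (d), property (ii) gives $(\lambda^{+}-\lambda^{-})t=\i\G_{1}t+\K\mu$ for some $\mu\in\Gamma(\V_{0})$, and hence for $s,t\in\ker(\K^{\ast}\vert_{\Gamma_{\c}})$,
\[
(\Lambda^{+}-\Lambda^{-})([s],[t])=(s,\i\G_{1}t)_{\V_{1}}+(\K^{\ast}s,\mu)_{\V_{0}}=\i(s,\G_{1}t)_{\V_{1}}=\q_{1}([s],[t]).
\]

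The delicate point, and the step I expect to require the most care, is (e)---relating the stated wavefront set condition on the operator $\lambda^{\pm}\:\Gamma_{\c}(\V_{1})\to\Gamma(\V_{1})$ to the Hadamard condition for the bidistribution representing the spacetime two-point functions. By the Schwartz kernel theorem, each $\lambda^{\pm}$ has a kernel $\lambda^{\pm}_{\M}\in\Gamma'(\V_{1}\boxtimes\V_{1})$, and the bidistribution encoding $\Lambda^{\pm}$ via the pairing $(\cdot,\cdot)_{\V_{1}}$ coincides with $\lambda^{\pm}_{\M}$ when both arguments lie in $\ker(\K^{\ast}\vert_{\Gamma_{\c}})$. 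The hypothesis $\WF'(\lambda^{\pm})\subset\mathcal{N}^{\pm}\times\mathcal{N}^{\pm}$ is precisely the Hadamard condition of Definition~\ref{Def:Hadamard}. The subtlety is that changing representatives in $\PS$ modifies $\lambda^{\pm}$ by compositions with $\P$ applied to compactly supported sections; these operations preserve the microlocal condition since $\P$ is a differential operator and thus cannot worsen the primed wavefront set beyond the characteristic set of $\D_{1}$, which is already contained in $\mathcal{N}=\mathcal{N}^{+}\cup\mathcal{N}^{-}$. Once this invariance is recorded, $\omega$ is a quasifree state on $\CCR$ satisfying the Hadamard condition, as claimed.
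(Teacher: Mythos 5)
Your verification of (a)--(d) is correct and follows the standard route from the Hack--Schenkel/G\'erard--Wrochna framework that the paper itself delegates to (the paper does not write out a proof of this proposition; it refers to \cite{Gerard2019,GerardWrochnaYM2015}). The key algebraic identity $\P=\D_1-\K\K^\ast$ together with hypothesis (i) (so that $\lambda^\pm$ maps $\ran(\K\vert_{\Gamma_\c})$ into $\ran(\K)$, which can then be annihilated against $\ker(\K^\ast)$ after integration by parts) is exactly what is needed for gauge-invariance and for extracting $\q_1$ from (ii), and your positivity and Hermiticity arguments are routine and right. This is essentially the same computation one finds in the cited references.

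On step (e) you overcomplicate a non-issue: the paper's Definition~\ref{Def:Hadamard} declares the state induced by a given pair of pseudo-covariances $\lambda^\pm$ to be Hadamard precisely when $\WF'(\lambda^\pm)\subset\mathcal N^\pm\times\mathcal N^\pm$, so hypothesis (iv) literally \emph{is} the Hadamard condition, with nothing further to verify. Your discussion of ``changing representatives in $\PS$ modifying $\lambda^\pm$'' is a misstatement --- the operators $\lambda^\pm$ are fixed; the representatives $s,t$ of classes in $\PS$ change, which is the content of step (a), not a microlocal issue --- and the claim that composing with $\P$ ``cannot worsen the primed wavefront set beyond the characteristic set of $\D_1$'' is not correct as stated (pseudodifferential composition does not enlarge wavefront sets, but it certainly does not confine them to $\mathrm{Char}(\D_1)$). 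Fortunately none of this is load-bearing: deleting that digression leaves a clean and complete argument.
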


 Working at the level of initial data, we get the following characterization:
 
\begin{proposition}\label{prop:hadafromC}
Suppose $c^\pm:\Gamma_{\c}(\V_{1})\to\Gamma(\V_{1})$ are continuous linear operators satisfying
\begin{itemize}
\item[(i)] \quad $( c^\pm)^{\dagger}=c^{\pm}$\, w.r.t. \,$\q_{1,\Sigma}$\,\,  and \,  $ c^\pm(\ran(\K_\Sigma\vert_{\Gamma_{\c}}))\subset\ran(\K_\Sigma)$;
\item[(ii)] \quad $(c^{+}+ c^{-})\f=\f$ \, modulo \, $\ran(\K_\Sigma)$ \, for any $ \f\in\ker(\K_{\Sigma}^{\dagger}\vert_{\Gamma_{\c}})$;
\item[(iii)]\quad $\pm\q_{1,\Sigma}(\f,c^{\pm}\f)\geq 0$ \, for any $\f\in\ker(\K_{\Sigma}^{\dagger}\vert_{\Gamma_{\c}})$;
\item[(iv)] \quad $ \WF'(\mathcal{U}_{1} c_{1}^{\pm})\subset (\mathcal{N}^{\pm}\cup F)\times\T^{*}\Sigma \; \text{ over $U\times \Sigma$
where $F\subset \T^{*}\M$ is a conic set s.t.~$F\cap \mathcal{N}= \emptyset$}\,.$ 
\end{itemize}
Then $\lambda^{\pm}\defeq \pm i\U_{1} c^{\pm}(\rho_{1}\G_{1})$
are pseudo-covariances of a quasifree Hadamard state on $\CCR$. 
 \end{proposition}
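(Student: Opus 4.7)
My plan is to mirror the proof strategy of Proposition~\ref{prop:hadafromc}, verifying the four conditions characterizing pseudo-covariances of a quasifree Hadamard state one at a time. The main ingredient is the identity
\begin{equation*}
\U_{1} = (\rho_{1}\G_{1})^{\ast}\,\G_{1,\Sigma}\quad \text{on the appropriate domain,}
\end{equation*}
which follows by combining the defining relation $\G_{1}=(\rho_{1}\G_{1})^{\ast}\G_{1,\Sigma}(\rho_{1}\G_{1})$ from~\eqref{GSigma} with $\G_{1}=\U_{1}\rho_{1}\G_{1}$; together with the intertwining properties $\rho_{1}\G_{1}\K=\K_{\Sigma}\rho_{0}\G_{0}$ and $\U_{1}\K_{\Sigma}=\K\U_{0}$ (on $\ker(\square_0)$), which are immediate consequences of the commutative diagram in Proposition~\ref{pro:PhaseSpace}.

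The algebraic conditions (i)--(iii) are then essentially formal. For self-adjointness, using the rewriting $\lambda^{\pm}=\pm\,\i\,(\rho_{1}\G_{1})^{\ast}\G_{1,\Sigma}c^{\pm}(\rho_{1}\G_{1})$ and the fact that $\G_{1,\Sigma}^{\ast}=-\G_{1,\Sigma}$ (due to the explicit $\i^{-1}$ factor displayed at the end of Section~\ref{Subsec:PhaseSpaceUltrastatic}), one finds $(\lambda^{\pm})^{\ast}=\pm\i(\rho_{1}\G_{1})^{\ast}(c^{\pm})^{\ast}\G_{1,\Sigma}(\rho_{1}\G_{1})$, and then $(c^{\pm})^{\dagger}=c^{\pm}$ (by (i)) rewritten as $(c^{\pm})^{\ast}\G_{1,\Sigma}=\G_{1,\Sigma}c^{\pm}$ closes the identity. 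The invariance of $\ran(\K)$ follows from $\rho_{1}\G_{1}\K=\K_{\Sigma}\rho_{0}\G_{0}$, from (i) mapping $\ran(\K_{\Sigma}|_{\Gamma_{\c}})$ into $\ran(\K_{\Sigma})$, and from $\U_{1}\K_{\Sigma}=\K\U_{0}$. The wave equation $\square_{1}\lambda^{\pm}=\lambda^{\pm}\square_{1}=0$ holds since $\ran(\U_{1})\subset\ker(\square_{1})$ and $\G_{1}\circ\square_{1}=0$ on $\Gamma_{\c}$. The identity $(\lambda^{+}-\lambda^{-})s=\i\G_{1}s$ modulo $\ran(\K)$ is a direct translation of (ii) using $\U_{1}\rho_{1}\G_{1}=\G_{1}$ and once more $\U_{1}\K_{\Sigma}=\K\U_{0}$. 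Positivity on $\ker(\K^{\ast}|_{\Gamma_{\c}})$ follows because $\rho_{1}\G_{1}$ sends this space into $\ker(\K_{\Sigma}^{\dagger}|_{\Gamma_{\c}})$, where (iii) applies and yields $(s,\lambda^{\pm}s)_{\V_{1}}=\pm\q_{1,\Sigma}(\rho_{1}\G_{1}s,c^{\pm}\rho_{1}\G_{1}s)\geq 0$.

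The genuinely delicate part is the Hadamard wavefront condition, which I expect to be the main obstacle. From hypothesis~(iv), over the neighbourhood $U\times \Sigma$ one has $\WF'(\U_{1}c^{\pm})\subset(\mathcal{N}^{\pm}\cup F)\times \T^{\ast}\Sigma$ with $F\cap\mathcal{N}=\emptyset$. Composing with $\rho_{1}\G_{1}$ on the right, whose kernel is a distribution supported on the graph of the lift to $\Sigma$ along null bicharacteristics (so that its primed wavefront set lies in $\T^{\ast}\Sigma\times\mathcal{N}$), one obtains a preliminary bound $\WF'(\lambda^{\pm})\subset(\mathcal{N}^{\pm}\cup F)\times\mathcal{N}$ over $U\times\M$. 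Now the operator $\square_{1}$ is elliptic off the characteristic set $\mathcal{N}$, so $\square_{1}\lambda^{\pm}=0$ together with elliptic regularity removes the component $F\times\mathcal{N}$, giving $\WF'(\lambda^{\pm})\subset\mathcal{N}^{\pm}\times\mathcal{N}$ over $U\times\M$. Symmetrically, $\lambda^{\pm}\square_{1}=0$ and the corresponding constraint on the second factor — or equivalently the self-adjointness $(\lambda^{\pm})^{\ast}=\lambda^{\pm}$ established above, which forces $\WF'(\lambda^{\pm})$ to be symmetric under swapping the two factors up to a sign on the covectors — upgrades this to $\WF'(\lambda^{\pm})\subset\mathcal{N}^{\pm}\times\mathcal{N}^{\pm}$ over $U\times U$. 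Finally, propagation of singularities for $\square_{1}$ along the bicharacteristic flow on $\mathcal{N}^{\pm}$ spreads this inclusion from $U\times U$ to the whole of $\M\times\M$, which together with the arbitrariness of the neighbourhood $U$ yields the microlocal spectrum condition globally. This concludes the verification that $\lambda^{\pm}$ define a quasifree Hadamard state on $\CCR$.
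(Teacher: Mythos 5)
The paper does not prove Proposition~\ref{prop:hadafromC}; it states it as a known fact and cites the references \cite{Gerard2019,GerardWrochnaYM2015}, mirroring the analogous unproved Proposition~\ref{prop:hadafromc} in Section~\ref{HadamardDefinition}. Your reconstruction follows precisely the standard argument from those references, so there is nothing in the paper to compare against; I assess it on its own terms.

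The algebraic part (i)--(iii) is handled correctly. The rewriting via $\U_{1}=(\rho_{1}\G_{1})^{\ast}\G_{1,\Sigma}$ on $\Gamma_{\c}(\V_{\rho_{1}})$ (valid because $\rho_{1}\G_{1}$ is surjective onto $\Gamma_{\c}(\V_{\rho_{1}})$), together with $\G_{1,\Sigma}^{\ast}=-\G_{1,\Sigma}$, the intertwining relations $\rho_{1}\G_{1}\K=\K_{\Sigma}\rho_{0}\G_{0}$ and $\U_{1}\K_{\Sigma}=\K\U_{0}$, and the observation that $\rho_{1}\G_{1}$ maps $\ker(\K^{\ast}\vert_{\Gamma_{\c}})$ into $\ker(\K_{\Sigma}^{\dagger}\vert_{\Gamma_{\c}})$ (because $\delta\G_{1}=\G_{0}\delta$), yields exactly the translation of hypotheses (i)--(iii) to the corresponding pseudo-covariance conditions. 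The microlocal part is also the correct strategy: compose the $\WF'$ bound on $\U_{1}c^{\pm}$ with $\WF'(\rho_{1}\G_{1})\subset\T^{\ast}\Sigma\times\mathcal{N}$ (no extra singularities from restriction to $\Sigma$ since the conormal to a spacelike Cauchy surface is timelike, hence not null), use $\square_{1}\lambda^{\pm}=\lambda^{\pm}\square_{1}=0$ and elliptic regularity to intersect with $\mathcal{N}\times\mathcal{N}$ and thereby eliminate $F$, use self-adjointness to make the bound symmetric, and propagate singularities along null bicharacteristics to leave the neighbourhood $U$. Two minor points you ought to make explicit in a polished write-up: (a) the wavefront composition theorem needs the usual no-exceptional-pair/properness hypothesis, which here is guaranteed again by the spacelike character of $\Sigma$; and (b) it is cleaner to do the elliptic-regularity step and the propagation step in both slots before invoking symmetry, since hypothesis~(iv) only controls $(x,\xi)\in\T^{\ast}U$, so "over $U\times U$" should be obtained by applying $\lambda^{\pm}\square_{1}=0$ in the second slot rather than implicitly. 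These are presentational, not substantive, and the argument is sound.
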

\subsection{Proof of the Main Theorem}
We are finally to prove the existence of Hadamard states on any globally hyperbolic spacetimes for Maxwell fields. To this end, we follow~\cite{MorettiMurroVolpe2023,MollerMIT} but we will also benefit from~\cite{GerardWrochnaYM2015,
MorettiMurroVolpe2022,defSHS,FPmoller}.
 
\begin{proof}[Proof of Theorem~\ref{thm:main}.]
We split the proof in three separated steps: 
\begin{itemize}
\item[1.] Let $(\M,g)$ be an ultrastatic globally hyperbolic manifolds such that $\Sigma$ is of bounded geometry. Consider the operators $\overline{c}^{\pm}\:\Gamma_\infty(\V_{\rho_{1}})\to\Gamma_\infty(\V_{\rho_{1}})$ defined in Theorem~\ref{thm:hada}.
On account of formula~\eqref{eq:KSigma}, then it follows immediately that for any $h\in\Gamma_\c(\V_{\rho_1})$ it holds $\K_\Sigma h \in \Gamma_{\infty,\d}(\V_{\rho_1})$. Furthermore, since $\ker(\K_\Sigma^\dagger|_{\Gamma_\c})\subset \ker(\K_\Sigma^\dagger|_{\Gamma_\infty})$, we can conclude that $c^\pm\defeq \overline{c}|_{\Gamma_\c}$ satisfies the hypothesis (i)-(iv) of Proposition~\ref{prop:hadafromC}.
\item[2.] Consider two globally hyperbolic metrics $g_0$ and $g_1$ on $\M$ such that $g_0\preceq g_1$, i.e. the open lightcone $V_p^{g_0}$ is contained in the open lightcone $V_p^{g_1}$ for any $p\in\M$.
Let be $\chi\in C^\infty(\M; [0,1])$  and let $g_\chi$ be the Lorentzian metric on $\M$ given by 
\begin{equation}g_\chi \defeq (1-\chi) g_0 + \chi  g_1\, .\label{gCHI}
\end{equation}
As shown in ~\cite[Section 2]{MorettiMurroVolpe2023}, $g_\chi$ is globally hyperbolic and satisfies $g_0\preceq g_\chi \preceq g_1$.
Consider now two Cauchy hypersurfaces {$\Sigma_0,\Sigma_1\subset (\M,g_\chi)$} such that
$\Sigma_1 \subset J_{g_\chi}^+(\Sigma_0)$  and
$$\chi_{|_{J_{g_\chi}^+(\Sigma_1)}}=1 \,,\qquad \text{and } \qquad 
\chi_{|_{J_{g_\chi}^-(\Sigma_0)}}=0 \, ,$$ 
and consider the spaces ${\PSS}_i$ defined by
$${\PSS}_i \defeq 	\frac{\ker(\K^\dagger_{\Sigma_i}|_{\Gamma_\c})}{ \ran(\K_{\Sigma_i}|_{\Gamma_\c})} \qquad \text{ for i=0,1}.$$
The operator $R\defeq\rho_1\U_\chi$ obtained from composing the Cauchy evolution operator $\D_\chi$ with the Cauchy data map $\rho_1$ on $\Sigma_1$ implements an unitary isomorphism between $({\PSS}_0, \q_{1,\Sigma_0})$ and $({\PSR}_1, \q_{1,\Sigma_1})$.
Therefore, given two operators $\c_0^\pm$ with domain given by $({\PSR}_0, \q_{1,\Sigma_0})$, we can define the operators $c_1^\pm\defeq(R^{-1})^\dagger c^\pm_0 R^{-1}$. A routine computations shows that if $c^\pm_0$ satisfy property (i)-(iv) of Theorem~\ref{thm:hada}, then also $c^\pm_1$ do.
Similarly, given two operators $\c_1^\pm$ with domain given by $({\PSR}_1, \q_{1,\Sigma_1})$, we can define the operators $c_0^\pm\defeq R^\dagger c^\pm_1 R$. A routine computations shows that if $c^\pm_1$ satisfy property (i)-(iv) of Theorem~\ref{thm:hada}, then also $c^\pm_0$ do.
\item[3.] We now consider a family of metric $\{g_i\}_{i\in[0,1]}$ with the following properties:
\begin{align*}
	\text{(I) }&\quad g_{i}\text{ are globally hyperbolic}\\
	\text{(II) }&\quad g_i\preceq g_{i+1}\text{ or }g_{i+1}\preceq g_{i}\\
	\text{(III) }&\quad g_{i=1}=g_1\text{ and }g_{i=0}=g_0\text{ with $g_{0}$ ultrastatic s.t.~$(\Sigma,h_0)$ is of bounded geometry}
\end{align*}
The existence of such a family of metrics was proven in~\cite[Section 5]{MorettiMurroVolpe2022} and the two metric were called \textit{paracausally related}. Since $g_0$ is ultrastatic and $h_0$ is of bounded geometry, by part 1.~of this proof, we now that there exists $c_0^\pm$ satisfying property (i)-(iv) of Proposition~\ref{prop:hadafromC}. Therefore, it is enough to iterate step 2 to define two operators $c_1^\pm$ with domain ${\PSS}_1$ enjoying the same properties.  
\end{itemize}
This concludes the proof.
	\end{proof}
\bibliographystyle{abbrv}

\end{document}